\theoremstyle{plain}
 \newtheorem{mainresult}{Main Result}
\newtheorem{theorem}{Theorem}[section]
\newtheorem{lemma}[theorem]{Lemma}
\newtheorem{corollary}[theorem]{Corollary}
\newtheorem{problem}[theorem]{Problem}
\newtheorem{assumption}[theorem]{Assumption}
\theoremstyle{definition}
\newtheorem{definition}[theorem]{Definition}
\newtheorem{remark}[theorem]{Remark}
\numberwithin{equation}{section}
\newcommand{\linspan}{\mathop{\rm span}\nolimits}
\newcommand{\rest}{\left.\kern-2\nulldelimiterspace\right|_}
\newcommand{\norm}[2]{\left|#1\right|_{#2}}
\newcommand{\dnorm}[2]{\left\|#1\right\|_{#2}}
\newcommand{\vol}{\mathop{\rm vol}\nolimits}
\newcommand{\Id}{{\mathbf1}}
\newcommand{\indf}{1}
\newcommand{\p}{\partial}
\newcommand*{\Bigcdot}{\raisebox{-.25ex}{\scalebox{1.25}{$\cdot$}}}
\newcommand{\clC}{{\mathcal C}}
\newcommand{\clE}{{\mathcal E}}
\newcommand{\clG}{{\mathcal G}}
\newcommand{\clH}{{\mathcal H}}
\newcommand{\clI}{{\mathcal I}}
\newcommand{\clJ}{{\mathcal J}}
\newcommand{\clK}{{\mathcal K}}
\newcommand{\clL}{{\mathcal L}}
\newcommand{\clS}{{\mathcal S}}
\newcommand{\clU}{{\mathcal U}}
\newcommand{\clX}{{\mathcal X}}
\newcommand{\clY}{{\mathcal Y}}
\newcommand{\clZ}{{\mathcal Z}}
\newcommand{\bbN}{{\mathbb N}}
\newcommand{\bbR}{{\mathbb R}}
\newcommand{\bfC}{{\mathbf C}}
\newcommand{\bfJ}{{\mathbf J}}
\newcommand{\bfP}{{\mathbf P}}
\newcommand{\bfW}{{\mathbf W}}
\newcommand{\fkB}{{\mathfrak B}}
\newcommand{\fkK}{{\mathfrak K}}
\newcommand{\fkP}{{\mathfrak P}}
\newcommand{\fkQ}{{\mathfrak Q}}
\newcommand{\fkU}{{\mathfrak U}}
\newcommand{\fkV}{{\mathfrak V}}
\newcommand{\fkX}{{\mathfrak X}}
\newcommand{\rmD}{{\mathrm D}}
\newcommand{\bfn}{{\mathbf n}}
\newcommand{\rma}{{\mathrm a}}
\newcommand{\rmd}{{\mathrm d}}
\newcommand{\rme}{{\mathrm e}}
\newcommand{\rmf}{{\mathrm f}}
\newcommand{\ttc}{{\mathtt c}}
\newcommand{\ttt}{{\mathtt t}}
\newcommand{\ovlineC}[1]{\overline C_{\left[#1\right]}}
\definecolor{LightGray}{rgb}{0.75,0.75,0.75}
\definecolor{DarkBlue}{rgb}{0,0.08,0.45}
\definecolor{DarkRed}{rgb}{.65,0,0}
\definecolor{applegreen}{rgb}{0.55, 0.71, 0.0}
\newcounter{mymac@matlab}
\newcommand{\matlab}{MATLAB%
   \ifnum\value{mymac@matlab}<1%
   \textregistered%
   \setcounter{mymac@matlab}{1}%
   \fi%
  }
\providecommand{\argmin}{\operatorname*{argmin}}
\begin{document}
\title{Global stabilizability to trajectories
for the Schl\"{o}gl  equation in a Sobolev norm}
\author{Karl Kunisch$^{\tt1,2}$,
\quad S\'ergio S.~Rodrigues$^{\tt1,2}$ }
\thanks{
\vspace{-1em}\newline\noindent
{\sc MSC2020}: 93C20, 93B52, 93D15, 35K58.
\newline\noindent
{\sc Keywords}: Saturated feedback controls, control constraints, stabilizability to trajectories,
semilinear parabolic equations, finite-dimensional control, optimal
constrained control
\newline\noindent
$^{\tt1}$ Johann Radon Institute for Computational and Applied Mathematics,
  \"OAW, Altenbergerstrasse~69, 4040~Linz, Austria.
\newline\noindent
$^{\tt2}$ Institute for Mathematics and Scientific Computing, Heinrichstrasse~36, 8010~Graz, Austria.
\newline\noindent
{\sc Emails}:
{\small\tt karl.kunisch@uni-graz.at,\quad
sergio.rodrigues@ricam.oeaw.ac.at}%
}

\begin{abstract}
The stabilizability to trajectories of the Schl\"ogl model is
investigated in the norm of the natural state space for strong solutions, which is strictly contained in the standard pivot space of square integrable functions.
 As actuators a finite number of indicator functions are used  and the control input is subject to a bound constraint.
A stabilizing  saturated explicit feedback control is proposed, where the set of actuators and the input bound are independent of the targeted trajectory. Further, the existence of open-loop optimal stabilizing constrained controls and related first-order optimality conditions are investigated. These conditions are then used to compute  stabilizing receding horizon based controls. Results of numerical simulations are presented comparing their stabilizing performance with that of saturated explicit feedback controls.
\end{abstract}

\maketitle

\pagestyle{myheadings} \thispagestyle{plain} \markboth{\sc K. Kunisch and  S. S. Rodrigues}
{\sc stabilizability to trajectories for the Schl\"ogl equation}

%%%%%%%%%%%%%%%%%%%%%%%%%%%%%
%%%%%%%%%%%%%%%%%%%%%%%%%%%%%
\section{Introduction}
We investigate the controlled Schl\"{o}gl system,
\begin{subequations}\label{sys-y-u-Cu}
 \begin{align}
& \tfrac{\p}{\p t} y_\ttc -\nu\Delta y_\ttc +(y_\ttc-\zeta_1)(y_\ttc-\zeta_2)(y_\ttc-\zeta_3)=h+{\textstyle\sum\limits_{i=1}^{M_\sigma} u_i\indf_{\omega^M_{i}}},\label{sys-y-u}\\
&y_\ttc(0)=y_{\ttc0}\in W^{1,2}(\Omega),\qquad \tfrac{\p}{\p\bfn}y_\ttc\rest{\p\Omega}=0,\label{y-icH1}\\
&\dnorm{u(t)}{}\le C_u,\quad t\ge0,\label{normu<Cu}
\end{align}
\end{subequations}
with state~$y_\ttc=y_\ttc(t,x)$, defined for~$(t,x)\in[0,+\infty)\times\Omega$, evolving in the Hilbert Sobolev space of square integrable functions with square integrable first-order derivatives, i.e. $y_\ttc(t,\Bigcdot)\in W^{1,2}(\Omega)\subset L^2(\Omega)$. Here~$\Omega\subset\bbR^d$ is a bounded rectangular domain, with~$d\in\{1,2,3\}$. The diffusion coefficient~$\nu>0$ is a positive constant and~$\zeta=(\zeta_1,\zeta_2,\zeta_3)\in\bbR^3$ is a constant vector. Further,~$M$ and~$M_\sigma$  are positive integers and $U_M\coloneqq\{\indf_{\omega^M_{i}}\mid 1\le i\le M_\sigma\}\subseteq L^2(\Omega)$ is a given family of $M_\sigma$~actuators, which are indicator functions of open subdomains~$\omega^M_{i}\subseteq\Omega$ depending on the parameter index~$M$, whose role will be clarified below.
Finally,~$h\in L^6_{\rm loc}(\bbR_+,L^2(\Omega))$ is a given external force satisfying, for some~$\tau_h>0$,
\begin{equation}\label{bound-h-intro}
\sup_{s\ge0}\norm{h}{L^6((s,s+\tau_h),L^2(\Omega))}<+\infty,
\end{equation}
and~$u=u(t)=(u_i(t),u_2(t),\dots,u_{M_\sigma}(t))$,  is a vector of scalar  input  controls (tuning parameters), the choice of which is at our disposal.

We consider the
case where the control~$u=u(t)$ is subject to constraints as in~\eqref{normu<Cu} for an a priori given constant~$C_u\in[0,+\infty]$ and an a priori given norm~$\dnorm{\Bigcdot}{}$ in~$\bbR^{M_\sigma}$. The usual Euclidean norm in~$\bbR^{M_\sigma}$ shall be denoted by~$\norm{\Bigcdot}{\bbR^{M_\sigma}}$.

We are particularly interested in the case~$C_u\in(0,+\infty)$, but we allow the extremal values~$C_u\in\{0,+\infty\}$ to include the cases of the free (uncontrolled) dynamics~$C_u=0$ and of unconstrained controls~$C_u=+\infty$.

 The total volume~$\vol(\bigcup_{i=1}^{M_\sigma}\omega^M_{i})$
covered by the actuators can be chosen a priori.

\subsection{Exponential stabilization to trajectories}
The  problem under investigation is as follows. We are given a trajectory/solution~$y_\ttt$ of the free dynamics,
\begin{subequations}\label{sys-haty}
 \begin{align}
& \tfrac{\p}{\p t} y_\ttt -\nu\Delta  y_\ttt +(y_\ttt-\zeta_1)(y_\ttt-\zeta_2)(y_\ttt-\zeta_3)=h,\\
&y_\ttt(0,\Bigcdot)=y_{\ttt0}\in W^{1,2}(\Omega),\qquad \tfrac{\p}{\p\bfn}y_\ttt\rest{\p\Omega}=0,
\end{align}
\end{subequations}
which we would like to track.

We are also given another initial state~$y_0\in W^{1,2}(\Omega)$. It turns out that the corresponding solution~$y$ of the free dynamics, with~$y(0,\Bigcdot)=y_0\in W^{1,2}(\Omega)$ may present an asymptotic behavior different from the targeted behavior of~$y_\ttt$. For example, in the case~$h=0$, and~$\zeta_1<\zeta_2<\zeta_3$ we could consider the free dynamics equilibrium
$y_\ttt(t,x)=\zeta_2$, with initial state~$y_\ttt(0,x)=y_{\ttt0}(x)=\zeta_2$, as our desired targeted behavior. Since~$y_\ttt(t,x)=\zeta_2$ is not a stable equilibrium,  if~$y_0(x)\coloneqq c\ne\zeta_2$ is a constant, then the  solution~$y(t,x)$  of the free dynamics, corresponding to the initial state~$y(0,x)=c$, does not converge to the targeted $y_\ttt(t,x)$  as time increases.

Our goal is  to design the control input~$u$ such that the controlled state~$y_\ttc(t,\Bigcdot)$ of the solution of system~\eqref{sys-y-u-Cu} converges exponentially to the given~$y_\ttt(t,\Bigcdot)$,  so that for some constants~$\varrho\ge1$ and~$\mu>0$, and all~$t\ge s\ge0$,
\begin{equation}\label{goal-exp}
\norm{ y_\ttc(t,\Bigcdot)-y_\ttt(t,\Bigcdot)}{W^{1,2}(\Omega)}^2\le \varrho\rme^{-\mu (t-s)}\norm{ y_\ttc(s,\Bigcdot)-y_\ttt(s,\Bigcdot)}{W^{1,2}(\Omega)}^2.
\end{equation}

We shall construct the stabilizing constrained control~$u$ by saturating a suitable unconstrainted stabilizing linear feedback control~$\clK\colon W^{1,2}(\Omega)\to\bbR^{M_\sigma}$ through a radial projection as follows
\begin{subequations}\label{rad.proj}
\begin{align}
&u=\overline\clK(y-y_\ttt)\coloneqq \fkP^{\dnorm{\Bigcdot}{}}_{C_u}(\clK (y-y_\ttt)),
&\intertext{where}
&\fkP^{\dnorm{\Bigcdot}{}}_{C_u}(v)\coloneqq\begin{cases}v,&\mbox{ if }\dnorm{v}{}\le C_u,\\
\frac{C_u}{\dnorm{v}{}}v,&\mbox{ if }\dnorm{v}{}> C_u,
\end{cases}\qquad v\in\bbR^{M_\sigma}
\end{align}
\end{subequations}
Note that we have
\begin{equation}\label{rad.proj.min}
\fkP^{\dnorm{\Bigcdot}{}}_{C_u}(0)=0\quad\mbox{and}\quad\fkP^{\dnorm{\Bigcdot}{}}_{C_u}(v)=\min\left\{1,\tfrac{C_u}{\dnorm{v}{}}\right\}v\quad\mbox{for}\quad v\ne0.
\end{equation}
and also that, for all $(v,C_u)\in\bbR^{M_\sigma}\times[0,+\infty]$,
\[\dnorm{\fkP^{\dnorm{\Bigcdot}{}}_{C_u}(v)}{}\le C_u,\qquad \fkP^{\dnorm{\Bigcdot}{}}_{0}(v)=0,\quad\mbox{and}\quad\fkP^{\dnorm{\Bigcdot}{}}_{+\infty}(v)=v.
\]
In particular, the saturated feedback control~$u(t)= \overline\clK(y(t)-y_\ttt(t))$ satisfies~$\dnorm{u(t)}{}\le C_u$.

The stabilizability of dynamical systems such as~\eqref{sys-y-u} is an important problem for applications, even in the case that the ``magnitude''~$\dnorm{u(t)}{}$ of the control is allowed to take arbitrary large values (i.e., in the case $C_u=+\infty$).

In applications we may be faced with physical constraints, for example, with an upper bound for the magnitude of the acceleration/forcing provided by an engine, or with an upper bound for the temperature provided by a heat radiator. For this reason it is also important to investigate the case of bounded controls (i.e., the case $C_u<+\infty$).

\begin{remark}\label{R:Msigma}
The stabilizability shall be proven to hold for a large number~$M_\sigma$ of actuators. That is why
we shall consider a sequence~$(U_M)_{M\in\bbN_+}$ of families of indicator functions~$U_M=\{\indf_{\omega^M_{i}}\mid 1\le i\le M_\sigma\}\subseteq L^2(\Omega)$ with supports~$\overline{\omega^M_{i}}$ depending on the sequence index~$M$.  Such dependence on~$M$ is also convenient to construct a sequence of families where the total volume covered by the actuators in each family is fixed a priori: given~$r\in(0,1)$, we construct~$(U_M)_{M\in\bbN_+}$ so that $\vol(\bigcup_{i=1}^{M_\sigma}\omega^M_{i})=r\vol(\Omega)$ for every~$M$.
\end{remark}

%%%%%%%%%%%%%%%%%%%%%%%%%%%
\subsection{The sequence of families of actuators}\label{sS:actuators}
For our rectangular spatial domain
\begin{equation}\label{Omegaxd}
\Omega=\Omega^\times=(0,L_1)\times (0,L_2)\times \cdots\times (0,L_d)\subset\bbR^d,\qquad d\in\{1,2,3\},
\end{equation}
we consider the set~$U_M$ of actuators as in
~\cite[sect.~4.8]{KunRod19-cocv} and~\cite[sect.~5]{KunRodWalter21},
\begin{subequations}\label{U_M}
\begin{align}
&U_M\coloneqq\{\indf_{\omega_{j}^M}\mid 1\le j\le M_\sigma\}\subseteq L^2(\Omega),\qquad\clU_M\coloneqq\linspan U_M,\qquad \dim\clU_M=M_\sigma,
\intertext{where, for a fixed~$M$, $M_\sigma=M^d$ and}
 &\omega_j^M\coloneqq{\bigtimes\limits_{n=1}^d}((c_n)_{j}^M-\tfrac{rL_n}{2M},(c_n)_{j}^M+\tfrac{rL_n}{2M}),\quad 1\le j\le M_\sigma,
\intertext{with centers}
 &\{c=(c)^M_j\mid 1\le j\le M_\sigma\}={\bigtimes\limits_{n=1}^d}\{(2k-1)\tfrac{L_n}{2M}\mid 1\le k\le M\}.
\end{align}
\end{subequations}
See Figure~\ref{fig.suppActSens} for an illustration for the case~$d=2$. See also~\cite[sect.~5.2]{Rod20-eect}
~\cite[sect.~6]{Rod21-aut} where an
analogue placement of the actuators and/or sensors has been used.
%%%%%%%%%%%%%%%%%%%%%%%%
%%%%%%   TIKZ BOXES   %%%%%%%%%
%%%%%%%%%%%%%%%%%%%%%%%%
\setlength{\unitlength}{.0018\textwidth}
\newsavebox{\Rectfw}%
\savebox{\Rectfw}(0,0){%
\linethickness{2pt}
{\color{black}\polygon(0,0)(120,0)(120,80)(0,80)(0,0)}%
}%
\newsavebox{\RectRef}%
\savebox{\RectRef}(0,0){%
\linethickness{1.5pt}
{\color{LightGray}\polygon*(40,30)(80,30)(80,50)(40,50)(40,30)}%
}%
 \begin{figure}[h!]
\begin{center}
\begin{picture}(500,100)%(0,0)
% Rect1
 \put(0,0){\usebox{\Rectfw}}
\put(0,0){\usebox{\RectRef}}
% Rect2
  \put(190,0){\usebox{\Rectfw}}
% %  %
   \put(190,0){\scalebox{.5}[.5]{\usebox{\RectRef}}}
    \put(250,0){\scalebox{.5}[.5]{\usebox{\RectRef}}}
   \put(190,40){\scalebox{.5}[.5]{\usebox{\RectRef}}}
  \put(250,40){\scalebox{.5}[.5]{\usebox{\RectRef}}}
%
% % Rect3
 \put(380,0){\usebox{\Rectfw}}
%  %
  \put(380,0){\scalebox{.333333}[.333333]{\usebox{\RectRef}}}
 \put(420,0){\scalebox{.333333}[.333333]{\usebox{\RectRef}}}
 \put(460,0){\scalebox{.333333}[.333333]{\usebox{\RectRef}}}
%  %
  \put(380,26.66666){\scalebox{.333333}[.333333]{\usebox{\RectRef}}}
 \put(420,26.66666){\scalebox{.333333}[.333333]{\usebox{\RectRef}}}
 \put(460,26.66666){\scalebox{.333333}[.333333]{\usebox{\RectRef}}}
%  %
  \put(380,53.33333){\scalebox{.333333}[.333333]{\usebox{\RectRef}}}
 \put(420,53.33333){\scalebox{.333333}[.333333]{\usebox{\RectRef}}}
 \put(460,53.33333){\scalebox{.333333}[.333333]{\usebox{\RectRef}}}
 \put(40,85){$M=1$}
 \put(230,85){$M=2$}
 \put(420,85){$M=3$}
\put(60,35){$\omega_1^1$}
\put(222,17){$\omega_1^2$}
\put(222,57){$\omega_2^2$}
\put(282,17){$\omega_3^2$}
\put(282,57){$\omega_4^2$}
\end{picture}
\end{center}
 \caption{Supports of actuators in a rectangle~$\Omega^\times\subset\bbR^2$.} \label{fig.suppActSens}
 \end{figure}
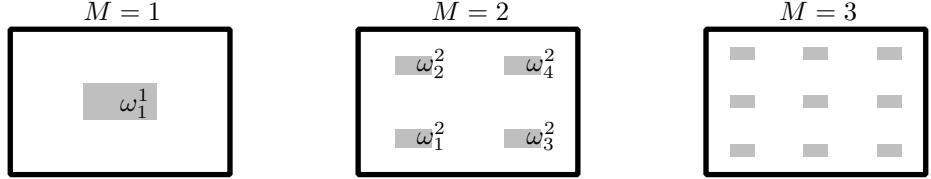

Together with the family~$U_M$ of actuators we will also consider a family of
auxiliary bump-like functions~$\widetilde U_M$ as  follows
\begin{subequations}\label{tildeU_M}
\begin{align}
&\widetilde U_M\coloneqq\left\{\Psi_{j}^M(x)\coloneqq\indf_{\omega_{j}^M}(x)\bigtimes_{j=1}^d\sin^2(\tfrac{ M\pi (x_j-c_j^M+\frac{rL}{2M})}{rL})\mid 1\le j\le M_\sigma\right\}\subseteq \rmD(A),\\
&\widetilde \clU_M\coloneqq\linspan\widetilde U_M,\qquad \dim\widetilde \clU_M=M_\sigma.
\end{align}
\end{subequations}

The orthogonal complement to a given subset~$B\subset \clH$ of a Hilbert space~$\clH$,
with scalar product~$(\Bigcdot,\Bigcdot)_\clH$,  is
denoted~$B^{\perp \clH}\coloneqq\{h\in \clH\mid (h,s)_\clH=0\mbox{ for all }s\in B\}$.
For simplicity, in the case~$\clH=L^2(\Omega)$ we denote
\[
 B^{\perp}\coloneqq B^{\perp L^2(\Omega)}.
\]

\begin{definition}
Given two closed subspaces~$F\subseteq \clH$ and~$G\subseteq \clH$ of the
Hilbert space~$\clH$, we write $\clH=F\oplus G$ if~$F\bigcap G=\{0\}$ and~$\clH=F+ G$ algebraically.
\end{definition}
\begin{definition}
Given two closed subspaces~$F\subseteq \clH$ and~$G\subseteq \clH$ of the
Hilbert space~$\clH=F\oplus G$, we denote by~$P_F^G\in\clL(\clH,F)$
the oblique projection in~$\clH$ onto~$F$ along~$G$. That is, writing $h\in \clH$ as $h=h_F+h_G$
with~$(h_F,h_G)\in F\times G$, we have
\[
 P_F^Gh\coloneqq h_F.
\]
The orthogonal projection in~$L^2(\Omega)$ onto~$F$ is denoted by~$P_F\coloneqq P_F^{F^{\perp }}\in\clL(L^2(\Omega),F)$.
\end{definition}

We can show, by the arguments in~\cite[Sect.~6]{Rod21-aut} (see also~\cite[Lem.~3.4]{KunRodWalter21}), that we have
\[
\widetilde \clU_M\oplus\clU_M^\perp=L^2(\Omega)=\clU_M\oplus\widetilde \clU_M^\perp.
\]

%%%%%%%%%%%%%%%%%%%%%%
\subsection{The main results}\label{sS:mainres-intro}
For simplicity, we define the isomorphism
\begin{equation}\label{UMdiam}
U_M^\diamond\colon\bbR^{M_\sigma}\to\clU_M,\qquad u\mapsto{\textstyle\sum\limits_{i=1}^{M_\sigma}}u_i\indf_{\omega^M_{i}}.
\end{equation}
Next, we observe that we can ``reduce'' the stabilizability to trajectories to the stabilizability to zero, simply by considering the error/difference~$z=y_\ttc-y_\ttt$ between the controlled solution of~\eqref{sys-y-u-Cu} and the targeted solution of~\eqref{sys-haty}.  In this case our goal is to stabilize the dynamics of the difference to zero, and
reads  (cf.~\eqref{goal-exp})
 \begin{equation}\label{goal-diff}
\norm{ z(t,\Bigcdot)}{W^{1,2}(\Omega)}^2\le \varrho\rme^{-\mu (t-s)}\norm{ z(s,\Bigcdot)}{W^{1,2}(\Omega)}^2,\quad\mbox{for all}\quad t\ge s\ge\tau\ge0.
\end{equation}

We shall consider an unconstrained explicit feedback operator, for a given~$\lambda\ge 0$, as
\begin{equation}\label{unc-K}
z\mapsto\clK_M^{\lambda}z\coloneqq-\lambda (U_{M}^\diamond)^{-1}P_{\clU_M}^{\widetilde \clU_M^\perp}(-\nu\Delta+\Id) P_{\widetilde \clU_M}^{\clU_M^\perp} z,
\end{equation}
 where~$\Id$ stands for the identity operator, and take saturated feedback controls as
\begin{equation}\label{const-K}
u=\overline\clK_M^{\lambda}(z)\coloneqq  \fkP^{\dnorm{\Bigcdot}{}}_{C_u}\left(-\lambda (U_{M}^\diamond)^{-1}P_{\clU_M}^{\widetilde \clU_M^\perp}(-\nu\Delta+\Id) P_{\widetilde \clU_M}^{\clU_M^\perp}z\right).
\end{equation}

By direct computations, the dynamics of the error, with this feedback control, reads
\begin{align} \label{sys-z-intro}
 &\tfrac{\p}{\p t} z =\nu\Delta z -f^{y_\ttt}(z)+U_{M}^\diamond u,\quad z(0,\Bigcdot)=z_0,\quad \tfrac{\p}{\p\bfn} z\rest{\p\Omega}=0,
 \end{align}
with~$z_0\in W^{1,2}(\Omega)$ and~$f^{y_\ttt}(z)=z^3+(3y_\ttt+\xi_2) z^2+(3y_\ttt^2+2\xi_2y_\ttt+\xi_1) z$, for suitable constants~$\xi_1$ and~$\xi_2$.

\begin{mainresult}\label{MR:mainStab}
For each~$\mu>0$ there exist large enough
constants~$M\in\bbN_+$, $\lambda\in\bbR_+$, and~$C_u\in\bbR_+$,
such that the solutions of~\eqref{sys-z-intro} with the feedback control~\eqref{const-K}, satisfy~\eqref{goal-diff}, for all $z_0\in W^{1,2}(\Omega)$.
\end{mainresult}

A more precise statement of this result is given in Theorem~\ref{T:mainSchloegl-st}.

We also address the constrained optimal control problem, concerned with the minimization of the classical energy functional
\begin{subequations}\label{Jy0}
\begin{equation}
J_{y_{\ttc0}}^{y_{\ttt0},Q}(y_\ttc-y_\ttt,u)\coloneqq\tfrac12\norm{Q y_\ttc-Qy_\ttt}{L^2(\bbR_+,{L^{2}(\Omega)}
)}^2+\tfrac12\norm{u}{L^2(\bbR_+,\bbR^{M_\sigma})}^2,
\end{equation}
for the solutions of~\eqref{sys-y-u-Cu} and~\eqref{sys-haty} with initial states~$y_{\ttt0}$ and~$y_0$ in $W^{1,2}(\Omega)$, where
\begin{equation}
Q\colon W^{2,2}(\Omega)\to L^2(\Omega),
\end{equation}
is a linear and continuous operator.
\end{subequations}

\begin{mainresult}\label{MR:mainOptim}
Let~$M$ be as in Main Result~\ref{MR:mainStab} and let~$Q=P_{\clE_{M_1}^\rmf}\colon L^2(\Omega)\to \clE_{M_1}^\rmf$, $M_1\in\bbN_+$, be the orthogonal projection in~$L^2(\Omega)$ onto the linear span~$\clE_{M_1}^\rmf$ of the first eigenfunctions of the Laplacian (under the considered Neumann boundary conditions). Then, for large enough~$M_1$, there are controls minimizing~$J_{y_{\ttc0}}^{y_{\ttt0},Q}$  in~\eqref{Jy0}.
\end{mainresult}

A more precise statement of this result is given in Theorem~\ref{T:existOptim}. The orthogonal projection in the statement of the result can be replaced by other operators as, for example, $Q=\Id$ or~$Q=(-\Delta+\Id)^\frac12$.

Further, we shall investigate the first-order optimality conditions associated with optimal controls.

%%%%%%%%%%%%%%%%%%%%%%
\subsection{On previous related works in literature}\label{sS:Intro-liter}
The literature is rich in results concerning the feedback stabilizability of parabolic like equations under no constraints in the magnitude of the control.
For example,  we can mention~\cite{BarbuTri04,BarRodShi11,KunRod19-cocv,PhanRod18-mcss,BreKunRod17,AzouaniTiti14,LunasinTiti17}, \cite[sect.~2.2]{Barbu11}, and references therein.
Though we do not address,
in the present manuscript, the case of boundary controls, we would like to refer the reader to~\cite{BarbuLasTri06,Barbu12,BadTakah11,Raymond19,Rod18,Barbu_TAC13,BalKrstic00,CochranVazquezKrstic06,KrsticMagnVazq09}.

Concerning literature considering an upper bound~$C_u$ for the magnitude
of the control~$u(t)$, for finite-dimensional systems we refer the reader to~\cite{HuLinQiu01,CorradiniCristofaroOrlando10,ZhouLam17,LiuChitourSontag96,Teel92,BarbuChengFreeman97,LauvdalFossen97,SaberiLinTeel96,SussmannSontagYang94,WredenhagenBelanger94}.
For infinite-dimensional systems, namely,  for parabolic equations we refer the reader to~\cite{MironchenkoPrieurWirth21}, and  for wave-like equations we refer the reader to \cite{LasieckaSeidman03,SeidmanLi01} and to~\cite[sect.~8.1]{Slemrod89} with an application to the  beam equation.

Recently, in~\cite{AzmiKunRod21-arx} it is shown that a saturated feedback as
\begin{equation}\label{feed-orthproj}
\widehat\clK_M^{\lambda}z\coloneqq\fkP^{\dnorm{\Bigcdot}{}}_{C_u}\left(-\lambda (U_{M}^\diamond)^{-1}P_{\clU_M}^{\clU_M^\perp}z\right)
\end{equation}
is able to stabilize system~\eqref{sys-y-u-Cu} in the pivot  $L^2(\Omega)$-norm. More precisely, it is shown that for large enough~$M$, $\lambda=\lambda(M)$, and~$C_u=C_u(M,\lambda)$, we have that
\begin{equation}\label{goal-diff-L2}
\norm{ z(t,\Bigcdot)}{L^{2}(\Omega)}^2\le \rme^{-\mu (t-s)}\norm{ z(s,\Bigcdot)}{L^{2}(\Omega)}^2,\quad\mbox{for all}\quad t\ge s\ge0.
\end{equation}
Hence,  Main Result~\ref{MR:mainStab} holds with the $L^2(\Omega)$-norm in place of the Sobolev~$W^{1,2}(\Omega)$-norm.

We shall show that~\eqref{goal-diff-L2} also holds with~$\widehat\clK_M^{\lambda}$ replaced by~$\overline\clK_M^{\lambda}$ in~\eqref{const-K}. With the later feedback,~$\lambda$ can be chosen independent of~$M$. This has advantages in applications, because we do not need  to be concerned with relations between the choices for~$M$ and~$\lambda$.

\medskip
Main Result~\ref{MR:mainStab} states that the stabilizability also holds in the Sobolev~$W^{1,2}(\Omega)$-norm. Note that, for general {\em nonlinear} systems it is not clear whether the stabilizability in~$L^{2}(\Omega)$-norm implies stabilizability in~$W^{1,2}(\Omega)$-norm.
We show the latter as a corollary of a smoothing-like property  of the error dynamics,  which reads as
\begin{equation}\label{diff-smoo}
\norm{ z(s+1,\Bigcdot)}{W^{1,2}(\Omega)}^2\le C\norm{ z(s,\Bigcdot)}{L^{2}(\Omega)}^2,\quad\mbox{for all}\quad  s\ge0.
\end{equation}
with a constant~$C$ independent of~$s$. To verify~\eqref{diff-smoo} we shall need to show appropriate bounds for the solution~$y_\ttt$ of the free dynamics, subject to  a general external forcing satisfying~\eqref{bound-h-intro}.
Subsequently, we will be able to show~\eqref{goal-diff}: for a suitable $\varrho>1$,
\begin{equation}\label{goal-diff-W12}
\norm{ z(t,\Bigcdot)}{W^{1,2}(\Omega)}^2\le \varrho\rme^{-\mu (t-s)}\norm{ z(s,\Bigcdot)}{W^{1,2}(\Omega)}^2,\quad\mbox{for all}\quad t\ge s\ge0.
\end{equation}
Note that comparing to~\eqref{goal-diff-L2}, here we have a ``transient bound'' constant~$\varrho>1$. That is, we could not show the inequality with~$\rho=1$. This means that, though $t\mapsto\norm{ z(t,\Bigcdot)}{L^{2}(\Omega)}^2$ is strictly decreasing, it may hold that~$t\mapsto\norm{ z(t,\Bigcdot)}{W^{1,2}(\Omega)}^2$ is not.

\medskip
Main Result~\ref{MR:mainOptim} states that the existence of optimal controls holds in case the linear mapping~$Q$ is an appropriate orthogonal projection. We are particularly interested in the case that~$Q$ has finite-dimensional range, as in Main Result~\ref{MR:mainOptim}. This can have computational advantages, for example, for nonautonomous linear free dynamics, the combination of the finite-dimensionality of the range of~$Q$ with the finite-dimensionality of the control can enhance the existence of low-rank approximations of the algebraic Riccati equation  associated with the optimal control; \cite{Penzl00}, \cite[sect.~4]{BennerLiPenzl08}.
In general,  for infinite-dimensional systems and nonlinear dynamics, the existence of optimal controls is a nontrivial question. Roughly speaking, following standard arguments, by taking a minimizing sequence of controls we should obtain a sequence of associated states bounded in a suitable norm allowing us to find a limit state solving the dynamics.  Indeed, the derivation of such bound (cf.~Lemma~\ref{L:key-exist-Id}) plays a key role in the proof of Main Result~\ref{MR:mainOptim}.

%%%%%%%%%%%%%%%%%%%%
\subsection{Contents} In section~\ref{S:Schloegl} we prove  Main Result~\ref{MR:mainStab} concerning the stabilizing property of the proposed saturated explicit feedback in~\eqref{const-K}. Section~\ref{S:optimalcontrol} is concerned with the existence of optimal controls stated in Main Result~\ref{MR:mainOptim}.  First order (Karush--Kuhn--Tucker) optimality conditions associated with our optimal control problem are addressed in section~\ref{S:1optcond}, which  are used in section~\ref{S:simul} to compute a suboptimal stabilizing receding horizon control,  whose  performance is compared with that of the saturated explicit feedback~\eqref{const-K}.

%%%%%%%%%%%%%%%%%%%%
\subsection{Notation}
We write~$\bbR$ and~$\bbN$ for the sets of real numbers and nonnegative
integers, respectively. We set $\bbR_+\coloneqq(0,+\infty)$, $\overline\bbR_+\coloneqq[0,+\infty)$, and~$\bbN_+\coloneqq\bbN\setminus\{0\}$.

Given Banach spaces~$X$ and~$Y$, we write $X\xhookrightarrow{} Y$ if the inclusion
$X\subseteq Y$ is continuous.
The space of continuous linear mappings from~$X$ into~$Y$ is denoted by~$\clL(X,Y)$. We
write~$\clL(X)\coloneqq\clL(X,X)$.
The continuous dual of~$X$ is denoted~$X'\coloneqq\clL(X,\bbR)$.
The adjoint of an operator $L\in\clL(X,Y)$ will be denoted $L^*\in\clL(Y',X')$.
The space of continuous functions from~$X$ into~$Y$ is denoted by~$\clC(X,Y)$.

We also denote
$W((0,T),X,Y)\coloneqq\{y\in L^2((0,T),X)\mid \dot y\in L^2((0,T),Y)\}$ and
$W_{\rm loc}(\bbR_+,X,Y)\coloneqq\{y\mid y\in W((0,T),X,Y)
\;\mbox{ for all }\; T>0\}$.

For simplicity, we shall often denote, for a fixed~$\Omega\subset\bbR^d$ the Hilbert spaces
\[
H\coloneqq L^2\coloneqq L^2(\Omega)\quad\mbox{and}\quad
W^{1,2}\coloneqq W^{1,2}(\Omega)=\{y\in L^2\mid \nabla y \in (L^2)^d\}
\]
endowed with their usual scalar products, and
\begin{equation}\label{V}
V\coloneqq W^{1,2}(\Omega)\quad\mbox{with  scalar product}\quad(w,z)_V\coloneqq \nu(\nabla w,\nabla z)_{H^d}+(w,z)_{H},
\end{equation}
 where~$\nu$ is the diffusion parameter in~\eqref{sys-y-u}, whose associated norm is equivalent to the usual~$W^{1,2}$-norm, due to
\[
\min\{\nu,1\}\norm{y}{W^{1,2}}^2\le\norm{y}{V}^2\le\max\{\nu,1\}\norm{y}{W^{1,2}}^2.
\]

As usual we take~$H=L^2$ as a pivot space, that is, we identify it with its continuous dual,~$H'=H$. We consider the (symmetric, rescaled, and shifted) Neumann Laplacian
\begin{equation}\label{A}
A\in\clL(V,V'),\quad \langle Aw,z\rangle_{V',V}\coloneqq(w,z)_V,
\end{equation}
with domain given by
\begin{equation}\label{DA}
\rmD(A)\coloneqq \{w\in H\mid Ay\in H\}=\left.\left\{z\in W^{2,2}(\Omega)\right|\;\; \tfrac{\p}{\p\bfn}z\rest{\p\Omega}=0\right\},
\end{equation}
which we shall assume to be endowed with the scalar product
\[
(w,z)_{\rmD(A)}\coloneqq (Aw,Az)_{H}.
\]
At some points in the text we will also need to refer to the continuous dual~$\rmD(A)'\eqqcolon D(A^{-1})$ of~$\rmD(A)$. In general, we can define the fractional power~$A^s$ of~$A$, for~$s\ge0$, and we write
$\rmD(A^{-s})\coloneqq\rmD(A^{s})'$.

We will also write, for
the more general Lebesgue and Sobolev spaces,
\[
L^p\coloneqq L^p(\Omega),\quad W^{s,p}=W^{s,p}(\Omega),\qquad p\ge1,\quad s\ge0.
\]

By
$\overline C_{\left[a_1,\dots,a_n\right]}$ we denote a nonnegative function that
increases in each of its nonnegative arguments~$a_i$, $1\le i\le n$.

Finally, $C,\,C_i$, $i=0,\,1,\,\dots$, stand for unessential positive constants.

%%%%%%%%%%%%%%%%%%%%%%%%%%%
%%%%%%%%%%%%%%%%%%%%%%%%%%%
 \section{Exponential stabilizability of the Schl\"ogl equation}\label{S:Schloegl}
The results are derived under the following assumption on the  external force.
\begin{assumption}\label{A:h-pers-bound}
The external force $h$ in~\eqref{sys-y-u} is in~$L^6_{\rm loc}(\bbR_+,L^2(\Omega))$ and there exist constants~$\tau_h>0$ and~$C_h\ge0$ such that it  satisfies the persistent bound as
\[
\sup_{s\ge0}\norm{h}{L^6((s,s+\tau_h),L^2(\Omega))}\le C_h.
\]
\end{assumption}
We show that a saturated control allows us to track arbitrary solutions/trajectories~$ y_\ttt$ to the free-dynamics.
Our goal is to construct a control which stabilizes the system to a given~$y_\ttt$. Recalling~\eqref{A}, we write the controlled system~\eqref{sys-y-u} as
\begin{subequations}\label{Schloegl-feed}
\begin{align}
 &\tfrac{\p}{\p t} y_\ttc +Ay_\ttc- y_\ttc+(y_\ttc- \zeta_1)(y_\ttc- \zeta_2)(y_\ttc- \zeta_3)= h+U_M^\diamond\overline\clK_M^\lambda(y_\ttc-y_\ttt),\\
&y_\ttc(0)=y_{\ttc0}\in V,
\intertext{with the saturated feedback input control operator (cf.~\eqref{sys-z-intro})}
 &\overline\clK_M^\lambda(w)\coloneqq  \fkP^{\dnorm{\Bigcdot}{}}_{C_u}\left(-\lambda (U_{M}^\diamond)^{-1}P_{\clU_M}^{\widetilde \clU_M^\perp}A P_{\widetilde \clU_M}^{\clU_M^\perp} w\right).\label{SatKw}
 \end{align}
\end{subequations}

We denote the solution~$y_\ttt$ of the free dynamics~\eqref{sys-haty} and the solution~$y(t)$ of~\eqref{Schloegl-feed} by
\[
\clS(y_{\ttt0};t)\coloneqq y_\ttt(t)\qquad\mbox{and}\qquad \clS_{\rm feed}^{y_\ttt}(y_{\ttc0};t)\coloneqq y_\ttc(t),
\]
with initial states~$y_{\ttt0}\in W^{1,2}(\Omega)$ and~$y_{\ttc0}\in W^{1,2}(\Omega)$.
We also denote
\begin{equation}\label{zetainf}
\norm{\zeta}{\infty}\coloneqq\max\limits_{1\le i\le 3}\norm{\zeta_i}{\bbR}.
\end{equation}

Recall that~$H=L^2(\Omega)$ and~$V=W^{1,2}(\Omega)$.
\begin{theorem}\label{T:mainSchloegl}
For arbitrary~$\mu>0$, there exist $M_*\in\bbN_+$ and~$\lambda_*>0$ such that, for every~$M\ge M_*\in\bbN_+$ and~$\lambda>\lambda_*$ there exists~$C_u^*=C_u^*(M,\lambda)\in\bbR_+$  such that, for all~$C_u>C_u^*$ it holds that: for each~$(y_{\ttt0},y_0)\in V\times V$, the solutions $y_\ttt(t)\coloneqq\clS(y_{\ttt0};t)$ of~\eqref{sys-haty} and $y_\ttc(t)\coloneqq\clS_{\rm feed}^{y_\ttt}(y_{\ttc0};t)$ of~\eqref{Schloegl-feed} satisfy
\begin{align}
\norm{y_\ttc(t)-y_\ttt(t)}{H}&\le \rme^{-\mu (t-s)}\norm{y_\ttc(s)-y_\ttt(s)}{H},
&&\mbox{ for all}\quad t\ge s\ge0;\label{Tmain.expL2}
\end{align}
Furthermore, $M_*\le\ovlineC{\mu,\norm{\zeta}{\infty}}$, $\lambda_*\le\ovlineC{\mu,\norm{\zeta}{\infty}}$.
\end{theorem}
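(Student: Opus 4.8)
The plan is to derive a differential inequality for the energy $\tfrac12\norm{z(t)}{H}^2$ where $z = y_\ttc - y_\ttt$ satisfies $\tfrac{\p}{\p t} z + Az - z + f^{y_\ttt}(z) = U_M^\diamond\overline\clK_M^\lambda(z)$ with $f^{y_\ttt}(z) = z^3 + (3y_\ttt+\xi_2)z^2 + (3y_\ttt^2+2\xi_2 y_\ttt+\xi_1)z$. Testing with $z$ gives, after using $(z^3,z)_H = \norm{z}{L^4}^4 \ge 0$,
\begin{equation*}
\tfrac12\tfrac{\ed}{\ed t}\norm{z}{H}^2 + \norm{z}{V}^2 - \norm{z}{H}^2 + \norm{z}{L^4}^4 \le -(f_{\le2}^{y_\ttt}(z),z)_H + (U_M^\diamond\overline\clK_M^\lambda(z),z)_H,
\end{equation*}
where $f_{\le2}^{y_\ttt}$ collects the lower-order terms. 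The quadratic and linear terms in $f^{y_\ttt}$ are dominated, via Young's inequality, by a small multiple of $\norm{z}{L^4}^4 + \norm{z}{V}^2$ plus $\ovlineC{\norm{\zeta}{\infty},\norm{y_\ttt}{L^\infty}}\norm{z}{H}^2$; here one must invoke an $L^\infty$-type bound on $y_\ttt$ on $[0,\infty)$ coming from Assumption~\ref{A:h-pers-bound} (the a priori estimate alluded to after~\eqref{diff-smoo}), so the resulting constant $\ovlineC{\mu,\norm{\zeta}{\infty}}$ is uniform in time. Thus the nonlinearity contributes at worst a term $c_0\norm{z}{H}^2$ with $c_0$ depending only on $\mu$ and $\norm{\zeta}{\infty}$, absorbing the cubic positively.

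The crucial term is the feedback term $(U_M^\diamond\overline\clK_M^\lambda(z),z)_H$. When the saturation is inactive, $\overline\clK_M^\lambda(z) = \clK_M^\lambda z = -\lambda(U_M^\diamond)^{-1}P_{\clU_M}^{\widetilde\clU_M^\perp}A P_{\widetilde\clU_M}^{\clU_M^\perp} z$, so $U_M^\diamond\overline\clK_M^\lambda(z) = -\lambda P_{\clU_M}^{\widetilde\clU_M^\perp}A P_{\widetilde\clU_M}^{\clU_M^\perp} z$, and the oblique-projection identity $\widetilde\clU_M\oplus\clU_M^\perp = H = \clU_M\oplus\widetilde\clU_M^\perp$ lets one rewrite $(-\lambda P_{\clU_M}^{\widetilde\clU_M^\perp}A P_{\widetilde\clU_M}^{\clU_M^\perp} z, z)_H$ using the adjoint $({P_{\clU_M}^{\widetilde\clU_M^\perp}})^* = P_{\widetilde\clU_M}^{\clU_M^\perp}$, yielding essentially $-\lambda\langle A P_{\widetilde\clU_M}^{\clU_M^\perp} z, P_{\widetilde\clU_M}^{\clU_M^\perp} z\rangle = -\lambda\norm{P_{\widetilde\clU_M}^{\clU_M^\perp}z}{V}^2$. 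Splitting $z = P_{\widetilde\clU_M}^{\clU_M^\perp}z + P_{\clU_M^\perp}^{\widetilde\clU_M}z$ and using the Poincaré-type inequality on the ``high'' component $P_{\clU_M^\perp}^{\widetilde\clU_M}z\in\clU_M^\perp$ — namely $\norm{w}{H}^2\le \varepsilon_M\norm{w}{V}^2$ for $w\in\clU_M^\perp$ with $\varepsilon_M\to0$ as $M\to\infty$ (the spectral gap of the Neumann Laplacian combined with the actuator construction, as in~\cite{KunRod19-cocv,KunRodWalter21,Rod21-aut}) — one bounds $\norm{z}{H}^2$ by $C(\norm{P_{\widetilde\clU_M}^{\clU_M^\perp}z}{H}^2 + \varepsilon_M\norm{z}{V}^2)$. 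Choosing $M$ large makes $\varepsilon_M$ small; choosing $\lambda$ large makes $\lambda\norm{P_{\widetilde\clU_M}^{\clU_M^\perp}z}{V}^2$ dominate $c_0\norm{P_{\widetilde\clU_M}^{\clU_M^\perp}z}{H}^2$. Combined, we obtain $\tfrac12\tfrac{\ed}{\ed t}\norm{z}{H}^2\le -\mu\norm{z}{H}^2$ in the unsaturated regime, whence~\eqref{Tmain.expL2} with $\rho=1$. The constants $M_*,\lambda_*$ are then visibly of the form $\ovlineC{\mu,\norm{\zeta}{\infty}}$.

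It remains to handle the saturation. Here I would argue that, since $\overline\clK_M^\lambda(z) = \fkP^{\dnorm{\Bigcdot}{}}_{C_u}(\clK_M^\lambda z) = \theta\,\clK_M^\lambda z$ for some $\theta = \theta(z)\in[0,1]$ (by~\eqref{rad.proj.min}), the feedback term becomes $\theta\cdot(-\lambda P_{\clU_M}^{\widetilde\clU_M^\perp}A P_{\widetilde\clU_M}^{\clU_M^\perp} z, z)_H \le -\theta\lambda\norm{P_{\widetilde\clU_M}^{\clU_M^\perp}z}{V}^2$, which is still nonpositive but loses strength when $\theta<1$. The standard remedy — the main obstacle in the proof — is to show that along trajectories $\norm{z(t)}{H}$ is monotone nonincreasing, so that if it starts in a region where saturation is inactive it never leaves it, and otherwise one uses a barrier/comparison argument on a fixed ball. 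Concretely: first establish the estimate for $C_u = +\infty$ (no saturation), obtaining decay $\norm{z(t)}{H}\le\norm{z(s)}{H}$; then show $\norm{\clK_M^\lambda z}{} \le c_1(M,\lambda)\norm{z}{V}$ and, using the smoothing estimate $\norm{z(s+1)}{V}^2\le C\norm{z(s)}{H}^2$ (i.e.~\eqref{diff-smoo}, whose proof for the present feedback must be carried out — it parallels the argument for the norm-squared of $z$ but now tests with $Az$), deduce that along any trajectory the feedback magnitude is eventually below any prescribed threshold; a standard continuity/bootstrap argument then pins down $C_u^* = C_u^*(M,\lambda)$ so that for $C_u>C_u^*$ the saturation is never active along trajectories emanating from $V$, since $\norm{z(\cdot)}{H}$ — hence, after one time unit, $\norm{z(\cdot)}{V}$ — stays controlled by its initial value. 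The delicate point is that $\norm{z(t)}{V}$ need not itself be monotone (as the paper explicitly notes, $\rho>1$ in the $V$-norm estimate), so one must be careful to phrase the ``trapping'' argument purely in terms of the monotone quantity $\norm{z(t)}{H}$ and only use the $V$-norm bound as an auxiliary consequence of $\norm{z}{H}$; this is where the bulk of the technical care in matching the thresholds $M_*$, $\lambda_*$, $C_u^*$ goes.
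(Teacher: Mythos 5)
Your outline of the unsaturated regime (testing with $z$, the adjoint identity $(P_{\clU_M}^{\widetilde\clU_M^\perp})^*=P_{\widetilde\clU_M}^{\clU_M^\perp}$ turning the feedback term into $-\lambda\norm{P_{\widetilde\clU_M}^{\clU_M^\perp}z}{V}^2$, and the Poincar\'e-type inequality on $\clU_M^\perp$ to choose $M$ and $\lambda$) matches the paper. But there are two genuine gaps. First, your treatment of the nonlinearity invokes an $L^\infty$-in-time-and-space bound on $y_\ttt$ via Assumption~\ref{A:h-pers-bound}, producing a constant $\ovlineC{\norm{\zeta}{\infty},\norm{y_\ttt}{L^\infty}}$. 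Theorem~\ref{T:mainSchloegl} does \emph{not} assume Assumption~\ref{A:h-pers-bound} (only Theorem~\ref{T:mainSchloegl-st} does), no $L^\infty$ bound on $y_\ttt$ is available or proved anywhere (only $L^6$ bounds, and those again require the assumption), and — decisively — the thresholds are claimed to satisfy $M_*,\lambda_*\le\ovlineC{\mu,\norm{\zeta}{\infty}}$, i.e.\ they must be independent of the targeted trajectory. The correct estimate exploits the sign structure of $f^{y_\ttt}$: the linear coefficient contains $+3y_\ttt^2$, so $(3y_\ttt^2 z,z)_H=3\norm{y_\ttt z}{H}^2\ge0$ absorbs the cross term $3(y_\ttt z^2,z)_H$ by Young together with a fraction of $\norm{z}{L^4}^4$, leaving only $C\norm{z}{H}^2$ with $C\le\ovlineC{\norm{\zeta}{\infty}}$ and no reference to $y_\ttt$ at all.

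Second, your handling of saturation cannot work as stated. You propose to choose $C_u^*$ so that ``the saturation is never active along trajectories emanating from $V$,'' with $\norm{z(t)}{H}$ controlled by its initial value; but the feedback demand at time $0$ is of order $\lambda\dnorm{\fkU_M}{}\norm{z_0}{H}$, which is unbounded over $z_0\in V$, while $C_u^*=C_u^*(M,\lambda)$ must be independent of $(y_{\ttt0},y_0)$. For large initial data the saturation \emph{is} active on an initial time interval, and your argument gives no decay rate there (you only note the saturated feedback term is nonpositive, which does not beat $C\norm{z}{H}^2$ by itself), whereas \eqref{Tmain.expL2} is asserted for all $t\ge s\ge0$. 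The missing ingredient is the universal absorbing ball produced by the coercivity of the cubic term (Lemma~\ref{L:expdec.largez0}): there is $D\le\ovlineC{\mu,\norm{\zeta}{\infty}}$, independent of $(z_0,y_{\ttt0},C_u)$, such that $-\norm{z}{L^4}^4$ alone forces $\tfrac{\rmd}{\rmd t}\norm{z}{H}\le-\mu\norm{z}{H}$ whenever $\norm{z}{H}\ge D$ (using only that the saturated feedback is monotone, Lemma~\ref{L:Ku-monot}), so the trajectory decays at rate $\mu$ during the transient and enters $\{\norm{z}{H}\le D\}$ in finite time; one then calibrates $C_u^*=\lambda\dnorm{\fkU_M}{}D$ to that universal radius, after which saturation is inactive and the unsaturated estimate applies. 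Note also that $\dnorm{\clK_M^\lambda z}{}$ is bounded by $\lambda\dnorm{\fkU_M}{}\norm{z}{H}$ directly (the range of $P_{\widetilde\clU_M}^{\clU_M^\perp}$ is finite-dimensional in $\rmD(A)$), so neither the $V$-norm nor the smoothing estimate \eqref{diff-smoo} is needed here; the entire argument runs in the $H$-norm.
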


The proof of Theorem~\ref{T:mainSchloegl} is given in section~\ref{sS:proofT:mainSchloegl}.
\begin{theorem}\label{T:mainSchloegl-st}
For arbitrary~$\mu>0$, let $M_*\in\bbN_+$, $\lambda_*>0$, and~$C_u^*\in\bbR_+$ be as in Theorem~\ref{T:mainSchloegl}. If Assumption~\eqref{A:h-pers-bound} holds true, then for each~$(y_{\ttt0},y_0)\in V\times V$, the solutions $y_\ttt(t)\coloneqq\clS(y_{\ttt0};t)$ of~\eqref{sys-haty} and $y_\ttc(t)\coloneqq\clS_{\rm feed}^{y_\ttt}(y_{\ttc0};t)$ of~\eqref{Schloegl-feed} satisfy
\begin{align}
\norm{y_\ttc(t)-y_\ttt(t)}{V}&\le \varrho\rme^{-\mu (t-s)}\norm{y_\ttc(s)-y_\ttt(s)}{V},
&&\mbox{for all}\quad t\ge s\ge0.\label{Tmain.expH1}
\end{align}
Furthermore, $\varrho\le\ovlineC{\mu,\norm{\zeta}{\infty}, \norm{y_{\ttt0}}{L^6},\norm{y_{\ttt0}-y_{\ttc0}}{L^6},\tau_h^{-1},C_h,\norm{U_M^\diamond\overline\clK_M^\lambda}{\clL(H)}}$.
\end{theorem}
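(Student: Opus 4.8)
The plan is to upgrade the $L^2$-stabilization estimate \eqref{Tmain.expL2} from Theorem~\ref{T:mainSchloegl} to the $V=W^{1,2}(\Omega)$-stabilization estimate \eqref{Tmain.expH1} by means of a smoothing-type inequality for the error dynamics of the form
\begin{equation*}
\norm{z(s+1)}{V}^2\le C\,\norm{z(s)}{H}^2,\qquad s\ge0,
\end{equation*}
with $C$ independent of $s$, exactly as announced around \eqref{diff-smoo}. Here $z=y_\ttc-y_\ttt$ solves \eqref{sys-z-intro}. Combining this with \eqref{Tmain.expL2} for $t\ge s+1$ gives $\norm{z(t)}{V}^2\le C\norm{z(t-1)}{H}^2\le C\rme^{-\mu(t-1-s)}\norm{z(s)}{H}^2$, and since $\norm{z(s)}{H}\le\norm{z(s)}{V}$, one obtains $\norm{z(t)}{V}^2\le C\rme^{\mu}\rme^{-\mu(t-s)}\norm{z(s)}{V}^2$ for $t\ge s+1$. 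For $0\le t-s\le1$ one needs a crude transient bound $\norm{z(t)}{V}^2\le C'\norm{z(s)}{V}^2$ on unit-length intervals, which together with the previous estimate yields \eqref{Tmain.expH1} with $\varrho\le C\rme^{\mu}+C'\rme^{\mu}$, i.e.\ a $\varrho$ depending on the data as stated.

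First I would establish the required bounds on the target trajectory $y_\ttt=\clS(y_{\ttt0};\cdot)$. Under Assumption~\ref{A:h-pers-bound}, standard parabolic energy estimates for the cubic nonlinearity (the dissipative term $y_\ttt^3$ is sign-favorable at top order) give a persistent bound $\sup_{s\ge0}\norm{y_\ttt}{L^\infty((s,s+1),H)}\le C$ and then, testing with $Ay_\ttt$ on a shifted interval and using a logarithmic-convexity / uniform Gronwall argument, a persistent bound in the stronger norm, $\sup_{s\ge1}\norm{y_\ttt(s)}{V}\le C$ and $\sup_{s\ge0}\norm{y_\ttt}{L^2((s,s+1),\rmD(A))}\le C$, with $C$ depending only on $\norm{y_{\ttt0}}{L^6}$, $\norm{\zeta}{\infty}$, $\tau_h^{-1}$, $C_h$; the $L^6$-norm of the initial datum enters because in dimension $d\le3$ one controls $\norm{y_\ttt^3}{H}=\norm{y_\ttt}{L^6}^3$ and $W^{1,2}\hookrightarrow L^6$. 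The same type of estimates for $z$, using the coefficients of $f^{y_\ttt}$ which are themselves controlled by the above bounds on $y_\ttt$ (hence the appearance of $\norm{y_{\ttt0}}{L^6}$ and $\norm{y_{\ttt0}-y_{\ttc0}}{L^6}$ in $\varrho$), together with the already-known $L^\infty(\bbR_+,H)$ bound on $z$ coming from \eqref{Tmain.expL2}, produce the smoothing inequality: test \eqref{sys-z-intro} with $Az$, absorb $(U_M^\diamond\overline\clK_M^\lambda(z),Az)_H$ using $\norm{U_M^\diamond\overline\clK_M^\lambda}{\clL(H)}$ (this is why that quantity appears in $\varrho$) and the saturation bound, control the nonlinear terms $( f^{y_\ttt}(z),Az)_H$ by interpolation ($\norm{z}{L^6}\le C\norm{z}{V}$, $\norm{z}{V}^2\le\norm{z}{H}\norm{z}{\rmD(A)}$), and conclude by integrating the resulting differential inequality $\frac{\rmd}{\rmd t}\norm{z}{V}^2+c\norm{z}{\rmD(A)}^2\le (\text{l.o.t.})$ over a unit interval, choosing the integration starting point via the mean-value theorem so that $\norm{z(\sigma)}{V}^2$ is bounded by $\int_s^{s+1}\norm{z}{V}^2\le C\int_s^{s+1}\norm{z}{H}\norm{z}{\rmD(A)}$.

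The crude transient bound on unit intervals is obtained the same way but starting from $\norm{z(s)}{V}$ directly (no need to gain regularity), giving $\norm{z(t)}{V}^2\le C'\norm{z(s)}{V}^2$ for $t\in[s,s+1]$ with $C'$ of the same dependence. Assembling the three pieces gives \eqref{Tmain.expH1} and the claimed bound on $\varrho$.

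The main obstacle I anticipate is the uniform-in-$s$ control of the nonlinear and feedback terms when testing with $Az$: the cubic term $z^3$ and the mixed terms $3y_\ttt z^2$, $3y_\ttt^2 z$ must be estimated so that the ``bad'' part is absorbed by $\nu\norm{z}{\rmD(A)}^2$ while the remainder is a time-integrable function of lower-order norms of $z$ and $y_\ttt$ that are already known to be persistently bounded — and one must be careful that $y_\ttt(s)\in V$ only for $s\ge1$, so the smoothing estimate can only be asserted from time $1$ onward (for $s\in[0,1]$ one simply uses the transient bound, harmlessly inflating $\varrho$). Keeping every constant tracked through the interpolation inequalities, in dimension up to three, so that it depends only on the quantities listed in the statement of $\varrho$ and not on $s$, is the delicate bookkeeping point; the structural fact making it work is the favorable sign of the leading cubic term together with the boundedness of $\norm{U_M^\diamond\overline\clK_M^\lambda}{\clL(H)}$ guaranteed by the saturation.
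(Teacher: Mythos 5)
Your proposal is correct and follows essentially the same route as the paper: a smoothing estimate $\norm{z(s+1)}{V}^2\le C\norm{z(s)}{H}^2$ (the paper's Lemma~\ref{L:smoo}, built on the persistent $L^6$ bounds for $y_\ttt$ and $y_\ttc$ from Lemma~\ref{L:estL6free} and Corollary~\ref{C:estL6cont}), a crude transient bound on unit intervals from testing with $2Az$ and Gronwall, and concatenation with the $L^2$ decay of Theorem~\ref{T:mainSchloegl}. The only cosmetic difference is that the paper realizes the smoothing step via the weighted function $w(t)=(t-s)z(t)$ rather than a mean-value choice of starting time, and your worry about $y_\ttt(s)\in V$ only for $s\ge1$ is moot since $y_{\ttt0}\in V\hookrightarrow L^6$ is assumed and only $L^6$ bounds on $y_\ttt$ are actually needed.
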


The proof of Theorem~\ref{T:mainSchloegl-st} is given in
section~\ref{sS:proofT:mainSchloegl-st}.
Observe that Theorem~\ref{T:mainSchloegl-st} states that every trajectory~$y_\ttt$ of the free-dynamics system~\eqref{sys-haty} can be tracked exponentially fast, in the $V$-norm.
Observe also that~$M\ge M_*$ does not depend on the pair~$(y_{\ttt0},y_0)$ of initial states, which means that the number~$M_\sigma$ of actuators can be chosen independently of both the targeted trajectory~$y_\ttt$ and of the initial error~$y_{\ttc0}-y_{\ttt0}$.

%%%%%%%%%%%%%%%%%%%%%%%%%%%
\subsection{The dynamics of the error}
We consider the dynamics of the error, that is, of the difference~$z\coloneqq y_\ttc-y_\ttt$ between the solution
$y_\ttc(t)$ of system~\eqref{Schloegl-feed} and the targeted solution~$y_\ttt(t)$ of the free dynamics~\eqref{sys-haty}. We find
\begin{subequations}\label{Schloegl-diff}
\begin{align}
 &\tfrac{\p}{\p t} z +Az-z+f(y_\ttc)-f(y_\ttt)= U_M^\diamond\overline\clK_M(z),\qquad z(0)=z_0,
\intertext{with}
&z_0\coloneqq y_{\ttc0}-y_{\ttt0}\quad\mbox{and}\quad
f(w)\coloneqq (w- \zeta_1)(w- \zeta_2)(w- \zeta_3).
\end{align}
\end{subequations}
We start by observing that
\begin{align}
&f(w)=w^3 +\xi_2 w^2+\xi_1 w+\xi_0\notag
\end{align}
with
\begin{equation}\label{xis}
\xi_2\coloneqq -(\zeta_1+ \zeta_2+ \zeta_3),\quad\xi_1\coloneqq  \zeta_1 \zeta_2+ \zeta_1 \zeta_3+ \zeta_2 \zeta_3,\quad
\xi_0\coloneqq  -\zeta_1 \zeta_2 \zeta_3,
\end{equation}
which leads us to
\begin{align}
f(y_\ttc)=f(z+y_\ttt)&=z^3 +3y_\ttt z^2 +3y_\ttt^2 z+y_\ttt^3
+\xi_2( z^2+2y_\ttt z +y_\ttt^2) + \xi_1(z+y_\ttt)+\xi_0\notag\\
&=z^3 +(3y_\ttt+\xi_2) z^2+(3y_\ttt^2+2\xi_2y_\ttt+\xi_1) z+f(y_\ttt),\notag
\end{align}
and, recalling~\eqref{A} and~\eqref{DA}, we arrive at the evolutionary error dynamics
\begin{subequations}\label{sys-z-feedf-hat}
\begin{align}
 &\dot z =-Az -f^{y_\ttt}(z)+U_M^\diamond\overline\clK_M(z),\qquad z(0)=z_0,\\
\intertext{with~$f^{y_\ttt}(z)\coloneqq f(y_\ttc)- f(y_\ttt)+z$, that is,}
&f^{y_\ttt}(z)\coloneqq z^3+(3y_\ttt+\xi_2) z^2+(3y_\ttt^2+2\xi_2y_\ttt+\xi_1-1) z.
 \end{align}
\end{subequations}
\begin{remark}
For every initial state~$y_{\ttc0}\in V$, there exists a unique solution
$y_\ttc\in W_{\rm loc}(\bbR_+,\rmD(A),H)$ for system~\eqref{Schloegl-feed}. It can be derived as a weak limit of Galerkin approximations~$y^N(t)\in\clE_N^\rmf$  in  the span $\clE_N^\rmf=\linspan\{e_i\mid 1\le i\le N\}$ of the first eigenfunctions of~$A$. We refer to the comments in~\cite[sect.~2.1]{AzmiKunRod21-arx} where we find estimates, which also hold for such approximations.
\end{remark}

%%%%%%%%%%%%%%%%%%%%%%%%%%%
\subsection{Auxiliary results for stabilizability in $H$-norm}\label{sS:auxiliary-L2}
We gather auxiliary results we shall use to show the stabilizability in the norm of the pivot space~$H=L^2(\Omega)$.
 Hereafter, $\clU_M$, $\widetilde\clU_M$, $A$,  and~$\overline\clK_M^\lambda$ are as in \eqref{U_M}, \eqref{tildeU_M}, \eqref{A}, and~\eqref{SatKw}.
\begin{lemma}\label{L:poincare}
The sequence~$(\xi_{M})_{M\in\bbN_+}$ of Poincar\'e-like constants
\[
\xi_{M}\coloneqq\inf_{h\in (V\bigcap\clU_M^\perp)\setminus\{0\}}\tfrac{\norm{h}{V}}{\norm{h}{H}}
\]
satisfies $\lim\limits_{M\to+\infty}\xi_{M}=+\infty$.
 \end{lemma}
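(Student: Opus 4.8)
The plan is to show that the Poincaré-like quotients $\xi_M$ blow up by exploiting the spectral structure of $A$ together with the fact that the subspaces $\clU_M$ of indicator functions become ``dense'' (in a quantitative sense) as $M\to+\infty$. The key mechanism is this: if $h\in V\cap\clU_M^\perp$, then $h$ is orthogonal in $H$ to every indicator $\indf_{\omega_j^M}$, which forces the local averages of $h$ over each cell $\omega_j^M$ to vanish. Since the cells $\omega_j^M$ are cubes of side $\sim rL/M$ spread on a regular grid of spacing $\sim L/M$, a function with vanishing mean on each such small cell cannot be ``too low-frequency'': by a local Poincaré inequality on each cell $\omega_j^M$, one gets $\norm{h}{L^2(\omega_j^M)}\le \frac{C}{M}\norm{\nabla h}{L^2(\omega_j^M)}$, so summing over $j$ yields $\norm{h}{L^2(\cup_j\omega_j^M)}\le\frac{C}{M}\norm{\nabla h}{L^2(\Omega)}$. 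This controls the part of $h$ living on the actuator region, but one still needs to control $h$ on the complement $\Omega\setminus\cup_j\omega_j^M$.

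First I would set up the local Poincaré estimate described above, with an $M$-independent constant $C$ obtained by scaling from the unit cube. Second, and this is the heart of the matter, I would bootstrap from control on the actuator cells to control on all of $\Omega$. One clean way: enlarge slightly and use a covering of $\Omega$ by ``fattened'' cells $\widehat\omega_j^M$ of side $\sim L/M$ (the full grid cells, not just the smaller supports $\omega_j^M$) which do cover $\Omega$ with bounded overlap; on each $\widehat\omega_j^M$ apply the Poincaré inequality relative to the average over the \emph{sub}cell $\omega_j^M\subset\widehat\omega_j^M$ — this average is zero by orthogonality — to get $\norm{h}{L^2(\widehat\omega_j^M)}\le\frac{C}{M}\norm{\nabla h}{L^2(\widehat\omega_j^M)}$, again by a scaling argument with a constant depending only on $r$ and $d$. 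Summing over $j$ gives
\[
\norm{h}{H}^2=\norm{h}{L^2(\Omega)}^2\le{\textstyle\sum_j}\norm{h}{L^2(\widehat\omega_j^M)}^2\le\tfrac{C^2}{M^2}{\textstyle\sum_j}\norm{\nabla h}{L^2(\widehat\omega_j^M)}^2\le\tfrac{C^2}{M^2}\norm{\nabla h}{(L^2)^d}^2.
\]
Third, I would convert this into a lower bound on $\xi_M$: since $\norm{h}{V}^2=\nu\norm{\nabla h}{(L^2)^d}^2+\norm{h}{H}^2\ge\nu\norm{\nabla h}{(L^2)^d}^2\ge\nu\frac{M^2}{C^2}\norm{h}{H}^2$, we get $\frac{\norm{h}{V}}{\norm{h}{H}}\ge\frac{\sqrt\nu\,M}{C}$ for every nonzero $h\in V\cap\clU_M^\perp$, hence $\xi_M\ge\frac{\sqrt\nu}{C}M\to+\infty$.

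The main obstacle is making the covering/scaling argument in the second step fully rigorous, since the sub-cube $\omega_j^M$ has side $r L/M$ strictly smaller than the grid cell, so one needs the (standard but slightly fiddly) fact that a Poincaré inequality holds on a cube with the mean taken over a fixed proportional sub-cube, with constant depending only on $d$ and $r$; and one must handle the cells near $\partial\Omega$ correctly, which is automatic here because the grid cells $\widehat\omega_j^M=\bigtimes_n(2(k_n-1)\frac{L_n}{2M},2k_n\frac{L_n}{2M})$ tile $\Omega^\times$ exactly. An alternative, perhaps cleaner, route avoiding any explicit covering: argue by contradiction via compactness — if $\xi_M\not\to\infty$ there is a subsequence and functions $h_M\in V\cap\clU_M^\perp$ with $\norm{h_M}{H}=1$ and $\norm{h_M}{V}\le K$; extract a weakly-$V$, strongly-$H$ convergent subsequence with limit $h$, $\norm{h}{H}=1$; the orthogonality conditions $\int_{\omega_j^M}h_M=0$ pass (via the density of $\bigcup_M\clU_M$ in $L^2$, which follows since the cell averages of any $L^2$ function converge to it) to give $h\perp L^2(\Omega)$, i.e. $h=0$, a contradiction. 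This compactness version sidesteps the scaling bookkeeping but gives no rate; the direct argument is preferable if one wants the explicit $\xi_M\gtrsim M$, which is likely what later estimates need.
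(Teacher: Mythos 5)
The paper does not prove this lemma in-text (it defers to the cited reference), but your direct argument is correct and is essentially the standard one: orthogonality to each $\indf_{\omega_j^M}$ gives zero averages over the subcells; a Poincar\'e inequality on each full grid cell relative to the mean over its centered proportional subcell (with constant depending only on $d$, $r$ and the fixed aspect ratios $L_1,\dots,L_d$, obtained by rescaling from the reference cell) yields $\norm{h}{H}\le \frac{C}{M}\norm{\nabla h}{(L^2)^d}$ after summing over the exact tiling of the rectangle, whence $\xi_M\ge \sqrt{\nu}\,M/C\to+\infty$, with the useful explicit rate $\xi_M\gtrsim M$.

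One caveat on your alternative compactness route: $\bigcup_M\clU_M$ is \emph{not} dense in $L^2(\Omega)$ when $r<1$, since every element of $\clU_M$ is supported on $\bigcup_j\omega_j^M$, whose measure is a fixed proper fraction of $\vol(\Omega)$; so the step ``$h\perp L^2(\Omega)$ by density'' fails as written. It can be repaired: for $\phi$ continuous, the functions $\psi_{M}\coloneqq\sum_j\phi(c_j^{M})\,r^{-d}\indf_{\omega_j^{M}}\in\clU_{M}$ converge to $\phi$ only \emph{weakly} in $L^2$, which together with the strong $H$-convergence of $h_{M_k}$ gives $(h,\phi)_H=\lim_k(h_{M_k},\phi-\psi_{M_k})_H=0$. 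Since you rely on the direct argument, the proof stands; just do not invoke norm-density of the actuator spans.
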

The proof of Lemma~\ref{L:poincare} can be found in~\cite[Sect.~5]{Rod21-sicon}.
\begin{lemma}\label{L:MlamPoinc}
For every~$\zeta>0$ we can find~$M_*\in\bbN_+$ and~$\lambda_*>0$ large enough such that
\[
\norm{y}{V}^2+\lambda \norm{P_{\widetilde\clU_{M}}^{\clU_{M}^\perp}y}{V}^2
\ge\zeta \norm{y}{H}^2, \qquad\mbox{for all}\quad M\ge M_*,\quad\lambda \ge\lambda_*,\quad\mbox{and}\quad
y\in V.
\]
Furthermore~$M_*=\ovlineC{\zeta}$ and~$\lambda_*=\ovlineC{\zeta}$.
 \end{lemma}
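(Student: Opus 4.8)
The plan is to reduce the claimed coercivity estimate to the Poincar\'e-like inequality of Lemma~\ref{L:poincare}, exploiting the algebraic direct sum decomposition $L^2(\Omega)=\widetilde\clU_M\oplus\clU_M^\perp$ recorded just above. Fix $y\in V$ and split it along this decomposition as $y=y_1+y_2$ with $y_1\coloneqq P_{\widetilde\clU_M}^{\clU_M^\perp}y\in\widetilde\clU_M$ and $y_2\coloneqq y-y_1\in\clU_M^\perp$. Since $\widetilde\clU_M\subseteq\rmD(A)\subset V$ is finite dimensional, both $y_1$ and $y_2$ lie in $V$; in particular $y_2\in V\cap\clU_M^\perp$, so Lemma~\ref{L:poincare} applies to $y_2$ and yields $\norm{y_2}{H}\le\xi_M^{-1}\norm{y_2}{V}$.

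Next I would estimate, using the triangle inequality in $H$ together with $2ab\le a^2+b^2$ and the elementary bound $\norm{\cdot}{H}\le\norm{\cdot}{V}$ coming from the definition~\eqref{V} of the $V$-norm,
\[
\norm{y}{H}^2\le 2\norm{y_1}{H}^2+2\norm{y_2}{H}^2\le 2\norm{y_1}{V}^2+2\xi_M^{-2}\norm{y_2}{V}^2 .
\]
Writing $y_2=y-y_1$ once more and expanding, $\norm{y_2}{V}^2\le 2\norm{y}{V}^2+2\norm{y_1}{V}^2$, whence
\[
\norm{y}{H}^2\le (2+4\xi_M^{-2})\norm{y_1}{V}^2+4\xi_M^{-2}\norm{y}{V}^2 .
\]

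Finally, choose the parameters. By Lemma~\ref{L:poincare} we have $\xi_M\to+\infty$, so there is $M_*\in\bbN_+$ with $\xi_M\ge\max\{2\sqrt\zeta,\,1\}$ for all $M\ge M_*$; since the threshold $\max\{2\sqrt\zeta,1\}$ is nondecreasing in $\zeta$, the least such $M_*$ is nondecreasing in $\zeta$, i.e.\ $M_*=\ovlineC{\zeta}$. For $M\ge M_*$ we have $4\zeta\xi_M^{-2}\le1$ and $\xi_M^{-2}\le1$, so multiplying the last display by $\zeta$ gives $\zeta\norm{y}{H}^2\le 6\zeta\norm{y_1}{V}^2+\norm{y}{V}^2$. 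Setting $\lambda_*\coloneqq 6\zeta=\ovlineC{\zeta}$, for every $\lambda\ge\lambda_*$ the right-hand side is bounded by $\norm{y}{V}^2+\lambda\norm{P_{\widetilde\clU_M}^{\clU_M^\perp}y}{V}^2$, which is exactly the assertion. The only genuinely delicate point is the order of the quantifier choices: $M$ must be fixed first, via Lemma~\ref{L:poincare}, so that the coefficient $4\zeta\xi_M^{-2}$ of $\norm{y}{V}^2$ drops below $1$, and only afterwards $\lambda_*$; the requirement that $\lambda_*$ be uniform over all $M\ge M_*$ is precisely what forces the auxiliary bound $\xi_M^{-2}\le1$ into the choice of $M_*$. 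Beyond this bookkeeping, every step is elementary, so I do not anticipate any real obstacle.
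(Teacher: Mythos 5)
Your proof is correct and follows essentially the same route as the paper: the same oblique decomposition $y=P_{\widetilde\clU_M}^{\clU_M^\perp}y+P_{\clU_M^\perp}^{\widetilde\clU_M}y$, Lemma~\ref{L:poincare} applied to the $\clU_M^\perp$-component, and the same order of parameter choices (first $M_*$ from $\xi_M\to+\infty$, then $\lambda_*$). The only difference is organizational — you upper-bound $\zeta\norm{y}{H}^2$ by the right-hand side, whereas the paper lower-bounds the left-hand side via Young's inequality on the cross term $2(\varTheta,\vartheta)_V$ — and both yield thresholds $M_*,\lambda_*$ that are nondecreasing in $\zeta$, as required.
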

\begin{proof}
We follow the arguments in~\cite[Lemma~3.5]{KunRodWalter21}.
We write,
\[
y=\varTheta+\vartheta,\quad\mbox{with}\quad\vartheta\coloneqq P_{\widetilde\clU_{M}}^{\clU_{M}^\perp}y
 \quad\mbox{and}\quad\varTheta\coloneqq P_{\clU_{M}^\perp}^{\widetilde\clU_{M}}y.
\]
We find that, for each~$y\in V$, we have~$\vartheta\in\rmD(A)\subset V$, $\varTheta\in V$, and
\begin{align*}
\norm{y}{V}^2+\lambda \norm{P_{\widetilde\clU_{M}}^{\clU_{M}^\perp}y}{V}^2
&=\norm{\varTheta+\vartheta}{V}^2+\lambda \norm{\vartheta}{V}^2
=\norm{\varTheta}{V}^2+2(\varTheta,\vartheta)_V+\norm{\vartheta}{V}^2+\lambda \norm{\vartheta}{V}^2\\
&\ge\tfrac{1}{2}\norm{\varTheta}{V}^2-\norm{\vartheta}{V}^2+\lambda \norm{\vartheta}{V}^2
\ge\tfrac{1}{2}\norm{\varTheta}{V}^2+(\lambda-1)\norm{\vartheta}{V}^2,
\end{align*}
With~$\xi_{M}$ as in Lemma~\ref{L:poincare} we denote
\[
\underline\xi_{M}\coloneqq\min_{N\ge M}\xi_{N}.
\]
Hence, for given~$\zeta>0$, by choosing
\[
M_*\coloneqq\min\{M\in\bbN_+\mid\underline\xi_{M}\ge4\zeta\} \quad\mbox{and}\quad\lambda_*\ge 2\zeta+1,
\]
we arrive at
\begin{align*}
\norm{y}{V}^2+\lambda\norm{P_{\widetilde\clU_{M}}^{\clU_{M}^\perp}y}{V}^2
&\ge2\zeta\left(\norm{\varTheta}{H}^2+ \norm{\vartheta}{H}^2\right)
\ge \zeta\norm{\varTheta+\vartheta}{H}^2=\zeta\norm{y}{H}^2,
\end{align*}
for all~$M\ge M_*$ and all~$\lambda\ge\lambda_*$.
\end{proof}

\begin{lemma}\label{L:Ku-monot}
The constrained feedback operator satisfies
\begin{align}
\left(U_M^\diamond\overline\clK_M^\lambda(z(t)),z(t)\right)_{H}&=
\begin{cases}
-\lambda\min\left\{1,\tfrac{C_u}{\dnorm{v(t)}{}}\right\}\norm{P_{\widetilde \clU_M}^{\clU_M^\perp}z(t)}{V}^2
&\quad\mbox{if}\quad P_{\clU_M}z(t)\ne0,\\
0,&\quad\mbox{if}\quad P_{\clU_M}z(t)=0,
\end{cases}\notag
\end{align}
where~$v(t)\coloneqq-\lambda(U_M^\diamond)^{-1}P_{\clU_M}^{\widetilde \clU_M^\perp}A P_{\widetilde \clU_M}^{\clU_M^\perp}z(t)$.
\end{lemma}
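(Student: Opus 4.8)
The plan is to unwind the radial projection appearing in $\overline\clK_M^\lambda$ (see~\eqref{SatKw}) by means of~\eqref{rad.proj.min}, and then to reduce the whole computation to the single bilinear identity
\[
\left(P_{\clU_M}^{\widetilde\clU_M^\perp}AP_{\widetilde\clU_M}^{\clU_M^\perp}z,z\right)_H=\norm{P_{\widetilde\clU_M}^{\clU_M^\perp}z}{V}^2,\qquad z\in V.
\]
First I would prove this identity. Fixing $z\in V$ and setting $\vartheta\coloneqq P_{\widetilde\clU_M}^{\clU_M^\perp}z$, we have $\vartheta\in\widetilde\clU_M\subseteq\rmD(A)$ by~\eqref{tildeU_M}, so $A\vartheta\in H$ is well defined and, by~\eqref{A}, $(A\vartheta,\vartheta)_H=\langle A\vartheta,\vartheta\rangle_{V',V}=\norm{\vartheta}{V}^2$. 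Using the two orthogonal direct sums $\widetilde\clU_M\oplus\clU_M^\perp=L^2=\clU_M\oplus\widetilde\clU_M^\perp$, decompose $z=\vartheta+\varTheta$ with $\varTheta\in\clU_M^\perp$, and $A\vartheta=w+\widetilde w$ with $w\coloneqq P_{\clU_M}^{\widetilde\clU_M^\perp}A\vartheta\in\clU_M$ and $\widetilde w\in\widetilde\clU_M^\perp$. Since $w\perp\varTheta$ (as $w\in\clU_M$, $\varTheta\in\clU_M^\perp$) and $\widetilde w\perp\vartheta$ (as $\widetilde w\in\widetilde\clU_M^\perp$, $\vartheta\in\widetilde\clU_M$), we obtain $(w,z)_H=(w,\vartheta)_H=(A\vartheta,\vartheta)_H-(\widetilde w,\vartheta)_H=\norm{\vartheta}{V}^2$, which is the claimed identity (note $w=P_{\clU_M}^{\widetilde\clU_M^\perp}AP_{\widetilde\clU_M}^{\clU_M^\perp}z$).

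Next I would settle the case distinction. Write $v(t)=-\lambda(U_M^\diamond)^{-1}w$ with $w$ as above for $z=z(t)$; assume $\lambda>0$ (for $\lambda=0$ one has $v(t)\equiv0$, $\overline\clK_M^0\equiv0$, and both sides of the claim vanish). Since $U_M^\diamond$ is an isomorphism, $v(t)=0$ if and only if $w=0$. If $P_{\clU_M}z(t)=0$, that is, $z(t)\in\clU_M^\perp$, then uniqueness of the decomposition $z(t)=0+z(t)\in\widetilde\clU_M\oplus\clU_M^\perp$ gives $\vartheta=0$, hence $w=0$, $v(t)=0$, and $\overline\clK_M^\lambda(z(t))=\fkP^{\dnorm{\Bigcdot}{}}_{C_u}(0)=0$, so the scalar product is $0$: this is the second branch. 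Conversely, if $P_{\clU_M}z(t)\ne0$ then $z(t)\notin\clU_M^\perp$, so $\vartheta\ne0$, and the identity above gives $(w,z(t))_H=\norm{\vartheta}{V}^2>0$; therefore $w\ne0$, $v(t)\ne0$, and in particular $\dnorm{v(t)}{}>0$. Applying~\eqref{rad.proj.min} together with $U_M^\diamond v(t)=-\lambda w$,
\[
U_M^\diamond\overline\clK_M^\lambda(z(t))=\min\!\left\{1,\tfrac{C_u}{\dnorm{v(t)}{}}\right\}U_M^\diamond v(t)=-\lambda\min\!\left\{1,\tfrac{C_u}{\dnorm{v(t)}{}}\right\}w;
\]
taking the $H$-scalar product with $z(t)$ and substituting $(w,z(t))_H=\norm{P_{\widetilde\clU_M}^{\clU_M^\perp}z(t)}{V}^2$ produces the first branch.

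I do not expect a real obstacle; the only thing requiring attention is choosing the correct splitting for each cancellation (use $L^2=\clU_M\oplus\widetilde\clU_M^\perp$ to discard $(w,\varTheta)_H$ and $L^2=\widetilde\clU_M\oplus\clU_M^\perp$ to discard $(\widetilde w,\vartheta)_H$), together with the inclusion $\widetilde\clU_M\subseteq\rmD(A)$, which is precisely what legitimizes $A\vartheta\in H$ and the equality $(A\vartheta,\vartheta)_H=\norm{\vartheta}{V}^2$. That same identity also makes the stated case split well posed, since it yields the equivalence $v(t)=0\Longleftrightarrow P_{\clU_M}z(t)=0$, and hence $\dnorm{v(t)}{}>0$ whenever one divides by it.
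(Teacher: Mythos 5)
Your proof is correct, and its overall structure mirrors the paper's: unwind the saturation via \eqref{rad.proj.min} and reduce everything to the identity $(P_{\clU_M}^{\widetilde\clU_M^\perp}AP_{\widetilde\clU_M}^{\clU_M^\perp}z,z)_H=\norm{P_{\widetilde\clU_M}^{\clU_M^\perp}z}{V}^2$. Where you diverge is in how that identity is obtained. The paper invokes the adjoint relation $(P_{\clU_M}^{\widetilde\clU_M^\perp})^*=P_{\widetilde\clU_M}^{\clU_M^\perp}$, cited from \cite[Lem.~3.4]{KunRodWalter21}, to move the oblique projection onto the test function, whereas you prove the required instance from scratch by splitting $z=\vartheta+\varTheta$ along $\widetilde\clU_M\oplus\clU_M^\perp$ and $A\vartheta=w+\widetilde w$ along $\clU_M\oplus\widetilde\clU_M^\perp$ and discarding the two cross terms; in effect you inline the proof of the cited adjoint identity for the one vector where it is needed, which makes the argument self-contained at the cost of a few extra lines. (Minor caveat: these two direct sums are oblique rather than orthogonal, but the only orthogonality you actually use, namely $\clU_M\perp\clU_M^\perp$ and $\widetilde\clU_M\perp\widetilde\clU_M^\perp$, is correct, so nothing breaks.) You also supply something the paper leaves implicit: the equivalence $v(t)=0\Longleftrightarrow P_{\clU_M}z(t)=0$ (via $(w,z)_H=\norm{\vartheta}{V}^2$, so $\vartheta\ne0$ forces $w\ne0$), which is exactly what makes the case split in the statement match the case split in \eqref{rad.proj.min} and legitimizes dividing by $\dnorm{v(t)}{}$; together with your separate treatment of $\lambda=0$ this is a genuine tightening rather than a detour.
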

\begin{proof}
Recalling~\eqref{rad.proj.min} and the feedback in~\eqref{Schloegl-feed}, 
with~$v(t)$ as in Lemma~\ref{L:Ku-monot}, we find
\begin{align}\notag
\left(U_M^\diamond\overline\clK_M^\lambda(z(t)),z(t)\right)_{H}&=\left(U_M^\diamond\fkP^{\dnorm{\Bigcdot}{}}_{C_u}(v(t)),z(t)\right)_{H}\notag\\
&=\min\left\{1,\tfrac{C_u}{\dnorm{v(t)}{}}\right\}(U_M^\diamond v(t),z(t))_{H},\quad\mbox{if}\quad v(t)\ne0.\notag
\end{align}
By~\cite[Lem.~3.4]{KunRodWalter21} we have the adjoint identity~$(P_{\clU_M}^{\widetilde \clU_M^\perp})^*=P_{\widetilde \clU_M}^{\clU_M^\perp}$, which leads us to
\begin{align}
(U_M^\diamond v(t),z(t))_{H}=-\lambda( P_{\clU_M}^{\widetilde \clU_M^\perp}A P_{\widetilde \clU_M}^{\clU_M^\perp}z(t),z(t))_{H}=-\lambda( AP_{\widetilde \clU_M}^{\clU_M^\perp} z(t),P_{\widetilde \clU_M}^{\clU_M^\perp}z(t))_{H},\notag
\end{align}
which finishes the proof, because~$( AP_{\widetilde \clU_M}^{\clU_M^\perp} z(t),P_{\widetilde \clU_M}^{\clU_M^\perp}z(t))_{H}=( P_{\widetilde \clU_M}^{\clU_M^\perp} z(t),P_{\widetilde \clU_M}^{\clU_M^\perp}z(t))_{V}$.
\end{proof}

\begin{lemma} \label{L:expdec.largez0}
For every~$\mu>0$, there exists a constant~$D\ge1$ such that for all~$(z_0,y_{\ttt0})\in V\times V$ the solution of system~\eqref{sys-z-feedf-hat}
satisfies
\begin{align}
 &\tfrac{\rmd}{\rmd t}\norm{z(t)}{H}\le-\mu\norm{z(t)}{H}\quad\mbox{if}\quad \norm{z(t)}{H}\ge D,\notag
\intertext{and}
&\norm{z(t)}{H}\le D\quad\mbox{for all}\quad t\ge (\mu^2D)^{-\frac1{2}}.\notag
\end{align}
Moreover, if for some~$s\ge0$ we have that~$\norm{z(s)}{H}\le D$,  then~$\norm{z(t)}{H}\le D$ for all~$t\ge s$.
Furthermore
$D\le\ovlineC{\mu,\norm{\zeta}{\infty}}$ is independent of~$(z_0,y_{\ttt0}, C_u)$.
\end{lemma}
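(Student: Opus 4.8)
The plan is to derive a differential inequality for $t\mapsto\norm{z(t)}{H}^2$ by testing the error equation~\eqref{sys-z-feedf-hat} with $z$ in $H$, and then to extract from it the three claimed conclusions by elementary ODE comparison arguments. First I would compute $\tfrac12\tfrac{\rmd}{\rmd t}\norm{z}{H}^2 = -\norm{z}{V}^2 + \norm{z}{H}^2 - (f^{y_\ttt}(z),z)_H + (U_M^\diamond\overline\clK_M^\lambda(z),z)_H$, using $(Az,z)_H=\norm{z}{V}^2$. By Lemma~\ref{L:Ku-monot} the feedback term is $\le 0$, so it can simply be dropped; this is exactly why $D$ can be taken independent of $C_u$. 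For the nonlinear term, I would use $f^{y_\ttt}(z)=z^3+(3y_\ttt+\xi_2)z^2+(3y_\ttt^2+2\xi_2y_\ttt+\xi_1-1)z$ and exploit the sign of the leading cubic: $(z^3,z)_H=\norm{z}{L^4}^4\ge0$, while the lower-order terms in $z$ with $y_\ttt$-dependent coefficients are dominated, via Young's inequality, by a small multiple of $\norm{z}{L^4}^4$ plus a term of the form $\overline C_{[\norm{\zeta}{\infty}]}\norm{z}{H}^2$. Crucially, the coefficients $3y_\ttt+\xi_2$ and $3y_\ttt^2+\cdots$ involve $y_\ttt$ pointwise, so after Hölder one gets powers of $\norm{y_\ttt(t)}{L^p}$; but because these are multiplied against $\norm{z}{L^4}^4$ (which we can absorb) and $\norm{z}{H}^2$, the pointwise bound $(z^3,z)_H\ge$ (the competing terms) can be arranged so that the $y_\ttt$-dependence drops out entirely when $\norm{z}{H}$ is large — this is the point of the ``large $z_0$'' in the lemma's name.

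The key algebraic observation is that for the scalar inequality $a^4 \ge c_1|y|\,a^3 \cdots$ type bounds one should instead work with the cubic structure directly: there is a constant $\kappa=\overline C_{[\norm{\zeta}{\infty}]}$ such that $\rho\,f^{y_\ttt}(\rho)\ge -\kappa\rho^2$ uniformly in $y_\ttt(t,x)\in\bbR$ once $|\rho|$ is large enough is \emph{false} pointwise (the $3y_\ttt^2$ coefficient can be huge); so instead one integrates and uses that the positive quartic term $\int z^4$ controls, by Hölder/Young, $\int|y_\ttt|\,|z|^3$ and $\int y_\ttt^2 z^2$ up to $\norm{z}{H}^2$ terms \emph{only after paying $\norm{y_\ttt}{L^\infty\text{-in-space}}$}, which is not available. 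The correct route, and what I expect the authors do, is: absorb $3y_\ttt z^3$ and $(3y_\ttt^2)z^2$ into $\tfrac12\int z^4$ using Young, producing $+\overline C_{[\norm{\zeta}{\infty}]}\big(\norm{y_\ttt}{L^4}^4+\norm{y_\ttt}{L^2}^2\big)$ — but this reintroduces $y_\ttt$-dependence. Hence the genuinely clean statement must instead use $-\norm{z}{V}^2+\norm{z}{H}^2 \le -\tfrac12\norm{z}{V}^2 + \overline C\,\norm{z}{H}^2$ together with $-\tfrac12\norm{z}{L^4}^4 \le 0$ absorbing \emph{all} the $y_\ttt$ cross terms into $-\tfrac14\int z^4$ plus constants times $\norm{z}{H}^{?}$; I would track this carefully and find that one lands at
\[
\tfrac12\tfrac{\rmd}{\rmd t}\norm{z}{H}^2 \le -\tfrac14\norm{z}{L^4}^4 + C_{[\norm{\zeta}{\infty}]}\norm{z}{H}^2 + C_{[\norm{\zeta}{\infty}]},
\]
and then, by the interpolation $\norm{z}{H}^2\le\norm{z}{L^4}^2\vol(\Omega)^{1/2}$ hence $\norm{z}{H}^4\le \vol(\Omega)\norm{z}{L^4}^4$, the quartic term dominates: $\tfrac12\tfrac{\rmd}{\rmd t}\norm{z}{H}^2 \le -c_0\norm{z}{H}^4 + C_1\norm{z}{H}^2+C_1$ for $c_0,C_1=\overline C_{[\norm{\zeta}{\infty}]}$. \textbf{This ODE is the crux.} Its right side is $\le -\mu\norm{z}{H}^2\cdot\norm{z}{H}$ once $c_0\norm{z}{H}^4 - C_1\norm{z}{H}^2 - C_1 \ge \mu\norm{z}{H}^3$, i.e. once $\norm{z}{H}\ge D$ for $D=\overline C_{[\mu,\norm{\zeta}{\infty}]}$ large, which after dividing by $2\norm{z}{H}$ gives $\tfrac{\rmd}{\rmd t}\norm{z}{H}\le -\mu\norm{z}{H}$ — the first assertion.

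Given the first assertion the remaining two are soft. For the absorbing-set claim: as long as $\norm{z(t)}{H}\ge D$ we have $\tfrac{\rmd}{\rmd t}\norm{z}{H}\le -\mu\norm{z}{H}\le -\mu D$, so $\norm{z}{H}$ decreases at linear rate at least $\mu D$; starting from any $\norm{z(0)}{H}$, it cannot exceed $D$ past time $t=\norm{z(0)}{H}/(\mu D)$ — but since the statement wants a bound \emph{independent of $z_0$}, I instead use the ODE $\tfrac{\rmd}{\rmd t}\norm{z}{H}\le -\mu\norm{z}{H}$ while above $D$ together with a crude a priori bound from the quartic ODE itself: from $\tfrac{\rmd}{\rmd t}r \le -c_0 r^3+\cdots$ (with $r=\norm{z}{H}^2$) one gets $r(t)\le \max\{(\text{const}),(2c_0 t)^{-1/2}+\cdots\}$ by the standard Osgood/Ghidaglia trick, yielding $\norm{z(t)}{H}\le D$ for $t\ge (\mu^2 D)^{-1/2}$ after matching constants. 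For the invariance claim: if $\norm{z(s)}{H}\le D$ then the region $\{\norm{z}{H}\le D\}$ is forward-invariant because at any hypothetical first exit time $t_*>s$ we would have $\norm{z(t_*)}{H}=D$ and $\tfrac{\rmd}{\rmd t}\norm{z}{H}\big|_{t_*}\le -\mu D<0$, contradicting exit. The main obstacle, as flagged, is organizing the nonlinear estimate so that the constant $D$ genuinely depends only on $\mu$ and $\norm{\zeta}{\infty}$ and not on $\norm{y_\ttt}{}$; the resolution is that the good quartic term $-\norm{z}{L^4}^4$ coming from $(z^3,z)_H$ dominates \emph{every} $y_\ttt$-dependent cross term once $\norm{z}{H}$ is large, so all $y_\ttt$-norms and $C_u$ are swallowed and only $\norm{\zeta}{\infty}$ (through $\xi_0,\xi_1,\xi_2$) and $\mu$ survive in $D$.
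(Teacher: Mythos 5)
Your overall strategy is the right one, and it is essentially the paper's: the paper does not write out this proof but defers to \cite[Lem.~2.4]{AzmiKunRod21-arx}, explicitly naming the two ingredients you use, namely the monotonicity of the saturated feedback (Lemma~\ref{L:Ku-monot}, which lets you drop the control term and is exactly why $D$ is independent of $C_u$) and the cubic structure of $f^{y_\ttt}$. The target differential inequality $\tfrac{\rmd}{\rmd t}\norm{z}{H}^2\le -c_0\norm{z}{H}^4+C_1\norm{z}{H}^2+C_1$ with $c_0,C_1=\ovlineC{\norm{\zeta}{\infty}}$, the Riccati/Ghidaglia argument giving a $z_0$-independent entry time into the ball of radius $D$ (after enlarging $D$ so that the algebraic decay $(c_0t)^{-1/2}$ drops below $D$ by time $(\mu^2D)^{-1/2}$), and the first-exit-time invariance argument are all correct.

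The one step you should not leave in its present form is the uniform-in-$y_\ttt$ coercivity of $(f^{y_\ttt}(z),z)_H$, where your written justification oscillates and your final assertion --- that the quartic term $\norm{z}{L^4}^4$ by itself ``dominates every $y_\ttt$-dependent cross term once $\norm{z}{H}$ is large'' --- is false as stated: for fixed $z$ the term $\int 3y_\ttt z^3$ can be made arbitrarily large by taking $y_\ttt$ large, so $z^4$ alone cannot absorb it. What saves the estimate is the \emph{positive} term $3y_\ttt^2z^2$ that you at one point treat as something to be absorbed: pointwise,
\begin{equation*}
z^4+3y_\ttt z^3+3y_\ttt^2z^2=z^2\bigl((z+\tfrac32 y_\ttt)^2+\tfrac34 y_\ttt^2\bigr)\ge \tfrac18 z^4+\tfrac37 y_\ttt^2 z^2,
\end{equation*}
i.e.\ the cubic-difference identity $(a^3-b^3)(a-b)\ge\tfrac14(a-b)^4$ with $a=y_\ttc$, $b=y_\ttt$; the retained fraction of $y_\ttt^2z^2$ then absorbs the remaining cross term $2\xi_2y_\ttt z^2$ coming from $\xi_2(y_\ttc^2-y_\ttt^2)z$, leaving only $\ovlineC{\norm{\zeta}{\infty}}\norm{z}{H}^2$. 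This yields precisely the estimate $-2(f^{y_\ttt}(z),z)_{H}\le-\tfrac18\norm{z}{L^4}^4+C_1\norm{z}{H}^2$ recorded as~\eqref{estz3hatf-z}, which holds for \emph{all} $z$ (largeness of $\norm{z}{H}$ plays no role here; it enters only at the ODE stage). With that step repaired, your argument is complete.
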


The proof of Lemma~\ref{L:expdec.largez0} can be done by following the steps as in~\cite[Proof of Lem.~2.4]{AzmiKunRod21-arx}, and uses uses the structure of the nonlinearity~$f^{y_\ttt}$ in~\eqref{sys-z-feedf-hat} in an essential manner. It is also important to observe that, though our feedback is different from the one in~\cite{AzmiKunRod21-arx}, the essential property at this point is that, as in~\cite{AzmiKunRod21-arx}, it is monotone, as in Lemma~\ref{L:Ku-monot}.

%%%%%%%%%%%%%%%%%%%%%%%%%%%
\subsection{Proof of Theorem~\ref{T:mainSchloegl}}\label{sS:proofT:mainSchloegl}
Following the arguments in~\cite{AzmiKunRod21-arx},  we can conclude that
\begin{align}
 \tfrac{\rmd}{\rmd t} \norm{z}{H}^2&\le-\norm{z}{V} ^2+C \norm{z}{H}^2 +2(U_M^\diamond\overline\clK_M^\lambda(z),z)_{H},\notag
\end{align}
with~$C=\tfrac{128}{15}\norm{\xi_2}{}^2+\tfrac{50}{11}\xi_2^2-2\xi_1+2\le\ovlineC{\norm{\zeta}{\infty}}$; see~\cite[Sect,~2.5]{AzmiKunRod21-arx}.

We define also the constants
\begin{subequations}\label{varpi}
\begin{align}
C_0&\coloneqq \tfrac23\xi_2^2-2\xi_1=\max\{-6r^2 -4\xi_2r-2\xi_1\mid r\in\bbR\},\\ \widehat C_0&\coloneqq\max\{4+C_0,C\},\quad\mbox{and}\quad
\varpi\coloneqq 2\mu+\widehat C_0.
\end{align}
\end{subequations}

We would like to remark that in this proof we can take $\widehat C_0=C$. We introduce~$C_0$,  because it will be convenient,  later on, that this Theorem holds for the larger~$\widehat C_0$ in~\eqref{varpi}; see ~\eqref{dtw-bull4} following from~\eqref{dtw-bull-latime} within the proof of Lemma~\ref{L:ZoweKurcyusz-0inint}.

Let~$M_*=\ovlineC{\varpi}\le\ovlineC{\mu,\norm{\zeta}{\infty}}\in\bbN_+$ and~$\lambda_*>0$ be given by Lemma~\ref{L:MlamPoinc}.
Then
\begin{equation}\label{lamMvarpi}
\norm{z}{V}^2+2\lambda\norm{P_{\widetilde \clU_M}^{\clU_M^\perp}z}{V}^2\ge \varpi\norm{z}{H}^2,\quad\mbox{if}\quad  M\ge M_* \quad\mbox{and}\quad \lambda\ge\lambda_*,
\end{equation}
which leads us to
\begin{align}
 \tfrac{\rmd}{\rmd t} \norm{z}{H}^2
&\le 2(U_M^\diamond\overline\clK_M^\lambda(z),z)_{H}+2\lambda\norm{P_{\widetilde \clU_M}^{\clU_M^\perp}z}{V}^2
-2\mu\norm{z}{H}^2.\label{dtz+2}
\end{align}

Observe that, by Lemma~\ref{L:Ku-monot}, we have that
\begin{subequations}\label{inactCu1}
\begin{align}
\left(U_M^\diamond\overline\clK_M(z(t)),z(t)\right)_{H}=
-\lambda\norm{P_{\widetilde \clU_M}^{\clU_M^\perp}z}{V}^2
&\quad\Longleftarrow\quad \dnorm{-\lambda(U_M^\diamond)^{-1}P_{\clU_M}^{\widetilde \clU_M^\perp}A P_{\widetilde \clU_M}^{\clU_M^\perp}z}{}\le C_u\notag\\
&\quad\Longleftarrow\quad  C_u\ge \lambda\dnorm{\fkU_M}{}\norm{z}{H}
\intertext{where}
\hspace{-12em}\fkU_M\coloneqq (U_M^\diamond)^{-1}P_{\clU_M}^{\widetilde \clU_M^\perp}A P_{\widetilde \clU_M}^{\clU_M^\perp}&\quad\mbox{and}\quad
\dnorm{\fkU_M}{}\coloneqq\max\limits_{w\in L^2\setminus\{0\}}\frac{\dnorm{\fkU_Mw}{}}{\norm{w}{H}}.
\end{align}
\end{subequations}

Recall that, from Lemma~\ref{L:expdec.largez0}, there exists a constant~$D\ge 1$ such that for
\begin{subequations}\label{zhaty.tauD-lt}
\begin{align}
&t_\rma\coloneqq\min\{t\ge0\mid\norm{z(t)}{H}\le D\}\le(\mu^2D)^{-\frac12},\\
\intertext{we have that}
&\norm{z(t)}{H}\le \rme^{-\mu(t-s)}\norm{z(s)}{H},\quad\mbox{for all}\quad 0\le s\le t\le t_\rma,\\
&\norm{z(t)}{H}\le D,\quad\mbox{for all}\quad t\ge t_\rma.
\end{align}
\end{subequations}

Now, motivated by~\eqref{inactCu1} we set
\begin{equation}\label{Cu*}
C_u^*\coloneqq  \lambda\dnorm{\fkU_M}{}D.
\end{equation}

We can conclude that
\begin{align}
\dnorm{-\lambda(U_M^\diamond)^{-1}P_{\clU_M}^{\widetilde \clU_M^\perp}A P_{\widetilde \clU_M}^{\clU_M^\perp}z(t)}{}\le C_u,\qquad\mbox{if}\quad C_u\ge C_u^*
\quad\mbox{and}\quad t\ge t_\rma,\notag
\end{align}
and, by~\eqref{inactCu1},
\begin{align}
(U_M^\diamond\overline\clK(z(t)),z(t))_{H}&=-\lambda\norm{P_{\widetilde \clU_M}^{\clU_M^\perp}z(t)}{V}^2
,\qquad\mbox{if}\quad C_u\ge C_u^*
\quad\mbox{and}\quad t\ge t_\rma.\notag
\end{align}

Therefore, by~\eqref{dtz+2},
\begin{align}
 \tfrac{\rmd}{\rmd t} \norm{z}{H}^2
&\le-2\mu\norm{z}{H}^2,\quad\mbox{for all}\quad t\ge t_\rma, \quad\mbox{if}\quad C_u\ge C_u^*,\notag
\end{align}
which implies that
\begin{align}
 \norm{z(t)}{H}
&\le\rme^{-\mu(t-s)}\norm{z(s)}{H},\quad\mbox{for all}\quad t\ge s \ge t_\rma, \quad\mbox{if}\quad C_u\ge C_u^*.\label{z1+}
\end{align}

Combining~\eqref{zhaty.tauD-lt} and~\eqref{z1+} we find
\begin{align}
&\norm{z(t)}{H}\le\rme^{-\mu(t-t_\rma)} \rme^{-\mu(t_\rma-s)}\norm{z(s)}{H}=\rme^{-\mu(t-s)} \norm{z(s)}{H},\notag\\ &\hspace{5em}\quad\mbox{for all}\quad  t\ge t_\rma\ge s\ge0,\quad\mbox{if}\quad C_u\ge C_u^*,
\end{align}
which finishes the proof.
\qed

\begin{remark}\label{R:Cu-M}
Note that~$C_u^*$ as in~\eqref{Cu*} depends on the pair~$(\lambda,M)$.
\end{remark}

%%%%%%%%%%%%%%%%%%%%%%%%%%%
\subsection{Auxiliary results for stabilizability in $V$-norm}\label{sS:auxiliary-W12}
We gather auxiliary results which we shall use to show the stabilizability in the norm of the space $V=W^{1,2}(\Omega)$ of initial states.
We start with an estimate, for the free dynamics, in the Lebesgue space~$L^6(\Omega)$; recall that~$W^{1,2}(\Omega)\subset L^6(\Omega)$, since $\Omega\subset\bbR^d$ with~$d\in\{1,2,3\}$.
\begin{lemma}\label{L:estL6free}
If Assumption~\ref{A:h-pers-bound} holds for the external force~$h$, then the solution of the free dynamics~\eqref{sys-haty} satisfies, for all~$t\ge s\ge0$,
\begin{align}
&\norm{y_\ttt(t)}{L^6}^6
\le\rme^{-(t-s)}\norm{y_\ttt(s)}{L^6}^6+\breve C_2\left(\norm{h}{L^6((s,t),H)}^6+1\right), 
\notag\\
&\norm{y_\ttt(t)}{L^6}^6
\le\rme^{-(t-s)}\norm{y_\ttt(s)}{L^6}^6+\tfrac{2-\rme^{-\tau_h}}{1-\rme^{-\tau_h}}\breve C_2\left(C_h^6+1\right),
\notag
\end{align}
with
$\breve C_2
\le\ovlineC{\norm{\zeta}{\infty}}$, where~$\tau_h>0$ and~$C_h\ge0$ are as in Assumption~\ref{A:h-pers-bound}.
\end{lemma}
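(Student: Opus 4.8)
\textbf{Proof plan for Lemma~\ref{L:estL6free}.}
The plan is to test the free-dynamics equation~\eqref{sys-haty} with the nonlinearity $w^3+\xi_2w^2+\xi_1w+\xi_0$ (equivalently, with $f(y_\ttt)=(y_\ttt-\zeta_1)(y_\ttt-\zeta_2)(y_\ttt-\zeta_3)$) against the test function $|y_\ttt|^4y_\ttt$, which is the natural multiplier for deriving $L^6$-estimates. First I would justify rigorously that $\frac1{6}\frac{\rmd}{\rmd t}\norm{y_\ttt(t)}{L^6}^6$ equals $\langle\dot y_\ttt,|y_\ttt|^4y_\ttt\rangle$; strictly this should be carried out at the Galerkin level (as in the remark after~\eqref{sys-z-feedf-hat}) and then passed to the limit, but for the sketch I treat $y_\ttt$ as smooth enough. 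Pairing the diffusion term gives $\nu\langle Ay_\ttt - y_\ttt, |y_\ttt|^4 y_\ttt\rangle \ge c\norm{\nabla(|y_\ttt|^2y_\ttt)}{L^2}^2 + (\text{lower order})$, which is nonnegative up to a controllable $\norm{y_\ttt}{L^6}^6$ term and will simply be discarded (it only helps). The heart of the computation is the reaction term: $\int f(y_\ttt)\,|y_\ttt|^4y_\ttt\,\rmd x = \int (y_\ttt^6 + \xi_2 y_\ttt^5 + \xi_1 y_\ttt^4 + \xi_0 y_\ttt^3)|y_\ttt|^4\,\rmd x$ after absorbing the sign of $y_\ttt$ appropriately. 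The dominant term $\int y_\ttt^{10}\,\rmd x \ge 0$ (more precisely $\int|y_\ttt|^{10}$) dominates the lower-degree terms $\xi_2 y_\ttt^9, \xi_1 y_\ttt^8, \xi_0 y_\ttt^7$ by Young's inequality: for any $\delta>0$ one has $|\xi_k|\,|y_\ttt|^{10-j}\le \delta|y_\ttt|^{10} + C_{\delta,\norm{\zeta}{\infty}}$ pointwise. This yields, after choosing $\delta$ small, a pointwise lower bound of the form $f(y_\ttt)|y_\ttt|^4y_\ttt \ge \tfrac12|y_\ttt|^{10} + \tfrac16|y_\ttt|^6 - C_{\norm{\zeta}{\infty}}$, so that $\int f(y_\ttt)|y_\ttt|^4y_\ttt \ge \tfrac16\norm{y_\ttt}{L^6}^6 - C_{\norm{\zeta}{\infty}}|\Omega|$; here the extra positive power $|y_\ttt|^{10}$ is genuinely gained but only the $|y_\ttt|^6$ part is needed.

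Next I would handle the forcing term $\langle h, |y_\ttt|^4y_\ttt\rangle \le \norm{h}{L^2}\norm{|y_\ttt|^4y_\ttt}{L^2} = \norm{h}{L^2}\norm{y_\ttt}{L^{10}}^5$, and bound this by $\delta\norm{y_\ttt}{L^{10}}^{10} + C_\delta\norm{h}{L^2}^2 \le \delta\norm{y_\ttt}{L^{10}}^{10} + C_\delta(\norm{h}{L^2}^6+1)$ (the last step just using $a^2\le a^6+1$, which is what forces the $L^6$-in-time hypothesis on $h$). Absorbing the $\delta\norm{y_\ttt}{L^{10}}^{10}$ into the gained $L^{10}$ term from the reaction (this is where the extra power pays off, though one could alternatively absorb into the diffusion-gradient term), one arrives at the differential inequality
\begin{equation}\label{plan:diffineq}
\tfrac{\rmd}{\rmd t}\norm{y_\ttt(t)}{L^6}^6 \le -\norm{y_\ttt(t)}{L^6}^6 + \breve C_2\bigl(\norm{h(t)}{L^2}^6 + 1\bigr),
\end{equation}
with $\breve C_2 = \ovlineC{\norm{\zeta}{\infty}}$. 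Note the coefficient $-1$ in front of $\norm{y_\ttt}{L^6}^6$ comes precisely from keeping the $\tfrac16|y_\ttt|^6$ contribution with room to spare after the various absorptions; any fixed negative constant would do after rescaling, so aiming for exactly $-1$ is a matter of bookkeeping in the choices of $\delta$.

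From~\eqref{plan:diffineq} the first claimed inequality is immediate by Gr\"onwall/integrating-factor: multiply by $\rme^{t}$, integrate from $s$ to $t$, giving $\norm{y_\ttt(t)}{L^6}^6 \le \rme^{-(t-s)}\norm{y_\ttt(s)}{L^6}^6 + \breve C_2\int_s^t \rme^{-(t-\sigma)}(\norm{h(\sigma)}{L^2}^6+1)\,\rmd\sigma$, and then $\int_s^t\rme^{-(t-\sigma)}\,\rmd\sigma\le 1$ while $\int_s^t\rme^{-(t-\sigma)}\norm{h(\sigma)}{L^2}^6\,\rmd\sigma \le \norm{h}{L^6((s,t),H)}^6$. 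For the second inequality I would replace the time-integral of $\norm{h}{L^2}^6$ by its persistent bound: split $(s,t)$ into consecutive windows of length $\tau_h$ and use Assumption~\ref{A:h-pers-bound} on each, together with the exponential weight $\rme^{-(t-\sigma)}$, so that $\int_s^t\rme^{-(t-\sigma)}\norm{h(\sigma)}{L^2}^6\,\rmd\sigma \le \sum_{k\ge0}\rme^{-k\tau_h}C_h^6 = \tfrac{C_h^6}{1-\rme^{-\tau_h}}$, and similarly the ``$+1$'' contribution gives $\tfrac{1}{1-\rme^{-\tau_h}}$ at worst; combining yields the stated factor $\tfrac{2-\rme^{-\tau_h}}{1-\rme^{-\tau_h}}$ after a mild rearrangement (taking the window-decomposition a bit more carefully, accounting for the partial first window). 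The main obstacle, such as it is, is the bookkeeping in the reaction-term estimate: one must check that the cubic polynomial structure of $f$ (degrees $3,2,1,0$ against the weight $|y_\ttt|^4y_\ttt$ of degree $5$) really produces a coercive $|y_\ttt|^{10}$ leading term with all lower-order terms absorbable by Young with only $\norm{\zeta}{\infty}$-dependent constants — this is exactly the ``structure of the nonlinearity'' that was invoked for Lemma~\ref{L:expdec.largez0}, adapted here from the $L^2$-multiplier $z$ to the $L^6$-multiplier $|y|^4y$. Everything else is standard integrating-factor and geometric-series manipulation.
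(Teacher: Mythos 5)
Your overall strategy coincides with the paper's: multiply the equation by $y_\ttt^5$ (the paper uses $3y_\ttt^5$ and works with $\tfrac{\rmd}{\rmd t}\norm{y_\ttt^3}{H}^2$, which is the same thing), derive a differential inequality of the form $\tfrac{\rmd}{\rmd t}\norm{y_\ttt}{L^6}^6\le-\norm{y_\ttt}{L^6}^6+\breve C_2(\norm{h}{H}^6+1)$, integrate, and obtain the second estimate by a $\tau_h$-window decomposition and a geometric series. However, there is a genuine gap in the two steps that produce the differential inequality. The degrees in your reaction term are wrong: $f(y_\ttt)\,|y_\ttt|^4y_\ttt=(y_\ttt^3+\xi_2y_\ttt^2+\xi_1y_\ttt+\xi_0)\,y_\ttt^5$ has leading term $y_\ttt^{8}$, not $y_\ttt^{10}$, so the coercivity gained from the cubic nonlinearity is $\int_\Omega|y_\ttt|^{8}$, and the claimed pointwise bound $f(y_\ttt)|y_\ttt|^4y_\ttt\ge\tfrac12|y_\ttt|^{10}+\tfrac16|y_\ttt|^6-C$ is false for large $|y_\ttt|$. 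This would be a harmless slip (degree $8$ still dominates the terms of degrees $7,6,5$ by Young, giving $-(f(y_\ttt),y_\ttt^5)_{H}\le-\tfrac16\norm{y_\ttt}{L^6}^6+\ovlineC{\norm{\zeta}{\infty}}\dnorm{\Omega}{}$, which is all the paper extracts as well), except that your treatment of the forcing term leans on the nonexistent $L^{10}$ gain: after Young you are left with $\delta\norm{y_\ttt}{L^{10}}^{10}$, and $\int_\Omega|y_\ttt|^{10}$ is controlled neither by $\int_\Omega|y_\ttt|^{8}$ nor, with a fixed constant, by the diffusion term $\nu\norm{\nabla(y_\ttt^3)}{H}^2$ (interpolation only gives $\norm{y_\ttt}{L^{10}}^{10}\lesssim\norm{y_\ttt}{L^6}^{4}\norm{y_\ttt^3}{W^{1,2}}^{2}$, with an unbounded prefactor). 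So the absorption step fails as proposed, and discarding the diffusion term, as you suggest, is not an option.

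The repair is the route you mention only parenthetically, and it requires more than a crude Young inequality: keep $(h,y_\ttt^5)_{H}\le\norm{h}{H}\norm{y_\ttt^3}{L^{10/3}}^{5/3}$ and use the Sobolev/Gagliardo--Nirenberg interpolation $\norm{y_\ttt^3}{L^{10/3}}\le C\norm{y_\ttt^3}{H}^{2/5}\norm{y_\ttt^3}{W^{1,2}}^{3/5}$ (via $W^{3/5,2}\xhookrightarrow{}L^{10/3}$ for $d\le3$), so that $\norm{y_\ttt^3}{W^{1,2}}$ appears with power exactly one. Young with exponent $2$ then absorbs it into $-\nu\norm{\nabla(y_\ttt^3)}{H}^2$, leaving $C\norm{h}{H}^2\norm{y_\ttt^3}{H}^{4/3}$, which a second Young inequality (exponents $3$ and $3/2$) converts into $C\norm{h}{H}^6+\norm{y_\ttt^3}{H}^2$; the last term is absorbed by the damping already extracted from the reaction (or from the diffusion). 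This is exactly what the paper does. With that replacement, your Gr\"onwall step and the windowed geometric-series argument for the persistent bound go through unchanged.
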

\begin{proof}
Multiplying the free dynamics~\eqref{sys-haty} by~$3y_\ttt^5$, we obtain
\begin{align}
\tfrac{\rmd}{\rmd t}\norm{y_\ttt^3}{H}^2&\le3\nu( \Delta y_\ttt,y_\ttt^5)_{H}
-3( (y_\ttt-\zeta_1)(y_\ttt-\zeta_2)(y_\ttt-\zeta_3)+h,y_\ttt^5)_{H}\notag\\
&=-\tfrac53\nu( 3y_\ttt^2\nabla y_\ttt,3y_\ttt^2\nabla y_\ttt)_{H}
-3( h+\xi_0,y_\ttt^5)_{H}
+3( -y_\ttt^3+\xi_2 y_\ttt^2+\xi_1y_\ttt,y_\ttt^5)_{H}\notag\\
&\le-\tfrac53\norm{y_\ttt^3}{V}^2+\tfrac53\norm{y_\ttt^3}{H}^2+3\norm{h+\xi_0}{H}\norm{y_\ttt^5}{H}
+3(-y_\ttt^3+\xi_2 y_\ttt^2+\xi_1y_\ttt ,y_\ttt^5)_{H}\notag\\
&=-\tfrac53\norm{y_\ttt^3}{V}^2+3\norm{h+\xi_0}{H}\norm{y_\ttt^3}{L^\frac{10}{3}}^\frac{10}{6}
+3(-y_\ttt^3+\xi_2 y_\ttt^2+(\xi_1+\tfrac53)y_\ttt ,y_\ttt^5)_{H}.\notag
\end{align}
Recall that for~$\Omega\subset\bbR^d$ the Sobolev embedding, see~\cite[Thm.~4.57]{DemengelDem12},
\begin{align}\label{sobol-rel}
(0<2s<d\quad\mbox{and}\quad 1\le q\le\tfrac{2d}{d-2s})\quad\Longrightarrow\quad W^{s,2}(\Omega)\xhookrightarrow{}L^q(\Omega)
\end{align}
gives us $W^{\frac{3}5,2}\xhookrightarrow{}W^{\frac{d}5,2}\xhookrightarrow{}L^\frac{10}{3}$, for $d\in\{1,2,3\}$. Then, by suitable interpolation and Young inequalities it follows that
\begin{align}
3\norm{h+\xi_0}{H}\norm{y_\ttt^3}{L^\frac{10}{3}}^\frac{10}{6}
&\le3C_0\norm{h+\xi_0}{H}\norm{y_\ttt^3}{H}^{\frac{10}{6}\frac{2}{5}}\norm{y_\ttt^3}{W^{1,2}}^{\frac{10}{6}\frac{3}{5}}\le3C\norm{h+\xi_0}{H}\norm{y_\ttt^3}{H}^{\frac{2}{3}}\norm{y_\ttt^3}{V}\notag\\
&\le\gamma^{-1}9C^2\norm{h+\xi_0}{H}^2\norm{y_\ttt^3}{H}^{\frac{4}{3}}+\gamma\norm{y_\ttt^3}{W^{1,2}}^{2},\quad\mbox{for all}\quad\gamma>0.\notag
\end{align}
Choosing~$\gamma=\frac13$ we obtain
\begin{align}
\tfrac{\rmd}{\rmd t}\norm{y_\ttt^3}{H}^2
&=-\tfrac43\norm{y_\ttt^3}{V}^2+27C^2\norm{h+\xi_0}{H}^2\norm{y_\ttt^3}{H}^{\frac{4}{3}}
+3(-y_\ttt^3+\xi_2 y_\ttt^2+(\xi_1+\tfrac53)y_\ttt ,y_\ttt^5)_{H}.\notag\\
&=-\tfrac43\norm{y_\ttt^3}{V}^2+27C^2\norm{h+\xi_0}{H}^2\norm{y_\ttt^3}{H}^{\frac{4}{3}}
+3C_1\dnorm{\Omega}{}.\notag
\end{align}
where
\[
C_1\coloneqq\max\{-s^8+\xi_2 s^7+(\xi_1+\tfrac{5}{3}) s^6\mid s\in\bbR\}.
\]

Writing~$-s^8+\xi_2 s^7+(\xi_1+\tfrac{5}{3}) s^6= s^6(-s^2+\xi_2 s+\xi_1+\tfrac{5}{3})$, we observe that \begin{align}
&\mbox{either}\quad C_1\le0\quad\mbox{or}\quad C_2\coloneqq\max\{-s^2+\xi_2 s+\xi_1+\tfrac{5}{3}\mid s\in\bbR\}>0.\notag
\end{align}
Further, we have the implications
$C_2>0 \,\Longrightarrow \, \xi_2^2+4(\xi_1+\tfrac{5}{3})>0$
and
\[
-s^2+\xi_2 s+\xi_1+\tfrac{5}{3}>0\quad\Longrightarrow\quad -C_3<s<C_3\coloneqq\tfrac{\norm{\xi_2}{\bbR}+\sqrt{\xi_2^2+4(\xi_1+\tfrac{5}{3})}}{2}\le
\ovlineC{\norm{\zeta}{\infty}}.
\]

Since, $C_2=\tfrac{\xi_2^2}{4}+\xi_1+\tfrac{5}{3}$ we arrive at~$C_1\le C_3^6C_2\le\ovlineC{\norm{\zeta}{\infty}}$.

Using again the Young inequality,
\begin{align}
\tfrac{\rmd}{\rmd t}\norm{y_\ttt^3}{H}^2
&=-\tfrac43\norm{y_\ttt^3}{V}^2+(\gamma_0^{-1}27C^2)^3\norm{h+\xi_0}{H}^6+\gamma_0^{\frac{3}{2}}\norm{y_\ttt^3}{H}^{2}+3C_1\dnorm{\Omega}{},\quad\mbox{for all}\quad\gamma_0>0.\notag
\end{align}
Choosing 
$\gamma_0=(\frac13)^{\frac{2}{3}}$ and recalling that~$\norm{y_\ttt^3}{H}\le\norm{y_\ttt^3}{V}$
and~$(a+b)^n\le 2^{n-1}(a^n+b^n)$ for~$n\ge1$ (see~\cite[Prop.~2.6]{PhanRod17}), we find
\begin{align}
\tfrac{\rmd}{\rmd t}\norm{y_\ttt^3}{H}^2
&\le-\norm{y_\ttt^3}{V}^2+3^{11}C^6\norm{h+\xi_0}{H}^6+3C_1\dnorm{\Omega}{}\notag\\
&\le-\norm{y_\ttt^3}{V}^2+2^53^{11}C^6\norm{h}{H}^6+2^53^{11}C^6\norm{\xi_0}{H}^6+3C_1\dnorm{\Omega}{}\notag\\
&\le-\norm{y_\ttt^3}{H}^2+2^53^{11}C^6\norm{h}{H}^6+2^53^{11}C^6\norm{\xi_0}{H}^6+3C_1\dnorm{\Omega}{},\label{freedty-L6}
\end{align}
and
\begin{subequations}\label{freey-L6}
\begin{align}
&\norm{y_\ttt(t)}{L^6}^6
\le\rme^{-(t-s)}\norm{y_\ttt(s)}{L^6}^6+\breve C_2\left(\norm{h}{L^6((s,t),H)}^6+1\right),\\
\intertext{for all~$t\ge s\ge0$, with}
&\breve C_2\coloneqq 2^53^{11}C^6(1+\norm{\xi_0}{H}^6)+3C_1\dnorm{\Omega}{}
\le\ovlineC{\norm{\zeta}{\infty}}.
\end{align}
\end{subequations}
This gives us the first claim.

From~\eqref{freey-L6}, in particular, for the integer~$\lfloor r\rfloor$ defined by~$\lfloor r\rfloor\le r< \lfloor r\rfloor+1$,
with~$\tau_h$ and~$C_h$ as in Assumption~\ref{A:h-pers-bound}, we find that for~$t\ge s$,
\begin{align}
&\norm{y_\ttt(t)}{L^6}^6
\le\rme^{-(t-s-\tau_h\lfloor \tfrac{t-s}{\tau_h}\rfloor)}\norm{y_\ttt(s+\tau_h\lfloor \tfrac{t-s}{\tau_h}\rfloor)}{L^6}^6+\breve C_2\left(\norm{h}{L^6((s+\tau_h\lfloor \tfrac{t-s}{\tau_h}\rfloor,t),H)}^6+1\right)\notag\\
&\le\rme^{-(t-s-\tau_h\lfloor \tfrac{t-s}{\tau_h}\rfloor)}\norm{y_\ttt(s+\tau_h\lfloor \tfrac{t-s}{\tau_h}\rfloor)}{L^6}^6+\breve C_2\left(\norm{h}{L^6((s+\tau_h\lfloor \tfrac{t-s}{\tau_h}\rfloor,s+\tau_h\lfloor \tfrac{t-s}{\tau_h}\rfloor+\tau_h),H)}^6+1\right)
\notag\\
&\le\rme^{-(t-s-\tau_h\lfloor \tfrac{t-s}{\tau_h}\rfloor)}\norm{y_\ttt(s+\tau_h\lfloor \tfrac{t-s}{\tau_h}\rfloor)}{L^6}^6+\breve C_2\left(C_h^6+1\right),\label{L6-t-tauh}
\end{align}
because~$s+\tau_h\lfloor \tfrac{t-s}{\tau_h}\rfloor+\tau_h>t\ge s+\tau_h\lfloor \tfrac{t-s}{\tau_h}\rfloor$. From~\eqref{freey-L6} it also follows, for $\lfloor \tfrac{t-s}{\tau_h}\rfloor\ge1$,
\begin{align}
\norm{y_\ttt(s+\tau_h\lfloor \tfrac{t-s}{\tau_h}\rfloor)}{L^6}^6
&\le\rme^{-\tau_h}\norm{y_\ttt(s+\tau_h\lfloor \tfrac{t-s}{\tau_h}\rfloor-\tau_h)}{L^6}^6+\breve C_2\left(C_h^6+1\right)\notag\\
&\le\rme^{-\lfloor \tfrac{t-s}{\tau_h}\rfloor\tau_h}\norm{y_\ttt(s)}{L^6}^6
+\breve C_2\left(C_h^6+1\right){\textstyle\sum\limits_{j=0}^{\lfloor \tfrac{t-s}{\tau_h}\rfloor-1}}\rme^{-j\tau_h}\notag\\
&\le\rme^{-\lfloor \tfrac{t-s}{\tau_h}\rfloor\tau_h}\norm{y_\ttt(s)}{L^6}^6+\tfrac{1}{1-\rme^{-\tau_h}}\breve C_2\left(C_h^6+1\right).\label{L6-tauh+1}
\end{align}
Combining~\eqref{L6-t-tauh} and~\eqref{L6-tauh+1} it follows that
\begin{align}
\norm{y_\ttt(t)}{L^6}^6
&\le\rme^{-(t-s)}\norm{y_\ttt(s)}{L^6}^6+(1+\tfrac{1}{1-\rme^{-\tau_h}}\rme^{-(t-s-\tau_h\lfloor \tfrac{t-s}{\tau_h}\rfloor)})\breve C_2\left(C_h^6+1\right)\notag\\
&\le\rme^{-(t-s)}\norm{y_\ttt(s)}{L^6}^6+\tfrac{2-\rme^{-\tau_h}}{1-\rme^{-\tau_h}}\breve C_2\left(C_h^6+1\right),\notag
\end{align}
which finishes the proof.
\end{proof}
\begin{corollary}\label{C:estL6cont}
If Assumption~\ref{A:h-pers-bound} holds for the external force~$h$, then the solution of the controlled dynamics~\eqref{Schloegl-feed} satisfies, for all~$t\ge s\ge0$,
\begin{align}
&\norm{y_\ttc(t)}{L^6}^6
\le\rme^{-(t-s)}\norm{y_\ttc(s)}{L^6}^6+\breve C_3\left(\norm{h}{L^6((s,t),H)}^6+\norm{y_\ttc(s)-y_\ttt(s)}{H}^6 +1\right), 
\notag\\
&\norm{y_\ttc(t)}{L^6}^6
\le\rme^{-(t-s)}\norm{y_\ttc(s)}{L^6}^6+\tfrac{2-\rme^{-\tau_h}}{1-\rme^{-\tau_h}}\breve C_3\left(C_{h}^6+\norm{y_\ttc(0)-y_\ttt(0)}{H}^6+1\right),
\notag
\end{align}
with
$\breve C_3
\le\ovlineC{\norm{\zeta}{\infty},\norm{U_M^\diamond\overline\clK_M^\lambda}{\clL(H)}}.$
\end{corollary}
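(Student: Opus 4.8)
The plan is to mimic the proof of Lemma~\ref{L:estL6free}, now applied to the controlled dynamics~\eqref{Schloegl-feed}, with the only new ingredient being the control term $U_M^\diamond\overline\clK_M^\lambda(y_\ttc-y_\ttt)$. First I would multiply~\eqref{Schloegl-feed} by $3y_\ttc^5$ and integrate over $\Omega$. The diffusion term, the reaction polynomial, and the external force $h$ are handled exactly as in the proof of Lemma~\ref{L:estL6free}, producing the terms $-\norm{y_\ttc^3}{V}^2$, the $\norm{h}{H}^6$ contribution, and the bounded polynomial term $3C_1\dnorm{\Omega}{}$; nothing changes there since $y_\ttc$ solves the same Neumann problem with the same nonlinearity. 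The additional term is $3(U_M^\diamond\overline\clK_M^\lambda(y_\ttc-y_\ttt),y_\ttc^5)_H$, which I would estimate by H\"older and the operator bound
\[
3\left|(U_M^\diamond\overline\clK_M^\lambda(y_\ttc-y_\ttt),y_\ttc^5)_H\right|
\le 3\norm{U_M^\diamond\overline\clK_M^\lambda}{\clL(H)}\norm{y_\ttc-y_\ttt}{H}\norm{y_\ttc^5}{H}
= 3\norm{U_M^\diamond\overline\clK_M^\lambda}{\clL(H)}\norm{y_\ttc-y_\ttt}{H}\norm{y_\ttc^3}{L^{\frac{10}{3}}}^{\frac{10}{6}},
\]
using $\norm{y_\ttc^5}{H}^2=\norm{y_\ttc}{L^{10}}^{10}=\norm{y_\ttc^3}{L^{10/3}}^{10/3}$. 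Note that I only need the continuity of the feedback operator on $H$, which is available since $P_{\clU_M}^{\widetilde\clU_M^\perp}$, $A$ restricted appropriately to the finite-dimensional spaces, and $P_{\widetilde\clU_M}^{\clU_M^\perp}$ are all bounded on the finite-dimensional subspaces involved and the radial projection $\fkP^{\dnorm{\Bigcdot}{}}_{C_u}$ is $1$-Lipschitz with $\fkP^{\dnorm{\Bigcdot}{}}_{C_u}(0)=0$; hence $\norm{U_M^\diamond\overline\clK_M^\lambda}{\clL(H)}<\infty$.

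Next I would absorb this new term exactly as the term $3\norm{h+\xi_0}{H}\norm{y_\ttt^3}{L^{10/3}}^{10/6}$ was absorbed in Lemma~\ref{L:estL6free}: by the same Sobolev embedding $W^{3/5,2}\hookrightarrow L^{10/3}$ (valid for $d\in\{1,2,3\}$), interpolation $\norm{y_\ttc^3}{L^{10/3}}\le C\norm{y_\ttc^3}{H}^{2/5}\norm{y_\ttc^3}{V}^{3/5}$, and Young's inequality, obtaining for every $\gamma>0$
\[
3\norm{U_M^\diamond\overline\clK_M^\lambda}{\clL(H)}\norm{y_\ttc-y_\ttt}{H}\norm{y_\ttc^3}{L^{10/3}}^{10/6}
\le\gamma^{-1}C\norm{U_M^\diamond\overline\clK_M^\lambda}{\clL(H)}^2\norm{y_\ttc-y_\ttt}{H}^2\norm{y_\ttc^3}{H}^{4/3}+\gamma\norm{y_\ttc^3}{V}^2 .
\]
Then a further application of Young's inequality on the first term on the right (as in the passage to~\eqref{freedty-L6}) converts $\norm{y_\ttc-y_\ttt}{H}^2\norm{y_\ttc^3}{H}^{4/3}$ into $C\norm{y_\ttc-y_\ttt}{H}^6+\gamma_0\norm{y_\ttc^3}{H}^2$, and choosing $\gamma,\gamma_0$ small enough keeps the coefficient of $-\norm{y_\ttc^3}{V}^2$ negative and leaves a $-\norm{y_\ttc^3}{H}^2$ on the right-hand side after using $\norm{y_\ttc^3}{H}\le\norm{y_\ttc^3}{V}$. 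The resulting differential inequality is
\[
\tfrac{\rmd}{\rmd t}\norm{y_\ttc^3}{H}^2\le-\norm{y_\ttc^3}{H}^2+\breve C_3\left(\norm{h}{H}^6+\norm{y_\ttc-y_\ttt}{H}^6+1\right),
\]
with $\breve C_3\le\ovlineC{\norm{\zeta}{\infty},\norm{U_M^\diamond\overline\clK_M^\lambda}{\clL(H)}}$, and Gronwall's inequality (integrating the factor $\rme^{-(t-s)}$) gives the first claimed estimate, since $\norm{y_\ttc^3}{H}^2=\norm{y_\ttc}{L^6}^6$.

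For the second estimate, observe that by Theorem~\ref{T:mainSchloegl} (more precisely by the monotonicity of the error norm, Lemma~\ref{L:expdec.largez0}, or simply~\eqref{Tmain.expL2} once the hypotheses on $C_u$ are in force) we have $\norm{y_\ttc(\sigma)-y_\ttt(\sigma)}{H}\le\norm{y_\ttc(0)-y_\ttt(0)}{H}$ for all $\sigma\ge0$; thus $\norm{y_\ttc-y_\ttt}{H}^6$ in the forcing is uniformly bounded by $\norm{y_\ttc(0)-y_\ttt(0)}{H}^6$, and the persistent bound on $h$ from Assumption~\ref{A:h-pers-bound} can be exploited by the same splitting into intervals of length $\tau_h$ and geometric summation used in~\eqref{L6-t-tauh}--\eqref{L6-tauh+1}. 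This yields the second estimate with the factor $\tfrac{2-\rme^{-\tau_h}}{1-\rme^{-\tau_h}}$. The main obstacle is essentially bookkeeping: ensuring the coefficient juggling in the two Young-inequality steps leaves a strictly negative multiple of $\norm{y_\ttc^3}{V}^2$ (equivalently a clean $-\norm{y_\ttc^3}{H}^2$ on the right) so that the Gronwall step produces the stated exponential decay rate $1$; this is exactly parallel to Lemma~\ref{L:estL6free} and presents no new analytical difficulty once the operator norm $\norm{U_M^\diamond\overline\clK_M^\lambda}{\clL(H)}$ is recognized as a finite constant that may legitimately enter $\breve C_3$.
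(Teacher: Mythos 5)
Your proposal is correct, but it is organized differently from the paper's argument. The paper does not redo any energy estimate: it simply observes that the controlled system~\eqref{Schloegl-feed} is the free dynamics~\eqref{sys-haty} with forcing $\overline h\coloneqq h+U_M^\diamond\overline\clK_M^\lambda(y_\ttc-y_\ttt)$, applies Lemma~\ref{L:estL6free} verbatim with $h$ replaced by $\overline h$, and then bounds $\norm{\overline h}{L^6((s,t),H)}$ by the triangle inequality together with the exponential decay $\norm{y_\ttc(r)-y_\ttt(r)}{H}\le \rme^{-\mu(r-s)}\norm{y_\ttc(s)-y_\ttt(s)}{H}$ from Theorem~\ref{T:mainSchloegl}. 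You instead unpack the proof of Lemma~\ref{L:estL6free}, test with $3y_\ttc^5$, and handle the feedback term pointwise in time with the same H\"older--Sobolev--interpolation--Young chain used for $h$, arriving at a differential inequality with $\norm{y_\ttc-y_\ttt}{H}^6$ as an extra source. Both routes work and both ultimately rest on Theorem~\ref{T:mainSchloegl}; the paper's substitution trick avoids re-checking the Young-inequality bookkeeping with the additional term, while yours avoids the (harmless) $\mu^{-1}$ that enters the paper's constant through $\bigl(\int_s^t\rme^{-6\mu(r-s)}\,\rmd r\bigr)^{1/6}$. One small point: after Gronwall you still need to convert $\int_s^t\rme^{-(t-r)}\norm{y_\ttc(r)-y_\ttt(r)}{H}^6\,\rmd r$ into $\norm{y_\ttc(s)-y_\ttt(s)}{H}^6$ for the \emph{first} estimate as well, which requires the monotonicity $\norm{y_\ttc(r)-y_\ttt(r)}{H}\le\norm{y_\ttc(s)-y_\ttt(s)}{H}$ for $r\ge s$ from Theorem~\ref{T:mainSchloegl}; you invoke this only for the second estimate, so make that use explicit in the first step too.
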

\begin{proof}
Replacing~$h$ by~$\overline h\coloneqq h+U_M^\diamond\overline\clK_M^\lambda(y_\ttc-y_\ttt)$, from Lemma~\ref{L:estL6free} we find that
\begin{subequations}\label{estL6barh}
\begin{align}
&\norm{y_\ttc(t)}{L^6}^6
\le\rme^{-(t-s)}\norm{y_\ttc(s)}{L^6}^6+\breve C_2\left(\norm{\overline h}{L^6((s,t),H)}^6+1\right), \\
&\norm{y_\ttc(t)}{L^6}^6
\le\rme^{-(t-s)}\norm{y_\ttc(s)}{L^6}^6+\tfrac{2-\rme^{-\tau_h}}{1-\rme^{-\tau_h}}\breve C_2\left(C_{\overline h}^6+1\right),
\end{align}
\end{subequations}
with~$\breve C_2
\le\ovlineC{\norm{\zeta}{\infty}}$ and~$C_{\overline h}\coloneqq \sup\limits_{r\ge0}\norm{\overline h}{L^6((r,r+\tau_h),H)}$. Observe that
\begin{align}
\norm{\overline h}{L^6((s,t),H)}
&\le\norm{h}{L^6((s,t),H)}+\norm{U_M^\diamond\overline\clK_M^\lambda(y_\ttc-y_\ttt)}{L^6((s,t),H)}\notag\\
&\le\norm{h}{L^6((s,t),H)}+\norm{U_M^\diamond\overline\clK_M^\lambda}{\clL(H)}
\norm{y_\ttc-y_\ttt}{L^6((s,t),H)}\notag
\end{align}
and, by Theorem~\ref{T:mainSchloegl} it follows that
\begin{align}
\norm{\overline h}{L^6((s,t),H)}
&\le\norm{h}{L^6((s,t),H)}+\norm{U_M^\diamond\overline\clK_M^\lambda}{\clL(H)}
\norm{y_\ttc(s)-y_\ttt(s)}{H}\left({\textstyle\int_s^t}\rme^{-6\mu(r-s)}\,\rmd r\right)^\frac16,\notag
\end{align}
which leads us to
\begin{align}
\norm{\overline h}{L^6((s,t),H)}^6
&\le2^5\norm{h}{L^6((s,t),H)}^6+2^5\norm{U_M^\diamond\overline\clK_M^\lambda}{\clL(H)}^6
\norm{y_\ttc(s)-y_\ttt(s)}{H}^6\tfrac1{6\mu},\label{yL6st}
\end{align}
and from~\eqref{estL6barh},
\begin{align}
&\norm{y_\ttc(t)}{L^6}^6
\le\rme^{-(t-s)}\norm{y_\ttc(s)}{L^6}^6+C\left(\norm{h}{L^6((s,t),H)}^6+\norm{y_\ttc(s)-y_\ttt(s)}{H}^6+1\right), \notag
\end{align}
with~$C\coloneqq 2^5\breve C_2(1+\norm{U_M^\diamond\overline\clK_M^\lambda}{\clL(H)}^6
\tfrac1{6\mu})\le \ovlineC{\norm{\zeta}{\infty},\norm{U_M^\diamond\overline\clK_M^\lambda}{\clL(H)}}$.

Next, we observe that~\eqref{yL6st} and Theorem~\ref{T:mainSchloegl} also give us, for given~$r\ge0$,
\begin{align}
\norm{\overline h}{L^6((r,r+\tau_h),H)}^6
&\le2^5\norm{h}{L^6((r,r+\tau_h),H)}^6+2^5\norm{U_M^\diamond\overline\clK_M^\lambda}{\clL(H)}^6
\norm{y_\ttc(r)-y_\ttt(r)}{H}^6\tfrac1{6\mu},\notag\\
&\le2^5\norm{h}{L^6((r,r+\tau_h),H)}^6+2^5\norm{U_M^\diamond\overline\clK_M^\lambda}{\clL(H)}^6
\norm{y_\ttc(0)-y_\ttt(0)}{H}^6\tfrac1{6\mu},
\end{align}
which, together with~\eqref{estL6barh}, lead us to
\begin{align}
&\norm{y_\ttc(t)}{L^6}^6
\le\rme^{-(t-s)}\norm{y_\ttc(s)}{L^6}^6+\tfrac{2-\rme^{-\tau_h}}{1-\rme^{-\tau_h}}C\left(C_{h}^6+\norm{y_\ttc(0)-y_\ttt(0)}{H}^6 +1\right),
\end{align}
and concludes the proof.
\end{proof}

Now, we derive a property for the nonlinearity in the error dynamics as follows.
\begin{lemma}\label{L:esthatf}
The function~$f^{y_\ttt}$ in~\eqref{sys-z-feedf-hat} satisfies
\begin{align}
&\norm{f^{y_\ttt}( z)}{H}\le C_{f^{y_\ttt}}\left(\norm{z}{L^6}^2
+\norm{y_\ttt}{L^6}^2+1\right)\norm{ z}{L^6}\notag
\end{align}
with~$C_{f^{y_\ttt}}=\ovlineC{\norm{\zeta}{\infty}}$.
\end{lemma}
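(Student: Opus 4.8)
The plan is to factor a single power of $z$ out of the nonlinearity,
\begin{equation*}
f^{y_\ttt}(z)=z\Bigl(z^2+(3y_\ttt+\xi_2)z+(3y_\ttt^2+2\xi_2y_\ttt+\xi_1-1)\Bigr),
\end{equation*}
and to estimate $\norm{f^{y_\ttt}(z)}{H}$ by H\"older's inequality with the conjugate exponents $6$ and $3$:
\begin{equation*}
\norm{f^{y_\ttt}(z)}{H}\le\norm{z}{L^6}\;\norm{z^2+(3y_\ttt+\xi_2)z+(3y_\ttt^2+2\xi_2y_\ttt+\xi_1-1)}{L^3}.
\end{equation*}
Since~$\Omega\subset\bbR^d$ is bounded, I will then use the continuous embeddings $L^6(\Omega)\xhookrightarrow{}L^p(\Omega)$ for $1\le p\le6$ to bound every summand of the $L^3$-factor in terms of the $L^6$-norms of $z$ and $y_\ttt$ only.

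Writing $a\coloneqq\norm{z}{L^6}$ and $b\coloneqq\norm{y_\ttt}{L^6}$, and recalling from~\eqref{xis} that $|\xi_1|,|\xi_2|\le\ovlineC{\norm{\zeta}{\infty}}$, the H\"older inequality and those embeddings give $\norm{z^2}{L^3}=a^2$, $\norm{y_\ttt^2}{L^3}=b^2$, $\norm{y_\ttt z}{L^3}\le ba$, $\norm{z}{L^3}\le Ca$, and $\norm{y_\ttt}{L^3}\le Cb$. Absorbing the cross term and the purely linear contributions by Young's inequality, $ba\le\tfrac12(a^2+b^2)$ and $a,b\le\tfrac12(a^2+1),\tfrac12(b^2+1)$, one obtains
\begin{equation*}
\norm{z^2+(3y_\ttt+\xi_2)z+(3y_\ttt^2+2\xi_2y_\ttt+\xi_1-1)}{L^3}\le\ovlineC{\norm{\zeta}{\infty}}\bigl(a^2+b^2+1\bigr),
\end{equation*}
and multiplying by $\norm{z}{L^6}=a$ yields $\norm{f^{y_\ttt}(z)}{H}\le\ovlineC{\norm{\zeta}{\infty}}(\norm{z}{L^6}^2+\norm{y_\ttt}{L^6}^2+1)\norm{z}{L^6}$, which is the assertion with $C_{f^{y_\ttt}}=\ovlineC{\norm{\zeta}{\infty}}$. (Equivalently, one could estimate the three monomials $z^3$, $y_\ttt z^2$, $z^2$, $y_\ttt^2 z$, $y_\ttt z$, $z$ of $f^{y_\ttt}$ in $H$ one at a time by H\"older and then reassemble via Young's inequality; this gives exactly the same bound.)

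I do not expect a genuine obstacle here: the estimate is essentially routine. The only points that need (mild) care are the systematic use of the boundedness of $\Omega$ to convert the intermediate $L^3$- and $L^4$-norms produced by H\"older into $L^6$-norms, and the correct bookkeeping with Young's inequality so that absorbing the cross term $\norm{y_\ttt}{L^6}\norm{z}{L^6}$ and the linear terms does not create a power higher than quadratic in $(a,b)$, i.e. so that the right-hand side really has the stated shape with a single power of $\norm{z}{L^6}$ factored out.
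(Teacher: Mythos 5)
Your proposal is correct and follows essentially the same route as the paper: factor out one power of $z$, apply H\"older with exponents $6$ and $3$, and bound the remaining quadratic factor in $L^3$ using the boundedness of $\Omega$ and Young's inequality. The only cosmetic difference is that the paper absorbs the cross terms by Young's inequality pointwise inside the $L^3$-norm, whereas you do it at the level of the norms after the triangle inequality; both yield the stated bound with $C_{f^{y_\ttt}}=\ovlineC{\norm{\zeta}{\infty}}$.
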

\begin{proof}
By straightforward computations, we find
\begin{align}
\norm{f^{y_\ttt}( z)}{H}&=\norm{z^3+(3y_\ttt+\xi_2) z^2+(3y_\ttt^2+2\xi_2y_\ttt+\xi_1-1) z}{H}\notag\\
&\hspace{0em}=\norm{(z^2+3y_\ttt z+\xi_2z+3y_\ttt^2+2\xi_2y_\ttt+\xi_1-1) z}{L^2}\notag\\
&\hspace{0em}\le\norm{z^2+2\tfrac32y_\ttt z+2\tfrac12\xi_2z+3y_\ttt^2+2\xi_2y_\ttt+\xi_1-1}{L^3}\norm{ z}{L^6}\notag\\
&\hspace{0em}\le\norm{3z^2+((\tfrac{3}{2})^2+4)y_\ttt^2+(\tfrac14+1)\xi_2^2+\xi_1-1}{L^3}\norm{ z}{L^6}\notag\\
&\hspace{0em}\le\ovlineC{\norm{\zeta}{\infty}}
\left(\norm{z}{L^6}^2+\norm{y_\ttt}{L^6}^2+1\right)\norm{ z}{L^6},\notag
\end{align}
which finishes the proof.
\end{proof}

We show next that the $V$-norm of $z(s+1)$, after a period of time~$1$ can be bounded by the $H$-norm of  $z(s)$. For this purpose we start with the following auxiliary result.
\begin{lemma}\label{L:smoo}
There exists~$\overline D>0$ such that the solution of~\eqref{sys-z-feedf-hat} satisfies
\begin{align}
\norm{z(s+1)}{V}^2
&\le \overline D\norm{ z(s)}{H}^2,\quad\mbox{for all}\quad s\ge0.\notag
\end{align}
Furthermore,~$\overline D\le\ovlineC{\norm{\zeta}{\infty},\norm{y_\ttt(0)}{L^6},\norm{z(0)}{L^6},\tau_h^{-1},C_h,\norm{U_M^\diamond\overline\clK_M^\lambda}{\clL(H)}}$.
\end{lemma}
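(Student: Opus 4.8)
The plan is to establish the smoothing estimate $\norm{z(s+1)}{V}^2 \le \overline D \norm{z(s)}{H}^2$ by a standard parabolic bootstrap argument: first use the $H$-norm decay from Theorem~\ref{T:mainSchloegl} to control $\norm{z}{H}$ on $[s,s+1]$, then integrate the energy identity to gain an $L^2$-in-time bound on $\norm{z}{V}$, and finally run a differentiated energy estimate with a time weight $(t-s)$ to upgrade this to a pointwise $V$-bound at time $t=s+1$.

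\medskip
\textbf{Step 1 (preliminary bounds).} First I would fix $s\ge 0$ and work on the interval $[s,s+1]$. By Theorem~\ref{T:mainSchloegl} we have $\norm{z(t)}{H}\le\norm{z(s)}{H}$ for all $t\in[s,s+1]$, and by Lemma~\ref{L:expdec.largez0} combined with Corollary~\ref{C:estL6cont} and Lemma~\ref{L:estL6free} we control $\norm{y_\ttt(t)}{L^6}$ and $\norm{z(t)}{L^6}\le\norm{y_\ttc(t)}{L^6}+\norm{y_\ttt(t)}{L^6}$ uniformly on $[s,s+1]$ by a constant of the stated dependence $\ovlineC{\norm{\zeta}{\infty},\norm{y_\ttt(0)}{L^6},\norm{z(0)}{L^6},\tau_h^{-1},C_h,\norm{U_M^\diamond\overline\clK_M^\lambda}{\clL(H)}}$. (Here $\norm{z(0)}{L^6}$ bounds $\norm{y_{\ttc0}}{L^6}$ via $\norm{y_{\ttt0}}{L^6}$.)

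\medskip
\textbf{Step 2 ($L^2$-in-time $V$-bound).} Testing \eqref{sys-z-feedf-hat} with $z$ gives $\tfrac12\tfrac{\rmd}{\rmd t}\norm{z}{H}^2+\norm{z}{V}^2-\norm{z}{H}^2+(f^{y_\ttt}(z),z)_H=(U_M^\diamond\overline\clK_M^\lambda(z),z)_H\le 0$ by Lemma~\ref{L:Ku-monot}. Using the cubic structure of $f^{y_\ttt}$ (its leading $z^4$ term has a good sign, lower order terms absorbed as in the proof of Theorem~\ref{T:mainSchloegl}), integrating over $[s,s+1]$ and using Step~1 yields $\int_s^{s+1}\norm{z}{V}^2\,\rmd t\le \ovlineC{\cdots}\,\norm{z(s)}{H}^2$. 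This identifies a time $t_0\in[s,s+1/2]$ with $\norm{z(t_0)}{V}^2\le 2\ovlineC{\cdots}\norm{z(s)}{H}^2$; alternatively one keeps the integral bound and uses the weight below.

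\medskip
\textbf{Step 3 (pointwise $V$-bound via time weight).} Now test \eqref{sys-z-feedf-hat} with $Az$ to get $\tfrac12\tfrac{\rmd}{\rmd t}\norm{z}{V}^2+\norm{Az}{H}^2 = \norm{z}{V}^2 - (f^{y_\ttt}(z),Az)_H + (U_M^\diamond\overline\clK_M^\lambda(z),Az)_H$. Using Lemma~\ref{L:esthatf} with $\norm{f^{y_\ttt}(z)}{H}\norm{Az}{H}\le \tfrac14\norm{Az}{H}^2 + C_{f^{y_\ttt}}^2(\norm{z}{L^6}^2+\norm{y_\ttt}{L^6}^2+1)^2\norm{z}{L^6}^2$, the Sobolev embedding $V\xhookrightarrow{}L^6$, the bound $\norm{U_M^\diamond\overline\clK_M^\lambda}{\clL(H)}$ on the feedback term, and Step~1, one arrives at $\tfrac{\rmd}{\rmd t}\norm{z}{V}^2\le C_1\norm{z}{V}^2 + C_2\norm{z}{V}^2$-type terms — more precisely a differential inequality $\tfrac{\rmd}{\rmd t}\norm{z}{V}^2\le g(t)\norm{z}{V}^2 + C_3$ with $\int_s^{s+1} g\,\rmd t$ controlled by Step~1 and Step~2. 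Multiplying by the weight $(t-s)$, integrating over $[s,s+1]$, applying a Gronwall argument and the $\int_s^{s+1}\norm{z}{V}^2$ bound from Step~2 to handle the $\int (t-s)'\norm{z}{V}^2 = \int\norm{z}{V}^2$ term, gives $\norm{z(s+1)}{V}^2\le \overline D\,\norm{z(s)}{H}^2$ with $\overline D$ of the claimed form; the $C_3$ contribution is absorbed since it is itself $\le \ovlineC{\cdots}\norm{z(s)}{H}^2$ after using that $\norm{z(s)}{H}$ appears as a factor in all the $L^6$ bounds, or (if not) one restricts to $\norm{z(s)}{H}\ge 1$ and handles small data directly from Step~2.

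\medskip
\textbf{Main obstacle.} The delicate point is keeping every constant homogeneous of the right degree in $\norm{z(s)}{H}$ so that the final estimate is quadratic in $\norm{z(s)}{H}$ rather than merely $\norm{z(s+1)}{V}^2\le\overline D(1+\norm{z(s)}{H}^2)$; this forces care in how the cubic nonlinearity and the forcing-type constants $C_h$, $C_1\dnorm{\Omega}{}$ enter, and may require splitting into the regimes $\norm{z(s)}{H}\le 1$ (handled by the linear-in-time-integral bound of Step~2 directly) and $\norm{z(s)}{H}\ge 1$. Controlling the $f^{y_\ttt}$ term in Step~3 — which is cubic and thus a priori only in $L^2$ after using $V\xhookrightarrow{}L^6$, requiring the $\tfrac14\norm{Az}{H}^2$ absorption and an a priori $L^\infty((s,s+1),L^6)$ bound on $z$ that itself depends on $\norm{z(s)}{H}$ through Corollary~\ref{C:estL6cont} — is the technically heaviest step.
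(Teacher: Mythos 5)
Your proposal is correct and follows essentially the same route as the paper: a time-weighted parabolic smoothing estimate obtained by testing with $Az$, with the nonlinearity controlled through Lemma~\ref{L:esthatf} and the uniform $L^6$ bounds of Lemma~\ref{L:estL6free} and Corollary~\ref{C:estL6cont}, followed by Gronwall. The only technical difference is where the weight is inserted: the paper works with $w(t)=(t-s)z(t)$ from the start, so the extra term coming from differentiating the weight appears as $2(z,Aw)_H$ and is absorbed into $\norm{Aw}{H}^2$ by Young, leaving only $\norm{z}{L^2((s,s+1),H)}^2\le\norm{z(s)}{H}^2$ to control; you instead multiply the differential inequality for $\norm{z}{V}^2$ by $(t-s)$, which produces $\int_s^{s+1}\norm{z}{V}^2\,\rmd t$ and therefore requires your auxiliary Step~2 (which is itself fine, via \eqref{estz3hatf-z} and Lemma~\ref{L:Ku-monot}). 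Also, the homogeneity concern in your ``main obstacle'' paragraph does not materialize: Lemma~\ref{L:esthatf} bounds $\norm{f^{y_\ttt}(z)}{H}$ by a coefficient (depending only on the $L^6$ norms, which are uniformly bounded by quantities allowed in $\overline D$) times $\norm{z}{L^6}$, and the feedback term is bounded by $\norm{U_M^\diamond\overline\clK_M^\lambda}{\clL(H)}\norm{z}{H}$, so every source term in the weighted inequality is already quadratic in $z$ and no additive constant $C_3$ or regime-splitting is needed.
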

\begin{proof}
Let us fix~$s\ge0$. The function~$w(t)\coloneqq\tau_s(t)  z(t)$
solves, with~$\tau_s(t)\coloneqq t-s$,
\begin{align} \label{sys-tz-feedf-hat}
 &\tfrac{\p}{\p t} w =-A w  -\tau_s f^{y_\ttt}( z)+\tau_s U_M^\diamond\overline\clK_M( z)+ z,\qquad w(s)=0.
 \end{align}
After multiplying the dynamics by~$2Aw$, we find
\begin{align}
 &\tfrac{\rmd}{\rmd t} \norm{w}{V}^2+2\norm{Aw}{H}^2=-2(\tau_sf^{y_\ttt}( z),Aw)_{H}+2(\tau_s U_M^\diamond\overline\clK_M( z)+ z,Aw)_{H}.\notag
\end{align}
With~$C_{f^{y_\ttt}}$ as in Lemma~\ref{L:esthatf}, it follows that
\begin{align}
-2(\tau_sf^{y_\ttt}( z),Aw)_{H}&=-2\tau_s^2(f^{y_\ttt}(z),Az)_{H}
\le 2\tau_s^2 C_{f^{y_\ttt}}\left(\norm{z}{L^6}^2
+\norm{y_\ttt}{L^6}^2+1\right)\norm{ z}{L^6}\norm{A z}{H}\notag\\
&= 2 C_{f^{y_\ttt}}\left(\norm{z}{L^6}^2
+\norm{y_\ttt}{L^6}^2+1\right)\norm{ w}{L^6}\norm{A w}{H}\le 2\Xi\norm{ w}{V}\norm{A w}{H}\notag\\
\mbox{with}\quad\Xi(t)&\coloneqq C_{f^{y_\ttt}}\left(\norm{z(t)}{L^6}^2
+\norm{y_\ttt(t)}{L^6}^2+1\right)\norm{\Id}{\clL(V,L^6)},\notag
\end{align}
which leads us to
\begin{align}
 \tfrac{\rmd}{\rmd t} \norm{w}{V}^2&\le-\tfrac32\norm{Aw}{H} ^2+2\Xi^2\norm{w}{V} ^2 +2(\tau_s U_M^\diamond\overline\clK_M( z)+ z,Aw)_{H},\notag\\
&\le-\norm{Aw}{H} ^2+2\Xi^2\norm{w}{V} ^2 +4\norm{\tau_s (U_M^\diamond\overline\clK_M( z)}{H}^2 +4\norm{z}{H}^2.\label{dtz-smoo}
\end{align}
Together with the Gronwall inequality, and since~$w(s)=0$, we obtain
\begin{align}
\norm{w(t)}{V}^2&\le4\rme^{2\norm{\Xi}{L^2((s,t),\bbR)}^2}
\left(\norm{\tau_s (U_M^\diamond\overline\clK_M^\lambda( z)}{L^2((s,t),H)}^2 +\norm{z}{L^2((s,t),H)}^2\right),\notag
\end{align}
for all~$t\ge s$. Taking~$t=s+1$, we conclude that, for all~$s\ge0$,
\begin{align}
\norm{z(s+1)}{V}^2
&\le4\rme^{2\norm{\Xi}{L^2((s,s+1),\bbR)}^2}
\left(\norm{ (U_M^\diamond\overline\clK_M^\lambda}{\clL(H)}^2
+1\right)\norm{ z}{L^2((s,s+1),H)}^2.\notag
\end{align}

Finally, observe that with $D_0\coloneqq C_{f^{y_\ttt}}\norm{\Id}{\clL(V,L^6)}$ we have
\begin{align}
\norm{\Xi}{L^2((s,s+1),\bbR)}^2&\le D_0\norm{\norm{z}{L^6}^2
+\norm{y_\ttt}{L^6}^2+1}{L^2((s,s+1),\bbR)}^2,\notag\\
&\le 3D_0\left(\norm{z}{L^4((s,s+1),L^6)}^4
+\norm{y_\ttt}{L^4((s,s+1),L^6)}^4+1\right).\notag
\end{align}
By Lemma~\ref{L:estL6free} and Corollary~\ref {C:estL6cont}
we find that
\begin{align}
&\norm{y_\ttt(t)}{L^6}^6\le\norm{y_\ttt(0)}{L^6}^6+\tfrac{2-\rme^{-\tau_h}}{1-\rme^{-\tau_h}}\breve C_2\left(C_h^6+1\right)\eqqcolon D_1,\notag\\
&\norm{y_\ttc(t)}{L^6}^6\le\norm{y_\ttc(0)}{L^6}^6+\tfrac{2-\rme^{-\tau_h}}{1-\rme^{-\tau_h}}\breve C_3\left(C_h^6+\norm{y_\ttc(0)-y_\ttt(0)}{H}^6+1\right)\eqqcolon D_2,\notag
\end{align}
and, recalling that~$z=y_\ttc-y_\ttt$,
\begin{align}
&\norm{\Xi}{L^2((s,s+1),\bbR)}^2
\le 3D_0\left(D_1^\frac46+D_2^\frac46+1\right)\notag\\
&\hspace{1em}\le 3D_0\left(\norm{y_\ttt(0)}{L^6}^4+\norm{y_\ttt(0)+z(0)}{L^6}^4+(\tfrac{2-\rme^{-\tau_h}}{1-\rme^{-\tau_h}})^\frac46
(\breve C_2^\frac46+\breve C_3^\frac46)
\left(C_h^6+\norm{z(0)}{H}^6+1\right)^\frac46\right)\notag\\
&\hspace{1em}
\eqqcolon D_4\le\ovlineC{\norm{\zeta}{\infty},\norm{y_\ttt(0)}{L^6},\norm{z(0)}{L^6},\tau_h^{-1},C_h,\norm{U_M^\diamond\overline\clK_M^\lambda}{\clL(H)}},\label{L2Xi}
\end{align}
which leads us to
\begin{align}
\norm{z(s+1)}{V}^2
&\le4\rme^{2D_4}
\left(\norm{ U_M^\diamond\overline\clK_M^\lambda}{\clL(H)}^2
+1\right)\norm{ z}{L^2((s,s+1),H)}^2.\notag
\end{align}
We finish the proof by observing that
$\norm{z}{L^2((s,s+1),H)}^2\le\norm{z}{L^\infty((s,s+1),H)}^2=\norm{z(s)}{H}^2$,  due to Theorem~\ref{T:mainSchloegl}.
\end{proof}

%%%%%%%%%%%%%%%%%%%%%%%%%%%
\subsection{Proof of Theorem~\ref{T:mainSchloegl-st}}\label{sS:proofT:mainSchloegl-st}
Multiplying the dynamics in~\eqref{sys-z-feedf-hat} by~$2Az$ and proceeding as in the proof of Lemma~\ref{L:smoo} we can arrive at the analog of~\eqref{dtz-smoo},
\begin{align}
 \tfrac{\rmd}{\rmd t} \norm{z}{V}^2 &\le-\tfrac32\norm{Az}{H} ^2+2\Xi^2\norm{z}{V} ^2 +2(U_M^\diamond\overline\clK_M( z),Az)_{H},\notag\\
&\le-\norm{Az}{H} ^2+2\Xi^2\norm{z}{V} ^2 +2\norm{U_M^\diamond\overline\clK_M(z)}{H}^2.
\end{align}
Hence, the Gronwall inequality and~\eqref{L2Xi} give us, for~$0\le s\le t\le s+1$,
\begin{align}
\norm{z(t)}{V}^2&\le\rme^{2D_4}
\left(\norm{z(s)}{V}^2+2\norm{U_M^\diamond\overline\clK_M z}{L^2((s,t),H)}^2\right)\notag\\
&\le\rme^{2D_4+\mu}\rme^{-\mu(t-s)}
\left(\norm{z(s)}{V}^2+2\norm{U_M^\diamond\overline\clK_M}{\clL(H)}^2\norm{z}{L^2((s,t),H)}^2\right)\notag\\
&\le C_1\rme^{-\mu(t-s)}
\norm{z(s)}{V}^2,\quad\mbox{for all}\quad 0\le s\le t\le s+1,\label{expVtsmall}
\end{align}
with~$C_1=\rme^{2D_4+\mu}\left(1+2\norm{U_M^\diamond\overline\clK_M}{\clL(H)}^2\right)\le
\ovlineC{\mu,\norm{\zeta}{\infty},\norm{y_\ttt(0)}{L^6},\norm{z(0)}{L^6},\tau_h^{-1},C_h,\norm{U_M^\diamond\overline\clK_M^\lambda}{\clL(H)}}$.

Next, for~$t\ge s+1\ge1$, by Lemma~\ref{L:smoo} and Theorem~\ref{T:mainSchloegl} we have that
\begin{align}
\norm{z(t)}{V}^2
&\le \overline D\norm{z(t-1)}{H}^2\le  \overline D\rme^{-\mu(t-1-s)}\norm{z(s)}{H}^2\notag\\
&\le \overline D\rme^\mu\norm{\Id}{\clL(V,H)}^2\rme^{-\mu(t-s)}\norm{z(s)}{V}^2
,\quad t\ge s+1,\label{expVtlarge}
\end{align}
with~$\overline D\le\ovlineC{\norm{\zeta}{\infty},\norm{y_\ttt(0)}{L^6},\norm{z(0)}{L^6},\tau_h^{-1},C_h,\norm{U_M^\diamond\overline\clK_M^\lambda}{\clL(H)}}$.

We finish the proof by combining~\eqref{expVtsmall} and~\eqref{expVtlarge},
\begin{align}
\norm{z(t)}{V}^2
&\le \varrho
\rme^{-\mu (t-s)}\norm{z(s)}{V}^2,\quad\mbox{for all}\quad t\ge s\ge0,
\end{align}
with $\varrho\coloneqq \max\{C_1,\overline D\rme^\mu\norm{\Id}{\clL(V,H)}^2\}$.
\qed

%%%%%%%%%%%%%%%%%%%%%%%%%%%
%%%%%%%%%%%%%%%%%%%%%%%%%%%
 \section{Existence of optimal controls}\label{S:optimalcontrol}

The main goal of this section is to show the existence and boundedness of controls minimizing classical energy-like functionals.

Let~$z= y_\ttc-y_\ttt$, where~$y_\ttt$ is the solution of the free dynamics~\eqref{sys-haty} and~$y_\ttc$ is the solution of~\eqref{sys-y-u-Cu}. Then, for an arbitrary open time interval~$I\subseteq\bbR^+$, with~$s_0\coloneqq \inf I$, we have that $z$ satisfies
\begin{align} \label{sys-z-hat}
 &\dot z =-A z -f^{y_\ttt}(z) +U_M^\diamond u,\qquad z(s_0)=z_0,\qquad \dnorm{u(t)}{}\le C_u,
 \end{align}
with$z_0\coloneqq y_{\ttc}(s_0)-y_{\ttt}(s_0)$, $f^{y_\ttt}$ as in~\eqref{sys-z-feedf-hat}, and~$U_M^\diamond$ as in~\eqref{UMdiam}.

We introduce   the energy functional~\eqref{Jy0},
\begin{equation}\label{Jz0}
\clJ_{I}^Q(z,u)\coloneqq\tfrac12\norm{Qz}{L^2(I,{H}
)}^2+\tfrac12\norm{u}{L^2(I,\bbR^{M_\sigma})}^2.
\end{equation}

The operator~$Q$ will need to satisfy an assumption involving the error dynamics with an arbitrary external forcing~$g\in L^2(I,H)$. For this purpose, we consider the system
\begin{align} \label{sys-z-g}
 &\dot w =-A w -f^{y_\ttt}(w) +g,\qquad w(s_0)=w_0\in V.
 \end{align}

\begin{assumption}\label{A:Q}
For~$Q\in\clL(\rmD(A),H)$, there exists a constant~$C_Q>0$ such that  the solutions of~\eqref{sys-z-g} satisfy the estimate
\begin{align}
\norm{z}{L^2(I,V)}^2\le C_Q\left(\norm{w_0}{H}^2+\norm{Qz}{L^2(I,H)}^2+\norm{g}{L^2(I,H)}^2\right).
  \notag 
 \end{align}
with~$C_Q$ independent of~$(w_0,g)$.
\end{assumption}

This assumption will be discussed  below. Now, let us denote the Hilbert spaces
\begin{subequations}\label{opt-setting}
\begin{align}
&\clX_I\coloneqq W(I,{\rmD(A)},H)\times L^2(I,{\bbR^{M_\sigma}}),\\
&\clY_I\coloneqq L^2(I,H)\times V,
\intertext{the convex sets}
&\clC_I\coloneqq\{(z,u)\in\clX_{I}\mid \dnorm{u(t)}{}\le C_u\mbox{ for } t\ge s_0\}
\intertext{and the mapping}
&\clG_I^{z_0}\colon\clX_I\to\clY_I,\qquad(z,u)\mapsto(\dot z +Az +f^{y_\ttt}(z) -U_M^\diamond u, z(s_0)-z_0).
\intertext{Then, we define the set}
&\fkX_{I}^{z_0}\coloneqq\{(z,u)\in\clX_{I}\mid \clG_I^{z_0}(z,u)=(0,0)\mbox{ and } (z,u)\in\clC_I\}
\end{align}
\end{subequations}
and consider the minimization problem as follows.
\begin{problem}\label{Pb:OCPI}
Given~$Q$ as in Assumption~\ref{A:Q} and~$z_0\in V$, find
\[
(z_{I}^{z_0} ,u_{I}^{z_0} )\in\argmin_{(z,u)\in\fkX_{I}^{z_0}}\clJ_I^Q(z,u).
\]
\end{problem}

%%%%%%%%%%%%%%%%%%%%%%%%%%%%
We are particularly interested in the case~$I=\bbR_+$.

Hereafter, pairs as~$(z_{I}^{z_0} ,u_{I}^{z_0})$ are assumed to solve Problem~\ref{Pb:OCPI}. Here, it is also understood that we are referring to the initial state~$z_0$ and time interval~$I$, included in the notation.
The main result of this section is as follows, the proof of which shall be given in section~\ref{sS:proofT:existOptim}.
\begin{theorem}\label{T:existOptim}
Let~$Q\in\clL(\rmD(A),H)$ satisfy Assumption~\ref{A:Q} and an let~$I\subseteq\bbR_+$ be a nonempty open interval. Then, for every~$z_0\in V$ there exists~$(z_{I}^{z_0} ,u_{I}^{z_0} )\in \fkX_{I}^{z_0}$  solving Problem~\ref{Pb:OCPI}. Further, there exists a constant~$\widehat C>0$, independent of~$(z_0,I)$ so that
$\clJ_{I}^Q(z_{I}^{z_0} ,u_{I}^{z_0})\le \widehat C\norm{z_0}{V}^2$.
\end{theorem}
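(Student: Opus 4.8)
The plan is to prove Theorem~\ref{T:existOptim} by the direct method of the calculus of variations, combined with the a~priori bound that Assumption~\ref{A:Q} provides. The first step is to verify that the feasible set~$\fkX_{I}^{z_0}$ is nonempty: indeed, the saturated feedback control~$u(t)=\overline\clK_M^\lambda(z(t))$ of Section~\ref{S:Schloegl} produces an admissible pair (it satisfies~$\dnorm{u(t)}{}\le C_u$ by construction and, by Theorem~\ref{T:mainSchloegl-st}, the associated state~$z$ decays exponentially in~$V$, hence lies in~$W(I,\rmD(A),H)$ after the standard parabolic regularity/smoothing argument as in Lemma~\ref{L:smoo}), so the infimum is over a nonempty set and is finite. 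Let~$(z^n,u^n)\in\fkX_{I}^{z_0}$ be a minimizing sequence, so that~$\clJ_I^Q(z^n,u^n)$ is bounded; in particular~$\norm{u^n}{L^2(I,\bbR^{M_\sigma})}$ and~$\norm{Qz^n}{L^2(I,H)}$ are bounded. The second, and crucial, step is to extract a uniform bound on~$\norm{z^n}{L^2(I,V)}$ (and then on~$\norm{z^n}{\clX_I}$): this is precisely what Assumption~\ref{A:Q} delivers, since~$z^n$ solves~\eqref{sys-z-g} with~$g=U_M^\diamond u^n$ and initial datum~$z_0$, giving
\[
\norm{z^n}{L^2(I,V)}^2\le C_Q\left(\norm{z_0}{H}^2+\norm{Qz^n}{L^2(I,H)}^2+\norm{U_M^\diamond u^n}{L^2(I,H)}^2\right)\le C_Q\left(\norm{z_0}{H}^2+2\clJ_I^Q(z^n,u^n)\norm{U_M^\diamond}{\clL(\bbR^{M_\sigma},H)}^2\right),
\]
which is bounded uniformly in~$n$.

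Once the~$L^2(I,V)$-bound is in hand, I would bootstrap to a bound in~$\clX_I=W(I,\rmD(A),H)\times L^2(I,\bbR^{M_\sigma})$. The key point is controlling~$\norm{f^{y_\ttt}(z^n)}{L^2(I,H)}$: using Lemma~\ref{L:esthatf} together with the persistent~$L^6$-bound on~$y_\ttt$ from Lemma~\ref{L:estL6free} and a smoothing estimate (multiply the equation by~$Az^n$, as in the proof of Lemma~\ref{L:smoo}) one obtains~$z^n$ bounded in~$L^\infty(I,V)\cap L^2(I,\rmD(A))$ and hence~$f^{y_\ttt}(z^n)$ bounded in~$L^2(I,H)$; then~$\dot z^n=-Az^n-f^{y_\ttt}(z^n)+U_M^\diamond u^n$ is bounded in~$L^2(I,H)$, so~$(z^n,u^n)$ is bounded in~$\clX_I$. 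Passing to a subsequence, $(z^n,u^n)\rightharpoonup(z^*,u^*)$ weakly in~$\clX_I$. The third step is to pass to the limit in the constraint set and the equation: $\clC_I$ is closed and convex, hence weakly closed, so~$(z^*,u^*)\in\clC_I$; the linear operators~$\dot z+Az$ and the evaluation~$z\mapsto z(s_0)$ pass to the weak limit; for the nonlinear term~$f^{y_\ttt}(z^n)$ one uses the Aubin--Lions compact embedding~$W(I,\rmD(A),H)\hookrightarrow\hookrightarrow L^2(I,V)$ (on bounded subintervals, then exhaust~$I$ if~$I=\bbR_+$) to get~$z^n\to z^*$ strongly in~$L^2_{\rm loc}(I,V)\hookrightarrow L^2_{\rm loc}(I,L^6)$, which suffices to identify the weak limit of~$f^{y_\ttt}(z^n)$ as~$f^{y_\ttt}(z^*)$. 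Therefore~$\clG_I^{z_0}(z^*,u^*)=(0,0)$ and~$(z^*,u^*)\in\fkX_I^{z_0}$. Finally, weak lower semicontinuity of the convex continuous functional~$\clJ_I^Q$ gives~$\clJ_I^Q(z^*,u^*)\le\liminf_n\clJ_I^Q(z^n,u^n)=\inf$, so~$(z^*,u^*)$ is a minimizer. The bound~$\clJ_I^Q(z_I^{z_0},u_I^{z_0})\le\widehat C\norm{z_0}{V}^2$ follows by comparing with the feedback competitor: plugging the saturated feedback pair into~$\clJ_I^Q$ and using the exponential decay estimate~\eqref{Tmain.expH1} (and~\eqref{goal-diff-L2}) to bound both the state term and the control term by a multiple of~$\norm{z_0}{V}^2$, with constant independent of~$(z_0,I)$ since~$I\subseteq\bbR_+$ only shortens the integrals.

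The main obstacle I expect is the handling of the unbounded interval~$I=\bbR_+$ together with the identification of the nonlinear limit. On an unbounded interval the Aubin--Lions lemma does not apply directly, so one must argue on an increasing exhaustion~$I\cap(0,T)$, extract a diagonal subsequence, and check that the pointwise-in-time limit~$z^*$ still lies in the global space~$W(\bbR_+,\rmD(A),H)$ with the uniform bounds surviving the limit; the uniform (in~$T$) bounds coming from Assumption~\ref{A:Q} and the persistent-bound Lemma~\ref{L:estL6free} are exactly what makes this work, but the bookkeeping is delicate. A secondary subtlety is ensuring the feasible set is nonempty with a competitor that genuinely lies in~$W(I,\rmD(A),H)$ (not merely in a weaker energy space); this requires the smoothing property already established in Lemma~\ref{L:smoo}, so that the feedback trajectory~$z$ is instantaneously in~$\rmD(A)$ and the $\rmD(A)$-norm is square-integrable in time thanks to the exponential decay.
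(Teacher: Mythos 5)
Your proposal follows essentially the same route as the paper: nonemptiness of $\fkX_I^{z_0}$ via the saturated feedback competitor, a minimizing sequence bounded in $\clX_I$ through Assumption~\ref{A:Q} plus the $L^\infty(I,V)\cap L^2(I,\rmD(A))$ bootstrap (your inline bootstrap is precisely the content of Lemma~\ref{L:key-exist-Q}, which the paper invokes directly), weak compactness with Aubin--Lions on an exhaustion of $I$ to identify the nonlinear limit, weak lower semicontinuity, and comparison with the feedback pair for the final estimate.

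One step needs care, though: for the bound $\clJ_I^Q(z_I^{z_0},u_I^{z_0})\le\widehat C\norm{z_0}{V}^2$ with $\widehat C$ \emph{independent of} $z_0$, you cannot rely on the $V$-norm decay~\eqref{Tmain.expH1}, because its transient constant $\varrho$ depends on $\norm{y_{\ttt0}-y_{\ttc0}}{L^6}$, i.e.\ on $z_0$ itself. The paper instead estimates the competitor using only the $L^2$-decay~\eqref{Tmain.expL2} (whose constant is $1$), obtaining $\clJ_I^Q(\overline z,\overline u)\le\tfrac12(1+\norm{\clK_M^\lambda}{\clL(L^2,\bbR^{M_\sigma})}^2)(2\mu)^{-1}\norm{z_0}{H}^2\le\widehat C\norm{z_0}{V}^2$; your parenthetical reference to~\eqref{goal-diff-L2} is the right tool, and you should make it the sole one for this step.
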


%%%%%%%%%%%%%%%%%%%%%%%%%%%%
\subsection{Remark on Assumption~\ref{A:Q}}\label{sS:RemA:Q}
Clearly, the requirement in Assumption~\ref{A:Q} is satisfied with~$Q=A^\frac12\in\clL(V,H)\subset\clL(\rmD(A),H)$. However, for applications it may be convenient to take~$Q=\Id$ or even~$Q$ with finite-dimensional range.
We show now the satisfiability of the assumption in the case where~$Q=P_{\clE_{M_1}^\rmf}$ is the orthogonal projection in~$H$ onto the linear span~$\clE_{M_1}^\rmf$ of the first~$M_1$ eigenfunctions~$e_i$ of the diffusion operator~$A$,
\[
\clE_{M_1}^\rmf\coloneqq\linspan\{e_i\mid 1\le i\le M_1\}.
\]

\begin{lemma}\label{L:key-exist-Id}
 There exists~$M_{1*}\in\bbN$ such that Assumption~\ref{A:Q} holds with~$Q=P_{\clE_{M_1}^\rmf}$, for all~$M_1\ge M_{1*}$. Furthermore, $C_Q=\ovlineC{\norm{\zeta}{\infty}}$ and~$M_1\le  \ovlineC{\norm{\zeta}{\infty}}$, with~$\norm{\zeta}{\infty}$ as in~\eqref{zetainf}.
\end{lemma}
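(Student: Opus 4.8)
The plan is to deduce Assumption~\ref{A:Q} for $Q=P_{\clE_{M_1}^\rmf}$ from a single $H$-energy estimate for the system~\eqref{sys-z-g}, combined with the spectral gap of the diffusion operator $A$ on the orthogonal complement of $\clE_{M_1}^\rmf$, the index $M_1$ being chosen large enough that the ``bad'' constant generated by the cubic nonlinearity gets absorbed.

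First I would test~\eqref{sys-z-g} with $w$ in $H$; since $(Aw,w)_H=\norm{w}{V}^2$ this gives
\[
\tfrac12\tfrac{\rmd}{\rmd t}\norm{w}{H}^2+\norm{w}{V}^2+(f^{y_\ttt}(w),w)_H=(g,w)_H .
\]
The key point is that the cubic term is dissipative up to a constant independent of the trajectory $y_\ttt$: recalling $f^{y_\ttt}$ from~\eqref{sys-z-feedf-hat},
\[
(f^{y_\ttt}(w),w)_H=\int_\Omega w^2\bigl(w^2+(3y_\ttt+\xi_2)w+3y_\ttt^2+2\xi_2y_\ttt+\xi_1-1\bigr)\,\rmd x ,
\]
where the inner quadratic in $w$ has leading coefficient $1$, so its minimum over $w\in\bbR$, then over $y_\ttt\in\bbR$, equals $\xi_1-1-\tfrac13\xi_2^2=-\bigl(\tfrac12C_0+1\bigr)$ with $C_0$ as in~\eqref{varpi}. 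Hence $(f^{y_\ttt}(w),w)_H\ge-\bigl(\tfrac12C_0+1\bigr)\norm{w}{H}^2$, and a Young inequality on $(g,w)_H$ gives, with $C_0':=C_0+3=\ovlineC{\norm{\zeta}{\infty}}$ (note $C_0'\ge1$, since $C_0=\tfrac23\xi_2^2-2\xi_1=\tfrac13\sum_{1\le i<j\le3}(\zeta_i-\zeta_j)^2\ge0$),
\[
\tfrac{\rmd}{\rmd t}\norm{w}{H}^2+2\norm{w}{V}^2\le C_0'\norm{w}{H}^2+\norm{g}{H}^2 .
\]
For the solution $w\in W(I,\rmD(A),H)\hookrightarrow\clC(\overline I,V)$ of~\eqref{sys-z-g} all products here are integrable by Lemma~\ref{L:esthatf}, and $\tfrac{\rmd}{\rmd t}\norm{w}{H}^2=2\langle\dot w,w\rangle$ is the standard identity; alternatively one carries the estimate out on Galerkin approximations and passes to the limit.

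Next I would exploit that $Q=P_{\clE_{M_1}^\rmf}$ commutes with $A$, so that $\norm{w}{V}^2\ge\norm{(\Id-Q)w}{V}^2$, whereas on the range of $\Id-Q$ the operator $A$ is bounded below by its $(M_1+1)$-st eigenvalue $\kappa_{M_1+1}$, where $1=\kappa_1\le\kappa_2\le\cdots\to+\infty$ enumerate the eigenvalues of $A$. This yields
\[
\norm{w}{H}^2=\norm{Qw}{H}^2+\norm{(\Id-Q)w}{H}^2\le\norm{Qw}{H}^2+\tfrac{1}{\kappa_{M_1+1}}\norm{w}{V}^2 .
\]
Taking $M_{1*}$ to be the least integer with $\kappa_{M_{1*}+1}\ge C_0'$, for every $M_1\ge M_{1*}$ the term $\tfrac{C_0'}{\kappa_{M_1+1}}\norm{w}{V}^2\le\norm{w}{V}^2$ is absorbed into the left-hand side of the energy inequality above, leaving $\tfrac{\rmd}{\rmd t}\norm{w}{H}^2+\norm{w}{V}^2\le C_0'\norm{Qw}{H}^2+\norm{g}{H}^2$.

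Finally I would integrate this over $(s_0,T)$ with $s_0=\inf I$, discard the nonnegative term $\norm{w(T)}{H}^2$, and let $T\uparrow\sup I$; since $C_0'\ge1$ this gives
\[
\norm{w}{L^2(I,V)}^2\le C_0'\bigl(\norm{w_0}{H}^2+\norm{Qw}{L^2(I,H)}^2+\norm{g}{L^2(I,H)}^2\bigr),
\]
which is exactly Assumption~\ref{A:Q} with $C_Q=C_0'=\ovlineC{\norm{\zeta}{\infty}}$; and since $\kappa_i\to+\infty$ while $C_0'=\ovlineC{\norm{\zeta}{\infty}}$, the index $M_{1*}$ above obeys $M_{1*}\le\ovlineC{\norm{\zeta}{\infty}}$ (with the dependence on the fixed data $\nu,\Omega$ suppressed). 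The only genuinely delicate step is the first one: the dissipativity estimate for $(f^{y_\ttt}(w),w)_H$ must hold uniformly in the a priori only $L^6$-bounded, time-dependent trajectory $y_\ttt$, which is precisely why one completes the square pointwise rather than estimating the mixed terms $y_\ttt w^2$ and $y_\ttt^2 w$ by H\"older; everything else is a routine spectral-gap absorption.
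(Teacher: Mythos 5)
Your proof is correct and follows essentially the same route as the paper's: a single $H$-energy estimate in which the contribution of the cubic nonlinearity is bounded below by $-C\norm{w}{H}^2$ with $C=\ovlineC{\norm{\zeta}{\infty}}$, followed by absorption of the low-order term on the range of $\Id-Q$ via the spectral gap of $A$ once $M_1$ is large enough. The only differences are cosmetic: you obtain the dissipativity constant $\tfrac12C_0+1$ by completing the square pointwise and absorb at the differential level, whereas the paper invokes a ready-made estimate $-2(f^{y_\ttt}(z),z)_H\le-\tfrac18\norm{z}{L^4}^4+C_1\norm{z}{H}^2$ and performs the splitting $z=\theta+\Theta$ after time integration.
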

\begin{proof}
Note that~$Q=P_{\clE_{M_1}^\rmf}\in\clL(H)\subset\clL(\rmD(A),H)$.
Recalling $f^{y_\ttt}$ defined in~\eqref{sys-z-feedf-hat},
in~\cite[Sect.~2.2]{AzmiKunRod21-arx} we find the estimate
\begin{align} \label{estz3hatf-z}
-2(f^{y_\ttt}(z),z)_{H}&\le-\tfrac18\norm{z}{L^4}^4+C_1 \norm{z}{H}^2.
\end{align}
with~$C_1\le\ovlineC{\norm{\zeta}{\infty}}$.
Multiplying the dynamics with~$2z$, we find
\begin{align}
\tfrac{\rmd}{\rmd t}\norm{z}{H}^2
 &\le-2\norm{z}{V}^2+C_1\norm{z}{H}^2+2(g,z)_{H}\le-\norm{z}{V}^2+C_1\norm{z}{H}^2+\norm{g}{H}^2.\notag
 \end{align}
Thus, after time integration, we obtain
\begin{align}
 \norm{z}{L^2(I,V)}^2
 &\le\norm{z(s_0)}{H}^2+C_1\norm{z}{L^2(I,H)}^2+\norm{g}{L^2(I,H)}^2\notag\\
&\le C_2\left(\norm{z_0}{H}^2+\norm{z}{L^2(I,H)}^2+\norm{g}{L^2(I,H)}^2\right), \notag
 \end{align}
with~$C_2\coloneqq 1+C_1=\ovlineC{\norm{\zeta}{\infty}}$.
Now, writing~$z=\theta+\Theta$ with the orthogonal components~$\theta\coloneqq P_{\clE_{M_1}^\rmf}z$ and~$\Theta\coloneqq z-\theta$, we find
\begin{align}
 2\norm{z}{L^2(I,V)}^2
 &\le 2C_2\left(\norm{z_0}{H}^2+\norm{\theta}{L^2(I,H)}^2+\norm{\Theta}{L^2(I,H)}^2+\norm{g}{L^2(I,H)}^2\right)\notag\\
&\le 2C_2\left(\norm{z_0}{H}^2+\norm{\theta}{L^2(I,H)}^2+\alpha_{M_1}^{-1}\norm{\Theta}{L^2(I,V)}^2+\norm{g}{L^2(I,H)}^2\right),\notag
 \end{align}
 where~$\alpha_{M_1}$ is the~$M_1$th eigenvalue of~$A$. Since~$\alpha_{i}\to+\infty$, we conclude that for large enough~$M_1$, with~ $\alpha_{M_1}\ge 2C_2$, we have
\begin{align}
\norm{z}{L^2(I,V)}^2\le 2\norm{\theta}{L^2(I,V)}^2+\norm{\Theta}{L^2(I,V)}^2
&\le 2C_2\left(\norm{z_0}{H}^2+\norm{\theta}{L^2(I,H)}^2+\norm{g}{L^2(I,H)}^2\right).
 \notag
 \end{align}

 Defining~$M_{1*}=\min\{i\in\bbN_+\mid \alpha_{i}\ge 2C_2\}$ we observe that the requirement~$\alpha_{M_1}\ge 2C_2$ is achieved for~$M_1\ge M_{1*}=\ovlineC{C_2}=\ovlineC{\norm{\zeta}{\infty}}$, which finishes the proof.
\end{proof}

%%%%%%%%%%%%%%%%%%%%%%%%%%%%
\subsection{A key auxiliary result}\label{sS:exist-optim-key}
In this section we prove an estimate for the controlled solutions, which we shall use to derive the  existence of optimal controls.
 \begin{lemma}\label{L:key-exist-Q}
Let~$Q\in\clL(\rmD(A),H)$ be as in Assumption~\ref{A:Q}. Then, there exists a constant~$ D_Q>0$ such that, for every interval~$I\subseteq\bbR_+$, $z_0\in V$, and~$u\in L^2(I,\bbR^{M_\sigma})$ we have that the solution~$z$ of the system
\begin{align}
 &\dot z =-A z -f^{y_\ttt}(z) +U_M^\diamond u,\qquad z(s_0)=z_0,\notag
 \end{align}
where~$s_0=\inf I$,
satisfies the estimate
\[
\norm{z}{W(I,\rmD(A),H)}^2\le D_Q\left(\norm{z_0}{V}^2+\norm{Qz}{L^2(I,H)}^2+\norm{u}{L^2(I,\bbR^{M_\sigma})}^2\right).
\]
Furthermore, the constant~$D_Q$ is independent of~$s_0$ and can be taken as
\[
D_Q\le \ovlineC{\norm{\zeta}{\infty},\norm{U_M^\diamond}{\clL(\bbR^{M_\sigma},H)},C_h,\tfrac1{\tau_h},\norm{y_\ttt(0)}{L^6},\norm{z_0}{V},\norm{Qz}{L^2(I,H)},\norm{u}{L^2(I,\bbR^{M_\sigma})}}.
\]
\end{lemma}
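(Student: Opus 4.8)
The plan is to derive the $W(I,\rmD(A),H)$ bound in two stages: first an $L^2(I,V)$ estimate, and then, bootstrapping from it, the full $\rmD(A)$-regularity. The $L^2(I,V)$ bound comes essentially for free from Assumption~\ref{A:Q}: taking~$g=U_M^\diamond u$ in~\eqref{sys-z-g}, the hypothesis gives
\[
\norm{z}{L^2(I,V)}^2\le C_Q\left(\norm{z_0}{H}^2+\norm{Qz}{L^2(I,H)}^2+\norm{U_M^\diamond u}{L^2(I,H)}^2\right),
\]
and since~$\norm{U_M^\diamond u}{L^2(I,H)}\le\norm{U_M^\diamond}{\clL(\bbR^{M_\sigma},H)}\norm{u}{L^2(I,\bbR^{M_\sigma})}$ and~$\norm{z_0}{H}\le\norm{z_0}{V}$, this is already of the desired form. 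In particular this also controls~$\norm{z}{L^\infty}$ on short subintervals, but more to the point it makes~$z\in L^2(I,V)$, hence~$z(t)\in L^6(\Omega)$ for a.e.~$t$, and (by the free-dynamics bound Lemma~\ref{L:estL6free}, since~$y_\ttt$ does not depend on the control) we also control~$\norm{y_\ttt}{L^\infty(I,L^6)}$ in terms of~$\norm{y_\ttt(0)}{L^6}$, $C_h$, $\tau_h$.

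Second stage: multiply the error dynamics by~$2Az$. Proceeding exactly as in the proof of Lemma~\ref{L:smoo} (but without the temporal weight~$\tau_s$, since we now have a genuine initial datum~$z_0\in V$ rather than a zero datum), one obtains
\[
\tfrac{\rmd}{\rmd t}\norm{z}{V}^2+\norm{Az}{H}^2\le 2\Xi^2\norm{z}{V}^2+2\norm{U_M^\diamond u}{H}^2,
\]
where~$\Xi(t)=C_{f^{y_\ttt}}\bigl(\norm{z(t)}{L^6}^2+\norm{y_\ttt(t)}{L^6}^2+1\bigr)\norm{\Id}{\clL(V,L^6)}$ as in Lemma~\ref{L:esthatf}. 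The subtlety here is that~$\Xi$ is only in~$L^2_{\rm loc}$ in time, not~$L^\infty$ — we cannot get a clean Gronwall estimate on all of~$I$ with an $L^\infty$-in-time coefficient. The fix is to note that, by the first stage together with Lemma~\ref{L:estL6free}, we have a bound
\[
\norm{\Xi}{L^2(I',\bbR)}^2\le D_4\qquad\text{for every unit subinterval } I'\subseteq I,
\]
with~$D_4$ of the asserted form (this is exactly~\eqref{L2Xi}, only now the $L^6$-bounds on~$z$ come from the first-stage $L^2(I,V)$ estimate rather than from Theorem~\ref{T:mainSchloegl}). Applying the Gronwall inequality on each unit subinterval~$[s_0+k,s_0+k+1]$ and summing (using that~$\rme^{2\norm{\Xi}{L^2}^2}$ is bounded uniformly across subintervals and that the $L^2(I,\cdot)$-norms of the right-hand side add up), one propagates the~$V$-bound across all of~$I$ and obtains
\[
\norm{z}{L^\infty(I,V)}^2+\norm{Az}{L^2(I,H)}^2\le \overline C\left(\norm{z_0}{V}^2+\norm{z}{L^2(I,V)}^2+\norm{u}{L^2(I,\bbR^{M_\sigma})}^2\right);
\]
a small care is needed that the summation over infinitely many subintervals (when~$I$ is unbounded) does not blow up, which is why the right-hand side is measured in $L^2(I,\cdot)$ rather than $L^\infty$ — the first-stage estimate makes~$\norm{z}{L^2(I,V)}^2$ finite, and the subinterval contributions telescope additively. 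Substituting the first-stage bound for~$\norm{z}{L^2(I,V)}^2$ then gives~$\norm{z}{L^2(I,\rmD(A))}^2+\norm{z}{L^\infty(I,V)}^2$ in the desired form.

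Finally, to close the~$W(I,\rmD(A),H)$ norm we still need~$\dot z\in L^2(I,H)$. Read this off the equation: $\dot z=-Az-f^{y_\ttt}(z)+U_M^\diamond u$, so~$\norm{\dot z}{L^2(I,H)}\le\norm{Az}{L^2(I,H)}+\norm{f^{y_\ttt}(z)}{L^2(I,H)}+\norm{U_M^\diamond u}{L^2(I,H)}$. The first and third terms are already controlled; for the middle term use Lemma~\ref{L:esthatf} together with the interpolation~$L^\infty(I,V)\cap L^2(I,\rmD(A))\hookrightarrow L^6(I,L^6)$ (or simply~$\norm{f^{y_\ttt}(z)}{L^2(I,H)}\le C_{f^{y_\ttt}}\bigl(\norm{z}{L^\infty(I,L^6)}^2+\norm{y_\ttt}{L^\infty(I,L^6)}^2+1\bigr)\norm{z}{L^2(I,L^6)}$, and~$\norm{z}{L^\infty(I,L^6)}\le C\norm{z}{L^\infty(I,V)}$). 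Collecting the constants and tracking their dependence through~$C_Q$ (Assumption~\ref{A:Q}), $\norm{U_M^\diamond}{\clL(\bbR^{M_\sigma},H)}$, $C_{f^{y_\ttt}}=\ovlineC{\norm{\zeta}{\infty}}$, and the $L^6$-bound for~$y_\ttt$ from Lemma~\ref{L:estL6free} yields the stated form of~$D_Q$. The main obstacle is the second stage: handling the merely-$L^2$-in-time nonlinearity coefficient~$\Xi$ on a possibly unbounded interval~$I$ without the Gronwall constant accumulating — resolved by the subinterval-and-sum argument, relying crucially on the uniform-in-$k$ bound~\eqref{L2Xi}-type estimate which is itself inherited from Lemma~\ref{L:estL6free}'s persistent $L^6$-bound.
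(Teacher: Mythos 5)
Your stage one (invoking Assumption~\ref{A:Q} with $g=U_M^\diamond u$) and your final step (reading $\dot z$ off the equation via Lemma~\ref{L:esthatf}) coincide with the paper. The gap is in stage two. By estimating $-(f^{y_\ttt}(z),Az)_H$ wholesale through Lemma~\ref{L:esthatf} you arrive at the coefficient $2\Xi^2$ with $\Xi^2\sim(\norm{z}{L^6}^2+\norm{y_\ttt}{L^6}^2+1)^2\gtrsim\norm{z}{V}^4$, i.e.\ a sextic term $\norm{z}{V}^6$ in the differential inequality. For Gronwall you then need $\Xi^2\in L^1(I)$, i.e.\ $z\in L^4(I,V)$, whereas stage one only delivers $z\in L^2(I,V)$. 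Your claimed uniform bound $\norm{\Xi}{L^2(I')}^2\le D_4$ on unit subintervals requires $\norm{z}{L^4(I',L^6)}^4$, which does \emph{not} follow from $\norm{z}{L^2(I,V)}^2<\infty$; in~\eqref{L2Xi} that bound came from the pointwise-in-time $L^6$ estimate of Corollary~\ref{C:estL6cont}, which rests on the stabilizing feedback and Theorem~\ref{T:mainSchloegl} and is unavailable here, where $u\in L^2(I,\bbR^{M_\sigma})$ is arbitrary. Moreover, even granting that uniform per-subinterval bound, chaining Gronwall across unit subintervals \emph{multiplies} the factors $\rme^{2D_4}$ (the initial datum of the $k$-th subinterval carries the accumulated product), so the $V$-bound on the $k$-th subinterval degrades like $\rme^{2kD_4}$ when $I$ is unbounded; the contributions do not telescope additively.

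The way out, which is the paper's route, is to not discard the sign of the leading cubic term against $Az$. Splitting as in~\eqref{hatf-est-exist}, the $y_\ttt$-free part satisfies
\begin{align}
(-z^3-\xi_2 z^2+(1-\xi_1)z,Az)_{H}
=(-z^2-\xi_2 z+1-\xi_1,z^2)_{H}+\nu(-3z^2-2\xi_2 z+1-\xi_1,\norm{\nabla z}{\bbR^d}^2)_{H}\le C_3\norm{z}{V}^2,\notag
\end{align}
with $C_3=\ovlineC{\norm{\zeta}{\infty}}$, because the leading coefficients $-z^2$ and $-3z^2$ are negative; only the $y_\ttt$-dependent part is estimated against $\norm{Az}{H}$, contributing $C_4\norm{z}{V}^2+\Phi\norm{z}{V}$ with $C_4,\Phi$ controlled uniformly in time by Lemma~\ref{L:estL6free}. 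After Young's inequality this produces a \emph{quartic} term $4C_4^2\norm{z}{V}^4=(4C_4^2\norm{z}{V}^2)\norm{z}{V}^2$, whose Gronwall coefficient $4C_4^2\norm{z}{V}^2$ lies in $L^1(I)$ precisely by your stage-one estimate, giving the single global constant $\rme^{4C_4^2\norm{z}{L^2(I,V)}^2}$ on all of $I$ at once. You should replace your second stage by this sharper treatment of the nonlinearity.
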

\begin{proof}
By Assumption~\ref{A:Q}, we have that
\begin{align}
\norm{z}{L^2(I,V)}^2\le C_Q\left(\norm{z_0}{H}^2+\norm{Qz}{L^2(I,H)}^2+\norm{U_M^\diamond u}{L^2(I,H)}^2\right).
  \label{zL2VQ}
 \end{align}

Multiplying the dynamics with~$2Az$
we find
\begin{align}
\tfrac{\rmd}{\rmd t}\norm{z}{V}^2
 &\le-2\norm{z}{\rmD(A)}^2-2(f^{y_\ttt}(z),Az)_{H}
+2\norm{U_M^\diamond u}{H}\norm{z}{\rmD(A)}\label{dtzDA-1}
 \end{align}

Recalling the expression for the nonlinearity in~\eqref{sys-z-feedf-hat}, we have that
\begin{subequations}\label{hatf-est-exist}
\begin{align}
-(f^{y_\ttt}(z),Az)_{H}
 &=(-z^3-(3y_\ttt+\xi_2) z^2-(3y_\ttt^2+2\xi_2y_\ttt+\xi_1-1) z,Az)_{H}\notag\\
&\hspace{-4em}=(-z^3-\xi_2 z^2+(1-\xi_1)z,Az)_{H} +(-3y_\ttt z^2-(3y_\ttt^2+2\xi_2y_\ttt) z,Az)_{H}.
\end{align}
From direct computations we obtain
\begin{align}
&(-z^3-\xi_2 z^2+(1-\xi_1)z,Az)_{H}\notag\\
&\hspace{1em}=(-z^2-\xi_2 z+1-\xi_1,z^2)_{H}+\nu(-3z^2-2\xi_2 z+1-\xi_1,\norm{\nabla z}{\bbR^d}^2)_{H}\le C_3\norm{z}{V}^2
\end{align}
with~$C_3\coloneqq\max\limits_{r\in\bbR}(-r^2-\xi_2 r+1-\xi_1)+\nu\max\limits_{s\in\bbR}(-3s^2-2\xi_2 s+1-\xi_1)$.
That is,
\[
C_3=\tfrac{(4\nu+3)\xi_2^2}{12}+(1+\nu)(1-\xi_1)\le\ovlineC{\norm{\zeta}{\infty}}.
\]
For the term involving the targeted state~$y_\ttt$ we find
\begin{align}
&\norm{-3y_\ttt z^2-(3y_\ttt^2+2\xi_2y_\ttt) z}{H}
\le3\norm{y_\ttt z^2}{H}+\norm{(3y_\ttt^2+2\xi_2y_\ttt) z}{H}\notag\\
&\hspace{3em}\le3\norm{y_\ttt}{L^6}\norm{z^2}{L^3}+\norm{3y_\ttt^2+2\xi_2y_\ttt}{L^3}\norm{z}{L^6}\le3\norm{y_\ttt}{L^6}\norm{z^2}{L^3}+\norm{4y_\ttt^2+\xi_2^2}{L^3}\norm{z}{L^6}\notag\\
&\hspace{3em}\le3\norm{y_\ttt}{L^6}\norm{z}{L^6}^2+(4\norm{y_\ttt}{L^6}^2+\xi_2^2\dnorm{\Omega}{}^\frac13)\norm{z}{L^6}
\le C_4\norm{z}{V}^2+\Phi\norm{z}{V}
 \end{align}
with~$C_4\coloneqq 3\norm{y_\ttt}{L^6}\norm{\Id}{\clL(V,L^6)}^2$ and~$\Phi\coloneqq (4\norm{y_\ttt}{L^6}^2+\xi_2^2\dnorm{\Omega}{}^\frac13)\norm{\Id}{\clL(V,L^6)}$.
\end{subequations}
Therefore, ~\eqref{dtzDA-1},~\eqref{hatf-est-exist}, and the Young inequality, give us
\begin{align}
\tfrac{\rmd}{\rmd t}\norm{z}{V}^2
 &\le-2\norm{z}{\rmD(A)}^2+2C_3\norm{z}{V}^2
+2\left(C_4\norm{z}{V}^2+\Phi\norm{z}{V}\right)\norm{z}{\rmD(A)}
+2\norm{U_M^\diamond u}{H}\norm{z}{\rmD(A)}\notag\\
&\le-\norm{z}{\rmD(A)}^2+2C_3\norm{z}{V}^2
+2\left(C_4\norm{z}{V}^2+\Phi\norm{z}{V}\right)^2
+2\norm{U_M^\diamond u}{H}^2\notag\\
&\le-\norm{z}{\rmD(A)}^2+(2C_3+4\Phi^2)\norm{z}{V}^2
+4C_4^2\norm{z}{V}^4
+2\norm{U_M^\diamond u}{H}^2.\label{dtzDA-2}
 \end{align}

By Lemma~\ref{L:estL6free} we have that
\begin{align}
\norm{y_\ttt(t)}{L^6}^6
\le\norm{y_\ttt(0)}{L^6}^6+C_5\left(C_h^6+1\right),\quad\mbox{for all}\quad t\ge0,\label{LinfL6haty}
\end{align}
with
$C_5\le\ovlineC{\norm{\zeta}{\infty},C_h,\tfrac1{\tau_h}}$, which gives us~$C_4\le\ovlineC{\norm{\zeta}{\infty},C_h,\tfrac1{\tau_h},\norm{y_\ttt(0)}{L^6}}$ and~$\Phi\le C_6\le\ovlineC{\norm{\zeta}{\infty},C_h,\tfrac1{\tau_h},\norm{y_\ttt(0)}{L^6}}$.

From~\eqref{dtzDA-2}, we have
\begin{subequations}\label{dtzDA-2-Gronwall}
\begin{align}
&\tfrac{\rmd}{\rmd t}\norm{z}{V}^2
 \le
a\norm{z}{V}^2
+b
\intertext{with}
&a\coloneqq4C_4^2\norm{z}{V}^2,\qquad
b\coloneqq C_7\norm{z}{V}^2+2\norm{U_M^\diamond u}{H}^2,
 \end{align}
\end{subequations}
and~$C_7\coloneqq 2C_3+4C_6^2$.
We apply the Gronwall inequality to~\eqref{dtzDA-2-Gronwall}, which implies that, for every~$s_1> s_0$ with~$s_1\le\sup I$ and~$I^{s_1}\coloneqq(s_0,s_1)$,   it holds the estimate
\begin{align}
\norm{z}{L^\infty(I^{s_1},V)}^2
 &\le\rme^{4C_4^2\norm{z}{L^2(I^{s_1},V)}^2}\left(\norm{z(s_0)}{V}^2 +C_7\norm{z}{L^2(I^{s_1},V)}^2+2\norm{U_M^\diamond u}{L^2(I^{s_1},H)}^2\right)\notag\\
&\le\rme^{4C_4^2\norm{z}{L^2(I,V)}^2}\left(\norm{z(s_0)}{V}^2 +C_7\norm{z}{L^2(I,V)}^2+2\norm{U_M^\diamond u}{L^2(I,H)}^2\right),\notag
\end{align}
and thus
\begin{align}
\norm{z}{L^\infty(I,V)}^2
&\le C_8\left(\norm{z(s_0)}{V}^2 +C_7\norm{z}{L^2(I,V)}^2+2\norm{U_M^\diamond u}{L^2(I,H)}^2\right),\label{LinfVz}
 \end{align}
where, using~\eqref{zL2VQ},
\[
C_8\coloneqq \rme^{4C_4^2\norm{z}{L^2(I,V)}^2}\le\ovlineC{\norm{\zeta}{\infty},\norm{U_M^\diamond}{\clL(\bbR^{M_\sigma},H)},C_h,\tfrac1{\tau_h},\norm{y_\ttt(0)}{L^6},\norm{z_0}{H},\norm{Qz}{L^2(I,H)},\norm{u}{L^2(I,\bbR^{M_\sigma})}}.
\]
We underline that the constant~$C_8$ is independent of~$s_0=\inf I$.
By integration of~\eqref{dtzDA-2},
\begin{align}
\norm{z}{L^2(I,\rmD(A))}^2
 &\le \norm{z_0}{V}^2+(C_7+4C_4^2\norm{z}{L^\infty(I,V)}^2)\norm{z}{L^2(I,V)}^2
+2C_U^2\norm{u}{L^2(I,\bbR^{M_\sigma})}^2,\label{zDA}
 \end{align}
where~$C_U\coloneqq\norm{U_M^\diamond}{\clL(\bbR^{M_\sigma},H)}$.
Combining~\eqref{zDA},~\eqref{LinfVz}, and~\eqref{zL2VQ}, allows us to write
\begin{align}
\norm{z}{L^2(I,\rmD(A))}^2
 &\le C_9\left(\norm{z_0}{V}^2+\norm{Qz}{L^2(I,H)}^2+\norm{u}{L^2(I,\bbR^{M_\sigma})}^2\right)\label{zL2DA}
 \end{align}
with~$C_9\le \ovlineC{\norm{\zeta}{\infty},C_U,C_h,\tfrac1{\tau_h},\norm{y_\ttt(0)}{L^6},\norm{z_0}{H},\norm{Qz}{L^2(I,H)},\norm{u}{L^2(I,\bbR^{M_\sigma})}}$.

Finally, from the dynamics we find that
\begin{align}
\norm{\dot z}{L^2(I,H)}^2 \le 3\norm{z}{L^2(I,\rmD(A))}^2 +3\norm{f^{y_\ttt}(z)}{L^2(I,H)}^2+3C_U^2\norm{u}{L^2(I,\bbR^{M_\sigma})}^2,\notag
 \end{align}

 Observe that, using~\eqref{LinfL6haty} and~\eqref{LinfVz}, we have that
\begin{align}
\norm{z}{{ L^\infty(I,L^6)}}^2
+\norm{y_\ttt}{{ L^\infty(I,L^6)}}^2\le \norm{\Id}{\clL(V,L^6)}^2\norm{z}{L^\infty(I,V)}^2+\norm{y_\ttt}{L^\infty(I,L^6)}^2\le C_{10}\notag
\end{align}
with~$C_{10}\le\ovlineC{\norm{\zeta}{\infty},C_U,C_h,\tfrac1{\tau_h},\norm{y_\ttt(0)}{L^6},\norm{z_0}{V},\norm{Qz}{L^2(I,H)},\norm{u}{L^2(I,\bbR^{M_\sigma})}}$. Recalling Lemma~\ref{L:esthatf},
\begin{align}
&\norm{f^{y_\ttt}(z)}{L^2(I,H)}^2\le C_{f^{y_\ttt}}^2\left(\norm{z}{L^\infty(I,L^6)}^2
+\norm{y_\ttt}{L^\infty(I,L^6)}^2+1\right)^2\norm{ z}{L^2(I,L^6)}^2,\notag\\
\intertext{with~$C_{f^{y_\ttt}}=\ovlineC{\norm{\zeta}{\infty}}$, and}
&\norm{\dot z}{L^2(I,H)}^2 \le 3\norm{z}{L^2(I,\rmD(A))}^2 +3C_{f^{y_\ttt}}^2(C_{10}+1)^2\norm{\Id}{\clL(V,L^6)}^2\norm{ z}{L^2(I,V)}^2+3C_U^2\norm{u}{L^2(I,\bbR^{M_\sigma})}^2.\notag
 \end{align}

By~\eqref{zL2VQ} and~\eqref{zL2DA} it follows that
\begin{align}
\norm{\dot z}{L^2(I,H)}^2 &\le C_{11}\left(\norm{z_0}{V}^2 +\norm{Qz}{L^2(I,H)}^2+\norm{u}{L^2(I,\bbR^{M_\sigma})}^2\right),\label{dotzL2}
 \end{align}
with~$C_{11}\le \ovlineC{\norm{\zeta}{\infty},C_U,C_h,\tfrac1{\tau_h},\norm{y_\ttt(0)}{L^6},\norm{z_0}{V},\norm{Qz}{L^2(I,H)},\norm{u}{L^2(I,\bbR^{M_\sigma})}}$.

From~\eqref{zL2DA} and~\eqref{dotzL2} it follows that, with~$D\coloneqq\max\{C_{9},C_{11}\}$ it follows that
\begin{align}
\norm{z}{W(I,\rmD(A),H)}^2 \le D\left(\norm{z_0}{V}^2+\norm{Qz}{L^2(I,H)}^2+\norm{u}{L^2(I,\bbR^{M_\sigma})}^2\right),\notag
 \end{align}
which finishes the proof.
\end{proof}

%%%%%%%%%%%%%%%%%%%%%%%%%%%%
\subsection{Proof of Theorem~\ref{T:existOptim}}\label{sS:proofT:existOptim}
Let~$s_0\coloneqq\inf I$ be the infimum of the interval~$I\subseteq\bbR_+$. By Theorem~\ref{T:mainSchloegl} we have that the pair~$(\overline z,\overline u)\coloneqq(y_\ttc-y_\ttt,\overline\clK_M^\lambda(y_\ttc-y_\ttt))$ satisfies
\begin{subequations}\label{bdd-optimcost}
\begin{align}
\clJ_{I}^Q(\overline z,\overline u)&=\tfrac12\norm{Q\overline z}{L^2(I,H)}^2+\tfrac12\norm{\overline\clK_M^\lambda\overline z}{L^2(I,\bbR^{M_\sigma})}^2
\le\tfrac12\norm{\overline z}{L^2(I,H)}^2+\tfrac12\norm{\clK_M^\lambda\overline z}{L^2(I,\bbR^{M_\sigma})}^2\notag\\
&\le\tfrac12(1+\norm{\clK_M^\lambda}{\clL(L^2,\bbR^{M_\sigma})}^2)\norm{\overline z}{L^2(I,H)}^2
\le \widehat C\norm{z_0}{H}^2,
\intertext{with}
\widehat C&\coloneqq  (2\mu)^{-1}(1+\norm{\clK_M^\lambda}{\clL(L^2,\bbR^{M_\sigma})}^2).
\end{align}
\end{subequations}
Here $y_\ttt$ and $y_\ttc$ solve \eqref{sys-haty} and~\eqref{Schloegl-feed}, respectively.
By Lemma~\ref{L:key-exist-Q} we have~$(\overline z,\overline u)\in\fkX_{I}^{z_0}$.

Now, let us consider a minimizing sequence~$(\overline z_k,\overline u_k)\in\fkX_{I}^{z_0}$,
\begin{align}
\clJ_{I}^Q(\overline z_k,\overline u_k)\to\inf_{(z,u)\in\fkX_I^{z_0}}\clJ_{I}^Q(z,u),\quad\mbox{with}\quad\clJ_{I}^Q(\overline z_k,\overline u_k)\le \widehat C\norm{z_0}{H}^2.\label{min-seq}
\end{align}
Note that~$\clJ_{I}^Q(\overline z_k,\overline u_k)=\norm{(Q\overline z_k,\overline u_k)}{\clZ_I}^2$, with~$\clZ_I\coloneqq L^2(I,H)\times L^2(I,\bbR^{M_\sigma})$.
Using again Lemma~\ref{L:key-exist-Q} we have that
\begin{align}
\norm{(\overline z_k,\overline u_k)}{\clX_I}^2\le D_Q\left(\norm{z_0}{V}^2+\norm{Q\overline z_k}{L^2(I,H)}^2+\norm{\overline u_k}{L^2(I,\bbR^{M_\sigma})}^2\right)\le\widehat D,\label{bddX}
\end{align}
with~$\widehat D\le \ovlineC{\norm{\zeta}{\infty},\norm{U_M^\diamond}{\clL(\bbR^{M_\sigma},H)},C_h,\tfrac1{\tau_h},\norm{y_\ttt(0)}{L^6},\norm{z_0}{V},\widehat C}$. Therefore we can take a subsequence, which we still denote by~$(\overline z_k,\overline u_k)_{k\in\bbN}$, weakly converging to some~$(\overline z_\infty,\overline u_\infty)\in\clX_I$,
\begin{align}
(\overline z_k,\overline u_k)\xrightharpoonup[\clX_I]{}(\overline z_\infty,\overline u_\infty).\label{exist-weakconv}
\end{align}
In particular~$(\overline z_k,\overline u_k)\xrightharpoonup[\clZ_I]{}(\overline z_\infty,\overline u_\infty)$ and due to~\eqref{min-seq} we obtain
\begin{align}
(Q\overline z_k,\overline u_k)\xrightarrow[\clZ_I]{}(Q\overline z_\infty,\overline u_\infty).\label{exist-weakconvZ}
\end{align}

Next for every~$s_0<r\le\sup I$, with~$I^r\coloneqq(s_0,r)$, we also have that
$\norm{(\overline z_k,\overline u_k)}{\clX_{I^r}}^2\le\widehat D$, thus we can assume that
\begin{align}
(\overline z_k,\overline u_k)\rest{I_r}\xrightarrow[\clZ_{I^r}]{}(\overline z_\infty,\overline u_\infty)\rest{I_r}.\label{exist-weakconvr}
\end{align}
Then, we can also assume that
\begin{align}
(\overline z_k,\overline u_k)\rest{I_r}\xrightharpoonup[\clX_{I^r}]{}(\overline z_\infty,\overline u_\infty)\rest{I_r}\quad\mbox{and}\quad \overline z_k\rest{I_r}\xrightarrow[L^2(I^r,V)]{}\overline z_\infty\rest{I_r},\label{exist-weakconvr2}
\end{align}
where for the later strong convergence we have used the compactness of the embedding~$W((s_0,r),\rmD(A),H)
\xhookrightarrow{}L^2((s_0,r),V)$.

 Following (nontrivial) standard arguments (which we skip here) we can show that $(\overline z_\infty,\overline u_\infty)$ satisfies~\eqref{sys-z-hat}, leading us to the conclusion that~$(\overline z_\infty,\overline u_\infty)\in\fkX_I^{z_0}$ solves Problem~\ref{Pb:OCPI}.
Finally, by~\eqref{bdd-optimcost} we necessarily have  the inequality
\[
\clJ_I^Q(\overline z_\infty,\overline u_\infty)\le\clJ_I^Q(\overline z ,\overline u )\le\widehat C\norm{z_0}{H}^2,
\]
which ends the proof.\qed

\begin{remark}
Note that in Theorem~\ref{T:existOptim}, uniqueness of the minimizer may not hold due to the nonlinearity of the problem (nonlinear constraint).
\end{remark}

%%%%%%%%%%%%%%%%%%%%%%%%%%%%
\subsection{Dynamic programming principle}\label{sS:DPP}
The dynamic programming principle can play an important role in the computation/implementation of optimal controls. For a given nonempty open interval~$I=(s_0,s_1)\subseteq\bbR_+$,
we denote the value function by
\begin{equation}\notag
\fkV^I\colon V \to\bbR,\qquad \fkV^I(z_0)\coloneqq\clJ_{I}^{Q}(z_{I}^{z_0} ,u_{I}^{z_0}),
\end{equation}
with~$(z_{I}^{z_0} ,u_{I}^{z_0} )$ solving Problem~\ref{Pb:OCPI}.

For an element~$a\in I$, let us denote the complementary intervals
\[
I^a=(s_0,a)\quad\mbox{and}\quad I_{a}=(a,s_1).
\]

Next, we recall the dynamic programming principle.
 \begin{lemma}\label{L:DPP}
We have the equivalence
\begin{align}\notag
&(\overline z ,\overline u)\in\argmin\limits_{(z,u)\in\fkX_{I}^{z_0}}\clJ_{I}^{Q}(z,u)
\quad\Longleftrightarrow\quad(\overline z ,\overline u)\rest{I^a}\in\argmin\limits_{(w,v)\in\fkX_{I^a}^{z_0}}\Bigl( \clJ_{I^a}^{Q}(w,v)+\fkV^{I_a}(w(a)) \Bigr).\notag
\end{align}
\end{lemma}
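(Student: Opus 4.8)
\emph{Plan.} This is the standard principle-of-optimality argument, resting on two structural facts about the cost~\eqref{Jz0} and the admissible set~\eqref{opt-setting}, together with the existence statement Theorem~\ref{T:existOptim}, which guarantees that the value function $\fkV^{J}$ is well defined on all of $V$ for every open subinterval $J\subseteq\bbR_+$. The plan is first to record the \emph{additivity of the running cost}: since $I=I^a\cup\{a\}\cup I_a$ and $\{a\}$ is Lebesgue-null, the definition~\eqref{Jz0} gives, for every $(z,u)\in\clX_I$,
\[
\clJ_{I}^{Q}(z,u)=\clJ_{I^a}^{Q}\bigl(z\rest{I^a},u\rest{I^a}\bigr)+\clJ_{I_a}^{Q}\bigl(z\rest{I_a},u\rest{I_a}\bigr).
\]
The second fact is that admissible pairs can be \emph{cut and pasted at the time $a$}:
\[
(z,u)\in\fkX_{I}^{z_0}\iff\bigl(z\rest{I^a},u\rest{I^a}\bigr)\in\fkX_{I^a}^{z_0}\ \text{ and }\ \bigl(z\rest{I_a},u\rest{I_a}\bigr)\in\fkX_{I_a}^{z(a)} .
\]
The forward implication is mere restriction. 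For the converse one glues a solution $z_1$ of~\eqref{sys-z-hat} on $I^a$ to a solution $z_2$ on $I_a$ with $z_2(a)=z_1(a)$: using the embedding $W((s_0,a),\rmD(A),H)\hookrightarrow\clC([s_0,a],V)$ and its analogue on $I_a$, the two pieces match at $a$ in $V$, so the glued function is continuous into $H$ at $a$ and its distributional time-derivative on $I$ is the concatenation of the two derivatives, hence in $L^2(I,H)$; together with $z\rest{I^a}\in L^2(I^a,\rmD(A))$, $z\rest{I_a}\in L^2(I_a,\rmD(A))$ and Lemma~\ref{L:esthatf} this places the glued pair in $W(I,\rmD(A),H)\times L^2(I,\bbR^{M_\sigma})$, makes it solve~\eqref{sys-z-hat}, and the constraint $\dnorm{u(t)}{}\le C_u$ is inherited pointwise; thus the glued pair lies in $\fkX_I^{z_0}$.

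With these two facts the equivalence follows. For ``$\Rightarrow$'': if $(\overline z,\overline u)$ minimizes $\clJ_I^Q$ over $\fkX_I^{z_0}$, then its tail $(\overline z,\overline u)\rest{I_a}$ minimizes $\clJ_{I_a}^Q$ over $\fkX_{I_a}^{\overline z(a)}$ — otherwise, splicing a strictly better tail onto $(\overline z,\overline u)\rest{I^a}$ and invoking additivity would yield an element of $\fkX_I^{z_0}$ of strictly smaller cost — so $\clJ_{I_a}^Q(\overline z\rest{I_a},\overline u\rest{I_a})=\fkV^{I_a}(\overline z(a))$, and additivity gives $\clJ_{I^a}^Q(\overline z\rest{I^a},\overline u\rest{I^a})+\fkV^{I_a}(\overline z(a))=\clJ_I^Q(\overline z,\overline u)=\fkV^I(z_0)$. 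For an arbitrary competitor $(w,v)\in\fkX_{I^a}^{z_0}$, Theorem~\ref{T:existOptim} supplies an optimal tail $(\widehat w,\widehat v)\in\fkX_{I_a}^{w(a)}$ with $\clJ_{I_a}^Q(\widehat w,\widehat v)=\fkV^{I_a}(w(a))$; gluing it to $(w,v)$ gives $(\widetilde w,\widetilde v)\in\fkX_I^{z_0}$, whence
\[
\clJ_{I^a}^Q(w,v)+\fkV^{I_a}(w(a))=\clJ_I^Q(\widetilde w,\widetilde v)\ge\fkV^I(z_0)=\clJ_{I^a}^Q(\overline z\rest{I^a},\overline u\rest{I^a})+\fkV^{I_a}(\overline z(a)),
\]
which is precisely optimality of $(\overline z,\overline u)\rest{I^a}$ for the right-hand problem. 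The converse ``$\Leftarrow$'' is the same chain read backwards: completing $(\overline z,\overline u)\rest{I^a}$ on $I_a$ by an optimal tail from $\overline z(a)$ (so that $\clJ_{I_a}^Q(\overline z\rest{I_a},\overline u\rest{I_a})=\fkV^{I_a}(\overline z(a))$), one estimates, for arbitrary $(z,u)\in\fkX_I^{z_0}$,
\[
\clJ_I^Q(z,u)\ge\clJ_{I^a}^Q(z\rest{I^a},u\rest{I^a})+\fkV^{I_a}(z(a))\ge\clJ_{I^a}^Q(\overline z\rest{I^a},\overline u\rest{I^a})+\fkV^{I_a}(\overline z(a))=\clJ_I^Q(\overline z,\overline u),
\]
using the cut-and-paste property in the first inequality and optimality of $(\overline z,\overline u)\rest{I^a}$ for the right-hand problem in the second.

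I expect the only genuinely delicate point to be the cut-and-paste property: one must verify that concatenating two strong solutions across $t=a$ introduces no spurious singular contribution to $\dot z$, which is exactly where the trace embedding $W(I,\rmD(A),H)\hookrightarrow\clC(\overline I,V)$ — forcing the two traces to agree at $a$ — is used. Everything else is bookkeeping with the additive cost and the value function.
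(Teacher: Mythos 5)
Your proposal is correct and follows essentially the same route as the paper's proof: additivity of the cost over the splitting of $I$ at $t=a$, concatenation of admissible pairs at $a$ (the paper's $\ddag$ operation, admissible precisely when the two traces agree in $V$, which is exactly the point you justify via the embedding into $\clC(\overline I,V)$), and the existence of optimal tails supplied by Theorem~\ref{T:existOptim}. The paper merely compresses your two directions into a single chain of inequalities beginning and ending at $\clJ^Q_{I}(\overline z,\overline u)$, which therefore collapses to equalities; your unrolled two-direction version with the explicit cut-and-paste verification is a faithful expansion of the same argument.
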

\begin{corollary}\label{C:optim-tail}
For every~$z_0\in V$, every open interval~$I\subseteq\bbR_+$, and every~$a\in I$, we have that given a minimizer~$(z_{I}^{z_0} ,u_{I}^{z_0} )$ for~$\clJ_{I}^Q$, then~$(z_{I}^{z_0} ,u_{I}^{z_0} )\rest{I_a}$ is a minimizer of~$\clJ_{I_a}^Q$.
\end{corollary}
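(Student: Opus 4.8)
The plan is to obtain the corollary from the dynamic programming principle of Lemma~\ref{L:DPP}, combined with the additivity of the cost over subintervals and a concatenation (gluing) argument. First I would fix $z_0\in V$, an open interval $I=(s_0,s_1)\subseteq\bbR_+$, a minimizer $(\overline z,\overline u)\coloneqq(z_I^{z_0},u_I^{z_0})$ of $\clJ_I^Q$ over $\fkX_I^{z_0}$, and $a\in I$, and set $z_a\coloneqq\overline z(a)$. This is well defined and lies in $V$, because $W(I,\rmD(A),H)\xhookrightarrow{}\clC(\overline I,V)$. Since the squared $L^2$-norms split over $I=I^a\cup\{a\}\cup I_a$ (with $I^a=(s_0,a)$, $I_a=(a,s_1)$), the cost is additive:
\begin{equation}\label{cor-split}
\clJ_I^Q(z,u)=\clJ_{I^a}^Q(z\rest{I^a},u\rest{I^a})+\clJ_{I_a}^Q(z\rest{I_a},u\rest{I_a}),\qquad(z,u)\in\clX_I.
\end{equation}

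Next I would establish the concatenation property: given $(w,v)\in\fkX_{I^a}^{z_0}$ and $(w',v')\in\fkX_{I_a}^{w(a)}$, the pair $(\widehat w,\widehat v)$ equal to $(w,v)$ on $I^a$ and to $(w',v')$ on $I_a$ belongs to $\fkX_I^{z_0}$. The control part is immediate, as $\widehat v\in L^2(I,\bbR^{M_\sigma})$ with $\dnorm{\widehat v(t)}{}\le C_u$ for a.e.\ $t$. For the state part, since $w$ and $w'$ share the trace $w(a)=w'(a)$, the distributional time derivative of $\widehat w$ on $I$ carries no Dirac mass at $a$ and agrees with the $L^2(I,H)$-concatenation of $\dot w$ and $\dot w'$; hence $\widehat w\in L^2(I,\rmD(A))$ with $\dot{\widehat w}\in L^2(I,H)$, i.e.\ $\widehat w\in W(I,\rmD(A),H)$. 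Moreover $\widehat w$ solves $\clG_I^{z_0}(\widehat w,\widehat v)=(0,0)$: the evolution equation holds a.e.\ on $I^a$ and on $I_a$, hence a.e.\ on $I$, while $\widehat w(s_0)=w(s_0)=z_0$. By Theorem~\ref{T:existOptim} the value $\fkV^{I_a}(w(a))$ is attained in $\fkX_{I_a}^{w(a)}$, so a feasible pair on $I^a$ can always be extended to a feasible pair on $I$ realizing this tail value.

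With these ingredients the conclusion is a short contradiction argument. Since $(\overline z,\overline u)\rest{I_a}\in\fkX_{I_a}^{z_a}$, the definition of the value function gives $\fkV^{I_a}(z_a)\le\clJ_{I_a}^Q(\overline z\rest{I_a},\overline u\rest{I_a})$. If this were strict, pick $(\widehat w,\widehat v)\in\fkX_{I_a}^{z_a}$ attaining $\fkV^{I_a}(z_a)$ (Theorem~\ref{T:existOptim}) and glue it to $(\overline z,\overline u)\rest{I^a}$; the resulting $(z^\star,u^\star)\in\fkX_I^{z_0}$ would satisfy, by \eqref{cor-split},
\[
\clJ_I^Q(z^\star,u^\star)=\clJ_{I^a}^Q(\overline z\rest{I^a},\overline u\rest{I^a})+\fkV^{I_a}(z_a)<\clJ_{I^a}^Q(\overline z\rest{I^a},\overline u\rest{I^a})+\clJ_{I_a}^Q(\overline z\rest{I_a},\overline u\rest{I_a})=\clJ_I^Q(\overline z,\overline u),
\]
contradicting the optimality of $(\overline z,\overline u)$. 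Hence $\clJ_{I_a}^Q(\overline z\rest{I_a},\overline u\rest{I_a})=\fkV^{I_a}(z_a)$, that is, $(z_I^{z_0},u_I^{z_0})\rest{I_a}$ minimizes $\clJ_{I_a}^Q$ over $\fkX_{I_a}^{z_a}$; the same identity is also what one reads off from Lemma~\ref{L:DPP} after combining it with \eqref{cor-split} evaluated at the minimizer, so I would be free to present it either way.

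The hard part is the gluing/junction regularity just invoked: verifying that concatenating two $W(\cdot,\rmD(A),H)$-trajectories across their common value at $a$ produces an admissible trajectory in $W(I,\rmD(A),H)$, with no spurious Dirac term in the time derivative and with the evolution equation preserved across $t=a$. Everything else is bookkeeping with the additive structure of $\clJ_I^Q$ and an appeal to the existence Theorem~\ref{T:existOptim}. Note that uniqueness of minimizers is neither available nor needed here.
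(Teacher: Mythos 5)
Your proof is correct and follows essentially the same route as the paper: the paper reads the conclusion directly off the chain of inequalities~\eqref{dpp=} established in the proof of Lemma~\ref{L:DPP}, which encodes exactly your exchange argument (glue the optimal tail, whose existence comes from Theorem~\ref{T:existOptim}, onto the restriction to $I^a$ and compare costs via additivity). Your explicit verification of the junction regularity for the concatenation is a detail the paper leaves implicit in its remark that $(z_1,u_1)\ddag(z_2,u_2)\in\clX_I$ iff $z_1(a)=z_2(a)\in V$.
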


The proofs of Lemma~\ref{L:DPP} and Corollary~\ref{C:optim-tail}, follow by standard arguments. We briefly recall the arguments in the Appendix, sections~\ref{ApxProofL:DPP} and~\ref{ApxProofC:optim-tail}.

%%%%%%%%%%%%%%%%%%%%%%%%%%%%
%%%%%%%%%%%%%%%%%%%%%%%%%%%%
\section{First-order optimality conditions}\label{S:1optcond}
For applications and, in particular, for the computation of approximations of the optimal controls, it is useful to know further properties of such controls.
Here we investigate the existence and properties of associated Lagrange mutipliers.
\begin{lemma}\label{L:fyr-est}
 The mapping~$\rmd f^{y_\ttt}\rest{\overline z}$ defined as
 \begin{equation}\label{dfyr.z}
 \rmd f^{y_\ttt}\rest{\overline z}w=3\overline z^2w +(6y_\ttt+2\xi_2)\overline zw+(3y_\ttt^2+2\xi_2y_\ttt+\xi_1-1)w
 \end{equation}
 satisfies the estimates
\begin{align}
\norm{\rmd f^{y_\ttt}\rest{\overline z}w}{H}&\le C_1\left(\norm{\overline z}{L^6}^2+\norm{y_\ttt}{L^6}^2+1\right)\norm{w}{L^6},\label{bdd-f-opt1}\\
\norm{\rmd f^{y_\ttt}\rest{\overline z}w}{L^2(I,H)}^2
&\le C_3\norm{ w}{L^2(\bbR_+,V)}^2\le C_4\norm{w}{W(I,\rmD(A),H)}^2,\label{fkG-wdef3}
\end{align}
with constants as
\begin{subequations}
\begin{align}
C_1&=C+\norm{1}{L^3}\le\ovlineC{\norm{\zeta}{\infty},\dnorm{\Omega}{}},&C&=\norm{2\xi_2^2+\xi_1-1}{\bbR},\\
C_3&\le\ovlineC{\norm{\zeta}{\infty},C_h,\tfrac1{\tau_h},\norm{\overline z}{W(I,\rmD(A),H)}^2},& C_4&\le\ovlineC{\norm{\zeta}{\infty},C_h,\tfrac1{\tau_h},\norm{\overline z}{W(I,\rmD(A),H)}^2}.
\end{align}
\end{subequations}
\end{lemma}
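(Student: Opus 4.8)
The plan is to prove the two estimates~\eqref{bdd-f-opt1} and~\eqref{fkG-wdef3} essentially by the same kind of Hölder/Sobolev bookkeeping already used in Lemma~\ref{L:esthatf}, combined with the $L^6$-in-time-and-space bounds for $y_\ttt$ coming from Lemma~\ref{L:estL6free}, and then the continuous embeddings $V\xhookrightarrow{}L^6$ and $W(I,\rmD(A),H)\xhookrightarrow{}L^\infty(I,V)$.

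First I would establish the pointwise-in-time bound~\eqref{bdd-f-opt1}. Starting from the explicit formula~\eqref{dfyr.z} for $\rmd f^{y_\ttt}\rest{\overline z}w$, I factor out $w$ and write
\[
\norm{\rmd f^{y_\ttt}\rest{\overline z}w}{H}
=\norm{\bigl(3\overline z^2+(6y_\ttt+2\xi_2)\overline z+3y_\ttt^2+2\xi_2y_\ttt+\xi_1-1\bigr)w}{L^2}
\le\norm{3\overline z^2+(6y_\ttt+2\xi_2)\overline z+3y_\ttt^2+2\xi_2y_\ttt+\xi_1-1}{L^3}\norm{w}{L^6},
\]
using the Hölder inequality with exponents $\tfrac13+\tfrac16=\tfrac12$. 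Then I bound the $L^3$-norm of the bracket: by Young's inequality $2|y_\ttt\overline z|\le \overline z^2+y_\ttt^2$ (and likewise absorbing the first-order terms $y_\ttt$, $\overline z$ against constants) the bracket is dominated in absolute value by $C(\overline z^2+y_\ttt^2+1)$ with $C=\ovlineC{\norm{\zeta}{\infty}}$, so $\norm{\text{bracket}}{L^3}\le C(\norm{\overline z^2}{L^3}+\norm{y_\ttt^2}{L^3}+\norm{1}{L^3}) = C(\norm{\overline z}{L^6}^2+\norm{y_\ttt}{L^6}^2)+\norm{1}{L^3}$. Collecting constants gives~\eqref{bdd-f-opt1} with $C_1=C+\norm{1}{L^3}\le\ovlineC{\norm{\zeta}{\infty},\dnorm{\Omega}{}}$; the appearance of $\dnorm{\Omega}{}$ is exactly through $\norm{1}{L^3}=\dnorm{\Omega}{}^{1/3}$.

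Next I would derive the time-integrated estimate~\eqref{fkG-wdef3}. Squaring~\eqref{bdd-f-opt1} and integrating over $I$,
\[
\norm{\rmd f^{y_\ttt}\rest{\overline z}w}{L^2(I,H)}^2
\le C_1^2\int_I\bigl(\norm{\overline z(t)}{L^6}^2+\norm{y_\ttt(t)}{L^6}^2+1\bigr)^2\norm{w(t)}{L^6}^2\,\rmd t
\le C_1^2\,\norm{\Id}{\clL(V,L^6)}^2\Bigl(\esssup_{t\in I}\bigl(\norm{\overline z(t)}{L^6}^2+\norm{y_\ttt(t)}{L^6}^2+1\bigr)\Bigr)^2\norm{w}{L^2(I,V)}^2.
\]
It remains to bound the $L^\infty$-in-time factor. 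For $y_\ttt$ this is Lemma~\ref{L:estL6free} (in the persistent form, giving $\norm{y_\ttt}{L^\infty(\bbR_+,L^6)}^6\le\norm{y_\ttt(0)}{L^6}^6+\tfrac{2-\rme^{-\tau_h}}{1-\rme^{-\tau_h}}\breve C_2(C_h^6+1)$, i.e.\ controlled by $\ovlineC{\norm{\zeta}{\infty},C_h,\tfrac1{\tau_h},\norm{y_\ttt(0)}{L^6}}$); for $\overline z$ we use $\norm{\overline z}{L^\infty(I,L^6)}\le\norm{\Id}{\clL(V,L^6)}\norm{\overline z}{L^\infty(I,V)}\le\norm{\Id}{\clL(V,L^6)}\norm{\overline z}{W(I,\rmD(A),H)}$ via the embedding $W(I,\rmD(A),H)\xhookrightarrow{}\clC(\overline I,V)\subset L^\infty(I,V)$ (with embedding constant independent of $I$, since $I\subseteq\bbR_+$ and the endpoint regularity is uniform — this is the standard intermediate-derivative / Lions–Magenes embedding). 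This yields the first inequality in~\eqref{fkG-wdef3} with $C_3\le\ovlineC{\norm{\zeta}{\infty},C_h,\tfrac1{\tau_h},\norm{\overline z}{W(I,\rmD(A),H)}^2}$, absorbing the $\norm{y_\ttt(0)}{L^6}$ dependence (which can itself be bounded, e.g.\ via $V\xhookrightarrow{}L^6$, into the listed arguments or simply kept implicit as in the lemma's other constants). Finally, $\norm{w}{L^2(I,V)}^2\le\norm{w}{W(I,\rmD(A),H)}^2$ trivially (indeed $L^2(I,\rmD(A))\xhookrightarrow{}L^2(I,V)$), giving the second inequality with $C_4$ of the same form. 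The only mild subtlety — and the point I would be most careful about — is the uniformity in $I$ of the embedding constant $W(I,\rmD(A),H)\xhookrightarrow{}L^\infty(I,V)$ and of the $L^6$-bound on $y_\ttt$; both are genuinely uniform because Assumption~\ref{A:h-pers-bound} is a persistence (shift-invariant) bound and the embedding constant for bounded or half-infinite intervals does not degenerate, so the final constants depend on $I$ only through the quantities $\norm{\overline z}{W(I,\rmD(A),H)}$ and $\norm{Qz}{L^2(I,H)}$, $\norm{u}{L^2(I,\bbR^{M_\sigma})}$ already allowed in the statement.
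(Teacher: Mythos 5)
Your proposal is correct and follows essentially the same route as the paper's proof: the same H\"older factorization $\norm{\text{bracket}}{L^3}\norm{w}{L^6}$ with Young's inequality absorbing the cross terms for~\eqref{bdd-f-opt1}, and the same combination of Lemma~\ref{L:estL6free}, the Sobolev embedding $V\xhookrightarrow{}L^6$, and the embedding $W(I,\rmD(A),H)\xhookrightarrow{}C(\overline I,V)$ to obtain~\eqref{fkG-wdef3}. No gaps.
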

\begin{proof}
By direct computations, we find that
\begin{align}
\norm{\rmd f^{y_\ttt}\rest{\overline z}w}{H}&=\norm{3\overline z^2w +(6y_\ttt+2\xi_2)\overline zw+(3y_\ttt^2+2\xi_2y_\ttt+\xi_1-1)w}{L^2}\notag\\
&\hspace{-3.5em}\le\norm{3\overline z^2+6y_\ttt\overline z+2\xi_2\overline z+3y_\ttt^2+2\xi_2y_\ttt+\xi_1-1}{L^3}\norm{w}{L^6}\notag\\
&\hspace{-3.5em}\le\norm{7\overline z^2+7y_\ttt^2+2\xi_2^2+\xi_1-1}{L^3}\norm{w}{L^6}
\notag\\
&\hspace{-3.5em}\le C(7+\norm{2\xi_2^2+\xi_1-1}{\bbR})\left(\norm{\overline z^2+y_\ttt^2+1}{L^3}\right)\norm{w}{L^6}\le C\left(\norm{\overline z}{L^6}^2+\norm{y_\ttt}{L^6}^2+\norm{1}{L^3}\right)\norm{w}{L^6}\notag\\
&\hspace{-3.5em}\le C_1\left(\norm{\overline z}{L^6}^2+\norm{y_\ttt}{L^6}^2+1\right)\norm{w}{L^6},\notag
\end{align}
with~$C=\norm{2\xi_2^2+\xi_1-1}{\bbR}$ and~
$C_1=C+\norm{1}{L^3}\le\ovlineC{\norm{\zeta}{\infty},\dnorm{\Omega}{}}$.
Hence, we obtain that
\begin{align}
&\norm{\rmd f^{y_\ttt}\rest{\overline z}w}{L^2(I,H)}^2
\le C_1\norm{ \left(\norm{\overline z}{L^6}^2+\norm{y_\ttt}{L^6}^2+1\right)^2\norm{w}{L^6}^2}{L^1(I,\bbR)}\notag
\end{align}
and using Lemma~\ref{L:estL6free} we have that
\[
\norm{y_\ttt(t)}{L^6}^6
\le\norm{y_\ttt(0)}{L^6}^6+\tfrac{2-\rme^{-\tau_h}}{1-\rme^{-\tau_h}}\breve C_2\left(C_h^6+1\right),
\]
with
$\breve C_2
\le\ovlineC{\norm{\zeta}{\infty}}$, where~$\tau_h>0$ and~$C_h\ge0$ are as in Assumption~\ref{A:h-pers-bound}. Thus
\begin{align}
&\norm{\rmd f^{y_\ttt}\rest{\overline z}w}{L^2(I,H)}^2
\le C_2\norm{ \left(\norm{\overline z}{V}^4+1\right)\norm{w}{V}^2}{L^1(I,\bbR)}\notag
\end{align}
with
$C_2
\le\ovlineC{\norm{\zeta}{\infty},C_h,\tfrac1{\tau_h}}$, where we have also used the
Sobolev embedding~$V\xhookrightarrow{}L^6$.
Next, recalling the embedding
\begin{align}\label{fkG-wdef2}
W(I,\rmD(A),H)\xhookrightarrow{}C(\overline I,V),
\end{align}
we arrive at
\begin{align}
&\norm{\rmd f^{y_\ttt}\rest{\overline z}w}{L^2(I,H)}^2
\le C_3\norm{ w}{L^2(I,V)}^2\le C_4\norm{w}{W(I,\rmD(A),H)}^2,\notag
\end{align}
with~$C_3\le\ovlineC{\norm{\zeta}{\infty},C_h,\tfrac1{\tau_h},\norm{\overline z}{W(I,\rmD(A),H)}^2}$ and~$C_4\le\ovlineC{\norm{\zeta}{\infty},C_h,\tfrac1{\tau_h},\norm{\overline z}{W(I,\rmD(A),H)}^2}$.
\end{proof}

\begin{lemma}\label{L:dG}
For every~$z_0\in V$ and an open interval~$I\subseteq\bbR_+$, the mapping~$\clG_{I}^{z_0}$ in~\eqref{opt-setting} is Fr\'echet differentiable, with derivative~$\rmd \clG_{I}^{z_0}\rest{(\overline z,\overline u)}\in\clL(\clX_{I},\clY_{I})$, at~$(\overline z,\overline u)\in\clX_{I}$,  given by
\[
(w,u)\mapsto\rmd \clG_{I}^{z_0}\rest{(\overline z,\overline u)}(w,u)=\Bigl(\dot w +Aw+\rmd f^{y_\ttt}\rest{\overline z}w -U_M^\diamond u, \, w(s_0)\Bigr),
\]
where~$\rmd f^{y_\ttt}\rest{\overline z}$ is as in~\eqref{dfyr.z}.
\end{lemma}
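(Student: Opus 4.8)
The plan is to split $\clG_{I}^{z_0}$ into an affine part and a single genuinely nonlinear term, the Nemytskii-type operator $N\colon W(I,\rmD(A),H)\to L^2(I,H)$ given by $N(z)\coloneqq f^{y_\ttt}(z)$. First I would note that $(z,u)\mapsto(\dot z+Az-U_M^\diamond u,\,z(s_0)-z_0)$ is affine and continuous from $\clX_{I}$ into $\clY_{I}$: the maps $z\mapsto\dot z$ and $z\mapsto Az$ are bounded from $W(I,\rmD(A),H)$ into $L^2(I,H)$ by definition of $W(I,\rmD(A),H)$ and since $A\in\clL(\rmD(A),H)$; the map $u\mapsto U_M^\diamond u$ is bounded from $L^2(I,\bbR^{M_\sigma})$ into $L^2(I,H)$; and the trace $z\mapsto z(s_0)$ is bounded into $V$ by the embedding~\eqref{fkG-wdef2}. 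An affine continuous map is Fr\'echet differentiable with derivative equal to its linear part, namely $(w,u)\mapsto(\dot w+Aw-U_M^\diamond u,\,w(s_0))$. By linearity of differentiation it then remains only to show that $N$ is Fr\'echet differentiable at every $\overline z\in W(I,\rmD(A),H)$ with $\rmd N\rest{\overline z}=\rmd f^{y_\ttt}\rest{\overline z}$ as in~\eqref{dfyr.z}.

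Since $f^{y_\ttt}(z)=z^3+(3y_\ttt+\xi_2)z^2+(3y_\ttt^2+2\xi_2y_\ttt+\xi_1-1)z$ is a cubic polynomial in $z$, expanding $f^{y_\ttt}(\overline z+w)$ yields the exact identity
\[
f^{y_\ttt}(\overline z+w)-f^{y_\ttt}(\overline z)-\rmd f^{y_\ttt}\rest{\overline z}w=(3\overline z+3y_\ttt+\xi_2)\,w^2+w^3\eqqcolon R(w),
\]
with $\rmd f^{y_\ttt}\rest{\overline z}$ precisely the linear map in~\eqref{dfyr.z}. By Lemma~\ref{L:fyr-est}, estimate~\eqref{fkG-wdef3}, this map belongs to $\clL(W(I,\rmD(A),H),L^2(I,H))$, so it is an admissible candidate derivative. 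Thus it suffices to prove that $\norm{R(w)}{L^2(I,H)}=o(\norm{w}{\clX_{I}})$ as $\norm{w}{\clX_{I}}\to0$; in fact I expect to obtain the quantitative bound $\norm{R(w)}{L^2(I,H)}\le\ovlineC{\norm{\zeta}{\infty},C_h,\tfrac1{\tau_h},\norm{y_\ttt(0)}{L^6},\norm{\overline z}{W(I,\rmD(A),H)}}\bigl(\norm{w}{W(I,\rmD(A),H)}^2+\norm{w}{W(I,\rmD(A),H)}^3\bigr)$.

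For the cubic term I would use $\norm{w(t)^3}{H}=\norm{w(t)}{L^6}^3$ together with $\int_I\norm{w}{L^6}^6\,\rmd t\le\norm{w}{L^\infty(I,L^6)}^4\norm{w}{L^2(I,L^6)}^2$; for the quadratic term, H\"older's inequality gives $\norm{(3\overline z+3y_\ttt+\xi_2)w^2}{H}\le\norm{3\overline z+3y_\ttt+\xi_2}{L^6}\norm{w}{L^6}^2$, after which the same interpolation applies. Both are then closed via the Sobolev embedding $V\xhookrightarrow{}L^6$ (valid for $d\le3$), the embedding $W(I,\rmD(A),H)\xhookrightarrow{}C(\overline I,V)\cap L^2(I,V)$ from~\eqref{fkG-wdef2}, the fact that $\overline z\in L^\infty(I,V)\subset L^\infty(I,L^6)$, and the bound $\norm{y_\ttt}{L^\infty(I,L^6)}\le\ovlineC{\norm{\zeta}{\infty},C_h,\tfrac1{\tau_h},\norm{y_\ttt(0)}{L^6}}$ from Lemma~\ref{L:estL6free}. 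This delivers $\norm{R(w)}{L^2(I,H)}\le C\bigl(\norm{w}{W(I,\rmD(A),H)}^2+\norm{w}{W(I,\rmD(A),H)}^3\bigr)=o(\norm{w}{\clX_{I}})$, so $N$ — and hence $\clG_{I}^{z_0}$ — is Fr\'echet differentiable with the claimed derivative.

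I expect the only delicate point to be making the remainder estimate (and the continuity of $\rmd f^{y_\ttt}\rest{\overline z}$) uniform in $I$ when $I$ is unbounded; this is exactly where the persistent-bound Lemma~\ref{L:estL6free} enters, in place of a merely local-in-time bound on $y_\ttt$, just as in Lemma~\ref{L:key-exist-Q}. Everything else is a routine polynomial expansion together with the same Sobolev and interpolation bookkeeping already carried out in Lemma~\ref{L:fyr-est}.
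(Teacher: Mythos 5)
Your proposal is correct and follows essentially the same route as the paper's proof: split off the affine part, use the exact cubic expansion $f^{y_\ttt}(\overline z+w)-f^{y_\ttt}(\overline z)-\rmd f^{y_\ttt}\rest{\overline z}w=(3\overline z+3y_\ttt+\xi_2)w^2+w^3$, invoke~\eqref{fkG-wdef3} for boundedness of the candidate derivative, and control the remainder via $V\xhookrightarrow{}L^6$ and $W(I,\rmD(A),H)\xhookrightarrow{}C(\overline I,V)$. The one (beneficial) difference is in the time integration of the remainder: the paper bounds it by $\norm{w+3\overline z+3y_\ttt+\xi_2}{L^2(I,L^6)}\norm{w}{C(\overline I,V)}^2$, which is delicate for unbounded $I$ since the constant $\xi_2$ and $y_\ttt$ need not lie in $L^2(I,L^6)$, whereas your use of $L^\infty$-in-time $L^6$ bounds on $\overline z$ and $y_\ttt$ (via Lemma~\ref{L:estL6free}) together with $\norm{w}{L^2(I,L^6)}$ handles that case cleanly.
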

We give the proof in the Appendix, section~\ref{ApxProofL:dG}.

\begin{definition}\label{D:locoptim}
Given~$z_0\in V$, a pair~$(\overline z ,\overline u )\in \fkX_{I}^{z_0}$ is called local solution for Problem~\ref{Pb:OCPI} if $\clJ_I^Q(\overline z,\overline u )=\min\{\clJ_I^Q(z,u)\mid (z,u)\in\fkX_{I}^{z_0}\mbox{ and }\norm{(z,u)-(\overline z ,\overline u )}{\clX_I}<r\}$
for some~$r>0$.
\end{definition}

Next we recall the following result which follows from optimization theory in Banach spaces that we find in~\cite[Thm~3.1; Eqs.~(1.1) and~(1.4)]{ZoweKurcyusz79}.
\begin{lemma}\label{L:ZoweKurcyusz}
If~$(\overline z,\overline u)\in\clX$ is a local solution for Problem~\ref{Pb:OCPI} which satisfies the relation
$
\overline\bbR_+\rmd \clG_I^{z_0}\rest{(\overline z,\overline u)}\left(\clC_I-(\overline z,\overline u)\right)=\clY_I,
$
then there exists a Lagrange multiplier~$\ell\in\clY_I'=L^2(I,H)\times V'$ such that
\begin{align}
&\rmd \clJ_I^Q\rest{(\overline z,\overline u)}-\ell\circ \rmd \clG_I^{z_0}\rest{(\overline z,\overline u)}\in\{\xi\in \clX_I'\mid \langle\xi,c-(\overline z,\overline u)\rangle_{\clX_I',\clX_I}\ge0\mbox{ for all }c\in\clC_I\}.\notag
\end{align}
Above~$\clC_I-(\overline z,\overline u)$ stands for~$\{c-(\overline z,\overline u)\mid c\in \clC_I\}\subseteq\clX_I$.
\end{lemma}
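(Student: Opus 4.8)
The plan is to deduce this statement from the abstract Lagrange multiplier theorem of Zowe and Kurcyusz~\cite[Thm.~3.1]{ZoweKurcyusz79}, of which it is essentially a special case once Problem~\ref{Pb:OCPI} is placed in their framework. Recall that in~\cite{ZoweKurcyusz79} one minimizes a Fr\'echet differentiable functional $f$ on a Banach space $X$ subject to a conic constraint $g(x)\in-K$, with $g\colon X\to Z$ Fr\'echet differentiable, $Z$ a Banach space, $K\subseteq Z$ a closed convex cone, and to $x\in\clC$ with $\clC\subseteq X$ closed and convex. Under the regularity condition that $0$ lies in the interior of $g(\overline x)+\rmd g\rest{\overline x}(\clC-\overline x)-K$ --- equivalently, $\overline\bbR_+\bigl(\rmd g\rest{\overline x}(\clC-\overline x)-K+g(\overline x)\bigr)=Z$ --- a local minimizer $\overline x$ admits a multiplier $\ell\in Z'$ satisfying the associated Karush--Kuhn--Tucker variational inequality together with the sign condition $\ell\in K^+$.

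First I would fix the identifications $X=\clX_I$, $Z=\clY_I$, $f=\clJ_I^Q$, $g=\clG_I^{z_0}$, $\clC=\clC_I$ from~\eqref{opt-setting} and~\eqref{Jz0}, together with $K=\{0\}$, so that the conic constraint $\clG_I^{z_0}(z,u)\in-K$ is precisely the equality $\clG_I^{z_0}(z,u)=(0,0)$ in the definition of $\fkX_I^{z_0}$; thus the feasible set is $\fkX_I^{z_0}$, and $(\overline z,\overline u)$ is a local minimizer in the sense of Definition~\ref{D:locoptim}, which is the notion of local solution used in~\cite{ZoweKurcyusz79}.

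Next I would verify the hypotheses one by one. The spaces $\clX_I$ and $\clY_I$ are Hilbert, hence Banach, and $\clC_I$ is closed and convex, being cut out by the convex pointwise bound $\dnorm{u(t)}{}\le C_u$. The functional $\clJ_I^Q$ is a continuous quadratic form on $\clX_I$ --- continuity of $z\mapsto Qz$ from $W(I,\rmD(A),H)$ into $L^2(I,H)$ following from $Q\in\clL(\rmD(A),H)$ --- hence it is Fr\'echet (indeed $C^\infty$) differentiable, with $\rmd\clJ_I^Q\rest{(\overline z,\overline u)}(w,u)=(Q\overline z,Qw)_{L^2(I,H)}+(\overline u,u)_{L^2(I,\bbR^{M_\sigma})}$. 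The map $\clG_I^{z_0}$ is Fr\'echet differentiable with $\rmd\clG_I^{z_0}\rest{(\overline z,\overline u)}\in\clL(\clX_I,\clY_I)$ by Lemma~\ref{L:dG}, the boundedness of the linearized nonlinear term $\rmd f^{y_\ttt}\rest{\overline z}$ being exactly the content of Lemma~\ref{L:fyr-est}. Finally, since $(\overline z,\overline u)\in\fkX_I^{z_0}$ we have $\clG_I^{z_0}(\overline z,\overline u)=(0,0)$, so with $K=\{0\}$ the Zowe--Kurcyusz regularity condition collapses to the relation $\overline\bbR_+\rmd\clG_I^{z_0}\rest{(\overline z,\overline u)}\bigl(\clC_I-(\overline z,\overline u)\bigr)=\clY_I$ assumed in the statement.

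With all hypotheses in force, \cite[Thm.~3.1, Eqs.~(1.1) and~(1.4)]{ZoweKurcyusz79} yields a Lagrange multiplier $\ell$ in the topological dual $\clY_I'$ for which $\rmd\clJ_I^Q\rest{(\overline z,\overline u)}-\ell\circ\rmd\clG_I^{z_0}\rest{(\overline z,\overline u)}$ lies in the normal cone $\{\xi\in\clX_I'\mid\langle\xi,c-(\overline z,\overline u)\rangle_{\clX_I',\clX_I}\ge0\mbox{ for all }c\in\clC_I\}$; the sign condition $\ell\in K^+$ is vacuous since $K=\{0\}$. It remains to identify the dual space: because $L^2(I,H)$ is identified with its own dual via the pivot space $H=L^2(\Omega)$, we have $\clY_I'=L^2(I,H)'\times V'=L^2(I,H)\times V'$, which is the asserted form of $\ell$, and this completes the argument. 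I do not expect a genuine obstacle, as the lemma is a direct application; the only items requiring care are the two nontrivial inputs already at our disposal, namely the Fr\'echet differentiability of $\clG_I^{z_0}$ (Lemma~\ref{L:dG}, via the polynomial bounds of Lemma~\ref{L:fyr-est}), and the bookkeeping of casting the equality constraint as a degenerate conic constraint with $K=\{0\}$ so that no extraneous dual-cone conditions survive.
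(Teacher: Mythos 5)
Your proposal is correct and follows exactly the route the paper takes: the paper states this lemma as a direct recall of \cite[Thm~3.1; Eqs.~(1.1) and~(1.4)]{ZoweKurcyusz79} specialized to the cone $K=\{0\}$ (see the remark immediately after the lemma), and gives no further proof. Your verification of the hypotheses (closed convexity of $\clC_I$, Fr\'echet differentiability of $\clJ_I^Q$ and of $\clG_I^{z_0}$ via Lemma~\ref{L:dG}, and the collapse of the regularity condition when $K=\{0\}$ and $\clG_I^{z_0}(\overline z,\overline u)=(0,0)$) is a faithful, slightly more detailed account of that same application.
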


\begin{remark}
In~\cite{ZoweKurcyusz79} a more general constraint~$\clG(z,u)\in K$ for a convex cone with vertex at~$(0,0)$ is considered. In our setting we have the particular case~$K=\{(0,0)\}$.
\end{remark}

The interior of a subset~$S$ of~$\clY_I$ will be denoted~${\bf int}[S]$.
\begin{lemma}[{\cite[sect.~3]{ZoweKurcyusz79}}]\label{L:ZoweKurcyusz-int}
Let~$(\overline z,\overline u)\in\clX_I$. Then, the relation~$
0\in{\bf int} [\rmd \clG_I^{z_0}\rest{(\overline z,\overline u)}\left(\clC_I-(\overline z,\overline u)\right)]
$
implies that
$
\overline\bbR_+\rmd \clG_I^{z_0}\rest{(\overline z,\overline u)}\left(\clC_I-(\overline z,\overline u)\right)=\clY_I.
$
\end{lemma}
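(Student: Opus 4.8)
The plan is to deduce this directly from the definition of the interior in the normed space $\clY_I=L^2(I,H)\times V$, via a one-line rescaling argument. First I would set
\[
S\coloneqq \rmd\clG_I^{z_0}\rest{(\overline z,\overline u)}\left(\clC_I-(\overline z,\overline u)\right)\subseteq\clY_I,
\]
and note that, by Lemma~\ref{L:dG}, $\rmd\clG_I^{z_0}\rest{(\overline z,\overline u)}\in\clL(\clX_I,\clY_I)$ is bounded and linear, while $\clC_I$ is convex; hence $S$ is a (convex) subset of~$\clY_I$. The hypothesis $0\in{\bf int}[S]$ then guarantees that $S\ne\emptyset$ and supplies a radius~$\e>0$ with $\{y\in\clY_I\mid\norm{y}{\clY_I}\le\e\}\subseteq S$.

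Since the inclusion $\overline\bbR_+S\subseteq\clY_I$ is trivial, it remains to establish $\clY_I\subseteq\overline\bbR_+S$. Here I would take an arbitrary $y\in\clY_I$: if $y\ne0$, the rescaled vector $\widetilde y\coloneqq\frac{\e}{\norm{y}{\clY_I}}\,y$ has norm exactly~$\e$, hence lies in~$S$, so that $y=\tfrac{\norm{y}{\clY_I}}{\e}\,\widetilde y\in\overline\bbR_+S$; and if $y=0$, then $y=0\cdot s\in\overline\bbR_+S$ for any $s\in S$, using that $\overline\bbR_+=[0,+\infty)$ contains the scalar~$0$. This yields $\overline\bbR_+\,\rmd\clG_I^{z_0}\rest{(\overline z,\overline u)}\left(\clC_I-(\overline z,\overline u)\right)=\clY_I$, which is precisely the Zowe--Kurcyusz regularity condition appearing in the hypothesis of Lemma~\ref{L:ZoweKurcyusz}.

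I do not expect any genuine obstacle in this argument; the only points requiring care are that $\rmd\clG_I^{z_0}\rest{(\overline z,\overline u)}$ is indeed bounded and linear (so that $S$ is a well-defined subset of the Banach space $\clY_I$, which is exactly what Lemma~\ref{L:dG} provides) and the minor bookkeeping with the convention $\overline\bbR_+=[0,+\infty)$, which ensures the degenerate case $y=0$ is covered. Note that convexity of~$S$ is not actually used for this implication; it becomes relevant only downstream, where this conclusion feeds into Lemma~\ref{L:ZoweKurcyusz} to produce a Lagrange multiplier $\ell\in L^2(I,H)\times V'$.
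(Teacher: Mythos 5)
Your proof is correct. The paper itself gives no proof of this lemma---it is imported verbatim from \cite[sect.~3]{ZoweKurcyusz79}---and your rescaling argument (any subset of a normed space containing a ball around the origin generates, as a cone, the whole space) is precisely the standard one-line justification the reference supplies; the only cosmetic caveat is that $0\in{\bf int}[S]$ a priori yields an \emph{open} ball $\{y\in\clY_I\mid\norm{y}{\clY_I}<\e\}\subseteq S$, so one should rescale by $\e/2$ instead of $\e$, which changes nothing of substance.
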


We shall use  the following asymptotic behavior results.
\begin{lemma}\label{L:Wwkconv0}
Let~$I=(s_0,+\infty)$. Then, every function~$w\in W(I,V,V')$ satisfies the asymptotic limit~$\lim\limits_{t\to+\infty}\norm{ w (t)}{H}^2=0$, $t\in I$.
\end{lemma}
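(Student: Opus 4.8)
The statement is a standard "energy decay" type fact for functions in the parabolic Bochner space $W(I,V,V')$ on a half-line; the point is that finiteness of $\int_{s_0}^{\infty}\bigl(\norm{w(t)}{V}^2+\norm{\dot w(t)}{V'}^2\bigr)\,\rmd t$ forces $\norm{w(t)}{H}^2\to 0$. The plan is to exploit the absolute continuity of $t\mapsto\norm{w(t)}{H}^2$ together with integrability of its derivative. First I would recall the classical embedding $W(I,V,V')\xhookrightarrow{}C(\overline I,H)$ and the fundamental identity $\frac{\rmd}{\rmd t}\norm{w(t)}{H}^2=2\langle\dot w(t),w(t)\rangle_{V',V}$, valid for a.e.\ $t\in I$; this is the Lions--Magenes lemma. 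Consequently the real-valued function $\varphi(t)\coloneqq\norm{w(t)}{H}^2$ is absolutely continuous on every compact subinterval of $\overline I$, with $\varphi'(t)=2\langle\dot w(t),w(t)\rangle_{V',V}$ a.e.

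Next I would show $\varphi'\in L^1(I,\bbR)$. Indeed, by the duality pairing and Cauchy--Schwarz,
\[
\int_{s_0}^{\infty}|\varphi'(t)|\,\rmd t\le 2\int_{s_0}^{\infty}\norm{\dot w(t)}{V'}\norm{w(t)}{V}\,\rmd t\le \norm{\dot w}{L^2(I,V')}^2+\norm{w}{L^2(I,V)}^2<+\infty,
\]
using $w\in W(I,V,V')$. Since $\varphi$ is absolutely continuous with integrable derivative on the half-line, the limit $\ell_\infty\coloneqq\lim_{t\to+\infty}\varphi(t)$ exists in $[0,+\infty)$ (write $\varphi(t)=\varphi(s_0)+\int_{s_0}^{t}\varphi'(r)\,\rmd r$ and note the integral converges as $t\to+\infty$ by dominated/absolute convergence). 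Finally I would argue $\ell_\infty=0$: if $\ell_\infty>0$, then $\norm{w(t)}{H}^2\ge \tfrac12\ell_\infty$ for all $t$ large, say $t\ge T$; but then, using $\norm{w(t)}{H}\le C\norm{w(t)}{V}$ for the continuous embedding $V\xhookrightarrow{}H$ (here $V=W^{1,2}\subset L^2=H$ with norm-one inclusion, so in fact $\norm{w(t)}{H}\le\norm{w(t)}{V}$), we would get $\norm{w(t)}{V}^2\ge\tfrac12\ell_\infty$ for all $t\ge T$, contradicting $\int_{s_0}^{\infty}\norm{w(t)}{V}^2\,\rmd t<+\infty$. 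Hence $\ell_\infty=0$, i.e.\ $\lim_{t\to+\infty}\norm{w(t)}{H}^2=0$.

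The only mild subtlety — the "hard part", such as it is — is justifying that $t\mapsto\norm{w(t)}{H}^2$ has a well-defined limit at $+\infty$ rather than merely a $\liminf$ of $0$; this is exactly where absolute continuity plus $L^1$-integrability of the derivative is essential, and where one must be careful not to invoke monotonicity (which does not hold here). Everything else is a routine application of standard Bochner-space facts, so I would keep the write-up short, citing the Lions--Magenes embedding/identity and otherwise carrying out the three estimates above directly.
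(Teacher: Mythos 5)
Your proposal is correct and follows essentially the same route as the paper: both rest on the identity $\tfrac{\rmd}{\rmd t}\norm{w}{H}^2=2\langle\dot w,w\rangle_{V',V}$, the resulting $L^1(I)$-integrability of this derivative, and the fact that $w\in L^2(I,V)$ forces $\norm{w(t)}{V}$ to be small for arbitrarily large $t$. The paper merely organizes the final step differently (extracting a sequence $t_n$ with $\norm{w(t_n)}{V}^2\le 1/n$ and controlling $\norm{w(t)}{H}^2$ for $t\ge t_n$ by the tail integral, citing Casas--Kunisch), whereas you first establish that the limit exists and then rule out a positive limit; the two are interchangeable.
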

\begin{proof}
 We follow a slight variation of the arguments in~\cite[Proof of~(2.8), Thm~2.4]{CasasKun17}. From~\cite[Ch.~3, Sect.~1.4, Lem.~1.2]{Temam01} we can write~$\frac{\rmd}{\rmd t}\norm{w}{H}^2=2\langle \dot w,w\rangle_{V',V}$. Since~$ w\in L^2(I,V)$ there is a sequence $(t_n)_{n\in\bbN}$, $t_n\in I$, such that $t_n\xrightarrow{}+\infty$ and~$\norm{w(t_n)}{V}^2\le\frac1n$. We can conclude by following~\cite[Proof of~(2.8), Thm~2.4]{CasasKun17}.
\end{proof}

\begin{corollary}\label{C:Wstconv0}
For every~$\alpha\in\bbR$ and~$z\in W(\bbR_+,\rmD(A^{\alpha+\frac12}),\rmD(A^{\alpha-\frac12}))$ we have that $\lim\limits_{t\to+\infty}\norm{ z (t)}{\rmD(A^{\alpha})}=0$.
\end{corollary}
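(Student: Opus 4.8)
Corollary \ref{C:Wstconv0} follows by reducing to Lemma~\ref{L:Wwkconv0} after a shift by the fractional power $A^{\alpha}$. The plan is: given $z\in W(\bbR_+,\rmD(A^{\alpha+\frac12}),\rmD(A^{\alpha-\frac12}))$, set $w\coloneqq A^{\alpha}z$. Since $A^{\alpha}$ is (by the usual functional calculus) an isometric isomorphism $\rmD(A^{\alpha+\frac12})\to\rmD(A^{\frac12})=V$ and $\rmD(A^{\alpha-\frac12})\to\rmD(A^{-\frac12})=V'$, and it commutes with $\tfrac{\rmd}{\rmd t}$, we get $w\in L^2(\bbR_+,V)$ with $\dot w=A^{\alpha}\dot z\in L^2(\bbR_+,V')$, i.e. $w\in W(\bbR_+,V,V')$. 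Applying Lemma~\ref{L:Wwkconv0} to $w$ yields $\lim_{t\to+\infty}\norm{w(t)}{H}^2=0$. Finally $\norm{w(t)}{H}=\norm{A^{\alpha}z(t)}{H}=\norm{z(t)}{\rmD(A^{\alpha})}$ by the very definition of the $\rmD(A^{\alpha})$-norm used in the paper (cf. the convention $\rmD(A^{-s})\coloneqq\rmD(A^{s})'$ together with $(w,z)_{\rmD(A)}=(Aw,Az)_H$, whose natural fractional analogue is $\norm{v}{\rmD(A^{\alpha})}=\norm{A^{\alpha}v}{H}$), giving the claim.

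One should be slightly careful about which norm "$\rmD(A^{\alpha})$" carries for negative $\alpha$: for $\alpha<0$ the space $\rmD(A^{\alpha})$ is the dual $\rmD(A^{-\alpha})'$, but since $A$ (being a shifted Laplacian with $A\ge \Id$) is positive and self-adjoint on $H$, its fractional powers $A^{\alpha}$ extend to isometric isomorphisms $\rmD(A^{\beta})\to\rmD(A^{\beta-\alpha})$ for every real $\alpha,\beta$, and in particular $A^{\alpha}\colon \rmD(A^{\alpha})\to H$ is an isometry. This is the only structural fact needed beyond Lemma~\ref{L:Wwkconv0}, and it is standard for such operators; I would simply invoke it. No new estimates on the nonlinearity or on $y_\ttt$ are required here.

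The only genuine (if minor) obstacle is bookkeeping: verifying that the membership $z\in W(\bbR_+,\rmD(A^{\alpha+\frac12}),\rmD(A^{\alpha-\frac12}))$ transfers to $w=A^{\alpha}z\in W(\bbR_+,V,V')$, which amounts to checking that $A^{\alpha}$ intertwines the pair of spaces with $(V,V')$ and commutes with the time derivative in the sense of vector-valued distributions — both immediate from the spectral calculus. Once that is in place the proof is two lines, so I would write it as: "Set $w\coloneqq A^\alpha z$. Then $w\in W(\bbR_+,V,V')$ since $A^\alpha$ is an isometric isomorphism $\rmD(A^{\alpha\pm\frac12})\to\rmD(A^{\pm\frac12})$ commuting with $\tfrac{\rmd}{\rmd t}$. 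By Lemma~\ref{L:Wwkconv0}, $\norm{w(t)}{H}\to0$; since $\norm{w(t)}{H}=\norm{z(t)}{\rmD(A^\alpha)}$, the conclusion follows."

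\begin{proof}
Set $w\coloneqq A^{\alpha}z$. Since $A$ is positive and self-adjoint on $H$, for every real $\beta$ the fractional power $A^{\alpha}$ restricts to an isometric isomorphism $\rmD(A^{\beta})\to\rmD(A^{\beta-\alpha})$; in particular $A^{\alpha}\colon\rmD(A^{\alpha+\frac12})\to\rmD(A^{\frac12})=V$ and $A^{\alpha}\colon\rmD(A^{\alpha-\frac12})\to\rmD(A^{-\frac12})=V'$ are isometric isomorphisms. As $A^{\alpha}$ commutes with $\tfrac{\rmd}{\rmd t}$ in the sense of $V'$-valued distributions, it follows that $w\in L^2(\bbR_+,V)$ with $\dot w=A^{\alpha}\dot z\in L^2(\bbR_+,V')$, i.e. $w\in W(\bbR_+,V,V')$. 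By Lemma~\ref{L:Wwkconv0}, $\lim\limits_{t\to+\infty}\norm{w(t)}{H}^2=0$. Finally, by the definition of the $\rmD(A^{\alpha})$-norm we have $\norm{w(t)}{H}=\norm{A^{\alpha}z(t)}{H}=\norm{z(t)}{\rmD(A^{\alpha})}$, and hence $\lim\limits_{t\to+\infty}\norm{z(t)}{\rmD(A^{\alpha})}=0$.
\end{proof}
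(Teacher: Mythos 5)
Your proof is correct and is essentially the paper's own argument: the paper likewise observes that $A^{\alpha}z\in W(\bbR_+,V,V')$, applies Lemma~\ref{L:Wwkconv0}, and identifies $\norm{A^{\alpha}z(t)}{H}$ with $\norm{z(t)}{\rmD(A^{\alpha})}$. Your version merely spells out the isometric-isomorphism and commutation-with-$\tfrac{\rmd}{\rmd t}$ details that the paper leaves implicit.
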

\begin{proof}
Observe that~$A^{\alpha} z  \in W(\bbR_+,V,V')$, thus we can use Lemma~\ref{L:Wwkconv0} to conclude that
$
\lim\limits_{t\to+\infty}\norm{z (t)}{\rmD(A^{\alpha})}=\lim\limits_{t\to+\infty}\norm{A^{\alpha}  z (t)}{H}=0.
$
\end{proof}

Now we present a first property of optimal pairs.
\begin{lemma}\label{L:ZoweKurcyusz-0inint}
For every~$(z_\bullet,u_\bullet)\in\fkX_{I}^{z_0}$, we have~$
0\in{\bf int} [\rmd \clG_{I}^{z_0}\rest{(z_\bullet,u_\bullet)}\left(\clC_{I}-(z_\bullet,u_\bullet)\right)].
$
\end{lemma}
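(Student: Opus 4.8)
The plan is to show that the image $\rmd\clG_{I}^{z_0}\rest{(z_\bullet,u_\bullet)}\bigl(\clC_{I}-(z_\bullet,u_\bullet)\bigr)$ contains a ball of $\clY_I=L^2(I,H)\times V$ centred at the origin; Lemma~\ref{L:ZoweKurcyusz-int} then converts this into the Zowe--Kurcyusz regularity relation of Lemma~\ref{L:ZoweKurcyusz}. Write $s_0\coloneqq\inf I$. The core of the argument is an isomorphism property of the \emph{linearized closed-loop} operator built from the unconstrained linear feedback $\clK_M^\lambda$ of~\eqref{unc-K}: define $L_{\rm cl}\colon W(I,\rmD(A),H)\to\clY_I$ by $L_{\rm cl}w\coloneqq\rmd\clG_{I}^{z_0}\rest{(z_\bullet,u_\bullet)}(w,\clK_M^\lambda w)=\bigl(\dot w+Aw+\rmd f^{y_\ttt}\rest{z_\bullet}w-U_M^\diamond\clK_M^\lambda w,\;w(s_0)\bigr)$. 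I would first prove that $L_{\rm cl}$ is a Banach-space isomorphism. Granting this, when $C_u=+\infty$ the constraint defining $\clC_I$ is void, so $\rmd\clG_{I}^{z_0}\rest{(z_\bullet,u_\bullet)}\bigl(\clC_{I}-(z_\bullet,u_\bullet)\bigr)=\clY_I$ and the conclusion is immediate; the finite-$C_u$ case is closed afterwards by a correction argument.

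To prove that $L_{\rm cl}$ is an isomorphism, existence of solutions $w\in W(I,\rmD(A),H)$ of $\dot w=-Aw-\rmd f^{y_\ttt}\rest{z_\bullet}w+U_M^\diamond\clK_M^\lambda w+g$, $w(s_0)=\phi$, follows from a Galerkin scheme in the eigenbasis of $A$ as for~\eqref{sys-z-feedf-hat}, uniqueness being automatic by linearity and Gronwall; what remains is the two-sided bound, obtained from two a priori estimates. Testing with $2w$ and using the algebraic identity $2(U_M^\diamond\clK_M^\lambda w,w)_H=-2\lambda\norm{P_{\widetilde\clU_M}^{\clU_M^\perp}w}{V}^2$ (the computation of Lemma~\ref{L:Ku-monot}, with the saturation inactive), the decisive point — and the reason Theorem~\ref{T:mainSchloegl} was phrased with the enlarged $\widehat C_0=\max\{4+C_0,C\}$ of~\eqref{varpi} — is that the explicit form of $\rmd f^{y_\ttt}\rest{z_\bullet}$ in~\eqref{dfyr.z} gives, after one Young inequality absorbing the $z_\bullet^2$-term against $3z_\bullet^2w^2\ge0$, the \emph{uniform} bound $-2(\rmd f^{y_\ttt}\rest{z_\bullet}w,w)_H\le(C_0+2)\norm{w}{H}^2$ with no dependence on $z_\bullet$ or $y_\ttt$. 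Since $\widehat C_0\ge C_0+4$ and $A\ge\Id$, this yields, with $\varpi=2\mu+\widehat C_0$,
\begin{align}
&\tfrac{\rmd}{\rmd t}\norm{w}{H}^2+\norm{w}{V}^2+2\lambda\norm{P_{\widetilde\clU_M}^{\clU_M^\perp}w}{V}^2\le\widehat C_0\norm{w}{H}^2+2(g,w)_H,\label{dtw-bull-latime}\\
&\tfrac{\rmd}{\rmd t}\norm{w}{H}^2+\tfrac12\norm{w}{V}^2+\mu\norm{w}{H}^2\le\tfrac1\mu\norm{g}{H}^2,\label{dtw-bull4}
\end{align}
where~\eqref{dtw-bull4} follows from~\eqref{dtw-bull-latime} by the Poincar\'e-like inequality~\eqref{lamMvarpi} (possibly after enlarging $M_*,\lambda_*$, so that $\tfrac12\norm{w}{V}^2+2\lambda\norm{P_{\widetilde\clU_M}^{\clU_M^\perp}w}{V}^2\ge(\mu+\widehat C_0)\norm{w}{H}^2$) and $2(g,w)_H\le\mu\norm{w}{H}^2+\tfrac1\mu\norm{g}{H}^2$. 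Integrating~\eqref{dtw-bull4} gives $\norm{w}{L^\infty(I,H)}^2+\norm{w}{L^2(I,V)}^2\le C\bigl(\norm{\phi}{V}^2+\norm{g}{L^2(I,H)}^2\bigr)$, and keeping instead the $\mu\norm{w}{H}^2$ term gives the pure-decay inequality $\tfrac{\rmd}{\rmd t}\norm{w}{H}^2+\mu\norm{w}{H}^2\le\tfrac1\mu\norm{g}{H}^2$ (used below). Next, testing with $2Aw$, the bound $\norm{\rmd f^{y_\ttt}\rest{z_\bullet}w}{H}\le C(\norm{z_\bullet}{L^6}^2+\norm{y_\ttt}{L^6}^2+1)\norm{w}{V}$ of Lemma~\ref{L:fyr-est} — with $\norm{z_\bullet}{L^6},\norm{y_\ttt}{L^6}$ uniformly bounded in time by $W(I,\rmD(A),H)\xhookrightarrow{}\clC(\overline I,V)\xhookrightarrow{}\clC(\overline I,L^6)$ and Lemma~\ref{L:estL6free} — together with the boundedness of $U_M^\diamond\clK_M^\lambda$ on $H$, leads, exactly as in Lemma~\ref{L:key-exist-Q} and the proof of Theorem~\ref{T:mainSchloegl-st}, to $\tfrac{\rmd}{\rmd t}\norm{w}{V}^2+\norm{w}{\rmD(A)}^2\le C(\norm{w}{V}^2+\norm{g}{H}^2)$; integrating and controlling $\dot w$ from the equation yields $\norm{w}{W(I,\rmD(A),H)}\le C\bigl(\norm{\phi}{V}+\norm{g}{L^2(I,H)}\bigr)$, uniformly in $s_0$. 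Thus $L_{\rm cl}$ is bounded below; being also bounded and surjective, it is an isomorphism, with $L_{\rm cl}^{-1}$ continuous.

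For $C_u\in(0,+\infty)$ I would close the argument by correcting the unconstrained solution. Using $\rmd\clG_{I}^{z_0}\rest{(z_\bullet,u_\bullet)}(w,v)=L_{\rm cl}w+\bigl(U_M^\diamond(\clK_M^\lambda w-v),0\bigr)$, fix $(g,\phi)\in\clY_I$ and look for $w\in W(I,\rmD(A),H)$ with $v\coloneqq\fkP^{\dnorm{\Bigcdot}{}}_{C_u}(u_\bullet+\clK_M^\lambda w)-u_\bullet$, so that $u_\bullet+v=\fkP^{\dnorm{\Bigcdot}{}}_{C_u}(u_\bullet+\clK_M^\lambda w)$ has $\dnorm{\Bigcdot}{}$-norm $\le C_u$ and hence $(w,v)\in\clC_I-(z_\bullet,u_\bullet)$ for \emph{every} $w$. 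The identity $\rmd\clG_{I}^{z_0}\rest{(z_\bullet,u_\bullet)}(w,v)=(g,\phi)$ then becomes the fixed-point equation $w=L_{\rm cl}^{-1}\bigl(g+U_M^\diamond R(w),\phi\bigr)$, with $R(w)\coloneqq(u_\bullet+\clK_M^\lambda w)-\fkP^{\dnorm{\Bigcdot}{}}_{C_u}(u_\bullet+\clK_M^\lambda w)$. Since $\dnorm{u_\bullet(t)}{}\le C_u$, one has the pointwise bound $\dnorm{R(w)(t)}{}\le\dnorm{\clK_M^\lambda w(t)}{}$, and $R$ is globally Lipschitz from $w\in L^2(I,\bbR^{M_\sigma})$ because $\fkP^{\dnorm{\Bigcdot}{}}_{C_u}$ is. Combining the pure-decay inequality above with the convolution (Young) estimate, one gets $\norm{L_{\rm cl}^{-1}(U_M^\diamond r,0)}{L^2(I,H)}\le\tfrac1\mu\norm{U_M^\diamond}{\clL(\bbR^{M_\sigma},H)}\norm{r}{L^2(I,\bbR^{M_\sigma})}$, which, for $\lambda$ (equivalently the decay rate $\mu$) chosen large enough, makes the fixed-point map a contraction on a small closed ball of $W(I,\rmD(A),H)$ into which it maps when $\norm{(g,\phi)}{\clY_I}$ is small; its unique fixed point provides $(w,v)\in\clC_I-(z_\bullet,u_\bullet)$ with $\rmd\clG_{I}^{z_0}\rest{(z_\bullet,u_\bullet)}(w,v)=(g,\phi)$, so a whole ball around $0$ lies in the image, i.e. $0\in{\bf int}\bigl[\rmd\clG_{I}^{z_0}\rest{(z_\bullet,u_\bullet)}(\clC_{I}-(z_\bullet,u_\bullet))\bigr]$.

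The principal difficulty is precisely this last step. The \emph{open-loop} linearized error system need not be well posed in $W(I,\rmD(A),H)$ on an unbounded $I$ (the reaction term can produce genuinely growing modes, since $C_0$ may be positive), which forces the injection of the high-gain stabilizing feedback $\clK_M^\lambda$ into the equation used to invert $\rmd\clG_{I}^{z_0}$; this in turn has to be reconciled with the hard constraint $\dnorm{u(t)}{}\le C_u$ when $(z_\bullet,u_\bullet)$ lies on the constraint sphere, which is exactly what the radial projection $\fkP^{\dnorm{\Bigcdot}{}}_{C_u}$ in $R(\cdot)$, together with the closed-loop contractivity furnished by~\eqref{dtw-bull4}, takes care of. The technical heart is the chain \eqref{dtw-bull-latime}$\Rightarrow$\eqref{dtw-bull4}, which rests on the enlargement of $C_0$ to $\widehat C_0$ in Theorem~\ref{T:mainSchloegl} and on the uniform-in-$(z_\bullet,y_\ttt)$ monotone-type estimate for $\rmd f^{y_\ttt}\rest{z_\bullet}$; the rest is the $\rmD(A)$-regularity bookkeeping already present in Lemmas~\ref{L:fyr-est} and~\ref{L:key-exist-Q}.
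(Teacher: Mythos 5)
Your overall strategy (invert the linearized dynamics closed with the unconstrained feedback $\clK_M^\lambda$, then absorb the constraint by a radial-projection fixed point) is genuinely different from the paper's, which instead constructs the correcting control explicitly as a concatenation: $v_c=u_\bullet$ on $(s_0,\overline s)$ and $v_c=\overline\clK_M^\lambda w$ (the already saturated feedback) on $[\overline s,+\infty)$, choosing $\overline s$ so large that $\norm{z_\bullet(\overline s)}{V}$ and the tail $\norm{u_\bullet}{L^2((\overline s,\infty),\bbR^{M_\sigma})}$ are small, and then shrinking $\varepsilon$ so that a continuation argument keeps $\norm{w(t)}{H}\le 2\delta$ and hence the saturation inactive for $t\ge\overline s$. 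The first part of your argument (the isomorphism property of $L_{\rm cl}$ and the energy estimates behind it, including the uniform bound $-2(\rmd f^{y_\ttt}\rest{z_\bullet}w,w)_H\le(C_0+2)\norm{w}{H}^2$ and the role of $\widehat C_0$) is sound and does dispose of the bounded-$I$ and $C_u=+\infty$ cases.

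The gap is in the contraction step for $C_u\in(0,+\infty)$, which is the case the lemma is actually needed for. The map $w\mapsto L_{\rm cl}^{-1}\bigl(g+U_M^\diamond R(w),\phi\bigr)$ has Lipschitz constant bounded by $\norm{L_{\rm cl}^{-1}}{}\,\norm{U_M^\diamond}{\clL(\bbR^{M_\sigma},H)}\,\Lip(R)$, and since $R(w)=(u_\bullet+\clK_M^\lambda w)-\fkP^{\dnorm{\Bigcdot}{}}_{C_u}(u_\bullet+\clK_M^\lambda w)$ with $\fkP^{\dnorm{\Bigcdot}{}}_{C_u}$ merely Lipschitz, $\Lip(R)$ is of the order of $\norm{\clK_M^\lambda}{\clL(H,\bbR^{M_\sigma})}=\lambda\dnorm{\fkU_M}{}$. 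Your estimate $\norm{L_{\rm cl}^{-1}(U_M^\diamond r,0)}{L^2(I,H)}\le\mu^{-1}\norm{U_M^\diamond}{\clL(\bbR^{M_\sigma},H)}\norm{r}{L^2(I,\bbR^{M_\sigma})}$ therefore yields a contraction constant of order $\lambda\dnorm{\fkU_M}{}\norm{U_M^\diamond}{\clL(\bbR^{M_\sigma},H)}/\mu$, which is large, not small: increasing $\lambda$ makes it worse, and $\mu$ is not ``equivalently'' at your disposal --- by Lemma~\ref{L:MlamPoinc} one needs $\lambda\ge\lambda_*\ge 2\varpi+1=4\mu+2\widehat C_0+1$, so $\lambda/\mu$ stays bounded below, and in any case $M$, $\lambda$ and $C_u$ are already fixed by Theorems~\ref{T:mainSchloegl} and~\ref{T:existOptim} and cannot be re-chosen inside this lemma. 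The smallness of the data $(g,\phi)$ gives the self-mapping property of a small ball but does not shrink the Lipschitz constant, so Banach's theorem does not apply as stated. To close the argument you need the extra ingredient the paper exploits: on a far-out tail $[\overline s,+\infty)$ both $\norm{z_\bullet(\overline s)}{V}$ (by Corollary~\ref{C:Wstconv0}) and $\norm{u_\bullet}{L^2((\overline s,\infty),\bbR^{M_\sigma})}$ (by square-integrability, quantified via Corollary~\ref{C:optim-tail}) can be made as small as desired, so one may switch there to the saturated feedback $\overline\clK_M^\lambda w$ --- which satisfies the constraint by construction --- and verify directly, by a bootstrap keeping $\norm{w(t)}{H}\le2\delta$, that the radial projection never activates; no contraction is needed.
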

\begin{proof}
For simplicity let us denote~$\clX\coloneqq\clX_{I}^{z_0}$, $\clY\coloneqq\clY_{I}^{z_0}$, $\clG\coloneqq\clG_{I}^{z_0}$, and~$\clC\coloneqq\clC_I$.
Let us fix~$\varepsilon>0$, and consider the ball~$\fkB_\varepsilon^{\clY}\coloneqq\fkB_\varepsilon^{\clY}(0)
=\{g\in\clY\mid\norm{g}{\clY}\le\varepsilon\}$. We shall show that for small enough~$\varepsilon$ we have that~$\fkB_\varepsilon^{\clY}\subseteq\rmd \clG\rest{( z_\bullet  , u_\bullet )}\left(\clC-( z_\bullet  ,u_\bullet )\right)$.
For arbitrary~$g=(\eta,w_0)\in\fkB_\varepsilon^{\clY}$, we search for~$c=(w_c, v_c)\in\clC$ such that $\rmd \clG\rest{( z_\bullet  ,u_\bullet )}(w_c- z_\bullet  ,v_c-u_\bullet )=(\eta,w_0)$.
That is, we look for~$c=(w_c, v_c)\in\clX$ such that
\begin{align}
 &\tfrac{\rmd}{\rmd t}({w_c}-{ z_\bullet  }) +A(w_c- z_\bullet  )+\rmd f^{y_\ttt}\rest{z_\bullet}(w_c- z_\bullet  )=U_M^\diamond (v_c-u_\bullet )+\eta,\notag\\
&w_c(s_0)- z_\bullet  (s_0)=w_0\in V,\qquad \dnorm{v_c(t)}{}\le C_u,\qquad 0\le s_0=\inf I.\notag
 \end{align}

Denoting~$w\coloneqq w_c- z_\bullet  $, we see that finding~$(w_c,v_c)$ is  equivalent to finding~$( w,v_c)$, which solves
\begin{subequations}\label{sys-w1}
\begin{align}
 &\dot w +Aw+\rmd f^{y_\ttt}\rest{z_\bullet}w -U_M^\diamond v_c+U_M^\diamond u_\bullet =\eta,\\
&w(s_0)=w_0\in V,\qquad \dnorm{v_c(t)}{}\le  C_u.
 \end{align}
\end{subequations}

Hence, it is enough to show that the solution of~\eqref{sys-w1}
satisfies~$w\in W(I,\rmD(A),H)$.
 Multiplying the dynamics in~\eqref{sys-w1} by
$2w$ we obtain
\begin{align}
\tfrac{\rmd}{\rmd t}\norm{w}{H}^2 &=-2\norm{w}{V}^2+2(\rmd f^{y_\ttt}\rest{z_\bullet}w,w)_{H} +2(U_M^\diamond v_c,w)_{H}-2(U_M^\diamond u_\bullet  ,w)_{H}+2(\eta,w)_{H},\notag
 \end{align}
and from straightforward computations we find that
\begin{align}
&-2(\rmd f^{y_\ttt}\rest{z_\bullet}w,w)_{H}=-2(3z_\bullet^2w +(6y_\ttt+2\xi_2) z_\bullet  w+(3y_\ttt^2+2\xi_2y_\ttt+\xi_1-1)w,w)_{H}\notag\\
&\hspace{3em}=2(-3( z_\bullet  +y_\ttt)^2 -2\xi_2( z_\bullet  +y_\ttt)-\xi_1+1,w^2)_{H}\le C_0\norm{w}{H}^2\notag
\end{align}
with $C_0=\ovlineC{\norm{\zeta}{\infty}}$ as in~\eqref{varpi}.
Thus,
\begin{align}
\tfrac{\rmd}{\rmd t}\norm{w}{H}^2 &=-2\norm{w}{V}^2+(3+C_0)\norm{w}{H}^2+\norm{\eta}{H}^2 +2(U_M^\diamond v_c,w)_{H}-2(U_M^\diamond u_\bullet  ,w)_{H}.\label{dtw-vcbullu}
 \end{align}
 Multiplying the dynamics of~\eqref{sys-w1} by~$2Aw$ we find
\begin{align}
 \tfrac{\rmd}{\rmd t}\norm{w}{V}^2&=-2\norm{Aw}{H}^2-2(\rmd f^{y_\ttt}\rest{z_\bullet}w,Aw)_{H} +2(U_M^\diamond v_c-U_M^\diamond u_\bullet+\eta,Aw)_{H}^2\notag\\
&\le-\norm{Aw}{H}^2+2\norm{\rmd f^{y_\ttt}\rest{z_\bullet}w}{H}^2+2\norm{U_M^\diamond v_c-U_M^\diamond u_\bullet+\eta}{H}^2\notag
 \end{align}
and time integration gives us
\begin{align}
 \norm{w}{L^2(I,\rmD(A))}^2&\le\norm{w_0}{V}^2+2\norm{\rmd f^{y_\ttt}\rest{z_\bullet}w}{L^2(I,H)}^2 +2\norm{U_M^\diamond v_c-U_M^\diamond u_\bullet+\eta}{L^2(I,H)}^2\notag
 \end{align}
 and recalling~\eqref{fkG-wdef3},
\begin{align}
 2\norm{w}{L^2(I,\rmD(A))}^2&\le\norm{w_0}{V}^2+2(1+C_3)\norm{w}{L^2(I,V)}^2+2\norm{U_M^\diamond v_c-U_M^\diamond u_\bullet+\eta}{L^2(I,H)}^2\label{boundL2DA-1}
\end{align}
with~$C_3\le\ovlineC{\norm{\zeta}{\infty},C_h,\tfrac1{\tau_h},\norm{\overline z}{W(I,\rmD(A),H)}^2}$.

We consider separately the cases of bounded~$I$ and unbounded~$I$.

\smallskip
\noindent$\bullet$\hspace{.75em}\emph{The case of bounded time interval}. In the case $I=(s_0,s_1)$ with~$0<s_0<s_1\in\bbR_+$, we choose~$v_c(t)=u_\bullet(t)$. Then by~\eqref{dtw-vcbullu}, together with the Gronwall inequality and time integration, it follows that~$\norm{w}{L^2(I,V)}^2<+\infty$, thus we can use~\eqref{boundL2DA-1} to conclude that~$\norm{w}{L^2(I,\rmD(A))}^2<+\infty$. Using the dynamics and~\eqref{sys-w1} and~\eqref{fkG-wdef3} we find
\begin{align}
 \norm{\dot w}{L^2(I,H)} &\le \norm{w}{L^2(I,\rmD(A))}+\norm{w}{L^2(I,H)} +C_3^\frac12\norm{w}{L^2(I,V)}+\norm{\eta}{L^2(I,H)}<+\infty.\notag
 \end{align}
That is, we have~$w\in W(I,\rmD(A),H)$ solving~\eqref{sys-w1}, which finishes the proof for bounded~$I$.

Furthermore, we can see that in this case we can take an arbitrary~$\varepsilon>0$, that is, $\rmd \clG_{I}^{z_0}\rest{(z_\bullet,u_\bullet)}\left(\clC_{I}-(z_\bullet,u_\bullet)\right)=\clY$.

\smallskip
\noindent$\bullet$\hspace{.75em}\emph{The case of unbounded time interval}. In the case $I=(s_0,+\infty)$,
we are going to take~$v_c$ in concatenated form as follows.
We start by defining the constant
\begin{align} \label{delta*}
\delta\coloneqq\tfrac{C_u}{2\lambda}\norm{(U_M^\diamond)^{-1}P_{\clU_M}^{\widetilde \clU_M}A P_{\widetilde \clU_M}^{\clU_M}}{\clL(H)}^{-1}
\end{align}
and, subsequently, we use Corollary~\ref{C:Wstconv0} and choose
\begin{align} \label{s*}
\overline s\ge s_0\quad\mbox{such that}\quad \norm{z_\bullet(t)}{V}^2\le\delta^2\widehat C^{-1}\norm{U_M^\diamond}{\clL(\bbR^{M_\sigma},H)}^{-2}\quad\mbox{for all}\quad t\ge\overline s,
\end{align}
where~$\widehat C$ is as in Theorem~\ref{T:existOptim}. Finally, we choose
\begin{align} \label{eps*}
\varepsilon\coloneqq\delta\left(1+(\overline s-s_0)(4+C_0)\rme^{(2+C_0)(\overline s-s_0) }\right)^{-\frac12},
\end{align}
for the radius of the ball~$\fkB_\varepsilon^\clY$, where~$C_0$ is as in~\eqref{varpi}.

Now, we define the concatenated control
\begin{align}
v_c(t)\coloneqq \fkK_{\overline s}w(t)\coloneqq\begin{cases}
u_\bullet(t),\quad&\mbox{if }t<\overline s;\\
\overline\clK_M^\lambda w(t),\quad&\mbox{if }t\ge \overline s.
\end{cases}\label{fkK}
\end{align}
which, together with~\eqref{dtw-vcbullu} and Lemma~\ref{L:Ku-monot}, give us
\begin{align}
\tfrac{\rmd}{\rmd t}\norm{w}{H}^2
 &\le-2\norm{w}{V}^2+(3+C_0)\norm{w}{H}^2+\norm{\eta}{H}^2,
&&\quad\mbox{for}\quad t<\overline s;\label{dtw-bull-smtime}\\
\tfrac{\rmd}{\rmd t}\norm{w}{H}^2
 &\le-2\norm{w}{V}^2+(4+C_0)\norm{w}{H}^2+\norm{U_M^\diamond u_\bullet}{H}^2+\norm{\eta}{H}^2\notag\\
&\quad-2\lambda\min\left\{1,\tfrac{C_u}{\dnorm{v}{}}\right\}\norm{P_{\widetilde \clU_M}^{\clU_M}w}{V}^2,
&&\quad\mbox{for}\quad t\ge\overline s;\label{dtw-bull-latime}
\end{align}
where~$v(t)\coloneqq-\lambda(U_M^\diamond)^{-1}P_{\clU_M}^{\widetilde \clU_M}A P_{\widetilde \clU_M}^{\clU_M}w(t)$ and
where~$\min\left\{1,\tfrac{C_u}{\dnorm{v}{}}\right\}\coloneqq1$ if~$\dnorm{v}{}=0$. Note also that
$\dnorm{v}{}=0$ is equivalent to~$w=0$.
Inequality~\eqref{dtw-bull-smtime} and~$\norm{w}{H}\le\norm{w}{V}$ imply that
\begin{align}
\tfrac{\rmd}{\rmd t}\norm{w}{H}^2
 &\le(1+C_0)\norm{w}{H}^2+\norm{\eta}{H}^2,\quad\mbox{for}\quad t\in(s_0,\overline s).\notag
 \end{align}
By  Gronwall's inequality and using that ~$(\eta,w_0)\in\fkB_\varepsilon^{\clY}$, we obtain
\begin{align}
\norm{w(t)}{H}^2
 &\le\rme^{(1+C_0)(t-s_0)}\left(\norm{w(s_0)}{H}^2+\norm{\eta}{(I^{\overline s},H)}^2\right)
\le\rme^{(1+C_0)(t-s_0)}\varepsilon^2,\quad\mbox{for}\quad t\in(s_0,\overline s).\notag
 \end{align}

Time integration of~\eqref{dtw-bull-smtime} now implies
\begin{align}
\norm{w(\overline s)}{H}^2+2\norm{w}{L^2((s_0,\overline s),V)}^2&\le \norm{w_0}{H}^2+(3+C_0)\norm{w}{L^2((s_0,\overline s),H)}^2+\varepsilon^2\notag\\
&\hspace{0em}\le \left(1+(\overline s-s_0)(3+C_0)\rme^{(1+C_0)(\overline s-s_0)}\right)\varepsilon^2
\le\delta^2.\label{dtw-bull3}
 \end{align}

From~\eqref{dtw-bull-latime}, and by recalling the choices of~$M\ge M_*$ and~$\lambda\ge \lambda_*$, see ~\eqref{lamMvarpi} and~\eqref{varpi}, we find that, for~$t\ge\overline s$,
\begin{align}
\tfrac{\rmd}{\rmd t}\norm{w}{H}^2
&\le-\norm{w}{V}^2-2\mu\norm{w}{H}^2+2\lambda\norm{P_{\widetilde \clU_M}^{\clU_M}w}{V}^2+\norm{U_M^\diamond u_\bullet  }{H}^2+\norm{\eta}{H}^2,\notag\\
&\quad-2\lambda\min\left\{1,\tfrac{C_u}{\dnorm{v}{}}\right\}\norm{P_{\widetilde \clU_M}^{\clU_M}w}{V}^2,
\notag
\end{align}
 and from the implications
$
\norm{w(t)}{H}\le 2\delta \Rightarrow\dnorm{v(t)}{}\le C_u \Rightarrow
\min\left\{1,\tfrac{C_u}{\dnorm{v}{}}\right\}=1,
$
we obtain
\begin{align}
\tfrac{\rmd}{\rmd t}\norm{w}{H}^2 &\le-\norm{w}{V}^2-2\mu\norm{w}{H}^2+\norm{U_M^\diamond u_\bullet  }{H}^2+\norm{\eta}{H}^2,
\quad\mbox{while }\norm{w(t)}{H}\le 2\delta,\; t\ge\overline s.
\label{dtw-bull4}
 \end{align}

By Theorem~\ref{T:existOptim} and Corollary~\ref{C:optim-tail}, with~$I_{\overline s}=(\overline s,+\infty)$, we obtain
\begin{align}\notag
\norm{u_\bullet}{L^2(I_{\overline s},\bbR^{M_\sigma})}^2
\le \clJ_{I_{\overline s}}^Q(z_{I_{\overline s}}^{z_\bullet(\overline s)},u_{I_{\overline s}}^{z_\bullet(\overline s)})
\le \widehat C\norm{z_\bullet(\overline s)}{V}^2,
\end{align}
which together with~\eqref{s*}, \eqref{dtw-bull4}, and~$\norm{w}{V}\ge\norm{w}{H}$, lead us to
\begin{align}  \notag
\tfrac{\rmd}{\rmd t}\norm{w}{H}^2
&\le-\norm{w}{H}^2+\widehat C\norm{U_M^\diamond
}{\clL(\bbR^{M_\sigma},H)}^2\norm{z_\bullet(\overline s)}{V}^2+\norm{\eta}{H}^2\\
&\le-\norm{w}{H}^2+\delta^2+\norm{\eta}{H}^2,
\quad\mbox{while}\quad \norm{w(t)}{H}\le 2\delta,\qquad t\ge\overline s,\notag
\end{align}
  Next, the Gronwall inequality, \eqref{dtw-bull3}, and~\eqref{eps*}, give us
\begin{align}
\norm{w(t)}{H}^2
&\le\rme^{-(t-\overline s)}\norm{w(\overline s)}{H}^2+\delta^2+\norm{\eta}{L^2(I_{\overline s},H)}^2\notag\\
&\le2\delta^2+\varepsilon^2\le3\delta^2,
\quad\mbox{while}\quad \norm{w(t)}{H}\le 2\delta,\qquad t\ge\overline s.\notag
 \end{align}
Therefore, we can conclude that
\begin{align}
\norm{w(t)}{H}^2\le3\delta^2,
\quad\mbox{for all}\quad t\ge\overline s\notag
 \end{align}
and, by time integration of~\eqref{dtw-bull4}, we also find that
\begin{align}
\norm{w}{L^2(I_{\overline s},V)}^2 &\le\norm{w(\overline s)}{H}^2+\norm{U_M^\diamond u_\bullet  }{L^2(I_{\overline s},H)}^2+\norm{\eta}{L^2(I_{\overline s},H)}^2<+\infty.\label{winL2RV}
 \end{align}

Multiplying the dynamics of~\eqref{sys-w1} by~$2Aw$ we find
\begin{align}
 \tfrac{\rmd}{\rmd t}\norm{w}{V}^2&=-2\norm{Aw}{H}^2-2(\rmd f^{y_\ttt}\rest{z_\bullet}w,Aw)_{H}+2(U_M^\diamond \fkK_{\overline s}w-U_M^\diamond u_\bullet+\eta,Aw)_{H}^2\notag\\
&\le-\norm{Aw}{H}^2+2\norm{\rmd f^{y_\ttt}\rest{z_\bullet}w}{H}^2+2\norm{U_M^\diamond \fkK_{\overline s}w-U_M^\diamond u_\bullet+\eta}{H}^2\notag
 \end{align}
and time integration gives us
\begin{align}
 2\norm{w}{L^2(I,\rmD(A))}^2&\le\norm{w_0}{V}^2+2\norm{\rmd f^{y_\ttt}\rest{z_\bullet}w}{L^2(I,H)}^2 +2\norm{U_M^\diamond \fkK_{\overline s}w-U_M^\diamond u_\bullet+\eta}{L^2(I,H)}^2\notag
 \end{align}
and recalling~\eqref{fkK} and~\eqref{fkG-wdef3},
\begin{align}
 2\norm{w}{L^2(I,\rmD(A))}^2&\le\norm{w_0}{V}^2+2C_3\norm{ w}{L^2(I,V)}^2+6\norm{U_M^\diamond \fkK_{\overline s}w}{L^2(I\setminus(s_0,\overline s],H)}^2\notag\\
&\quad +6\norm{U_M^\diamond u_\bullet}{L^2(I\setminus(s_0,\overline s],H)}^2+6\norm{\eta}{L^2(	I,H)}^2\label{infboundL2DA-1}
\end{align}
with~$C_3\le\ovlineC{\norm{\zeta}{\infty},C_h,\tfrac1{\tau_h},\norm{\overline z}{W(I,\rmD(A),H)}^2}$. By Corollary~\ref{C:optim-tail}, \eqref{dtw-bull3}, and~\eqref{winL2RV}, we obtain
\begin{align}
 2\norm{w}{L^2(I,\rmD(A))}^2
&\le\norm{w_0}{V}^2+(2C_3+6\norm{U_M^\diamond\overline \clK_M^\lambda}{\clL(H)}^2)\norm{w}{L^2(I,V)}^2+6\norm{\eta}{L^2(\bbR_+,H)}^2\notag\\
&\quad+6\widehat C\norm{U_M^\diamond}{\clL(\bbR^{M_\sigma},H)}^2\norm{z_\bullet(\overline s)}{V}^2<+\infty.\label{infboundL2DA}
 \end{align}

Now,  from~\eqref{sys-w1} and~\eqref{fkG-wdef3}, we find
\begin{align}
 \norm{\dot w}{L^2(I,H)} &\le \norm{w}{L^2(I,\rmD(A))}+C_3^\frac12\norm{w}{L^2(I,V)}  +\norm{U_M^\diamond \fkK_{\overline s}w-U_M^\diamond u_\bullet +\eta}{L^2(I,H)}<+\infty.\notag
 \end{align}
Hence, $w\in W(I,\rmD(A),H)$, which finishes the proof in the case of unbounded~$I$.
\end{proof}

%%%%%%%%%%%%%%%%%%%%%%%%%%%%
\subsection{The adjoint equation}\label{sS:Adjoint}
Given a local solution~$(z_\bullet,u_\bullet)=(z_I^{z_0},u_I^{z_0})$ to the optimal control Problem~\ref{Pb:OCPI}, from Lemmas~\ref{L:ZoweKurcyusz}, ~\ref{L:ZoweKurcyusz-int}, and~\ref{L:ZoweKurcyusz-0inint}, it follows that
there exists a~$\ell\in\clY_I'=L^2(I,H)\times V'$ such that~$\rmd \clJ_I^Q\rest{(z_\bullet,u_\bullet)}-\ell\circ \rmd \clG_I^{z_0}\rest{(z_\bullet,u_\bullet)}\in \clX_I'$ and
\begin{align}
&\left\langle\rmd \clJ_I^Q\rest{(z_\bullet,u_\bullet)}-\ell\circ \rmd \clG_I^{z_0}\rest{(z_\bullet,u_\bullet)},c-(z_\bullet,u_\bullet)\right\rangle_{\clX_I',\clX_I}\ge 0\mbox{ for all } c\in\clC_I.\label{lag.gen}
\end{align}
In particular, setting~$c=(v+z_\bullet,u_\bullet)$ we find that
\begin{align}
&\left\langle\rmd \clJ_I^Q\rest{(z_\bullet,u_\bullet)}-\ell\circ \rmd \clG_I^{z_0}\rest{(z_\bullet,u_\bullet)},(v,0)\right\rangle_{\clX_I',\clX_I}\ge 0\mbox{ for all } v\in W(I,\rmD(A),H),\notag
\end{align}
which gives us, for~$\ell\eqqcolon (p,\xi)\in L^2(I,H)\times V'$,  and recalling Lemma~\ref{L:dG},
\begin{align}
0&\le (Qz_\bullet,Qv)_{L^2(I,H)}+(u_\bullet,0)_{L^2(I,\bbR^{M_\sigma})}\notag\\
&\quad-\left( p,\dot v +Av+\rmd f^{y_\ttt}\rest{z_\bullet}v -U_M^\diamond 0\right)_{L^2(I,H)}-\left\langle\xi, \, v(s_0)\right\rangle_{V',V}\notag\\
&= \langle Q^*Qz_\bullet,v\rangle_{L^2(I,\rmD(A)'),L^2(I,\rmD(A))}-\left\langle\xi, \, v(s_0)\right\rangle_{V',V}-\left( p,\dot v +Av+\rmd f^{y_\ttt}\rest{z_\bullet}v \right)_{L^2(I,H)}\notag\\
&\mbox{ for all } v\in W(I,\rmD(A),H),\mbox{ with }v\rest{I_T}=0, \label{lagr.gen-v}
\end{align}
where we recall that~$I_T=\{r\in I\mid r> T\}$, $T> s_0=\inf I$. In particular, we find that
\begin{align}
0&\le -\left\langle -Q^*Qz_\bullet-\dot p +Ap+\rmd f^{y_\ttt}\rest{z_\bullet} p,\pm v \right\rangle_{W(I,\rmD(A),H)',W(I,\rmD(A),H)}\notag\\
&\mbox{ for all } v\in W(I,\rmD(A),H),\mbox{ with } v(s_0)=0\mbox{ and } v\rest{I_T}=0.\notag
\end{align}
which implies that
\begin{align}\label{dotp1}
0&= -\dot p+Ap+\rmd f^{y_\ttt}\rest{z_\bullet} p -Q^*Qz_\bullet \in W(I^T,\rmD(A),H)',\mbox{ for all } T\in I,
\end{align}
where~$I^T=\{r\in I\mid r< T\}$.
 Note that by~\eqref{dfyr.z}, $\rmd f^{y_\ttt}\rest{z_\bullet}pv=\rmd f^{y_\ttt}\rest{z_\bullet}vp$, and by~\eqref{fkG-wdef3}
$\norm{\rmd f^{y_\ttt}\rest{\overline z}p}{L^2(I^T,V')}^2
\le C_3\norm{p}{L^2(I^T,H)}^2.
$
As~$T\to\sup I$, we find~$\norm{\rmd f^{y_\ttt}\rest{\overline z}p}{L^2(I,V')}^2
\le C_3\norm{p}{L^2(I,H)}^2$, which together with~\eqref{dotp1} lead us to~$p\in W(I,H,\rmD(A^{-1}))$. Hence, Corollary~\ref{C:Wstconv0} gives us~$\lim\limits_{t\to+\infty}\norm{p(t)}{\rmD(A^{-\frac12})}=0$ in the case~$\sup I=(s_0,+\infty)$.

Combining~\eqref{lagr.gen-v} with~\eqref{dotp1} we obtain
\[
0\le-\left\langle\xi, \,  v(s_0)\right\rangle_{V',V}+\left\langle p(s_0), \,  v(s_0)\right\rangle_{V',V}
\mbox{ for all } v\in W(I,\rmD(A),H),\mbox{ with }v\rest{I_T}=0,
\]
which gives us~$p(s_0)=\xi$. In the case~$s_1=\sup I<+\infty$ from~\eqref{lagr.gen-v} and~\eqref{dotp1} we
also find, with~$T=s_1$,
\[
0\le-\left\langle p(s_1),v(s_1)\right\rangle_{V',V}
\mbox{ for all } v\in W(I,\rmD(A),H),
\]
which gives us~$p(s_1)=0$.

Next, for simplicity, let us denote the convex set
\[
\bfC\coloneqq\{u\in L^2(I,\bbR^{M_\sigma})\mid \dnorm{u(t)}{}\le C_u,\;t\in I\}.
\]
Then, we set~$c=(z_\bullet,u)$ in~\eqref{lag.gen}, with~$u\in\bfC$, which leads us to
\begin{align}
0&\le\left\langle\rmd \clJ_I^Q\rest{(z_\bullet,u_\bullet)}-\ell\circ \rmd \clG_I^{z_0}\rest{(z_\bullet,u_\bullet)},(0,u-u_\bullet)\right\rangle_{\clX_I',\clX_I}\notag\\
&=(u_\bullet,u-u_\bullet)_{L^2(I,\bbR^{M_\sigma})}+(p, U_M^\diamond (u-u_\bullet))_{L^2(I,H)}\notag\\
&
=( u_\bullet+(U_M^\diamond)^*p,u-u_\bullet)_{L^2(I,\bbR^{M_\sigma})},
\quad\mbox{for all}\quad u\in\bfC,\notag
\end{align}
which implies that
\begin{align}
(-(U_M^\diamond)^*p-u_\bullet,u-u_\bullet)_{L^2(I,\bbR^{M_\sigma})}\le0,
\quad\mbox{for all}\quad u\in \bfC,\notag
\end{align}

By~\cite[Lem.~1.10, Sect.~1.7]{HinzePinUlbrUlbr09} (see also~\cite[Def.~3.8 and Thm.~3.16]{BauschkeComb17}) it follows that
\begin{align}
u_\bullet=\argmin_{\overline u\in\bfC}\norm{\overline u+(U_M^\diamond)^*p}{L^2(I,\bbR^{M_\sigma})}.\label{ubullet-p}
\end{align}

Next we will explore the fact that our control constraint is pointwise in time, namely,
$\dnorm{u(t)}{}\le C_u$ for (almost) every $t\in I$. For this purpose  we denote the convex set
\[
\bfC_u\coloneqq\{v\in\bbR^{M_\sigma}\mid \dnorm{v}{}\le C_u\}.
\]
and the projection
\begin{align}
\bfP_{\bfC_u}\colon\bbR^{M_\sigma}\to\bfC_u,\qquad
\bfP_{\bfC_u}(v)\coloneqq\argmin_{\overline v\in\bfC_u}\norm{\overline v-v}{\bbR^{M_\sigma}}.\label{convProj}
\end{align}

Now, standard arguments lead us to
\begin{align}
u_\bullet(t)=-\bfP_{\bfC_u}\!\left((U_M^\diamond)^*p(t)\right),\quad\mbox{for a.e. } t\in I.\label{ubullet-pt}
\end{align}
Therefore,
every local solution~$(z_\bullet,u_\bullet)$ of Problem~\ref{Pb:OCPI} in the time interval~$I=(s_0,s_1)\subseteq(0,+\infty)$ satisfies, together with an associated adjoint state~$p$, the coupled system
\begin{equation}\label{optimality-system}\boxed{
\begin{array}{ll}
 \dot z_\bullet =-A z_\bullet -f^{y_\ttt}(z_\bullet) +U_M^\diamond u_\bullet,&\quad z_\bullet(s_0)=z_0,\vspace*{.5em}\\
\dot p=Ap+\rmd f^{y_\ttt}\rest{z_\bullet} p -Q^*Qz_\bullet, &\quad p(s_0)=\xi,\qquad \norm{p(s_1)}{V'}=0,\vspace*{.5em}\\
u_\bullet(t)=-\bfP_{\bfC_u}\!\left((U_M^\diamond)^*p(t)\right).
 \end{array}
 }
\end{equation}
where, in the case~$s_1=+\infty$, the relation
\[
\norm{p(+\infty)}{V'}=0\quad\mbox{is to be understood as}\quad \lim\limits_{t\to+\infty}\norm{p(t)}{V'}=0.
\]

%%%%%%%%%%%%%%%%%%%%%%%%%%%%
\subsection{The adjoint equation for~$Q\in\clL(H)$}\label{sS:Adjoint-st}
Note that with~$Q\in\clL(\rmD(A),H)$ we will have that~$Q^*Q\in\clL(\rmD(A),\rmD(A)')$ and we will not, in general, be able to increase the spatial regularity of~$p\in L^2(I,H)$. For that we will need to take~$Q$ in a smaller space. Here, we consider the case that~$Q$  as in Assumption~\ref{A:Q} satisfies in addition~$Q\in\clL(H)$. This extra requirement  allows us to derive strong regularity for the solutions of the adjoint equation.
First of all note that by~\eqref{fkG-wdef3},
 for~$q\in L^2(I,V)$,
\begin{align}
\norm{\rmd f^{y_\ttt}\rest{z_\bullet}q}{L^2(I,H)}\le C_3\norm{q}{L^2(I,V)}.\label{bdd-f-opt1Phi}
\end{align}

Next, observe that, since~$p\in L^2(I,H)$,  for every~$T\in I$, there exists~$t_T\in I$ such that~$t_T>T$ and~$p(t_T)\in L^2$.
Since~$Q^*Q\in\clL(H)$, standard estimates and the Gronwall inequality allow us to conclude the existence of a weak solution, that is, $p\in W((s_0,t_T),V,V')$. Furthermore,  we can combine the smoothing property for linear parabolic equations to conclude that~$p(T)\in V$, and subsequently we can conclude the existence of a strong solution
\begin{subequations}\label{p-regular-loc}
\begin{align}
&p\in W((s_0,T),\rmD(A),H),\quad\mbox{for all}\quad T\in I,\label{p-regular-loc-reg}
\intertext{and}
&0= -\dot p+Ap+\rmd f^{y_\ttt}\rest{z_\bullet} p -Q^*Qz_\bullet \in L^2((s_0,T),H).
\end{align}
\end{subequations}
Taking  an arbitrary~$v_0\in V$ and~$v\in W(I,\rmD(A),H)$ with~$v(s_0)=v_0$ and~$v\rest{I_T}=0$, by~\eqref{lagr.gen-v} and~\eqref{p-regular-loc}, it follows that
\begin{align}
0&\le (p(s_0),v_0)_{H}-\left\langle\xi, \, v_0\right\rangle_{V',V},\label{arbv-loc}
\end{align}
which gives us~$0\le\left\langle p(s_0)-\xi,v_0\right\rangle_{V',V}$ for all~$v_0\in V$, and implies
\begin{align}
\xi=p(s_0)\in V.\label{xiinV}
\end{align}

We know, from~\eqref{optimality-system} that in the case~$s_1=\sup I<+\infty$ we have~$p(s_1)=0$, and in the case~$s_1=+\infty$ we have~$\lim\limits_{t\to+\infty}\norm{p(t)}{V'}=0$.
We show next that this asymptotic limit also holds in $V$-norm.
For this purpose, let~$s>s_0$ and denote~$\breve v(t)\coloneqq v(s+1-t)$, with~$t\in[0,1]$. The dynamics of~$q(t)\coloneqq t\breve  p(t)$ leads us to
\begin{align}
\dot q&=-Aq-\rmd f^{\breve y_\ttt}\rest{\breve z_\bullet}q +tQ^*Q\breve z_\bullet+\breve p,\notag\\
\tfrac{\rmd}{\rmd t}\norm{q}{V}^2&=-2\norm{q}{\rmD(A)}^2-2(\rmd f^{\breve y_\ttt}\rest{\breve z_\bullet}q -tQ^*Q\breve z_\bullet+\breve p,Aq)_{H}\notag\\
&\le\tfrac12\norm{\rmd f^{\breve y_\ttt}\rest{\breve z_\bullet}q -tQ^*Q\breve z_\bullet+\breve p}{H}^2
\le C_3\varPhi\norm{q}{V}^2 +\norm{-tQ^*Q\breve z_\bullet+\breve p}{H}^2\notag
\end{align}
with~$\varPhi$ as in\eqref{bdd-f-opt1Phi}, and the Gronwall inequality gives us
\begin{align}
\norm{p(s)}{V}^2=\norm{q(1)}{V}^2&\le D_\varPhi\norm{\breve p-tQ^*Q\breve z_\bullet}{L^2((0,1),H)}^2\notag\\
&\le 2(D_\varPhi+\norm{Q^*Q}{\clL(H)}^2)\left(\norm{\breve p}{L^2((0,1),H)}^2+\norm{\breve z_\bullet}{L^2((0,1),H)}^2\right)\notag\\
&=2(D_\varPhi+\norm{Q^*Q}{\clL(H)}^2)\left(\norm{p}{L^2((s,s+1),H)}^2+\norm{z_\bullet}{L^2((s,s+1),H)}^2\right),\notag
 \end{align}
with
\begin{align}\notag
D_\varPhi\coloneqq\rme^{C_3\norm{\varPhi}{L^\infty(I,\bbR)}}\ge\sup\limits_{s\ge0}\rme^{\int_s^{s+1}C_3\varPhi(t)\,\rmd t}.
 \end{align}
 Then since both~$p$ and~$z_\bullet$ are in~$\in L^2(I,H)$, we can conclude that
\begin{align}
&\lim_{s\to+\infty}\norm{p}{L^2((s,s+1),H)}^2=0,\qquad\lim_{s\to+\infty}\norm{z_\bullet}{L^2((s,s+1),H)}^2=0,\notag
 \end{align}
 which allow us to derive the asymptotic limit
 \begin{align}
\lim_{s\to+\infty}\norm{p(s)}{V}=0.\label{pVconv0}
 \end{align}

Finally, we show that~$p\in W((s_0,+\infty),\rmD(A),H)$.
We start by considering the finite time interval~$I^s=(s_0,s)$, $s_0<s<+\infty$. Note that by~\eqref{p-regular-loc} we already know that~$p\in W(I^s,\rmD(A),H)$, and
satisfies
\begin{align}
&\dot{ p}=A  p+\rmd f^{y_\ttt}\rest{z_\bullet}  p -Q^*Q\overline  z,\notag
 \end{align}
and after multiplication with~$-2 p$ we find, recalling~\eqref{bdd-f-opt1},
\begin{align}
-\tfrac{\rmd}{\rmd t}\norm{p}{H}^2&\le-2\norm{p}{V}^2+2 C_1\left(\norm{z_\bullet}{L^6}^2+\norm{y_\ttt}{L^6}^2+1\right)\norm{p}{L^6}\norm{p}{H} +2\norm{Q^*Qz_\bullet}{H}\norm{p}{H}\notag\\
&\le-\norm{p}{V}^2+2\Phi\norm{p}{H}^2 +2\norm{Q^*Qz_\bullet}{H}^2\notag,
 \end{align}
with~$\Phi(t)\coloneqq \norm{\Id}{\clL(V,L^6)}^2C_1^2\left(\norm{z_\bullet(t)}{L^6}^2+\norm{y_\ttt(t)}{L^6}^2+1\right)^2$. By Lemma~\ref{L:estL6free}, it follows that~$\Phi(t)\le \ovlineC{\norm{\zeta}{\infty},\dnorm{\Omega}{},\norm{z_\bullet}{W(I,\rmD(A),H)}}\eqqcolon C_2$, where~$C_2$ is independent of~$s$. Then, time integration gives us
\begin{align}
\norm{p(s_0)}{H}^2+\norm{p}{L^2(I^s,V)}^2\le \norm{p(s)}{H}^2+2C_2\norm{p}{L^2(I^s,H)}^2 +2\norm{Q^*Qz_\bullet}{L^2(I^s,H)}^2,\notag
 \end{align}
which gives us, by taking the limit as $s$ goes to~$+\infty$,
\begin{align}
\norm{p(s_0)}{H}^2+\norm{p}{L^2(I,V)}^2\le 2C_2\norm{p}{L^2(I,H)}^2 +2\norm{Q^*Qz_\bullet}{L^2(I,H)}^2<+\infty.\notag
 \end{align}

Next, we multiply the dynamics with~$-2A p$ and find
\begin{align}
-\tfrac{\rmd}{\rmd t}\norm{p}{V}^2&\le-2\norm{p}{\rmD(A)}^2 +2 C_1\left(\norm{z_\bullet}{L^6}^2+\norm{y_\ttt}{L^6}^2+1\right)\norm{p}{L^6}\norm{p}{\rmD(A)} +2\norm{Q^*Qz_\bullet}{H}\norm{p}{\rmD(A)}\notag\\
&\le-\norm{p}{\rmD(A)}^2+2\Phi\norm{p}{V}^2 +2\norm{Q^*Qz_\bullet}{H}^2\notag,
 \end{align}
and
\begin{align}
\norm{p(s_0)}{V}^2+\norm{p}{L^2(I,\rmD(A))}^2\le 2C_2\norm{p}{L^2(I,V)}^2 +2\norm{Q^*Q\overline  z}{L^2(I,H)}^2<+\infty.\notag
 \end{align}

Using the dynamics once more we can see that
\begin{align}
\norm{\dot p}{L^2(I,H)}&\le \norm{p}{L^2(I,\rmD(A))} +\norm{\rmd f^{y_\ttt}\rest{z_\bullet} p}{L^2(I,H)} +\norm{Q^*Qz_\bullet}{L^2(I,H)}\\
&\le \norm{p}{L^2(I,\rmD(A))}+\norm{Q^*Qz_\bullet}{L^2(I,H)}\\
&\quad+C_1\left(\norm{z_\bullet(t)}{L^\infty(I,L^6)}^2
+\norm{y_\ttt(t)}{L^\infty(I,L^6)}^2
+1\right)
\norm{p}{L^2(I,L^6)} <+\infty
,\notag
 \end{align}

Therefore, we can conclude that~$p\in W(I,\rmD(A),H)$.

%%%%%%%%%%%%%%%%%%%%%
%%%%%%%%%%%%%%%%%%%%%
\section{Numerical simulations.}\label{S:simul}
We compare simulations for the nonlinear system defining the error dynamics~\eqref{sys-z-hat} with two different kinds of control. The first one is given by the explicit feedback as in~\eqref{const-K},  the second one by an approximation to the optimal control given   by receding horizon control. For the latter we sucessively minimize
\begin{equation}\notag
\clJ_{\clI_i}^Q(z,u)\coloneqq\tfrac12\norm{Qz}{L^2(\clI_i,{H}
)}^2+\tfrac12\norm{u}{L^2(\clI_i,\bbR^{M_\sigma})}^2,
\end{equation}
 in the time intervals~$\clI_i\coloneqq(i\delta_{\rm rh},i\delta_{\rm rh}+T_{\rm rh})$, seeking for optimal pairs
\[
(z_{\clI_i}^{z_{i}},u_{\clI_i}^{z_{i}}),\qquad  z_{i+1}\coloneqq z_{\clI_{i}}^{z_{i}}((i+1)\delta_{\rm rh}), \quad i\in\bbN,
\]
solving Problem~\ref{Pb:OCPI}; for~$i=0$, $z_0$ is the initial error in~\eqref{sys-z-hat}.
 We refer the reader to the recent works~\cite{AzmiKun19,KunPfeiffer20,GruneSchaSchi22} for more details on this receding horizon strategy in the context of parabolic-like equations.

 For the computation of the finite-horizon optimal pairs~$(z_{\clI_i}^{z_{i}},u_{\clI_i}^{z_{i}})$ we use a gradient based algorithm proposed in~\cite{AzmiKun20} and~\cite[sect.~4]{AzmiKun21} with Barzilai--Borwein time steps; see~\cite{BarzBorw88}.

The simulations have been run in the finite-horizon time interval~$(0,T)$, with
\[
T=15,\qquad T_{\rm rh}=1,\qquad\delta_{\rm rh}=0.5.
\]

The truncated finite time-horizon cost functional
\begin{equation}\notag
\bfJ_T^Q\coloneqq\clJ_{(0,T)}^Q(z,u)\coloneqq\tfrac12\norm{Qz}{L^2((0,T),{H}
)}^2+\tfrac12\norm{u}{L^2((0,T),\bbR^{M_\sigma})}^2,
\end{equation}
is seen as an approximation of the infinite time-horizon one~\eqref{Jz0}. For~$Q$ (motivated by Lemma~\ref{L:key-exist-Id}, cf. discussion at the end of sect.~\ref{sS:Intro-liter} on  the Main Result~\ref{MR:mainOptim}),
we have chosen the orthogonal projection onto the first~$M_1$ eigenfunctions of the Laplacian~$\Delta$ ,
\[
Q=P_{\clE_{M_1}^\rmf},\qquad M_1=20.
\]

We consider the evolution of our model~\eqref{Schloegl-feed} in the one-dimensional spatial domain
$\Omega=(0,1)$,
with the parameters chosen as~$
\nu=0.1$ and~$(\zeta_1,\zeta_2,\zeta_3)=(-1,0,2).$

We present results for two targeted trajectories, namely,
\[
y_\ttt(t,x)=0\quad\mbox{and}\quad y_\ttt(t,x)=\sin(3t)\cos(\pi x),\quad\mbox{with}\quad (t,x)\in[0,+\infty)\times\Omega.
\]
These trajectories  solve the free dynamics system~\eqref{sys-haty} for appropriate external forcings~$h= h_{y_\ttt}(t,x)$.
Finally, for the initial error we set
\[
z_0=-4+8\cos(2\pi x^2),\quad x\in(0,1).
\]

\begin{remark}
In the case~$y_\ttt(t,x)=0$ we must take~$h_{y_\ttt}(t,x)=0$. Observe that in this case, under Neumann boundary conditions (cf.~\eqref{DA}), the constant functions~$\overline\zeta_i(x)\coloneqq\zeta_i$ are steady states, where $\overline\zeta_2(x)=0$ is locally exponentially unstable and~$\overline\zeta_1(x)=-1$ and $\overline\zeta_3(x)=2$ are locally exponentially stable.
\end{remark}

Fig.~\ref{Fig:Free} shows that the free-dynamics error do not converge to zero as time increases.
\begin{figure}[ht]
\centering
\subfigure[Target $y_\ttt(t,x)=0$.]
{\includegraphics[width=0.45\textwidth,height=0.20\textheight]{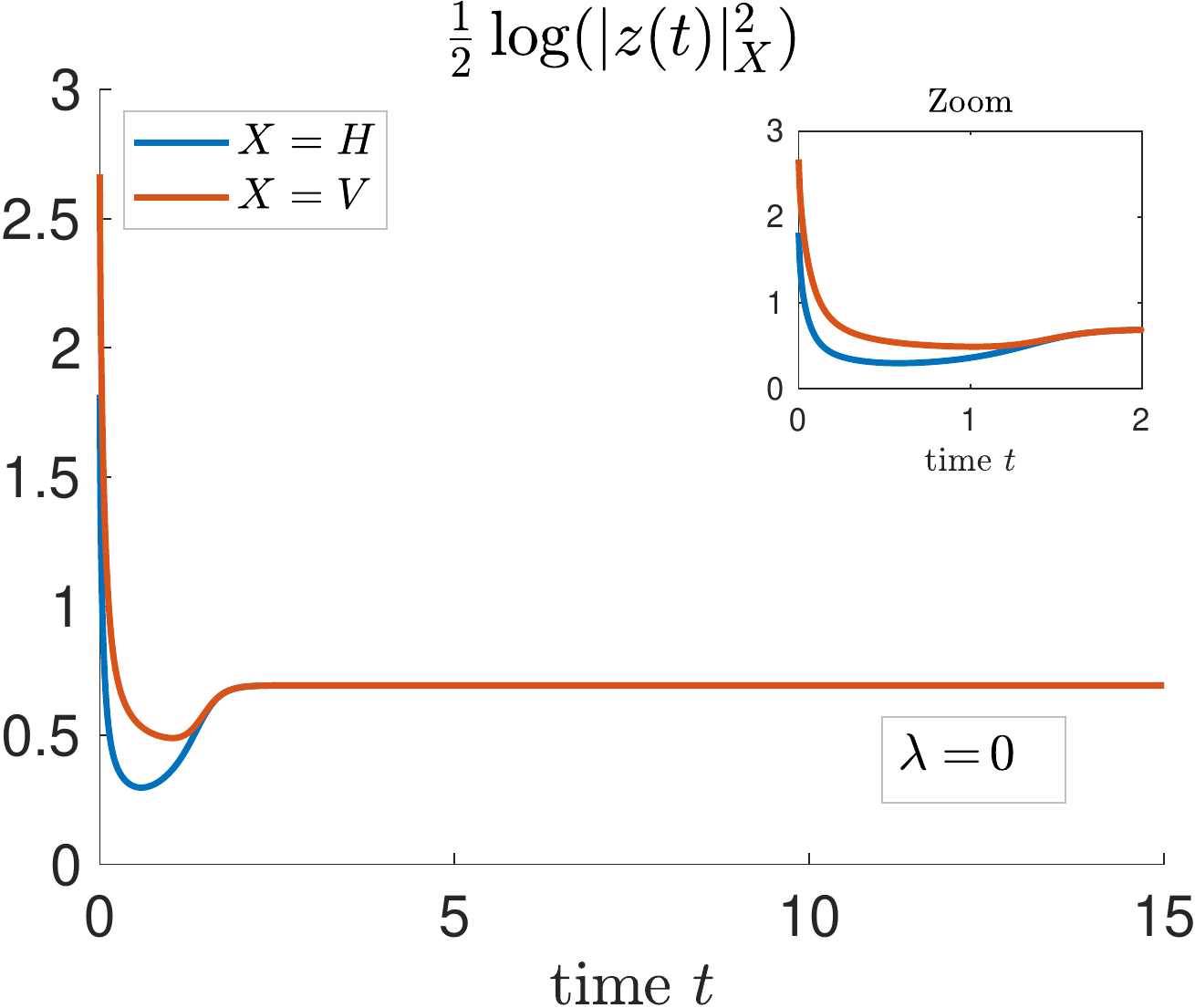}}
\qquad
\subfigure[Target $y_\ttt(t,x)=\sin(3t)\cos(\pi x)$.]
{\includegraphics[width=0.45\textwidth,height=0.20\textheight]{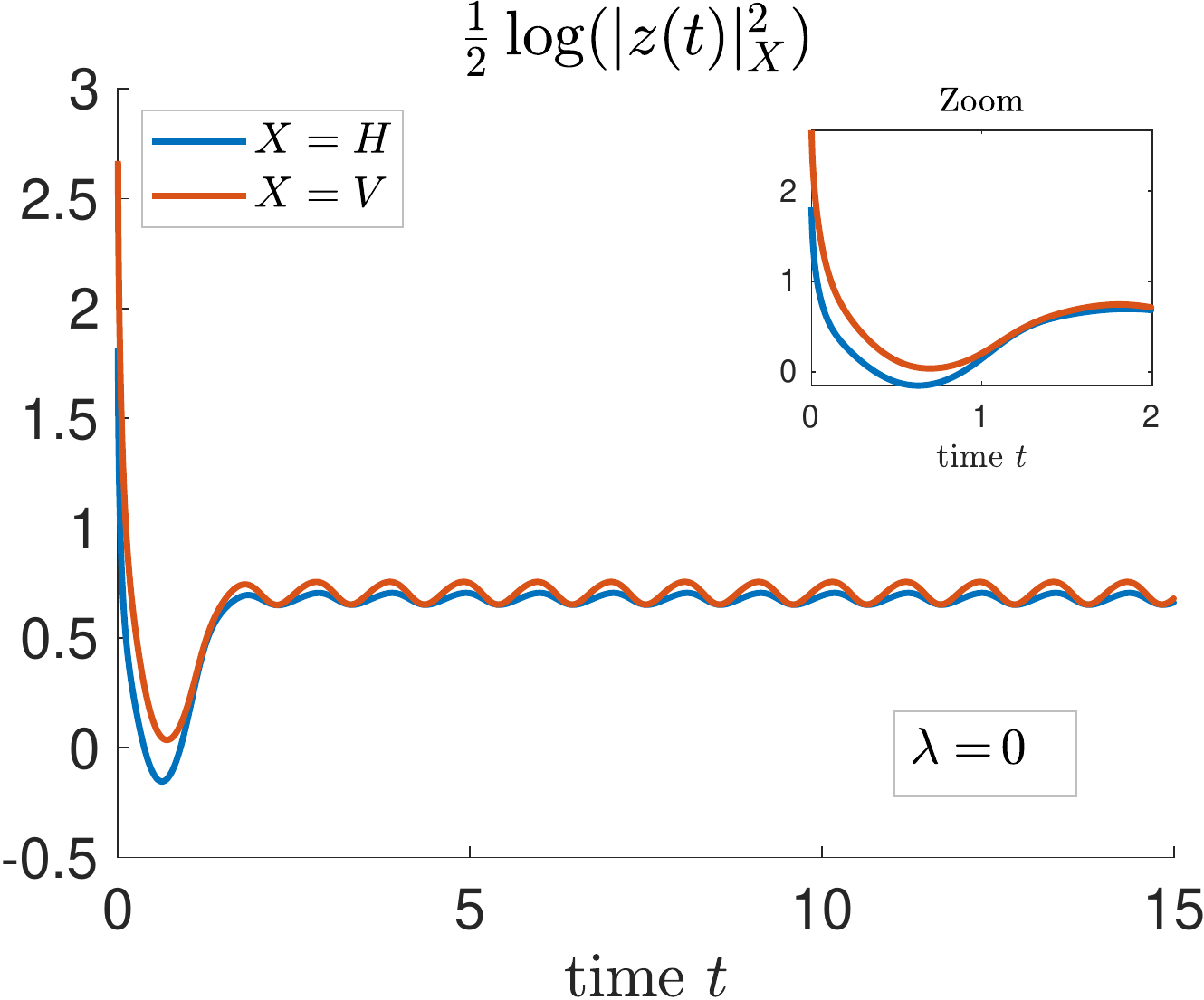}}
\caption{Free dynamics error ($u=0$, i.e., with $\lambda=0$ in~\eqref{SatKw}).}
\label{Fig:Free}
\end{figure}

 Hence a control is needed in order to reach our goal~\eqref{goal-diff-W12} and/or~\eqref{goal-diff-L2}.
 We  utilize four indicator function actuators~$\indf_{\omega_i}$ as in~\eqref{U_M} in section~\ref{sS:actuators}, with
\[
\omega_i=(\tfrac{(2i-1)}{8}-\tfrac{r}{8},\tfrac{(2i-1)}{8}+\tfrac{r}{8}),\qquad r=0.1,\qquad 1\le i\le4,
\]
covering~$10\%$ of the spatial domain.
We take control input magnitude constraints as
\[
\dnorm{u(t)}{}\le C_u,\quad\mbox{with}\quad\dnorm{\Bigcdot}{}\coloneqq\norm{\Bigcdot}{\ell^\infty},
\]
where~$\norm{(v_1,v_2,\dots,v_M)}{\ell^\infty}\coloneqq\max\{\norm{v_i}{\bbR}\mid 1\le i\le M\}$ is the usual~$\ell^\infty$-norm in~$\bbR^M$. We have considered two cases for the magnitude control bound, namely,
\[
C_u=30\quad\mbox{and}\quad C_u=15.
\]

 Concerning the discretization, for the spatial variable we used piecewise linear finite-elements in a regular partition of the spacial domain with 1001 points.  For the temporal variable we used  a Crank--Nicolson--Adams--Bashforth scheme with time step~$10^{-4}$.

\subsection{Targeting zero}\label{sS:numexa-0}
In Figs.~\ref{Fig:Explicit0} and Figs.~\ref{Fig:rhc0}
\begin{figure}[t]
\centering
\subfigure%[]
{\includegraphics[width=0.45\textwidth,height=0.20\textheight]{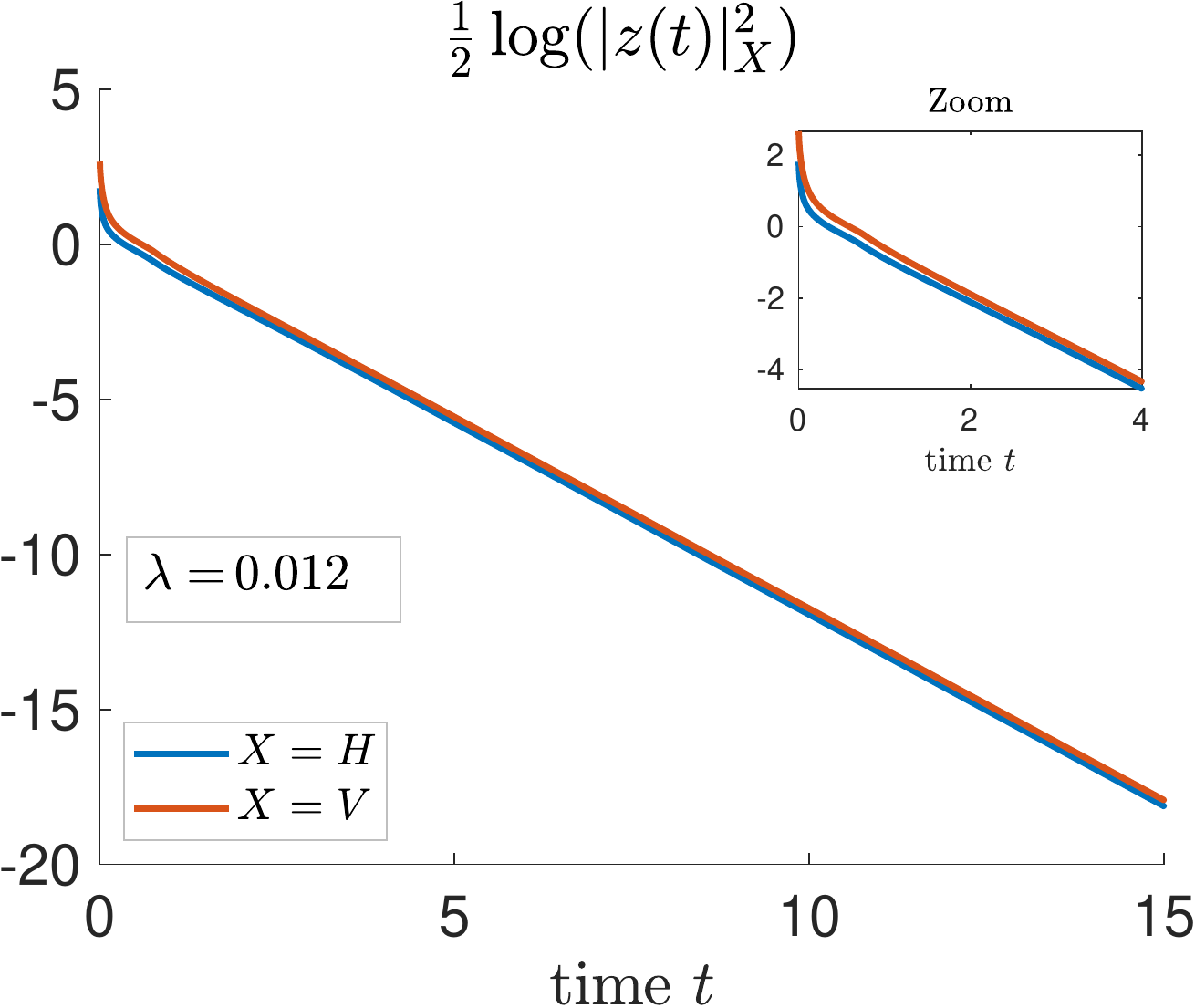}}
\quad
\subfigure%[]
{\includegraphics[width=0.45\textwidth,height=0.20\textheight]{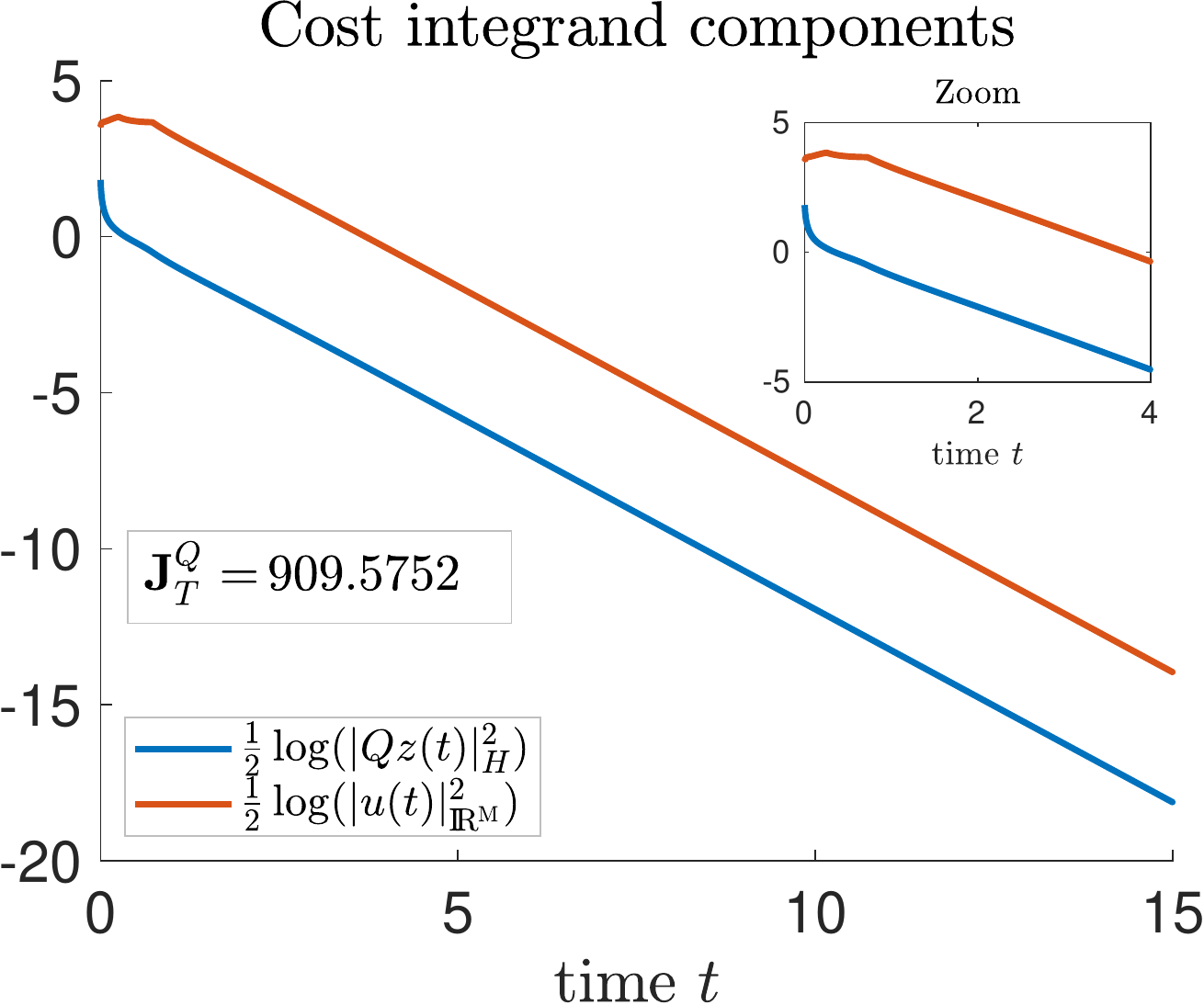}}
\\
\subfigure%[]
{\includegraphics[width=0.45\textwidth,height=0.20\textheight]{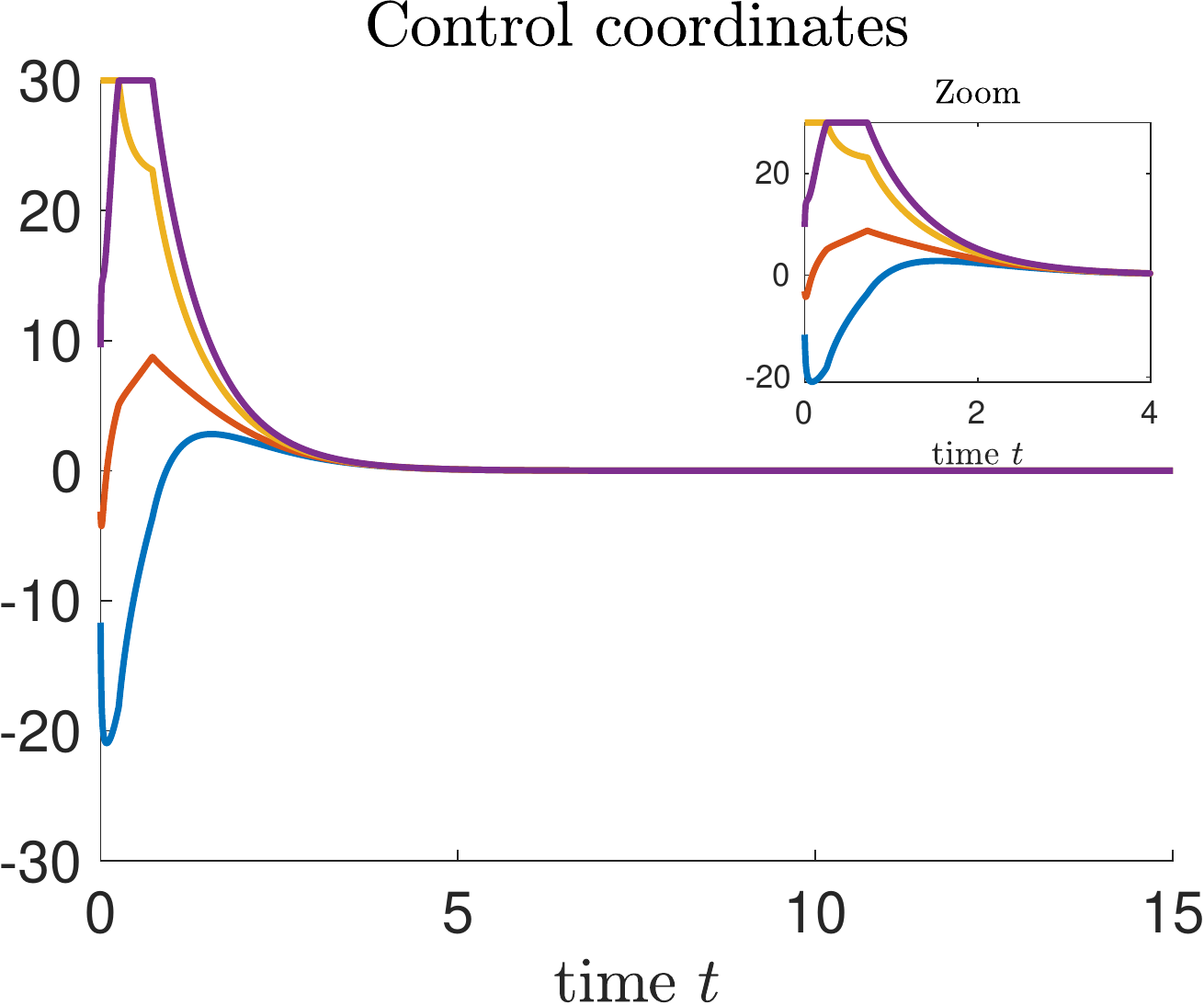}}
\caption{Explicit feedback. $C_u=30$.   $y_\ttt(t,x)=0$.\newline}
\label{Fig:Explicit0}
\subfigure%[]
{\includegraphics[width=0.45\textwidth,height=0.20\textheight]{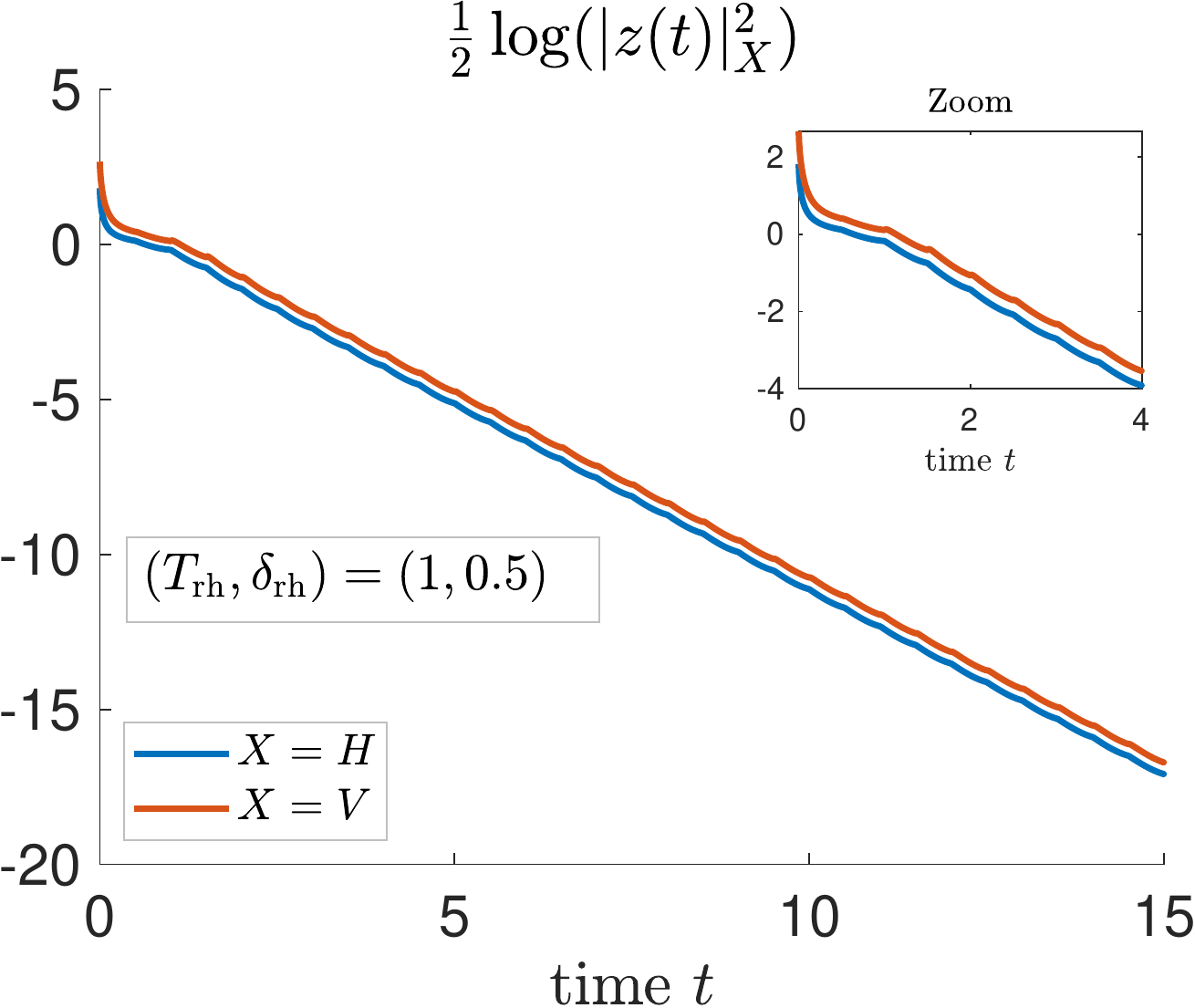}}
\quad
\subfigure%[]
{\includegraphics[width=0.45\textwidth,height=0.20\textheight]{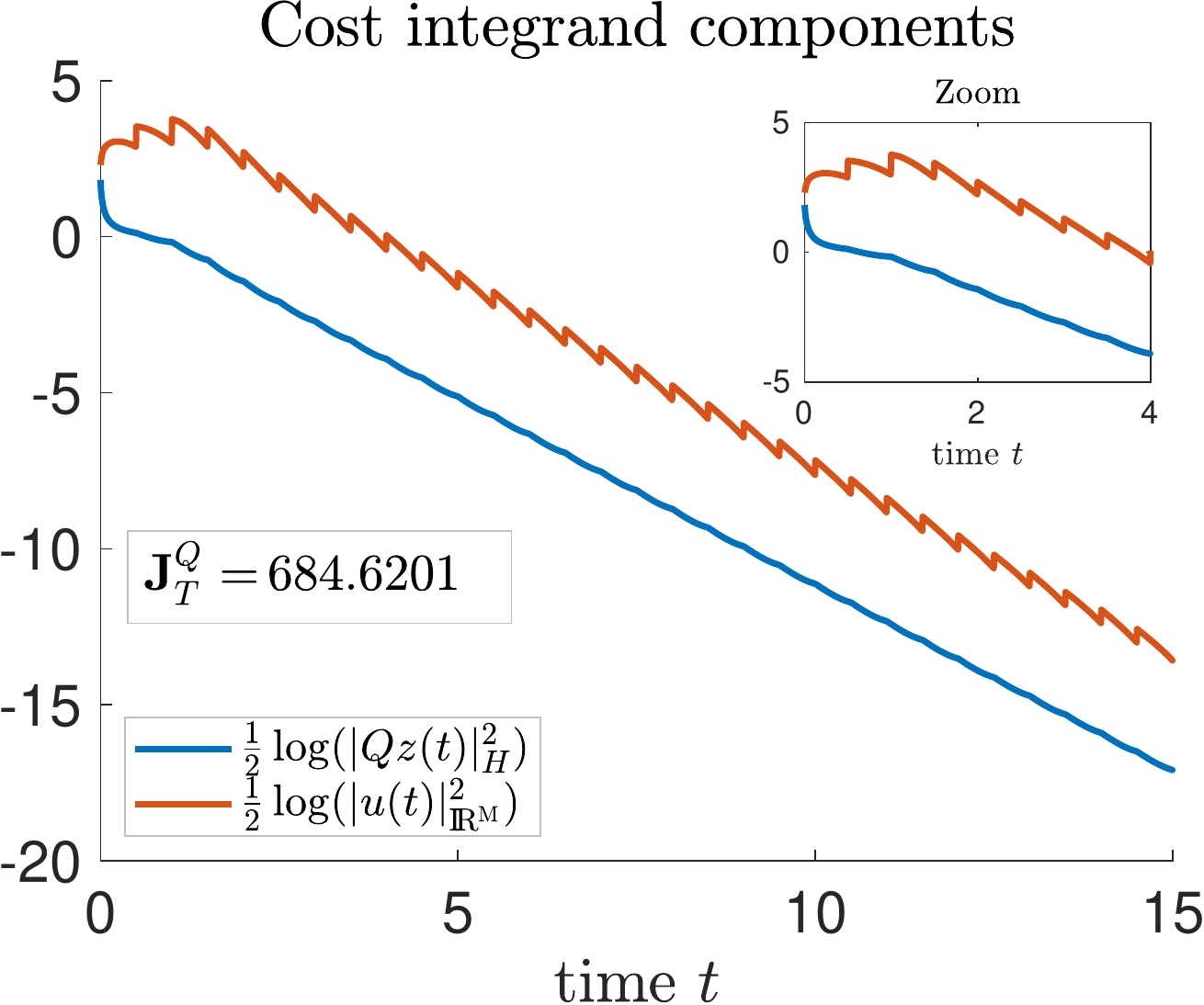}}
\\
\subfigure%[]
{\includegraphics[width=0.45\textwidth,height=0.20\textheight]{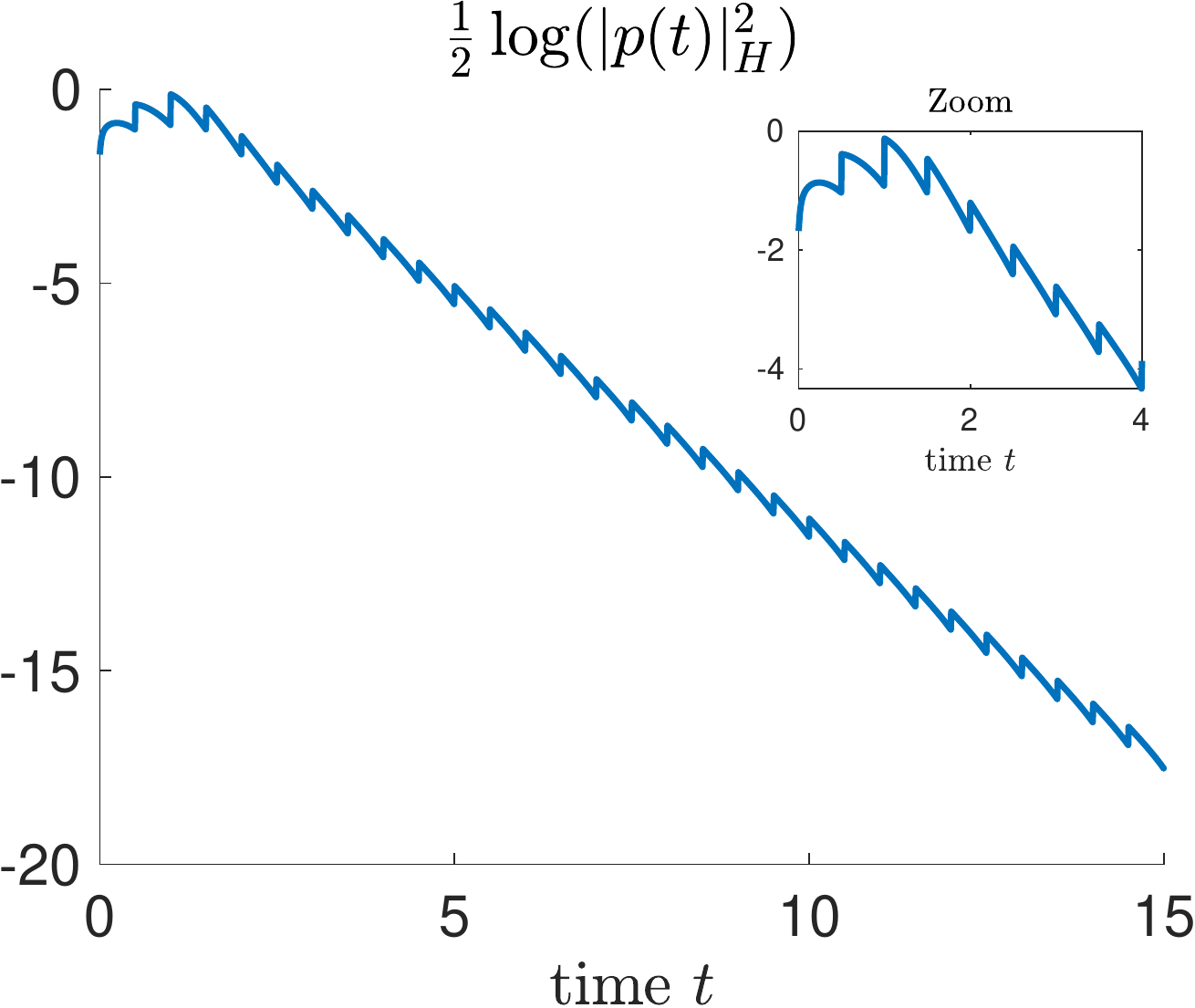}}
\quad
\subfigure%[]
{\includegraphics[width=0.45\textwidth,height=0.20\textheight]{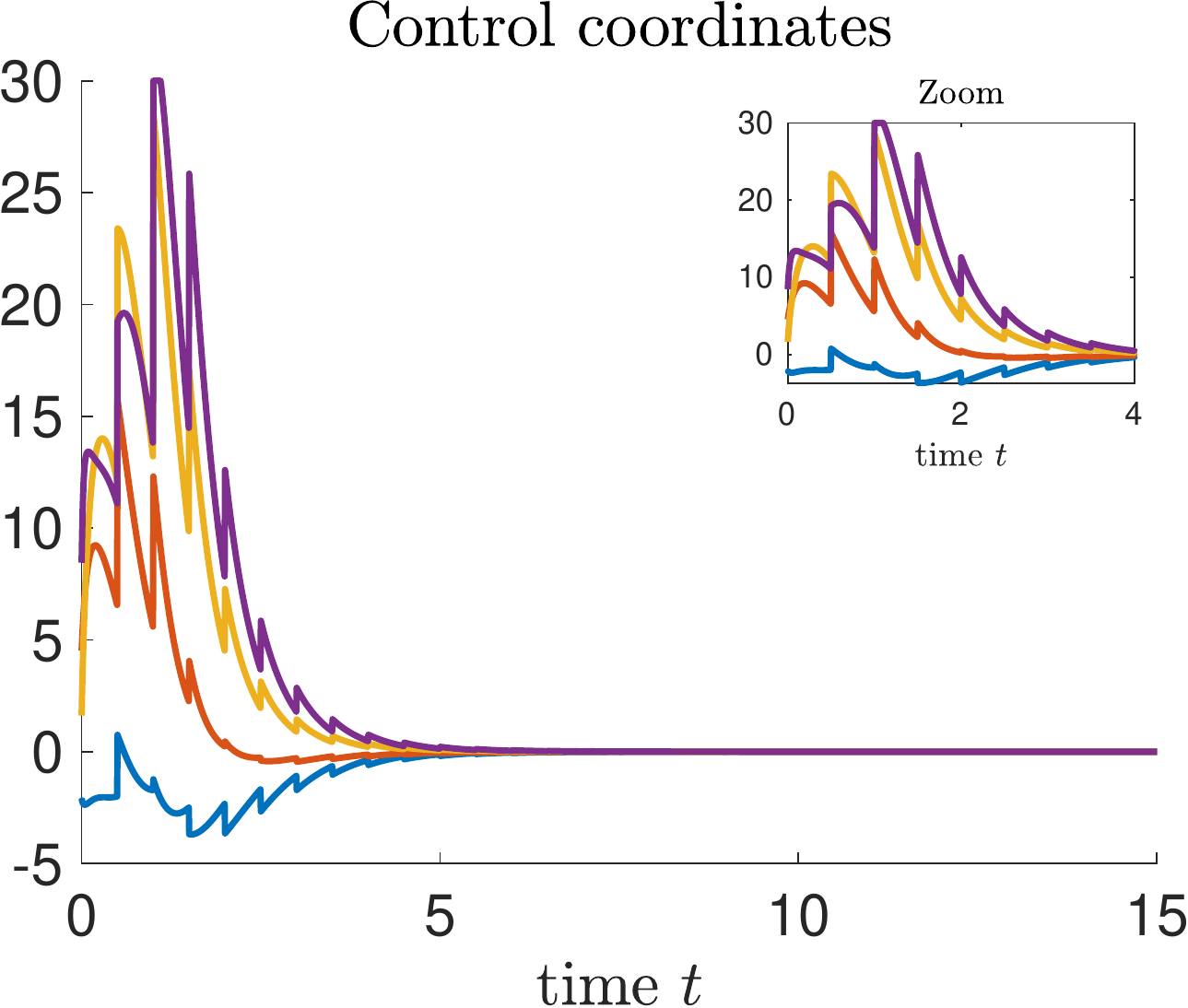}}
\caption{Receding horizon. $C_u=30$.   $y_\ttt(t,x)=0$.}
\label{Fig:rhc0}
\vspace{-.5em}
\end{figure}
 we present the results concerning the case
~$y_\ttt(t,x)=0$ with~$C_u=30$. We see that both the explicit feedback control and the receding horizon control are able to stabilize the system.  The computation of the former is much cheaper, since  the later involves the computation of thirty optimal control problems in time intervals of length~$T_{\rm rh}=1$. Though, receding horizon controls are more difficult to compute, they  play an important role in case we are interested in the minimization  the total energy spent during the stabilization process. Indeed, comparing~$\bfJ_T^Q$ in Figs.~\ref{Fig:Explicit0} and~\ref{Fig:rhc0}, we observe that the spent energy associated with the receding horizon control is smaller than that spent with the explicit feedback.

\subsection{A time-dependent targeted trajectory}\label{sS:numexa-gen}
In Figs.~\ref{Fig:Explicit-tx} and~\ref{Fig:rhc-tx} we present the results concerning the case
of the targeted trajectory~$y_\ttt(t,x)=\sin(3t)\cos(\pi x)$ depending on both spatial and temporal variables  with~$C_u=30$. Again, both the explicit feedback control and the receding horizon control are able to stabilize the system and the spent energy~\eqref{Jz0} associated with the latter is smaller.

Note that, for the receding horizon strategy, the discontinuities of  the controls and  adjoint states occur at the concatenation  time instants~$t=i\delta_{\rm rh}$.
\begin{remark}
 We  first solved for the receding-horizon control solution. Then we  solved the system with the explicit feedback for several values~$\lambda$. In Figs.~\ref{Fig:Explicit0} and~\ref{Fig:Explicit-tx} we present  the results for one of such~$\lambda$s, for which the norm of the error at final time  is close to the corresponding norm at the final time reached with the receding-horizon control. For this reason the values of~$\lambda$ differ in Figs.~\ref{Fig:Explicit0} and~\ref{Fig:Explicit-tx}.
\end{remark}
\begin{figure}[t!]
\centering
\subfigure%[]
{\includegraphics[width=0.45\textwidth,height=0.20\textheight]{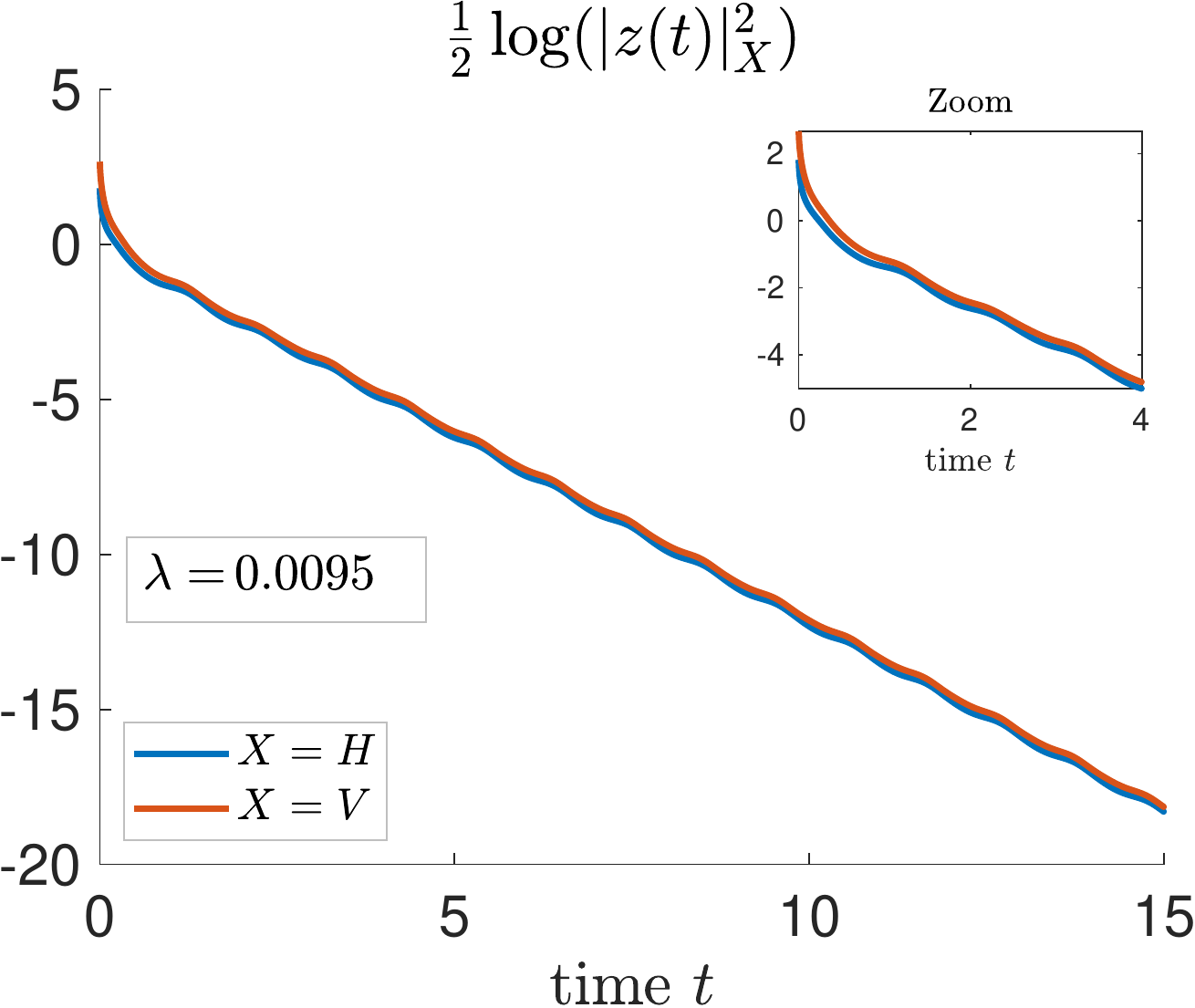}}
\quad
\subfigure%[]
{\includegraphics[width=0.45\textwidth,height=0.20\textheight]{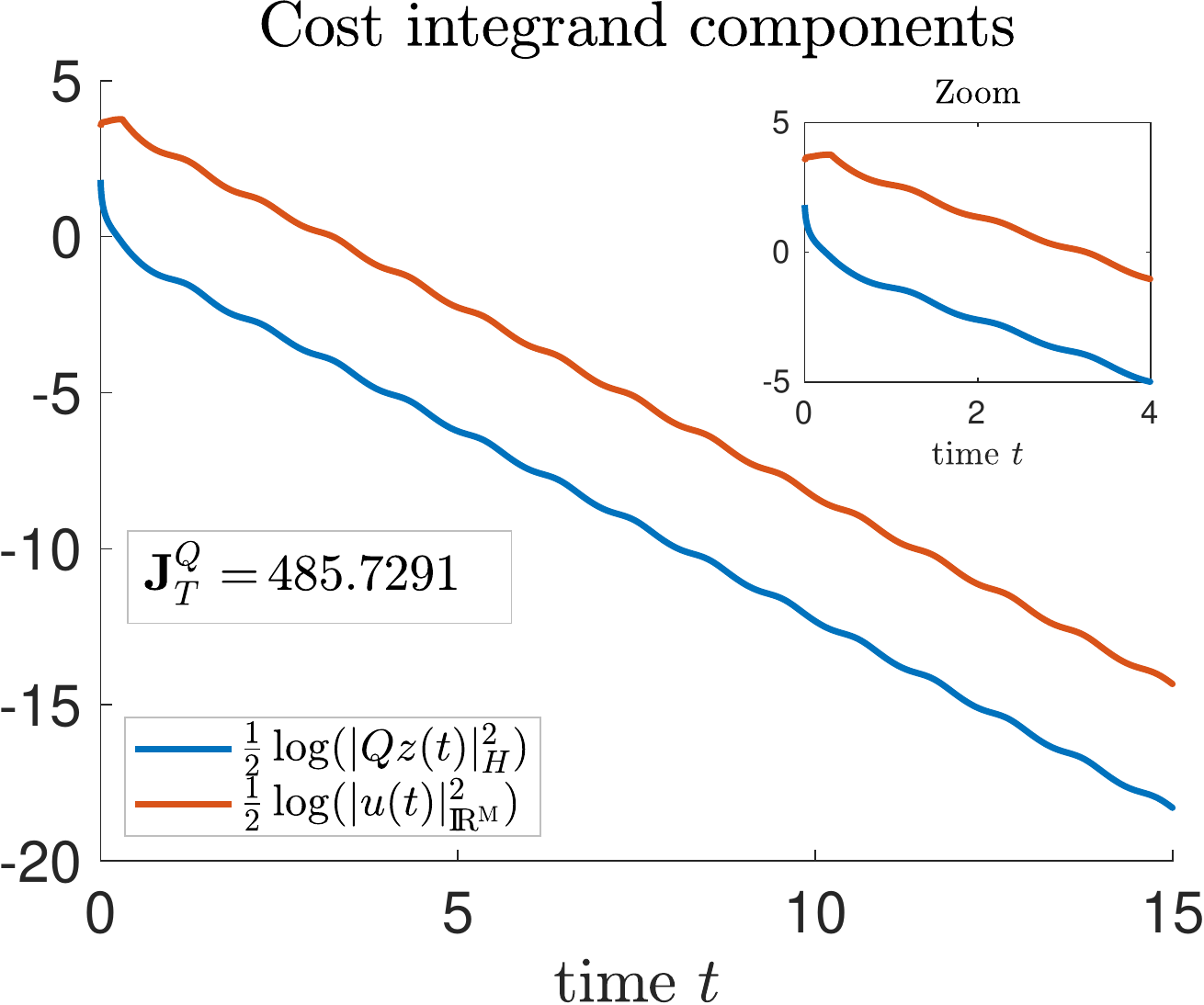}}
\\
\subfigure%[]
{\includegraphics[width=0.45\textwidth,height=0.20\textheight]{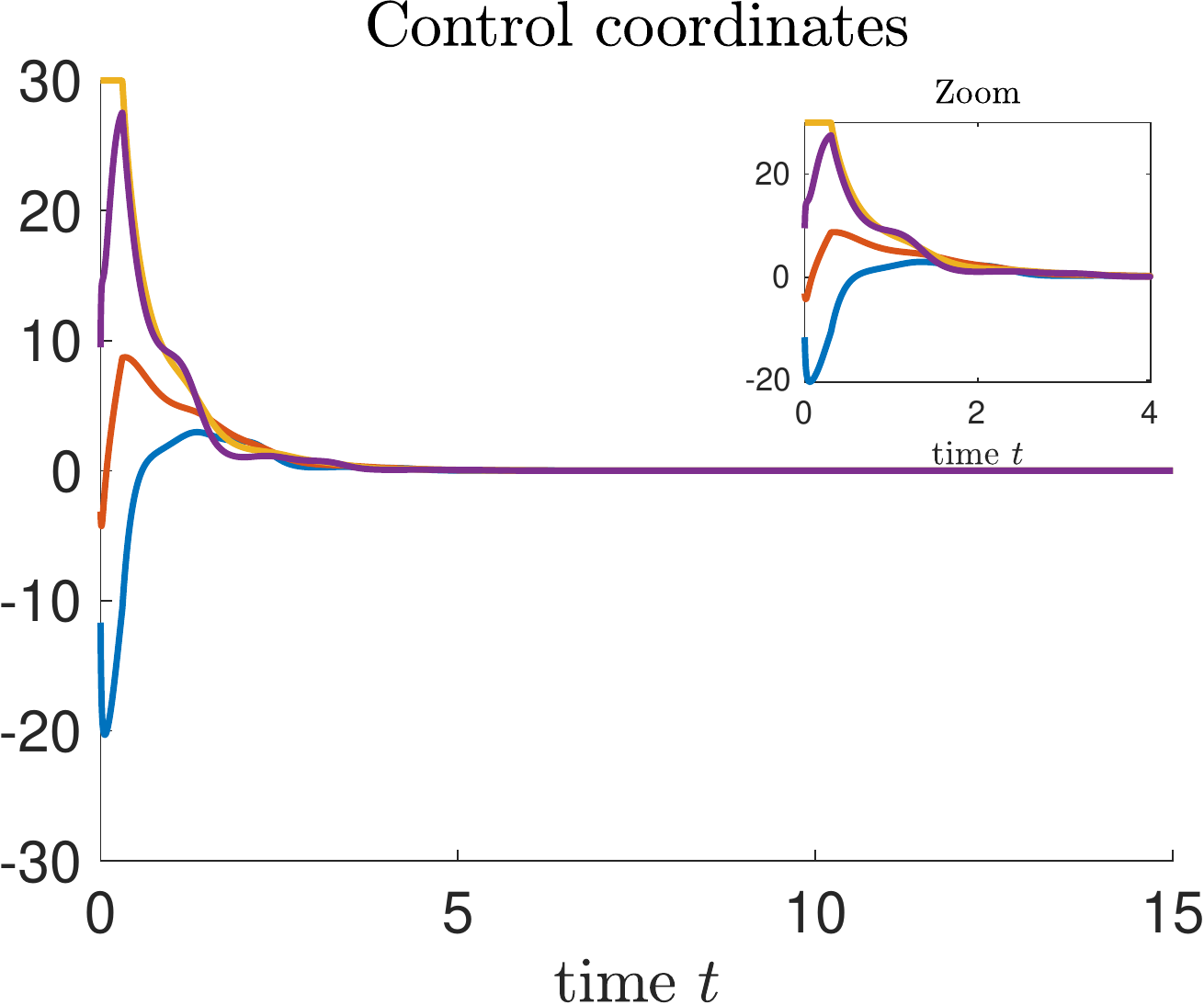}}
\caption{Explicit feedback. $C_u=30$. $y_\ttt(t,x)=\sin(3t)\cos(\pi x)$.\newline}
\label{Fig:Explicit-tx}
\subfigure%[]
{\includegraphics[width=0.45\textwidth,height=0.20\textheight]{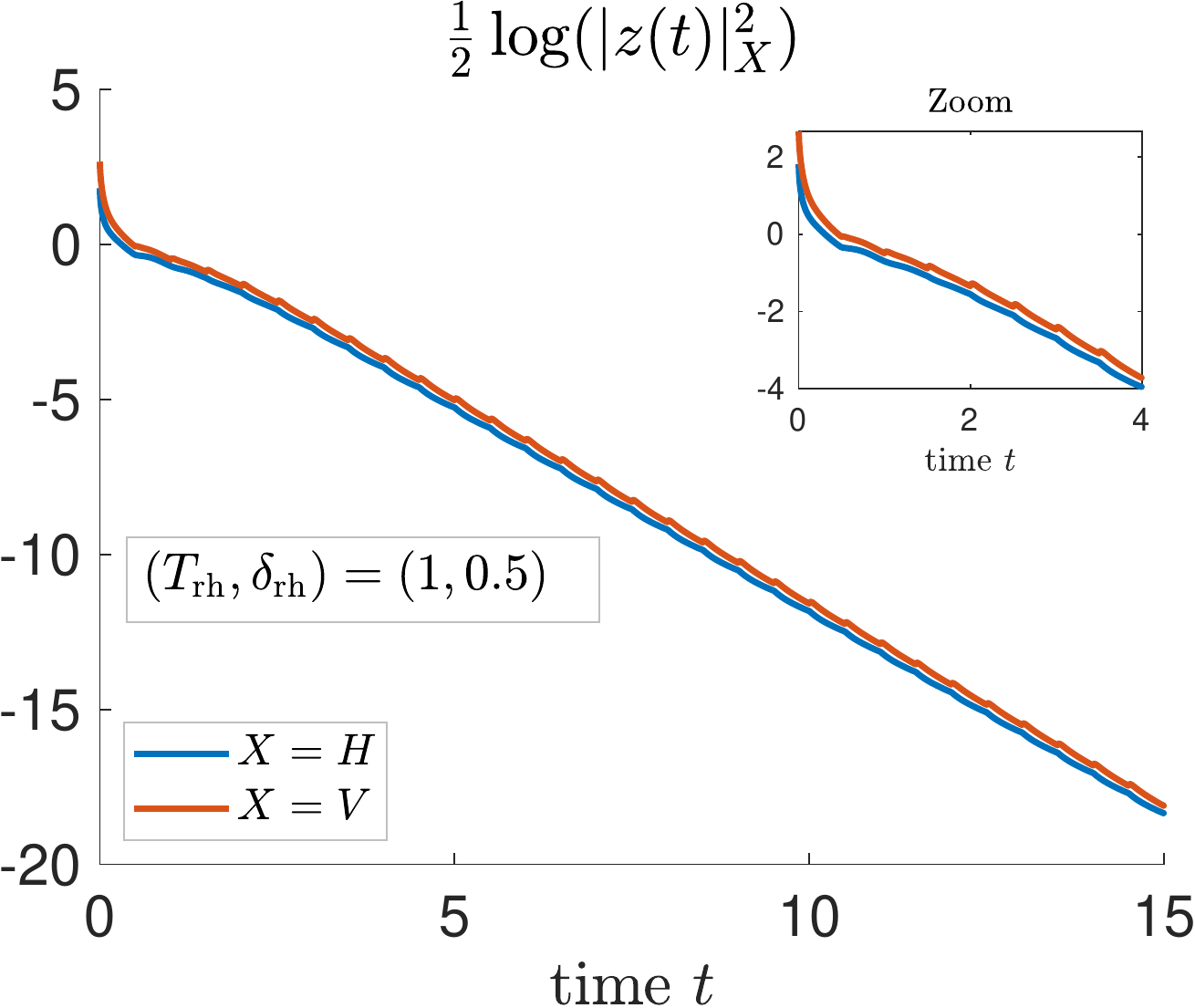}}
\quad
\subfigure%[]
{\includegraphics[width=0.45\textwidth,height=0.20\textheight]{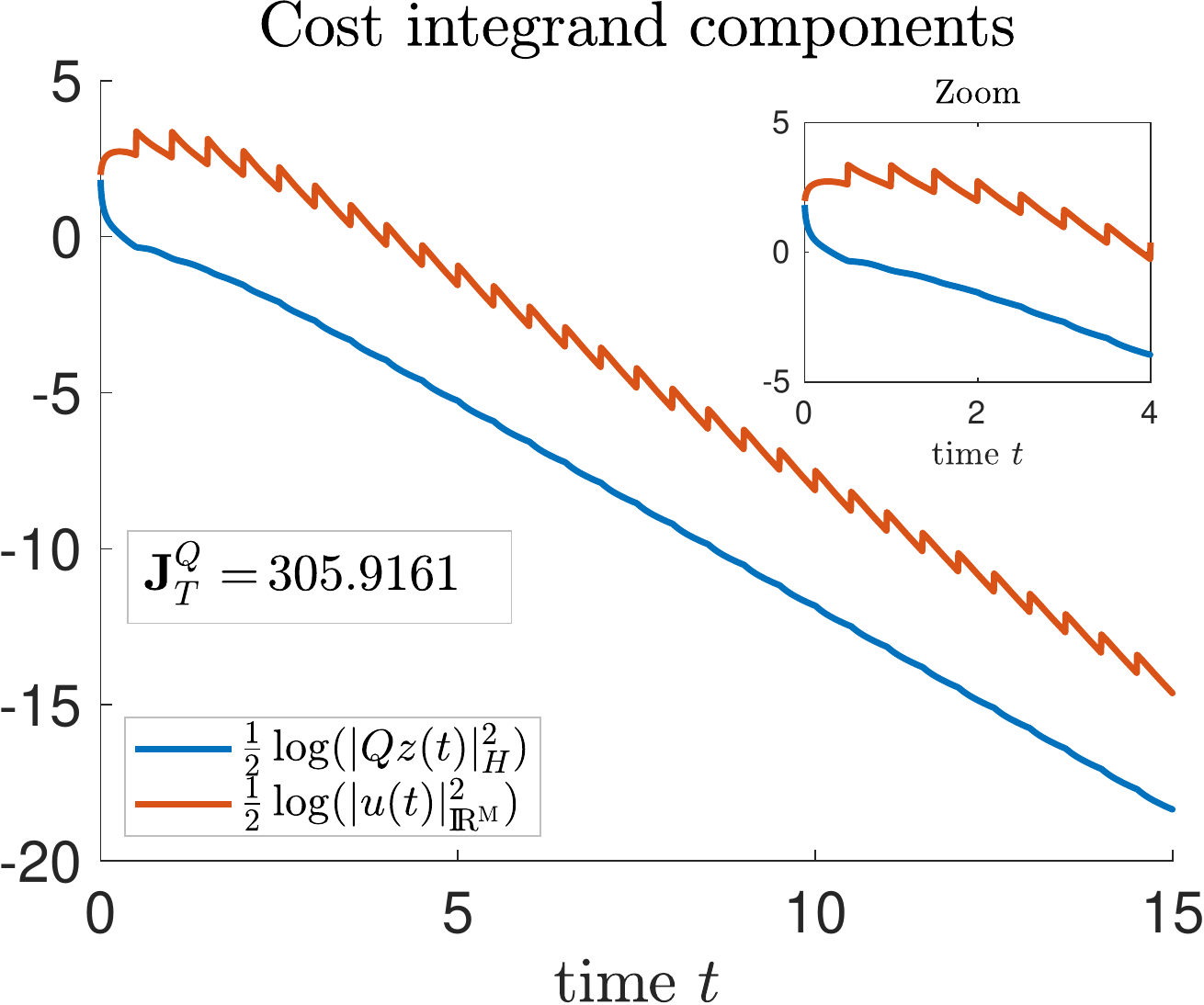}}
\\
{\includegraphics[width=0.45\textwidth]{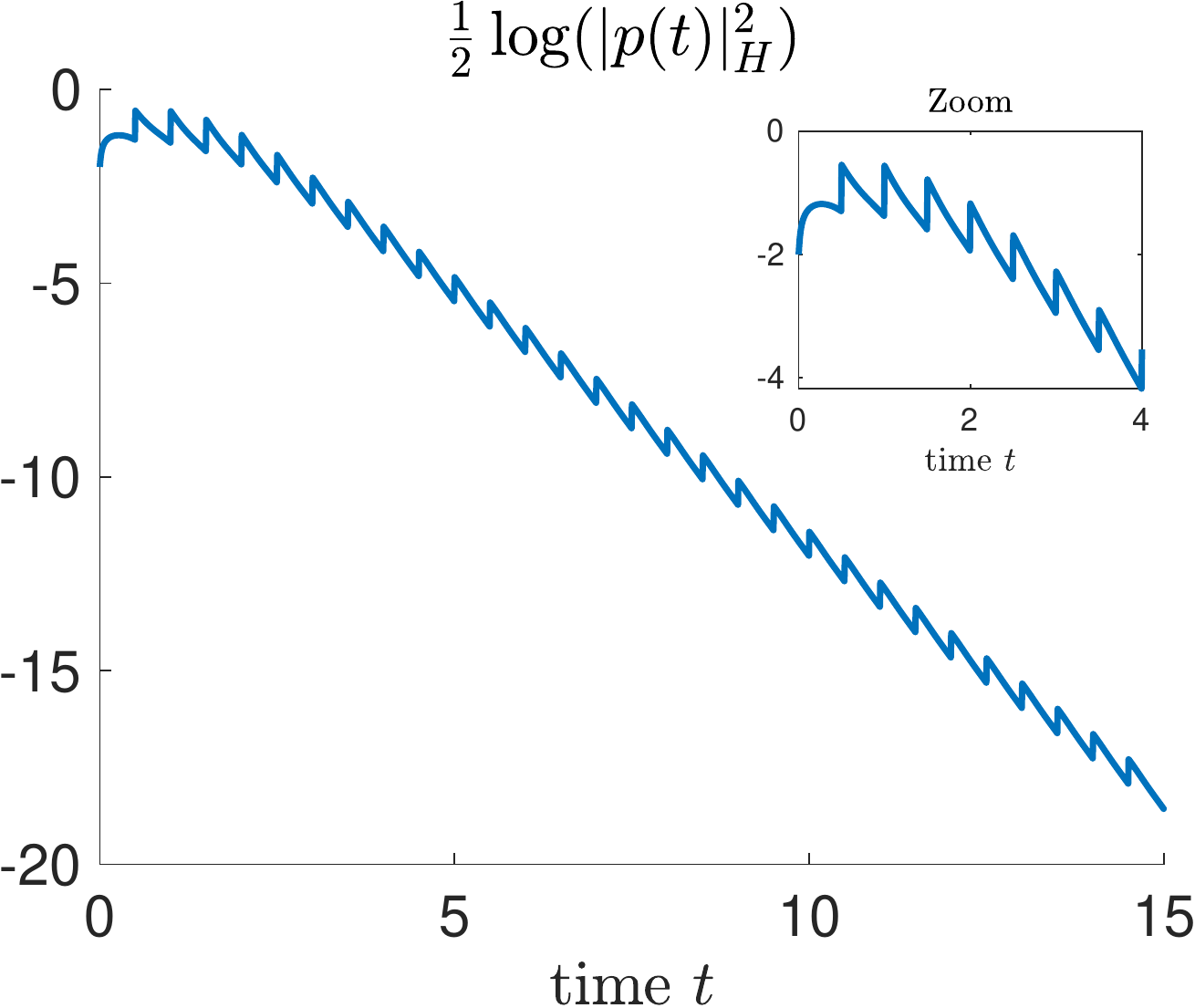}}
\quad
\subfigure%[]
{\includegraphics[width=0.45\textwidth,height=0.20\textheight]{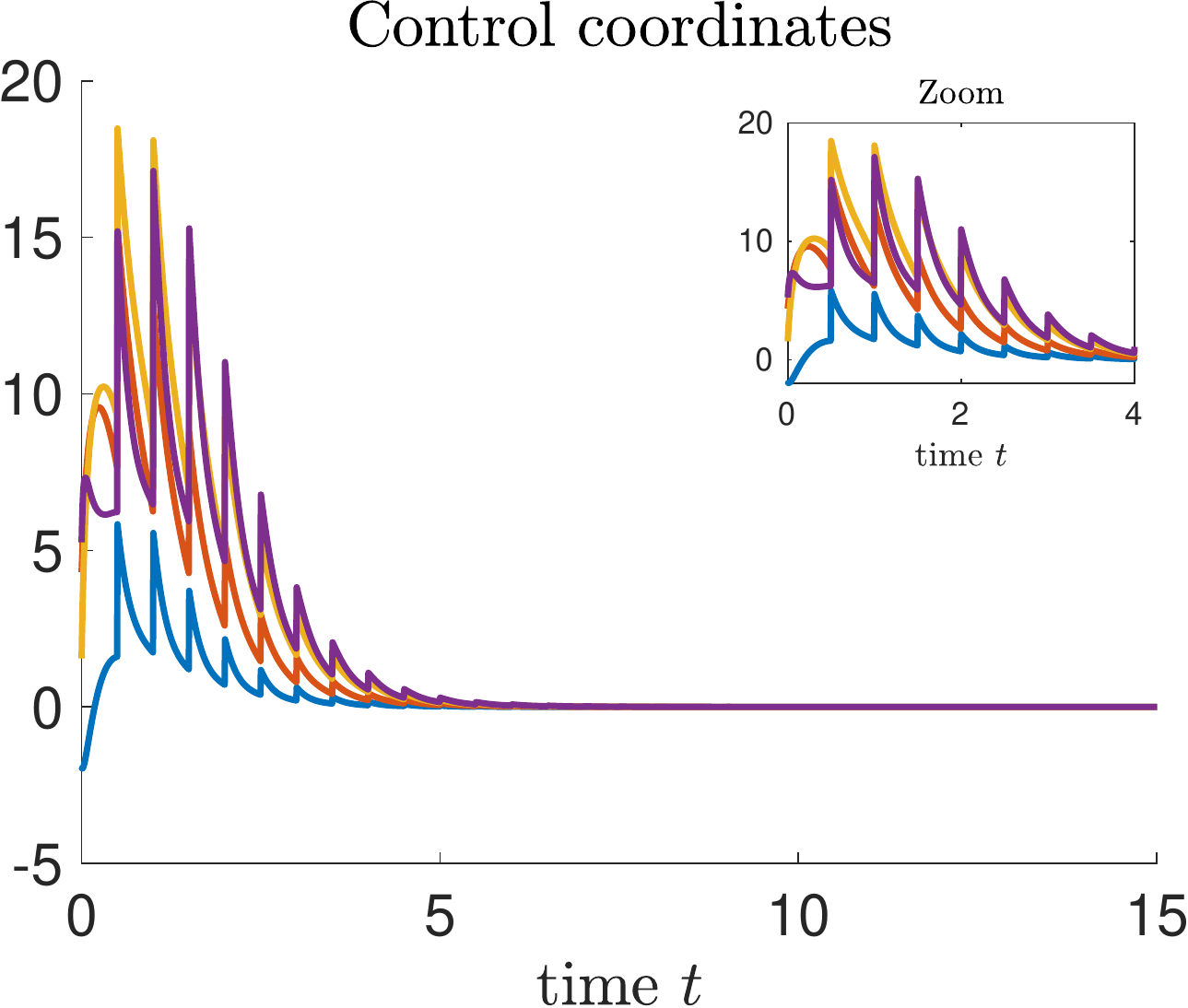}}
\caption{Receding horizon. $C_u=30$. $y_\ttt(t,x)=\sin(3t)\cos(\pi x)$.}
\label{Fig:rhc-tx}
\vspace{-.5em}
\end{figure}

\subsection{On smaller magnitude control bounds}\label{sS:numexa-lim}

 We have already commented on the fact that the explicit feedback control is
significantly simpler to realize in practice
than the receding horizon control, at the prize that the spent energy is
higher.
Here we present an example, where the explicit
feedback control fails to stabilize the system, while the receding horizon control succeeds to.
To illustrate this point, we focus on the magnitude control bound by taking the smaller~$C_u=15$. In Figs.~\ref{Fig:Explicit-Cus} and~\ref{Fig:rhc-Cus} we can see that the explicit feedback control is not stabilizing, while the receding horizon one is.
\begin{figure}[t!]
\centering
\subfigure%[]
{\includegraphics[width=0.45\textwidth,height=0.20\textheight]{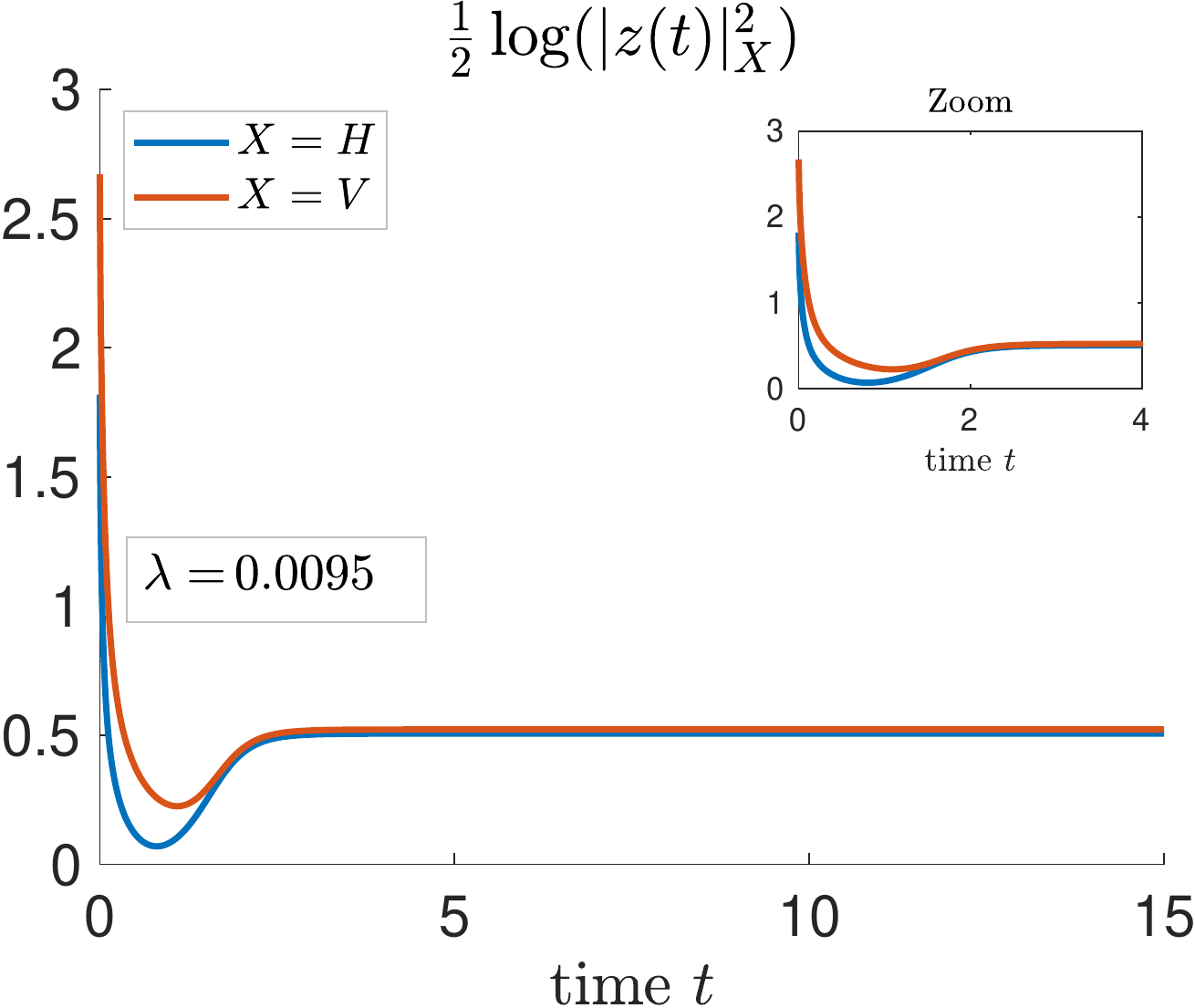}}
\quad
{\includegraphics[width=0.45\textwidth,height=0.20\textheight]{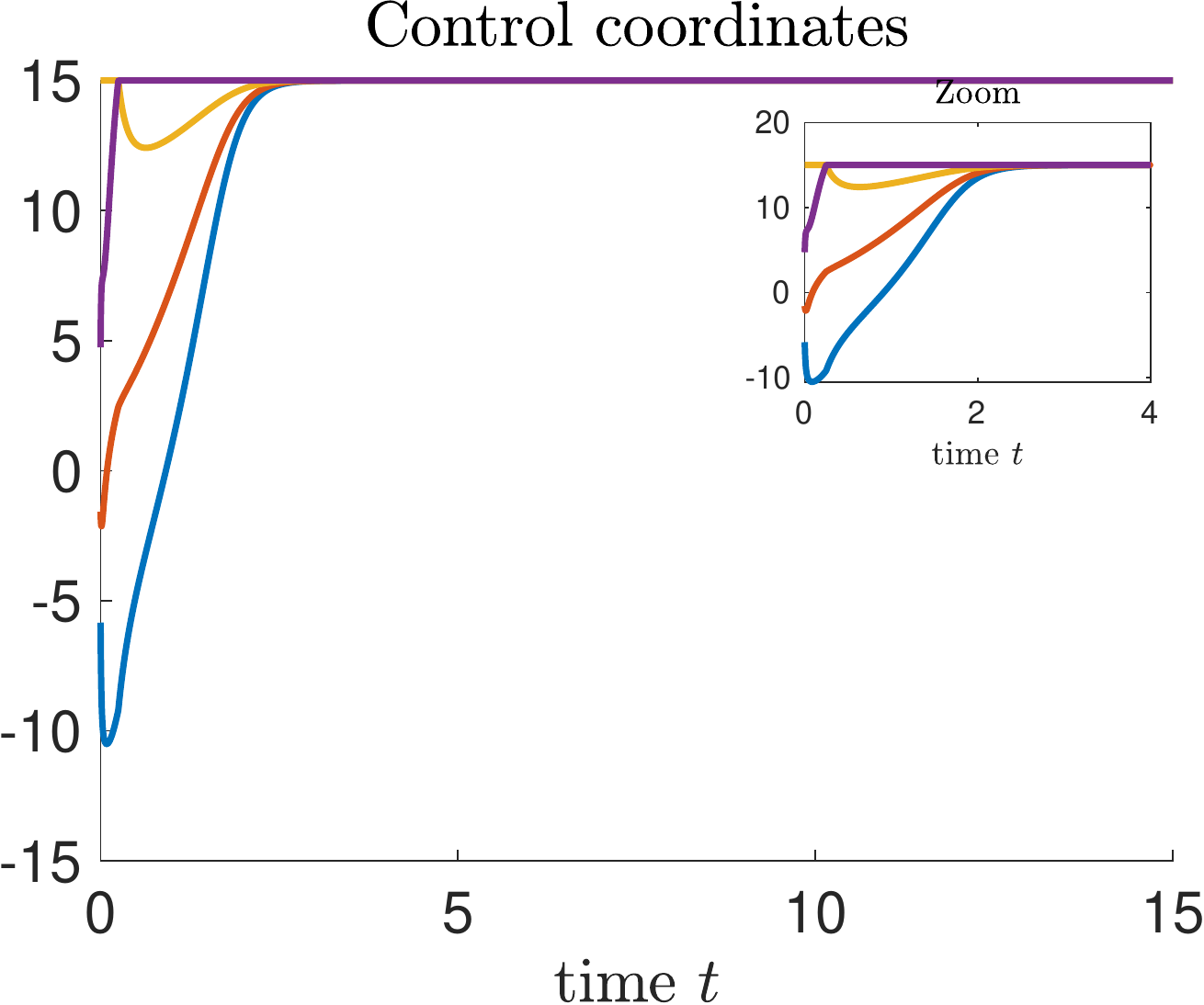}}
\caption{Explicit feedback. $C_u=15$. Target $y_\ttt(t,x)=0$.\newline}
\label{Fig:Explicit-Cus}
\subfigure%[]
{\includegraphics[width=0.45\textwidth,height=0.20\textheight]{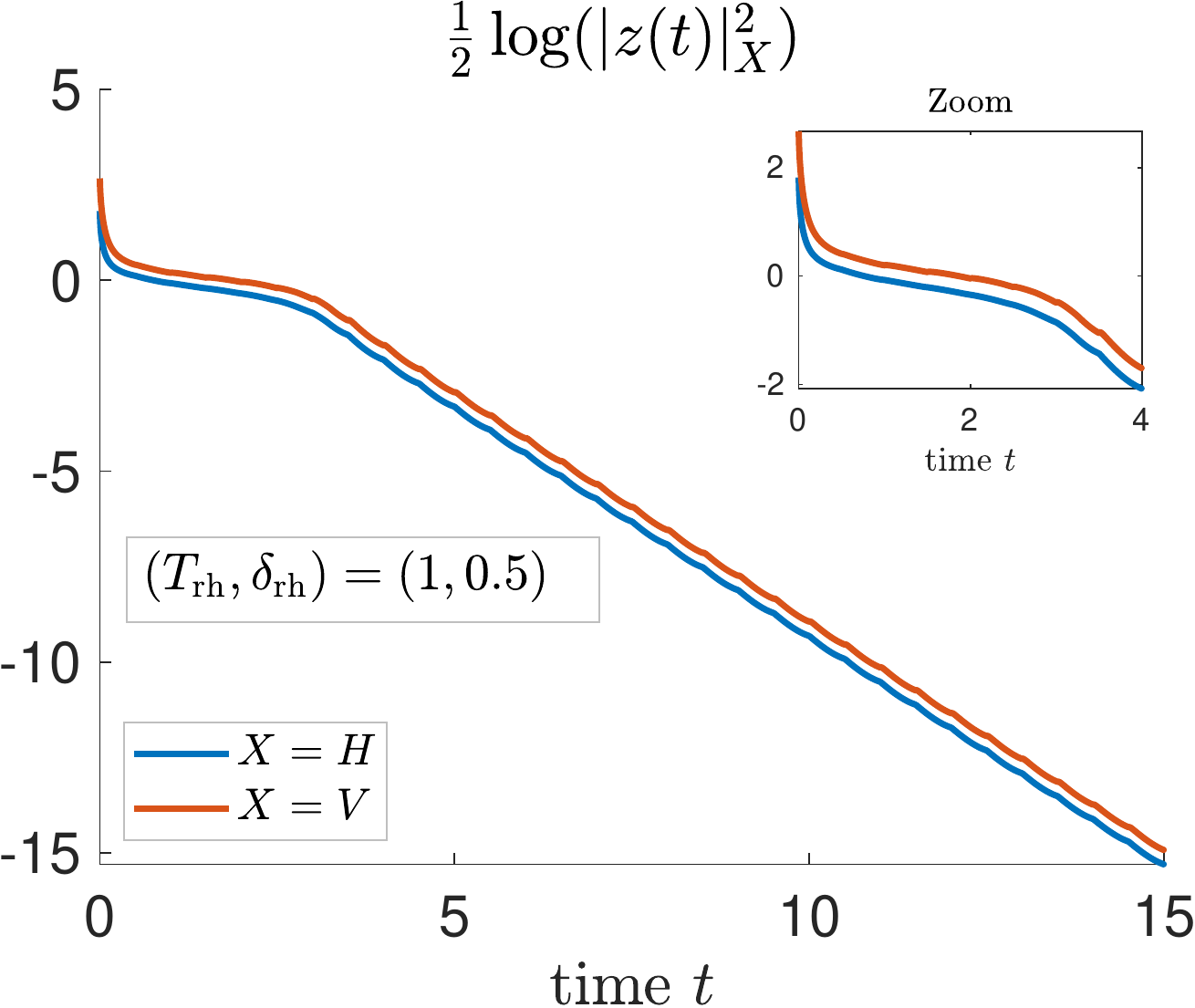}}
\quad
\subfigure%[]
{\includegraphics[width=0.45\textwidth,height=0.20\textheight]{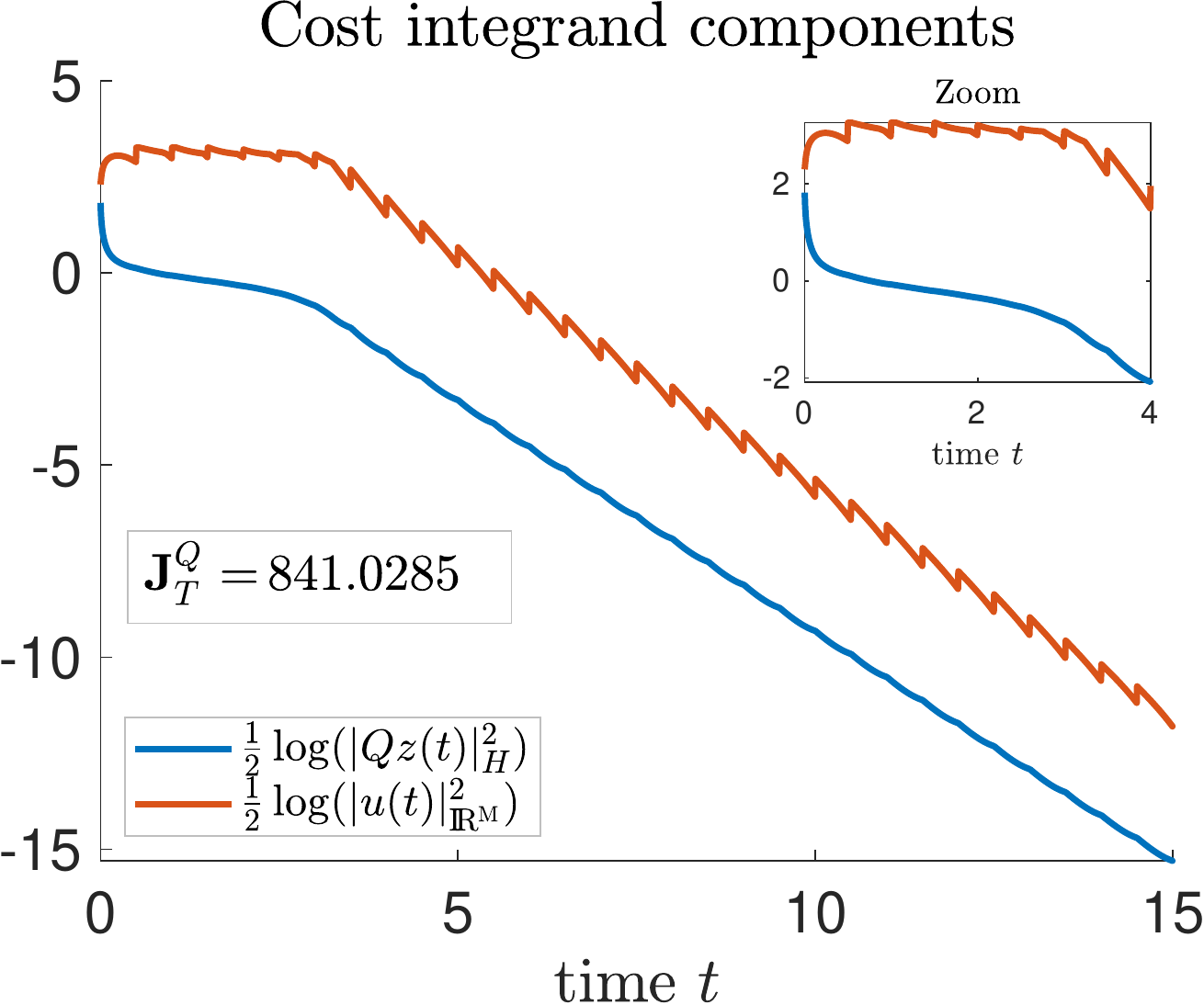}}\\
{\includegraphics[width=0.45\textwidth,height=0.20\textheight]{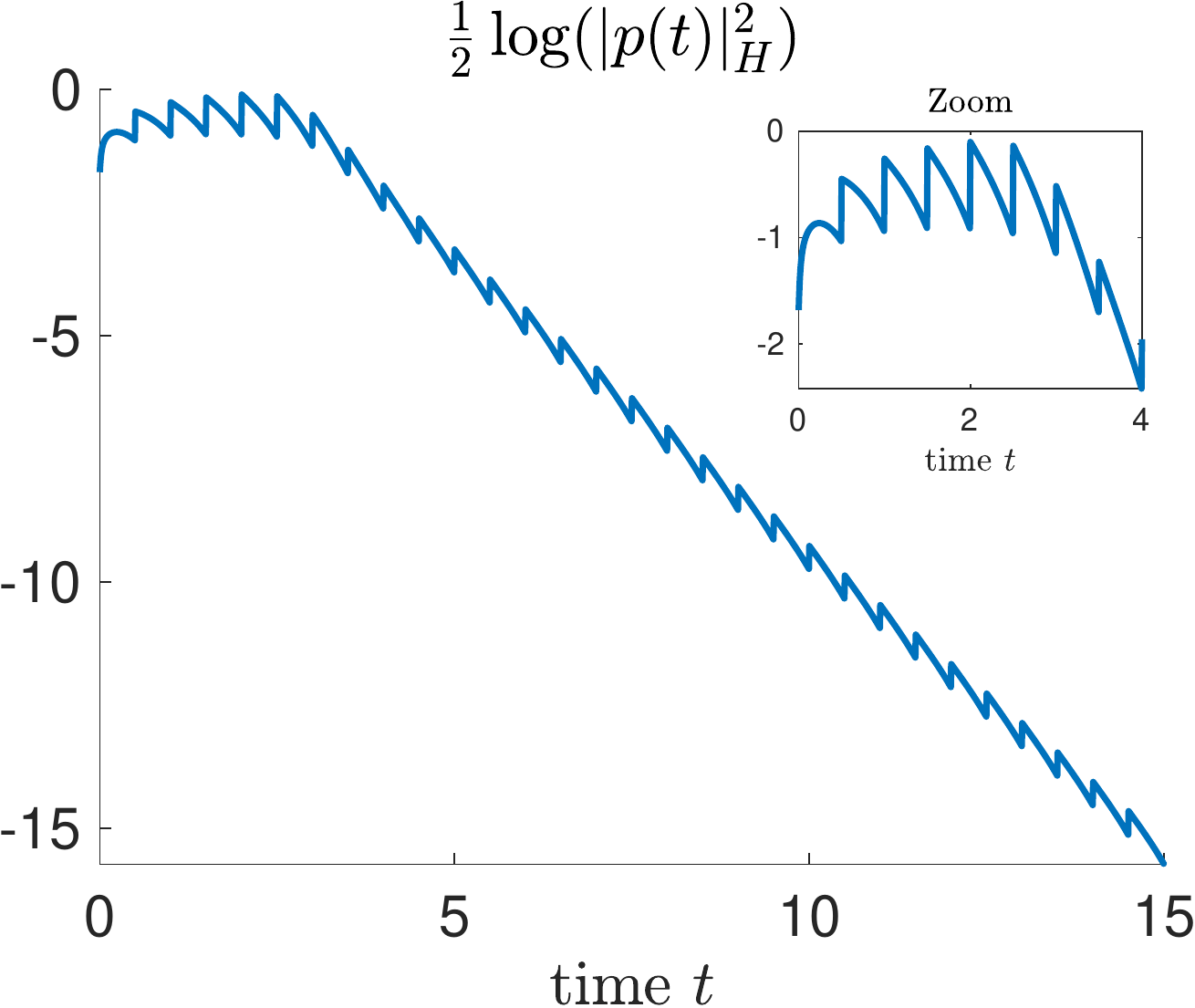}}
\qquad
\subfigure%[]
{\includegraphics[width=0.45\textwidth,height=0.20\textheight]{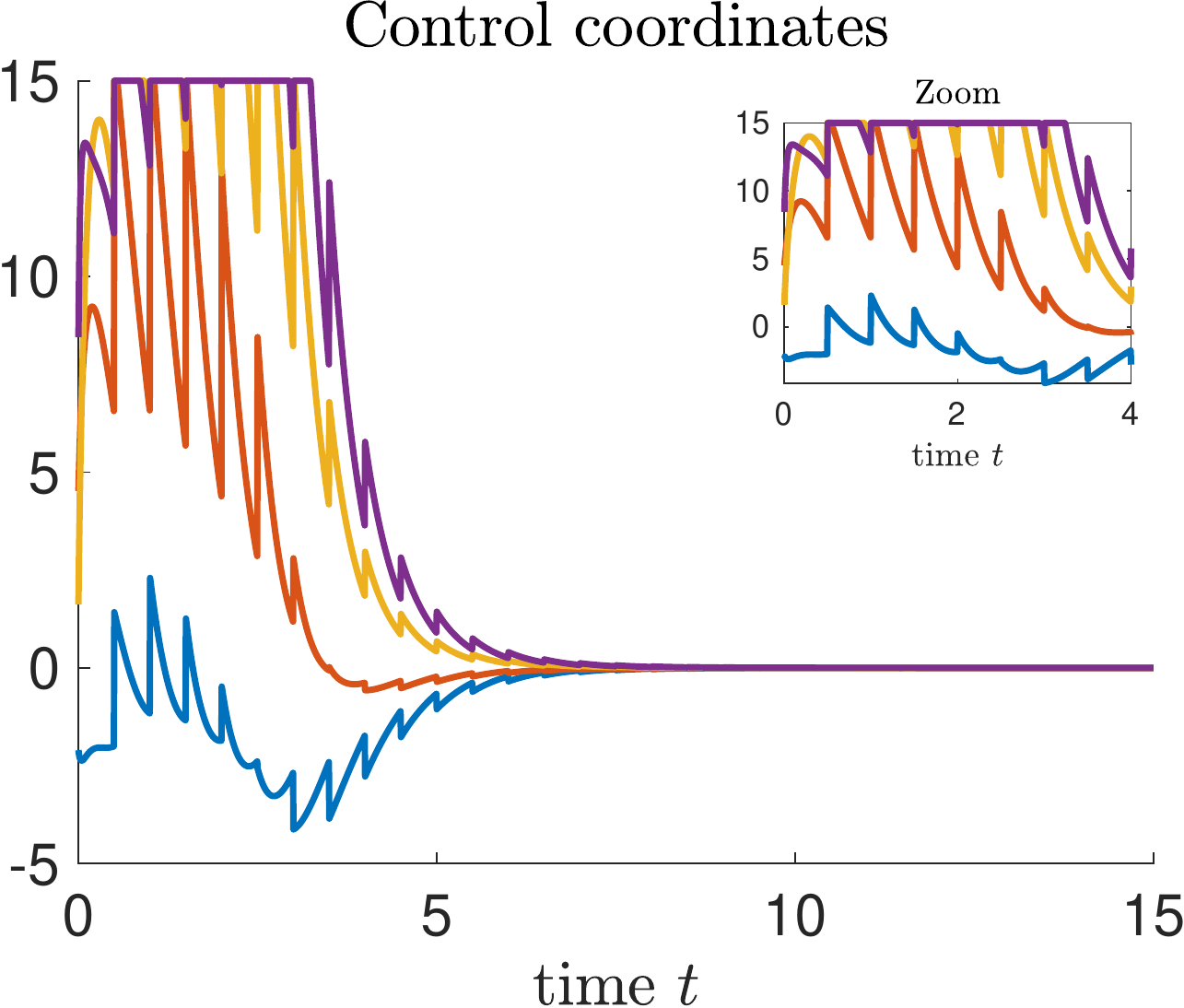}}
\caption{Receding horizon. $C_u=15$. Target $y_\ttt(t,x)=0$.}
\label{Fig:rhc-Cus}
\end{figure}

%%%%%%%%%%%%%%%%%%%%%
\bigskip\noindent
{\bf Aknowlegments.} S. Rodrigues acknowledges partial support from
the State of Upper Austria and the Austrian Science
Fund (FWF): P 33432-NBL.

 %%%%%%%%%%%%%%%%Appendix%%%%%%%%%%%%%%%%%
\bigskip
\appendix
\addcontentsline{toc}{section}{Appendix}
\newpage
\section*{Appendix}
%reset the counters
\setcounter{section}{1}%to restart with A (~1 in Alph)
\setcounter{theorem}{0} \setcounter{equation}{0}
\numberwithin{equation}{section}

 \subsection{Proof of Lemma~\ref{L:DPP}}\label{ApxProofL:DPP}
Let~$(\overline z ,\overline u)\in\fkX_{\bbR_s}^{z_0}$ minimize~$\clJ_{I}^{Q}(z , u)$ and~$(\overline w ,\overline v)\in\fkX_{I^a}^{z_0}$ minimize~$\clJ_{I^a}^Q(w,v)+\fkV^{I_a}(w(a))$.

Let us be given two pairs~$(z_{I_a}^{\overline w(a)} ,u_{I_a}^{\overline w(a)})$ and~$(z_{I_a}^{\overline z(a)} ,u_{I_a}^{\overline z(a)})$ solving Problem~\ref{Pb:OCPI}.

Note that~$(z_1,u_1)\ddag (z_2,u_2)\in\clX_{I}$ if, and only if, $z_1(a)=z_2(a)\in V$.
From
\[
(\overline w ,\overline v)\ddag( y_{I_a}^{\overline w(a)} ,u_{I_a}^{\overline w(a)})\in\fkX_{I}^{z}\quad\mbox{and}\quad
(\overline z ,\overline u)\rest{I^a}\ddag( y_{I_a}^{\overline z(a)} ,u_{I_a}^{\overline z(a)})\in\fkX_{I}^{z}
\]
and  optimality we find
\begin{align}
\clJ^Q_{I}(\overline z ,\overline u)&\le\clJ^Q_{I^a}(\overline w ,\overline v)+\clJ^Q_{I_a}( z_{I_a}^{\overline w(a)} ,u_{I_a}^{\overline w(a)})=\clJ^Q_{I^a}(\overline w ,\overline v)+\fkV^{I_a}(\overline w(a))\notag\\
&\le\clJ^Q_{I^a}(\overline z ,\overline u)+\fkV^{I_a}(\overline z(a))=\clJ^Q_{I^a}(\overline z ,\overline u)+\clJ^Q_{I_a}( z_{I_a}^{\overline z(a)} ,u_{I_a}^{\overline z(a)})\notag\\
&\le\clJ^Q_{I^a}(\overline z ,\overline u)+\clJ^Q_{I_a}(\overline z ,\overline u)=\clJ^Q_{I}(\overline z ,\overline u),\label{dpp=}
\end{align}
which finishes the proof.
\qed

\subsection{Proof of Corollary~\ref{C:optim-tail}}\label{ApxProofC:optim-tail}

By~\eqref{dpp=}, taking~$(\overline z,\overline u)\coloneqq(z_{I}^{z_0} ,u_{I}^{z_0} )$ we observe that $\fkV^{I_a}(\overline z(a))=\clJ^Q_{I_a}(\overline z ,\overline u)$.
\qed

 \subsection{Proof of Lemma~\ref{L:dG}}\label{ApxProofL:dG}
We show that indeed~$\rmd \clG_{I}\rest{(\overline z,\overline u)}\in\clL(\clX_{I},\clY_{I})$.
Recalling that~$\clY_{I}=L^2(I,H)\times V$, it is clear that the mapping
\begin{align} \label{fkG-wdef1}
&\varPsi(w,u)\coloneqq (\dot w +Aw  -U_M^\diamond u, w(0))
\quad\mbox{satisfies}\quad
\varPsi\in\clL(\clX_{I},\clY_{I}),
\end{align}
Next, recalling~\eqref{fkG-wdef3} and~\eqref{fkG-wdef2},
we conclude that~$\rmd \clG_{I}^{z_0}\rest{(\overline z,\overline u)}\in\clL(\clX_{I},\clY_{I})$.

It remains to show that
\[
\fkQ\coloneqq\lim_{\norm{(z,u)-(\overline z,\overline u)}{\clX_I}\to0}\tfrac{\norm{\clG_{I}^{z_0}(z,u)-\clG_{I}^{z_0}(\overline z,\overline u)-\rmd \clG_{I}^{z_0}\rest{(\overline z,\overline u)}\bigl((z,u)-(\overline z,\overline u)\bigr)}{\clY_I}}{\norm{(z,u)-(\overline z,\overline u)}{\clX_I}}=0.
\]
That is, that~$\rmd \clG_{I}^{z_0}\rest{(\overline z,\overline u)}$ is the Fr\'echet derivative of~$\clG_{I}^{z_0}$ at~$(\overline z,\overline u)$.
Indeed, recalling the definition of~$\clG_{I}^{z_0}$ in~\eqref{opt-setting}, we find
\begin{align}
&\clG_{I}^{z_0}(z,u)-\clG_{I}^{z_0}(\overline z,\overline u)=(\dot w +Aw -U_M^\diamond v,w(0))+(f^{y_\ttt}(z)-f^{y_\ttt}(\overline z),0)\notag
\end{align}
with~$(w,v)\coloneqq (z,u)-(\overline z,\overline u)$ and, recalling~$f^{y_\ttt}$ defined in~\eqref{sys-z-feedf-hat}, we have
\begin{align}
f^{y_\ttt}(z)-f^{y_\ttt}(\overline z)&=w^3 +3\overline z w^2+3\overline z^2w+(3y_\ttt+\xi_2) w^2\notag\\
&\quad+2(3y_\ttt+\xi_2)\overline zw+(3y_\ttt^2+2\xi_2y_\ttt+\xi_1-1)w\\
&=w^3 +(3\overline z +3y_\ttt+\xi_2) w^2+\rmd f^{y_\ttt}\rest{\overline z}w.
\end{align}
Then, with~$\bfW\coloneqq W(I,\rmD(A),H)$. we obtain
\begin{align}
\fkQ&=\lim_{\norm{(w,v)}{\clX_I}\to0}\tfrac{\norm{(w^3 +(3\overline z+3y_\ttt+\xi_2) w^2,0)}{\clY_I}}{\norm{(w,v)}{\clX_I}}=\lim_{\norm{w}{\bfW}\to0}\tfrac{\norm{w^3 +(3\overline z+3y_\ttt+\xi_2) w^2}{L^2(I,H)}}{\norm{w}{\bfW}}.\notag
\end{align}
Next we observe that, with~$\psi\coloneqq w +3\overline z+3y_\ttt+\xi_2$,
\begin{align}\notag
\norm{w^3 +(3\overline z+3y_\ttt+\xi_2) w^2}{H}
\le\norm{\psi}{L^6}\norm{ w^2}{L^3}
\le\norm{\psi}{L^6}\norm{ w}{L^6}^2\le\norm{\Id}{\clL(V,L^6)}\norm{\psi}{L^6}\norm{ w}{V}^2
\end{align}
and
\begin{align}
\fkQ&\le \norm{\Id}{\clL(V,L^6)}\lim_{\norm{w}{\bfW}\to0}
\tfrac{\norm{\psi}{L^2(I,L^6)}\norm{w}{C(\overline I,V)}^2}{\norm{w}{\bfW}}\notag\\
&\le \norm{\Id}{\clL(V,L^6)}\norm{\Id}{\clL(\bfW,C(\overline I,V))}\lim_{\norm{w}{\bfW}\to0}
\norm{w +3\overline z+3y_\ttt+\xi_2}{L^2(I,L^6)}\norm{w}{\bfW}=0,\notag
\end{align}
which finishes the proof.
\qed

%%%%%%%%%%%%%%%%%%%%%%%%%%%%%%
%%%%%%%%%%%%%%%%%%%%%%%%%%%%%%
\bibliography{ParabSaturCont}
\bibliographystyle{plainurl}

\end{document}